\def\@tocline#1#2#3#4#5#6#7{\relax
  \ifnum #1>\c@tocdepth 
  \else
    \par \addpenalty\@secpenalty\addvspace{#2}%
    \begingroup \hyphenpenalty\@M
    \@ifempty{#4}{%
      \@tempdima\csname r@tocindent\number#1\endcsname\relax
    }{%
      \@tempdima#4\relax
    }%
    \parindent\z@ \leftskip#3\relax \advance\leftskip\@tempdima\relax
    \rightskip\@pnumwidth plus4em \parfillskip-\@pnumwidth
    #5\leavevmode\hskip-\@tempdima
      \ifcase #1
       \or\or \hskip 1em \or \hskip 2em \else \hskip 3em \fi%
      #6\nobreak\relax
    \hfill\hbox to\@pnumwidth{\@tocpagenum{#7}}\par
    \nobreak
    \endgroup
  \fi}
\newcommand{\bburl}[1]{\textcolor{blue}{\url{#1}}}
\newtheorem{thm}{Theorem}[section]
\newtheorem{cor}[thm]{Corollary}
\newtheorem{claim}[thm]{Claim}
\newtheorem{lem}[thm]{Lemma}
\newtheorem{prop}[thm]{Proposition}
\newtheorem{exa}[thm]{Example}
\newtheorem{defi}[thm]{Definition}
\newtheorem{rek}[thm]{Remark}
\DeclareMathOperator{\supp}{supp}
\DeclareMathOperator{\spann}{span}
\DeclareMathOperator{\sgn}{sgn}
\numberwithin{equation}{section}
\DeclareFontFamily{U}{mathx}{}
\DeclareFontShape{U}{mathx}{m}{n}{<-> mathx10}{}
\DeclareSymbolFont{mathx}{U}{mathx}{m}{n}
\DeclareMathAccent{\widehat}{0}{mathx}{"70}
\DeclareMathAccent{\widecheck}{0}{mathx}{"71}
\begin{document}

\title{Strong partially greedy bases with respect to an arbitrary sequence}

\author{H\`ung Vi\d{\^e}t Chu}

\email{\textcolor{blue}{\href{mailto:hungchu1@tamu.edu}{hungchu1@tamu.edu}}}
\address{Department of Mathematics, Texas A\&M University, College Station, TX 77843, USA}

\begin{abstract} 
For Schauder bases, Dilworth et al. introduced and characterized the partially greedy property, which is strictly weaker than the (almost) greedy property. Later,  Berasategui et al. defined and studied the strong partially greedy property for general bases. Let $\mathbf n$ be any strictly increasing sequence of positive integers. 
In this paper, we define the strong partially greedy property with respect to $\mathbf n$, called the ($\mathbf n$, strong partially greedy) property. We give characterizations of this new property, study relations among ($\mathbf n$, strong partially greedy) properties for different sequences $\mathbf n$, establish Lebesgue-type inequalities for the ($\mathbf n$, strong partially greedy) parameter, investigate ($\mathbf n$, strong partially greedy) bases with gaps, and weighted ($\mathbf n$, strong partially greedy) bases, to name a few. Furthermore, we introduce the ($\mathbf n$, almost greedy) property and equate the property to a strengthening of the ($\mathbf n$, strong partially greedy) property.  This paper can be viewed both as a survey of recent results regarding strong partially greedy bases and as an extension of these results to an arbitrary sequence instead of $\mathbb{N}$. 
\end{abstract}

\subjclass[2020]{41A65; 46B15}

\keywords{Strong partially greedy; bases; thresholding greedy algorithm}

\thanks{The author acknowledges the summer funding from the Department of Mathematics at the University of Illinois Urbana-Champaign. The author is thankful to Timur Oikhberg for helpful feedback on earlier drafts of this paper.}

\maketitle

\section{Introduction}

\subsection{Settings and classical results}
Let $X$ be a separable, infinite dimensional Banach space over the field $\mathbb{F} = \mathbb{R}$ or $\mathbb{C}$ with the dual space $X^*$. A countable set $\mathcal{B} = (e_n)_{n=1}^\infty\subset X$ is said to be a semi-normalized basis (or basis, for short) of $X$ if 
\begin{enumerate}
    \item[a)] $\spann\{e_n: n\in\mathbb{N}\}$ is norm-dense in $X$;
    \item[b)] there is a semi-normalized sequence $(e_n^*)_{n=1}^\infty\subset X^*$ such that $e_j^*(e_k) = \delta_{j, k}$ for all $j, k\in\mathbb{N}$;
    \item[c)] there exist $c_1, c_2 > 0$ such that 
    $$0 \ <\ c_1 := \inf_n\{\|e_n\|, \|e_n^*\|\}\ \le\ \sup_n\{\|e_n\|, \|e_n^*\|\} \ =:\ c_2 \ <\ \infty.$$
\end{enumerate}
For a basis $\mathcal{B}$, we represent each vector $x\in X$ with the formal series $\sum_{n\in\mathbb{N}} e_n^*(x)e_n$. It is possible that two different vectors have the same representation unless $\mathcal{B}$ is a Markushevich basis, which, in addition to items a), b), and c) above, also satisfies
\begin{enumerate}
    \item [d)] $\overline{\spann\{e_n^*: n\in\mathbb{N}\}}^{w^*} = X^*$.
\end{enumerate}
A Schauder basis satisfies items from a) to d), and there exists the least constant $\mathbf K_b$, called the basis constant, such that $\|S_m(x)\|\le \mathbf K_b\|x\|$, for all $x\in X$ and $m\in\mathbb{N}$, where $S_m(x) = \sum_{n=1}^m e_n^*(x)e_n$. When $\mathbf K_b = 1$, we say that $\mathcal{B}$ is monotone. 

By a density argument, it is easy to see that if $\mathcal{B}$ is a basis, then \begin{equation}\label{e11}\lim_{n\rightarrow\infty}|e_n^*(x)| \ =\ 0,\forall x\in X.\end{equation}
We now introduce notation, some of which have been frequently used in the literature: fixing $x\in X$, finite subsets $A, B$ of positive integers, and a strictly increasing sequence of positive integers $\mathbf n = n_1, n_2, \ldots$, we define 
\begin{enumerate}
    \item a sign $\varepsilon$ to be a sequence $(\varepsilon_n)_{n=1}^\infty$ of scalars of modulus $1$.
    \item $1_A = \sum_{n\in A}e_n$ and $1_{\varepsilon A} = \sum_{n\in A}\varepsilon_n e_n$, for a given sign $\varepsilon$. 
    \item $P_A(x)= \sum_{n\in A}e_n^*(x)e_n$ and $P_{A^c}(x) = x- P_A(x)$.
    \item $\|x\|_\infty = \max_{n}|e_n^*(x)|$ and $\supp(x) = \{n: e_n^*(x) \neq 0\}$.
    \item $A < B$ to mean that $a < b$ for all $a\in A, b\in B$. It holds vacuously that $\emptyset < A$ and $\emptyset > A$. Also, $n < A$ for a number $n$ means $\{n\} < A$.
    \item $A\sqcup B$ to mean $A\cap B = \emptyset$. Hence, $D < (A\sqcup B)\cap E$ for two sets $D, E$ means that $D < (A\cup B)\cap E$ and $A\cap B = \emptyset$.
    \item $\mathbb{T}(\mathbf n)$ to be the collection of all ordered pairs of finite sets $(A, B)$ such that $A\subset\mathbf n$, $|A|\le |B|$, and $A < B\cap \mathbf n$. 
    \item $\mathbb{S}(\mathbf n)$ to be the collection of all ordered pairs of finite sets $(A, B)$ such that $A\subset\mathbf n$ and $|A|\le |B|$. 
    \item for a number $a$, $A|_a = \{n\in A: n\ge a\}$. Furthermore, for sets $A, B, C$, $A < (B\sqcup C)|_{\min A}$ means that $B\sqcup C$ and either $A = \emptyset$ or $A < (B\cup C)|_{\min A}$.
    \item for $m\ge 0$, $\mathcal{I}^{(m)}: = \{A\subset\mathbb{N}: |A| = m\mbox{ and }A \mbox{ is an interval}\}$, $\mathcal{I}^{\le m}:= \cup_{1\le k\le m}\mathcal{I}^{(k)}$, and $\mathcal{I} := \cup_{m\ge 0}\mathcal{I}^{(m)}$.
\end{enumerate}

The limit \eqref{e11} enables the definition of 
the thresholding greedy algorithm (TGA), introduced by Konyagin and Temlyakov in 1999 \cite{KT1}. 
For each $x\in X$, the algorithm chooses the largest coefficients (in modulus) with respect to $\mathcal{B}$: a set $A\subset \mathbb{N}$ is a greedy set of $x$ of order $m$ if $|A| = m$ and 
$$\min_{n\in A}|e_n^*(x)|\ \ge\ \max_{n\notin A}|e_n^*(x)|.$$
The corresponding greedy sum is 
$G_m(x)\ :=\ P_A(x)$.
Let $G(x, m)$ denote the set of all greedy sets of $x$ of order $m$. The TGA thus produces a (possibly nonunique) sequence $(G_m(x))_{m=1}^\infty$ for each $x\in X$.

Konyagin and Temlyakov \cite{KT1} then defined and characterized greedy bases as being unconditional and democratic. In particular,

\begin{defi}\normalfont
A basis $\mathcal{B}$ is greedy if there exists $\mathbf C\ge 1$ such that
\begin{equation}\label{e6}\left\|x-G_m(x)\right\| \ \le\ \mathbf C\sigma_m(x),\forall x\in X, \forall m\in \mathbb{N}, \forall G_m(x),\end{equation}
where 
$$\sigma_m(x) \ :=\ \inf_{a_n\in\mathbb{F}}\left\{\left\|x-\sum_{n\in A}a_ne_n\right\| \,:\, |A| = m\right\}.$$
If $\mathbf C$ satisfies \eqref{e6}, we say that $\mathcal{B}$ is $\mathbf C$-greedy.
\end{defi}

\begin{defi}\label{d1}\normalfont A basis $\mathcal{B}$ is unconditional if there exists $\mathbf K \ge 1$ such that for all $N\in\mathbb{N}$, $$\left\|\sum_{n=1}^Na_ne_n\right\|\ \le\ \mathbf K\left\|\sum_{n=1}^N b_n e_n\right\|,$$
whenever $|a_n|\le |b_n|$ for all $1\le n\le N$. The least constant $\mathbf K$ is denoted by $\mathbf K_u$, called the unconditional constant of $\mathcal{B}$. Equivalently, there exists the so-called suppression-unconditional constant $\mathbf K_{su}$, which is the smallest constant such that $\|P_A(x)\|\le \mathbf K_{su}\|x\|$ for all $A\subset\mathbb{N}$.
\end{defi}

\begin{defi}\label{d2}\normalfont A basis $\mathcal B$ is  superdemocratic if there is $\mathbf C \ge 1$ such that
\begin{equation}\label{e12}\|1_{\varepsilon A}\|\ \le\ \mathbf C\|1_{\delta B}\|,\end{equation}
for all finite sets $A, B\subset \mathbb{N}$ with $|A| \le |B|$ and for all signs $\varepsilon, \delta$. Let $\mathbf C_{sd}$ be the smallest constant for \eqref{e12} to hold. If \eqref{e12} holds for $\varepsilon \equiv \delta \equiv 1$, then we say that $\mathcal{B}$ is $\mathbf C$-democratic; the smallest constant in this case is denoted by $\mathbf C_d$.
\end{defi}

\begin{thm}[Konyagin and Temlyakov \cite{KT1}]\label{KT1}
A basis $\mathcal{B}$ in a Banach space is greedy if and only if it is unconditional and democratic.
\end{thm}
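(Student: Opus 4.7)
The plan is to prove the two directions of Theorem~\ref{KT1} separately.

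\textbf{Necessity ($\Rightarrow$).} Assuming $\mathcal{B}$ is $\mathbf C$-greedy, I would first extract superdemocracy. Given disjoint $A, B$ with $|A| = |B| = m$, signs $\varepsilon, \delta$, and $\epsilon > 0$, the test vector $x_\epsilon = (1+\epsilon)\,1_{\varepsilon A} + 1_{\delta B}$ has $A$ as its unique greedy set of order $m$, giving $\|x_\epsilon - G_m(x_\epsilon)\| = \|1_{\delta B}\|$, while the $m$-term approximation $1_{\delta B}$ itself yields $\sigma_m(x_\epsilon) \le (1+\epsilon)\|1_{\varepsilon A}\|$; applying \eqref{e6} and letting $\epsilon \downarrow 0$ gives $\|1_{\delta B}\| \le \mathbf C\|1_{\varepsilon A}\|$, with the case $|A| < |B|$ reduced to the equal-size case via the quasi-greedy bound $\|G_m(x)\| \le (1+\mathbf C)\|x\|$ (a direct consequence of \eqref{e6} and $\sigma_m(x) \le \|x\|$). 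For unconditionality, given a finitely supported $x$ and $A \subset \supp(x)$, I fix $\alpha > \max_{n \in \supp(x)\setminus A}|e_n^*(x)|$ and let $\varepsilon_n = \sgn(e_n^*(x))$ for $n \in A$. The modified vector
\[
y \;=\; P_{A^c}(x) \,+\, \alpha\, 1_{\varepsilon A}
\]
has $A$ as its greedy set of order $|A|$, so $y - G_{|A|}(y) = P_{A^c}(x)$. The $|A|$-term approximation on $A$ with coefficients $\alpha\varepsilon_n - e_n^*(x)$ satisfies $y - \sum_{n \in A}(\alpha\varepsilon_n - e_n^*(x)) e_n = x$, producing $\sigma_{|A|}(y) \le \|x\|$; then \eqref{e6} forces $\|P_{A^c}(x)\| \le \mathbf C\|x\|$. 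Letting $A$ range over subsets of $\supp(x)$ yields suppression unconditionality.

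\textbf{Sufficiency ($\Leftarrow$).} With $\mathcal{B}$ assumed $\mathbf K_{su}$-suppression unconditional and $\mathbf C_{sd}$-superdemocratic (superdemocracy being a consequence of democracy combined with unconditionality), I fix $x \in X$, a greedy set $A$ of order $m$, and a competing $m$-term approximation $y = \sum_{n \in B} b_n e_n$ with $|B| = m$. Setting $D = A \setminus B$, $E = B \setminus A$, so that $|D| = |E|$, I would decompose
\[
x - G_m(x) \;=\; P_{(A\cup B)^c}(x) \,+\, P_E(x).
\]
Since $y$ is supported on $B$ and $(A\cup B)^c \cap B = \emptyset$, the first piece equals $P_{(A\cup B)^c}(x - y)$ and is bounded by $\mathbf K_{su}\|x - y\|$. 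For the second, the greedy condition on $A$ yields $\max_{n \in E}|e_n^*(x)| \le \min_{n \in D}|e_n^*(x)|$; unconditionality lets me replace the coefficients on $E$ by this common modulus (with signs $\sgn(e_n^*(x))$), superdemocracy (using $|E| = |D|$) transfers the norm to $D$, and unconditionality again replaces the resulting constants on $D$ by the true $e_n^*(x)$, giving $\|P_E(x)\| \le \mathbf K_u^2 \mathbf C_{sd}\|P_D(x)\|$. Since $D \subset B^c$, one has $\|P_D(x)\| = \|P_D(x - y)\| \le \mathbf K_{su}\|x - y\|$. Summing both bounds and taking the infimum over $y$ delivers \eqref{e6}.

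\textbf{Main obstacle.} The necessity side runs smoothly once the correct test vectors are identified; the pivotal trick is that replacing $P_A(x)$ by the large sign vector $\alpha\, 1_{\varepsilon A}$ forces the greedy algorithm to select $A$ exactly while allowing an $|A|$-term perturbation to recover $x$ itself, linking $\sigma_{|A|}(y)$ to $\|x\|$. The sufficiency side requires careful bookkeeping: the harder part is handling the cross-term $P_E(x)$, where the greedy hypothesis $\max_E|e_n^*(x)| \le \min_D|e_n^*(x)|$ is indispensable, and the superdemocracy strengthening is needed because the signs $\sgn(e_n^*(x))$ on $E$ and $D$ are generally not aligned, so the democracy-only version would not suffice to compare $\|P_E(x)\|$ with $\|P_D(x)\|$.
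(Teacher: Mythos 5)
Your proof is correct and is essentially the classical Konyagin--Temlyakov argument (test vectors $(1+\epsilon)1_{\varepsilon A}+1_{\delta B}$ and $P_{A^c}(x)+\alpha 1_{\varepsilon A}$ for necessity, the splitting $x-P_A(x)=P_{(A\cup B)^c}(x)+P_{B\setminus A}(x)$ with the truncation/superdemocracy transfer from $B\setminus A$ to $A\setminus B$ for sufficiency); the paper states Theorem \ref{KT1} only as a citation to \cite{KT1} without reproducing a proof, so there is no internal argument to compare against. The one small point to tidy is that Definition \ref{d2} allows $A$ and $B$ to overlap, so after your disjoint equal-cardinality comparison you should route through an auxiliary set disjoint from $A\cup B$ (together with the quasi-greedy bound you already invoke for $|A|<|B|$) to obtain the full (super)democracy constant --- a one-line addition.
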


Continuing the work, Dilworth et al.\ \cite{DKKT} defined and characterized the so-called almost greedy bases as being quasi-greedy and democratic. 

\begin{defi}\normalfont \label{defAG} A basis $\mathcal B$ is almost greedy if there exists $\mathbf C\ge 1$ such that
\begin{equation}\label{e7}\|x-G_m (x)\|\ \le\ \mathbf C\widetilde{\sigma}_m(x), \forall x\in X, \forall m\in \mathbb{N}, \forall G_m(x),\end{equation}
where $\widetilde{\sigma}_m(x) := \inf\{\|x-P_A(x)\|: |A| = m\}$. If $\mathbf C$ verifies \eqref{e7}, then $\mathcal{B}$ is said to be $\mathbf C$-almost greedy. 
\end{defi}

\begin{defi}\label{d3}\normalfont
A basis $\mathcal{B}$ is quasi-greedy if there exists $\mathbf C > 0$ such that
$$\|G_m(x)\|\ \le\ \mathbf C\|x\|,\forall x\in X, m\in\mathbb{N}, \forall G_m(x).$$
The least such $\mathbf C$ is denoted by $\mathbf C_q$, called the quasi-greedy constant. Also when $\mathcal{B}$ is quasi-greedy, let $\mathbf C_\ell$ be the least constant such that $$\|x-G_m(x)\|\ \le\ \mathbf \mathbf \mathbf C_\ell\|x\|,\forall x\in X, m\in\mathbb{N}, \forall G_m(x).$$
We call $\mathbf C_\ell$ the suppression quasi-greedy constant.
\end{defi}

\begin{thm}[Dilworth et al. \cite{DKKT}]\label{DKKTcha}
A basis $\mathcal{B}$ in a Banach space is almost greedy if and only if it is quasi-greedy and democratic. 
\end{thm}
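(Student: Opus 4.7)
The plan is to prove Theorem~\ref{DKKTcha} by showing the two implications separately, mirroring the strategy of the Konyagin--Temlyakov characterization (Theorem~\ref{KT1}) but replacing unconditionality by the weaker quasi-greedy property.

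\emph{Forward direction.} Assume $\mathcal{B}$ is $\mathbf{C}$-almost greedy. To get democracy, I would test the almost greedy inequality on a simple perturbation. Pick disjoint finite sets $A,B\subset\mathbb{N}$ with $|A|=|B|=m$ and, for $t>0$, set $x_t := 1_A + (1+t)\,1_B$. Because $(1+t) > 1$, the unique greedy set of $x_t$ of order $m$ is $B$, so $x_t - G_m(x_t) = 1_A$. On the other hand, $\widetilde{\sigma}_m(x_t) \le \|x_t - P_A(x_t)\| = (1+t)\|1_B\|$. Plugging these into \eqref{e7} and sending $t\to 0^+$ yields $\|1_A\|\le \mathbf{C}\|1_B\|$; the same argument with signs $1_{\varepsilon A}$ and $1_{\delta B}$ in place of $1_A, 1_B$ actually gives the superdemocratic constant, so we obtain democracy. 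To get quasi-greediness, I would first work with finitely supported vectors $x$: for such $x$ one can choose $B \subset \mathbb{N}\setminus \supp(x)$ with $|B|=m$, so $P_B(x)=0$ and hence $\widetilde{\sigma}_m(x) \le \|x\|$. Then \eqref{e7} gives $\|x - G_m(x)\|\le \mathbf{C}\|x\|$, which implies $\|G_m(x)\|\le (1+\mathbf{C})\|x\|$ by the triangle inequality. A density argument (together with \eqref{e11} to ensure that greedy sets behave well in the limit) extends the bound to all $x\in X$.

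\emph{Reverse direction.} Assume $\mathcal{B}$ is $\mathbf{C}_\ell$-suppression quasi-greedy and $\mathbf{C}_d$-democratic (in fact, superdemocracy will be needed, but this follows from democracy plus quasi-greediness, as is well known). Fix $x\in X$, $A\in G(x,m)$, and any $B$ with $|B|=m$. Using $P_A - P_B = P_{A\setminus B} - P_{B\setminus A}$, decompose
\[
\|x - P_A(x)\| \ \le\ \|x - P_B(x)\| + \|P_{A\setminus B}(x)\| + \|P_{B\setminus A}(x)\|.
\]
For the middle term, observe that in $x - P_B(x) = P_{A\setminus B}(x) + P_{(A\cup B)^c}(x)$, the coefficients on $A\setminus B \subset A$ dominate those on $(A\cup B)^c\subset A^c$ (because $A$ is a greedy set). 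Hence $A\setminus B$ is a greedy set for $x - P_B(x)$, and the suppression quasi-greedy constant yields
\[
\|P_{A\setminus B}(x)\|\ \le\ \mathbf{C}_\ell\,\|x - P_B(x)\|.
\]

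For the last term, the greediness of $A$ gives $\alpha := \min_{n\in A}|e_n^*(x)| \ge \max_{n\in B\setminus A}|e_n^*(x)|$ while $|e_n^*(x)|\ge \alpha$ on $A\setminus B$. Using the truncation (``lower'') inequality that is standard for quasi-greedy bases, one has $\alpha\,\|1_{\varepsilon,A\setminus B}\| \lesssim \|P_{A\setminus B}(x)\|$ for a constant depending on $\mathbf C_q$; then superdemocracy switches $\|1_{\varepsilon,A\setminus B}\|$ with $\|1_{\eta, B\setminus A}\|$ for the sign of $x$ on $B\setminus A$, and finally a quasi-greedy ``upper'' estimate yields $\|P_{B\setminus A}(x)\|\lesssim \alpha\,\|1_{\eta,B\setminus A}\|$. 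Chaining these bounds gives $\|P_{B\setminus A}(x)\| \le C'\|x - P_B(x)\|$ for a constant $C'$ depending only on $\mathbf{C}_q$ and $\mathbf{C}_d$. Taking infimum over $B$ with $|B|=m$ produces the almost greedy inequality \eqref{e7}.

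\emph{Main obstacle.} The hardest step is the control of $\|P_{B\setminus A}(x)\|$: unlike $P_{A\setminus B}(x)$, this is \textit{not} a greedy projection of $x - P_B(x)$, so suppression quasi-greediness alone does not apply. One must pivot through $\alpha\,\|1_{B\setminus A}\|$, which requires both the truncation inequality of Wojtaszczyk--type (to get a \emph{lower} bound on $\|P_{A\setminus B}(x)\|$ in terms of $\alpha\|1_{A\setminus B}\|$) and democracy (to switch indicator norms), followed by an upper estimate for $\|P_{B\setminus A}(x)\|$ in terms of $\alpha\|1_{B\setminus A}\|$. Packaging these three ingredients cleanly, and showing that the final constant depends \emph{only} on $\mathbf{C}_q$ and $\mathbf{C}_d$, is where the bookkeeping becomes delicate.
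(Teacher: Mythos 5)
Your proposal is essentially sound, and it is worth noting that the paper itself does not prove Theorem~\ref{DKKTcha} (it is quoted from \cite{DKKT}); the closest in-paper argument is the proof of the generalization, Theorem~\ref{mm2}, which for $\mathbf n=\mathbb{N}$ is exactly this statement. Your reverse direction coincides with that proof: the decomposition through $x-P_B(x)$, the bound $\|P_{A\setminus B}(x)\|\le \mathbf C_q\|x-P_B(x)\|$ via the greedy set $A\setminus B$, and the chain ``convexity upper bound $\to$ superdemocracy $\to$ truncation/lower estimate (Lemma~\ref{b2g})'' for $\|P_{B\setminus A}(x)\|$, with superdemocracy obtained from democracy plus quasi-greediness (Remark~\ref{r1}), are precisely the paper's steps, so the ``main obstacle'' you flag is handled by exactly the ingredients you list.

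Two small repairs are needed in your forward direction. First, the perturbation $x_t=1_A+(1+t)1_B$ only treats \emph{disjoint} sets with $|A|=|B|$, whereas democracy as defined requires $\|1_A\|\le \mathbf C\|1_B\|$ for arbitrary finite sets with $|A|\le|B|$ (possibly overlapping); you must either add the standard bridging step (compare $A$ with an auxiliary disjoint set of the same size and then pass to a subset of $B$ using quasi-greediness for constant-coefficient vectors), or simply use the paper's test vector $x=1_{A\cup B}$, for which $\|1_A\|=\|x-P_{B\setminus A}(x)\|\le \mathbf C\|x-P_{A\setminus B}(x)\|=\mathbf C\|1_B\|$ handles $|A|\le|B|$ and overlaps in one stroke. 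Second, the ``density argument'' for quasi-greediness is shaky as stated, because the greedy operators $G_m$ are neither linear nor continuous; the clean route is to show directly that $\widetilde{\sigma}_m(x)\le\|x\|$ for \emph{every} $x\in X$ by choosing the $m$-element set $B$ far to the right, so that $\|P_B(x)\|\le m\,c_2\max_{n\in B}|e_n^*(x)|\to 0$ by \eqref{e11} (this is how Proposition~\ref{pp1} and Theorem~\ref{mm2} argue), after which \eqref{e7} immediately yields the suppression quasi-greedy bound for all $x$. With these adjustments your argument is complete and matches the paper's approach.
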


\begin{rek}\normalfont
By definition, any greedy basis is almost greedy. However, \cite[Example 10.2.9]{AK} is an almost greedy basis that is not greedy. 
\end{rek}

Another popular greedy-type basis is the so-called partially greedy, also introduced by Dilworth et al. for Schauder bases \cite{DKKT}. 
Later, Berasategui et al.\ defined strong partially greedy bases for general bases \cite{BBL}. Both partially greedy Schauder bases and strong partially greedy bases can be characterized as being quasi-greedy and conservative. Since we work with general bases, we shall use the definition of strong partially greedy bases.

\begin{defi}\normalfont
A basis $\mathcal B$ is strong partially greedy if there exists $\mathbf C\ge 1$ such that
\begin{equation}\label{e13}\|x-G_m (x)\|\ \le\ \mathbf C\widehat{\sigma}_m(x), \forall x\in X, \forall m\in \mathbb{N}, \forall G_m(x).\end{equation}
where $\widehat{\sigma}_m(x) := \min_{0\le k\le m}\|x-P_{\{1, \ldots, k\}}(x)\|$. If $\mathbf C$ verifies \eqref{e7}, then $\mathcal{B}$ is said to be $\mathbf C$-strong partially greedy. 
\end{defi}

\begin{defi}\label{d5}\normalfont A basis $\mathcal B$ is  superconservative if there is $\mathbf C > 0$ such that
\begin{equation}\label{e15}\|1_{\varepsilon A}\|\ \le\ \mathbf C\|1_{\delta B}\|,\end{equation}
for all finite sets $A, B\subset \mathbb{N}$ with $|A| \le |B|$, $A < B$ and for all signs $\varepsilon, \delta$. Let $\mathbf C_{sc}$ be the smallest constant for \eqref{e15} to hold. If \eqref{e15} holds for $\varepsilon \equiv \delta \equiv 1$, then we say that $\mathcal{B}$ is $\mathbf C$-conservative; the smallest constant in this case is denoted by $\mathbf C_c$. 
\end{defi}

\begin{thm}\cite[Theorem 4.2]{B}\label{B}
A basis $\mathcal{B}$ is strong partially greedy if and only if it is quasi-greedy and conservative. 
\end{thm}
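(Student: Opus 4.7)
The strategy is to split the equivalence into its two implications and handle each with now-standard greedy-basis techniques. For the \emph{forward direction} (strong partially greedy $\Longrightarrow$ quasi-greedy and conservative), quasi-greediness is essentially immediate: taking $k=0$ in the definition of $\widehat{\sigma}_m(x)$ yields $\|x-G_m(x)\|\le \mathbf{C}\|x\|$, so $\mathbf{C}_\ell\le\mathbf{C}$. For (super)conservativity, given $A<B$ finite with $|A|\le|B|$ and signs $\varepsilon,\delta$, I would test with
\[
x \;=\; 1_{\varepsilon A} + (1+\eta)\bigl(1_{\delta B} + 1_D\bigr), \qquad \eta > 0,
\]
where $D=\{1,\ldots,\max A\}\setminus A$ (the case $A=\emptyset$ being trivial). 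Set $m=|B|+|D|$. Because $|D|=\max A-|A|$ and $|A|\le|B|$, one has $m\ge\max A$; because $A<B$, one also has $B\cap\{1,\ldots,\max A\}=\emptyset$. The unique greedy set of $x$ of order $m$ is therefore $B\cup D$, so $x-G_m(x)=1_{\varepsilon A}$, while choosing $k=\max A\le m$ in $\widehat{\sigma}_m(x)$ yields $x-P_{\{1,\ldots,k\}}(x)=(1+\eta)\,1_{\delta B}$. Inequality \eqref{e13} together with $\eta\to 0^+$ then delivers $\|1_{\varepsilon A}\|\le\mathbf{C}\,\|1_{\delta B}\|$, giving $\mathbf{C}_{sc}\le\mathbf{C}$ and in particular $\mathbf{C}_c\le\mathbf{C}$.

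For the \emph{reverse direction}, assume $\mathcal{B}$ is quasi-greedy and conservative. I would first invoke the classical upgrade that, for quasi-greedy bases, conservativity implies superconservativity via the truncation lemma, so signs may be inserted at a controlled cost. Fix $x$, $m$, a greedy set $A\in G(x,m)$, and $k\le m$; set $I=\{1,\ldots,k\}$, $\alpha=\min_{n\in A}|e_n^*(x)|$, and $y=x-P_I(x)$. The key identity is
\[
x - P_A(x) \;=\; y \;+\; P_{I\setminus A}(x) \;-\; P_{A\cap I^c}(x).
\]
The middle-sized term $\|P_{A\cap I^c}(x)\|$ is easy: $A\cap I^c$ exhausts the indices $n\in I^c$ with $|e_n^*(y)|\ge\alpha$, so it is a greedy set of $y$, and quasi-greediness yields $\|P_{A\cap I^c}(x)\|\le\mathbf{C}_q\|y\|$. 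The term $\|P_{I\setminus A}(x)\|$ needs a longer chain: since $I\setminus A\subseteq I$ and $A\cap I^c\subseteq I^c$ one has $I\setminus A<A\cap I^c$, and the count $|A|=m\ge k=|I|$ yields $|I\setminus A|\le|A\cap I^c|$; truncation applied to $y$ bounds $\|\alpha\,1_{\varepsilon(A\cap I^c)}\|$ by a multiple of $\|y\|$, superconservativity transfers this bound to $\|\alpha\,1_{\varepsilon(I\setminus A)}\|$, and a dual multiplier bound (another consequence of quasi-greediness, exploiting $|e_n^*(x)|\le\alpha$ on $I\setminus A$) recovers $\|P_{I\setminus A}(x)\|$. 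Summing the three contributions and minimizing over $0\le k\le m$ delivers \eqref{e13}.

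The main obstacle I expect is precisely this truncation-plus-(super)conservativity chain controlling $\|P_{I\setminus A}(x)\|$: it requires both a ``small coefficients to flat sum'' estimate and its reverse, plus careful sign handling to reconcile the unsigned conservativity hypothesis with signed intermediate quantities. Each individual step is classical (traceable to Wojtaszczyk and to Dilworth--Kalton--Kutzarova--Temlyakov), but tracking the constants and signs is what prevents the reverse direction from collapsing to a one-liner. The forward direction, by contrast, reduces to a single well-chosen test vector whose padding set $D$ is engineered precisely so that the critical index $\max A$ fits inside the admissible range $0\le k\le m$ of $\widehat{\sigma}_m$.
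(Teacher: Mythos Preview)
Your proposal is correct. Note first that the paper does not give its own proof of Theorem~\ref{B}: it is quoted from \cite{B}. The paper does, however, prove the generalization Theorem~\ref{m1} (the case $\mathbf n=\mathbb N$ recovers Theorem~\ref{B}), and it is against that argument that the comparison should be made.

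The forward directions are essentially the same: both pad $A$ with $D=\{1,\dots,\max A\}\setminus A$ to force $B\cup D$ to be a greedy set, then evaluate $\widehat\sigma_m$ at $k=\max A$. The only cosmetic difference is that you perturb by $(1+\eta)$ to make the greedy set unique, whereas the paper exploits the ``for all $G_m(x)$'' clause to simply select $B\cup D$ among the tied greedy sets (and in fact proves the stronger PSLC inequality, not just superconservativity).

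The reverse directions genuinely differ in organization. The paper routes through the intermediate \emph{PSLC} property: it first upgrades quasi-greedy $+$ (super)conservative to PSLC (Lemma~\ref{l10}), reformulates PSLC as a one-sided inequality (Lemma~\ref{l2}), and then applies PSLC together with the truncation-operator bound (Theorem~\ref{bto}) in a single stroke (Proposition~\ref{p2}). Your approach bypasses PSLC entirely, writing $x-P_A(x)=y+P_{I\setminus A}(x)-P_{A\cap I^c}(x)$ and estimating each piece by quasi-greediness and superconservativity directly; this is essentially the decomposition the paper later uses for the Lebesgue-type inequality in Theorem~\ref{m6}(ii). Your route is more self-contained and avoids introducing PSLC, at the cost of a larger final constant and a longer chain of inequalities in the $P_{I\setminus A}$ term; the paper's PSLC route is more modular (PSLC is reused throughout Sections~\ref{Lebesgue}--\ref{weight}) and yields the tighter bound $\mathbf C_{\mathbf n,sp}\le\mathbf C_\ell\Delta_{\mathbf n,pl}$.
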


\subsection{Motivation and outline}
Throughout the paper, let $\mathbf n = n_1, n_2, \ldots$ be a strictly increasing sequence of positive integers. The first motivation comes from a natural extension of strong partially greedy bases.
While $\widehat{\sigma}_m(x)$ measures the distance between $x$ and the projection of $x$ onto the first $m$ vectors in $\mathcal{B}$, this paper investigates the resulting bases that satisfy \eqref{e13} with $\widehat{\sigma}_m(x)$ replaced by $\widehat{\sigma}^{\mathbf n}_m(x)$, where $\widehat{\sigma}^{\mathbf n}_m(x)$ measures the distance between $x$ and the projection of $x$ onto the first $m$ vectors in $(e_n)_{n\in\mathbf n}$. Formally, for each $x\in X$, let $P^{\mathbf n}_m(x) := \sum_{i=1}^m e_{n_i}^*(x)e_{n_i}$. Define
$$\widehat{\sigma}^{\mathbf n}_m(x)\ :=\ \min_{0\le k\le m}\|x-P^{\mathbf n}_k(x)\|.$$

\begin{defi}\label{d10}\normalfont
A basis $\mathcal{B}$ in a Banach space is said to be ($\mathbf n$, strong partially greedy) if there exists $\mathbf C\ge 1$ such that 
\begin{equation}\label{e1}\|x-G_m(x)\|\ \le\ \mathbf C\widehat{\sigma}_m^{\mathbf n}(x), \forall x\in X,\forall m\in\mathbb{N},\forall G_m(x).\end{equation}
The smallest constant $\mathbf {C}$ for \eqref{e1} to hold is denoted by $\mathbf C_{\mathbf n, sp}$. Note that 
when $\mathbf n = \mathbb{N}$, we have strong partially greedy bases. 
\end{defi}

The second motivation comes from recent work by Berasategui et al.\ \cite{BBC}, where the authors let
$$\widecheck{\sigma}_m (x)\ :=\ \inf \{\|x-P_I(x)\|: I\in\mathcal{I}, |I| = m\}$$
and define consecutive almost greedy bases of type I. 
\begin{defi}\normalfont\label{defcon}
A basis $\mathcal{B}$ is said to be consecutive almost greedy of type I (CAG(I)) if there exists $\mathbf C\ge 1$ such that 
\begin{equation}\label{ec1}\|x-G_m(x)\|\ \le\ \mathbf C\widecheck{\sigma}_m(x), \forall x\in X, \forall m\in\mathbb{N}, \forall G_m(x).\end{equation}
\end{defi}

\begin{thm}\cite[Theorem 1.7]{BBC}\label{rt1}
A basis $\mathcal{B}$ is CAG(I) if and only if 
$$\|x-G_m(x)\|\ \le\ \mathbf C\widecheck{\sigma}_k(x), \forall x\in X, \forall m\in\mathbb{N}, \forall k\le m.$$
\end{thm}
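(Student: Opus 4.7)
The direction $(\Leftarrow)$ is immediate by taking $k = m$ in the stronger inequality, so the task is $(\Rightarrow)$. The strategy is to compare $x - G_m(x)$ with $x - G_k(x)$ for an appropriately chosen greedy sum of order $k$, and then apply the CAG(I) hypothesis at order $k$.

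Given a greedy set $A$ of $x$ of order $m$ (so $G_m(x) = P_A(x)$), I would split $A = A' \sqcup (A \setminus A')$, where $A'$ collects the $k$ indices in $A$ at which $|e_n^*(x)|$ is largest. A routine check on the ordering of the coefficients $|e_n^*(x)|$ shows that $A'$ is a greedy set of $x$ of order $k$, so $G_k(x) := P_{A'}(x)$ is a legitimate greedy sum. An analogous check reveals that $A \setminus A'$ is a greedy set of $x - G_k(x)$ of order $m-k$, since the coefficients of $x - G_k(x)$ vanish on $A'$ and agree with those of $x$ elsewhere, while on $A \setminus A'$ they still dominate those on $\mathbb{N} \setminus A$. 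Writing
\[
x - G_m(x) \;=\; \bigl(x - G_k(x)\bigr) - P_{A\setminus A'}(x - G_k(x))
\]
and applying the triangle inequality reduces the problem to bounding $\|P_{A\setminus A'}(x - G_k(x))\|$ in terms of $\|x - G_k(x)\|$.

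This last bound is precisely the quasi-greedy inequality applied to the vector $x - G_k(x)$, for which $P_{A\setminus A'}(x - G_k(x))$ is a genuine greedy sum. Since CAG(I) bases are quasi-greedy (a fact that is part of the characterization established in \cite{BBC}), there is a constant $\mathbf{C}_q$ realizing the needed bound. Combining it with the CAG(I) hypothesis at order $k$ then yields
\[
\|x - G_m(x)\| \;\le\; (1+\mathbf{C}_q)\,\|x - G_k(x)\| \;\le\; (1+\mathbf{C}_q)\,\mathbf{C}\,\widecheck{\sigma}_k(x),
\]
with a constant independent of $x$, $m$, and $k$, as required. The one subtle point is that quasi-greediness is not a one-line consequence of CAG(I) for general (non-Schauder) bases, since interval projections need not be uniformly bounded; however, this implication is part of the framework developed in \cite{BBC} and can be invoked as a black box here.
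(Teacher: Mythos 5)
Your proposal is correct, but it follows a genuinely different route from the paper's. The paper treats this statement as the case $\mathbf n = \mathbb N$ of Theorem \ref{mm5}, whose proof goes through the characterization of almost greedy bases: CAG(I) is first shown to be quasi-greedy and democratic (the democracy step requires the splitting of $A$ by two disjoint intervals $I_1, I_2$ and auxiliary vectors), hence almost greedy by Theorem \ref{mm2}, and only then does the min-over-$k$ inequality follow, via Proposition \ref{pp1}, by augmenting a given projection set with far-right indices whose projection is negligible. You instead stay entirely inside the greedy set: taking $A'\subset A$ to be the $k$ largest coefficients, checking that $A'$ is a greedy set of $x$ of order $k$ and $A\setminus A'$ a greedy set of $x-P_{A'}(x)$, and writing $x-G_m(x)=(I-P_{A\setminus A'})(x-G_k(x))$, you reduce everything to quasi-greediness plus one application of the CAG(I) inequality at order $k$; all these verifications are sound. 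This is shorter and avoids democracy altogether; what it does not yield (and what the paper's longer route buys) is the stronger conclusion that CAG(I) coincides with almost greediness, i.e.\ that arbitrary $m$-term projections, not just interval ones, control the error. One remark on your ``black box'': the implication CAG(I) $\Rightarrow$ quasi-greedy needs no bounded interval projections at all; since $e_n^*(x)\to 0$ and the basis is semi-normalized, projecting onto an interval $\{N+1,\ldots,N+m\}$ far to the right gives $\|P_{I_N}(x)\|\to 0$, hence $\|x-G_m(x)\|\le \mathbf C\|x-P_{I_N}(x)\|\to \mathbf C\|x\|$; this two-line argument (it is exactly the first step of the paper's proof of Theorem \ref{mm5}) would make your proof self-contained rather than partially circular. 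Finally, your constant $(1+\mathbf C_q)\mathbf C$ is acceptable since only some uniform constant is required; applying suppression quasi-greediness directly to $(I-P_{A\setminus A'})(x-G_k(x))$ would give the slightly cleaner bound $\mathbf C_\ell\,\mathbf C\,\widecheck{\sigma}_k(x)$.
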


It can be seen from Definition \ref{defcon} that the notion of CAG(I) bases is a strengthening of the notion of strong partially greedy bases. Specifically, instead of projecting $x$ onto the first $m$ vectors in $\mathcal{B}$, we project $x$ onto any set of $m$ consecutive vectors arbitrarily to the right. We have the implications
$$\mbox{greedy} \Longrightarrow \mbox{almost greedy} \Longrightarrow \mbox{CAG(I)} \Longrightarrow \mbox{strong partially greedy}.$$
Surprisingly, Theorem \ref{rt1} states that a basis is CAG(I) if and only if it is almost greedy. By introducing ($\mathbf n$, strong partially greedy) bases, we would like to explore whether we still obtain almost greedy bases if we project $x$ onto vectors with consecutive indices in $\mathbf n$. We predict that the answer is negative at least in the case $\mathbb N\backslash \mathbf n$ is infinite, since then, there are certain parts of $\mathcal{B}$ that are never projected on. Then we would have a new type of greedy bases that lie strictly between being almost greedy and being ($\mathbf n$, strong partially greedy). Our prediction is confirmed by Proposition \ref{pe1} and Theorem \ref{mm5}.

The third motivation comes from the following property of unconditional bases of the classical space $\ell_p\oplus \ell_q$ for $1\le p < q <\infty$. \`{E}del'\v{s}te\v{i}n and Wojtaszczyk \cite{EW} showed that any unconditional basis $\mathcal{B}$ of the direct sum $\ell_p\oplus \ell_q$ consists of two subsequences, one of which, denoted by $\mathcal{B}_1$, is a basis for $\ell_p$, while the
other, denoted by $\mathcal{B}_2$, is a basis for $\ell_q$. By \cite[Proposition 2.1.3]{AK}, we can find subsequences of $\mathcal{B}_1$ and $\mathcal{B}_2$ that are equivalent to the canonical bases of $\ell_p$ and $\ell_q$, respectively. We now use the same reasoning as in \cite[Example 10.4.4]{AK} to conclude that $\mathcal{B}$ is not conservative. By Theorem \ref{B}, $\mathcal{B}$ is not strong partially greedy. In short, if $\mathcal{B}$ is an unconditional basis of $\ell_p\oplus \ell_q$, then $\mathcal{B}$ is not strong partially greedy. However, there exists an unconditional basis of $\ell_p\oplus \ell_q$ that is ($\mathbf n$, strong partially greedy) for some $\mathbf n$. For example, let $\mathcal{B} = (e_n)_n$ be the direct sum of the standard unit vector bases of the two spaces to have
$$\left\|\sum_{n=1}^{\infty}a_n e_n\right\|\ =\ \left(\sum_{k=0}^{\infty}|a_{2k+1}|^p\right)^{1/p} + \left(\sum_{k=1}^\infty|a_{2k}|^q\right)^{1/q}.$$
Let $\mathbf n = 2, 4, 6, 8, \ldots$ and $(A, B)\in \mathbb{T}(\mathbf n)$. Setting $B_1 = B\cap \mathbf n$ and $B_2 = B\backslash B_1$, we have
$$\|1_A\|\ \le\ |A|^{1/q} \mbox{ and }\|1_B\| \ =\ |B_1|^{1/q} + |B_2|^{1/p}.$$
If $|B_1|\ge |A|$, then $\|1_B\|\ge \|1_A\|$. If $|B_1| < |A|$, then 
$$\|1_A\|\ \le\ |A|^{1/q} \ \le\ |B_1|^{1/q} + (|A|-|B_1|)^{1/q} \ \le\ |B_1|^{1/q} + |B_2|^{1/q}\ \le\ \|1_B\|.$$
We have shown that $\|1_A\|\le \|1_B\|$ for all $(A, B)\in\mathbb{T}(\mathbf n)$. As we shall see later, it follows that $\mathcal{B}$ is ($\mathbf n$, strong partially greedy). 

We study various aspects of ($\mathbf n$, strong partially greedy) bases in this paper, which can be viewed both as a survey of recent results regarding strong partially greedy bases and as an extension of these results to an arbitrary sequence instead of $\mathbb{N}$. The paper structure is as follows: 
\begin{itemize}
    \item Section \ref{characterization} defines ($\mathbf n$, (super)conservative bases) and ($\mathbf n$, partial symmetry for largest coefficients) (or ($\mathbf n$, PSLC), for short) and characterizes ($\mathbf n$, strong partially greedy) bases in the same manner as Theorems \ref{KT1},  \ref{DKKTcha}, and \ref{B}. 
    \item Section \ref{differentSeq} utilizes results in Section \ref{characterization} to examine when an ($\mathbf m$, strong partially greedy) basis is ($\mathbf n$, strong partially greedy) for two different sequences $\mathbf m$ and $\mathbf n$. In particular, let $\Delta_{\mathbf m, \mathbf n}$ be the difference set of the two sequences $\mathbf m$ and $\mathbf n$. We prove that the ($\mathbf m$, strong partially greedy) property is equivalent to the ($\mathbf n$, strong partially greedy) property if and only if $\Delta_{\mathbf m, \mathbf n}$ is finite.
    \item Section \ref{Lebesgue} establishes some Lebesgue-type estimates (previously studied in \cite{BBL, BBG, BBGHO, C0}) for the ($\mathbf n$, strong partially greedy) parameter, denoted by $\widehat{\mathbf L}^{\mathbf n}_m$. Our results extend several inequalities proved in \cite{BBL}. We also give examples, which are modifications of \cite[Examples 3.4 and 3.5]{BBL}, to show the optimality of the estimates.
    \item Section \ref{1PSLC} gives two characterizations of $1$-($\mathbf n$, strong partially greedy) bases. Furthermore, for any sequences $\mathbf m, \mathbf n$ with infinite $\Delta_{\mathbf m, \mathbf n}$, we construct a $1$-($\mathbf n$, strong partially greedy) basis that is not ($\mathbf m$, conservative). This strengthens \cite[Example 4.3]{BBL}.
    \item Section \ref{consecutive} introduces ($\mathbf n$, almost greedy) bases, which can be shown to be equivalent to a strengthening of the ($\mathbf n$, strong partially greedy) property. For two sequences $\mathbf m$ and $\mathbf n$ such that $\{i: i\in \mathbf m, i\notin \mathbf n\}$, we prove that there exists an ($\mathbf n$, almost greedy) basis that is not ($\mathbf m$, almost greedy). However, if $\{i: i\in\mathbf m, i\notin \mathbf n\}$ is finite, then any ($\mathbf n$, almost greedy) basis is ($\mathbf m$, almost greedy). 
    \item Section \ref{gaps} studies ($\mathbf n$, strong partially greedy) bases with gaps including their implications and characterizations. We generalize several results in \cite{BB1} to the case of an arbitrary sequence $\mathbf n$.
    \item Section \ref{largersum} investigates bases that satisfy \eqref{e1} with $G_m(x)$ replaced by $G_{\lceil\lambda m\rceil}(x)$. The replacement by a larger greedy sum was started by Dilworth et al. \cite{DKKT}, followed by the author of the present paper in \cite{C1}.
    \item Section \ref{weight} discusses weighted ($\mathbf n$, strong partially greedy) bases. Let $\zeta = (s_n)_{n}\in (0,\infty)^{\mathbb N}$. For $\mathbf n\neq \mathbb{N}$, we prove the following distinction between $\zeta$-strong partially greedy bases and $\zeta$-($\mathbf n$, strong partially greedy) bases: while there is a $\zeta$ such that all quasi-greedy bases are $\zeta$-strong partially greedy, there is no $\zeta$ such that all quasi-greedy bases are $\zeta$-($\mathbf n$, strong partially greedy). 
\end{itemize}

It is worth mentioning that while all of our results are concerned with strong partially greedy bases, we expect their analogs to hold for reverse partially greedy bases (introduced by Dilworth and Khurana \cite{DK}) due to various evidence that the two types of bases are companions \cite{B, C3, DK, K}.

\section{Characterizations of ($\mathbf n$, strong partially greedy) bases}\label{characterization}

\subsection{Truncation operator, (super)conservative bases, and PSLC}
First, we recall the uniform boundedness of the truncation operator for quasi-greedy bases. Fixing $\alpha > 0$, we define the truncation function $T_\alpha$ as follows: for  $b\in\mathbb{F}$,
$$T_{\alpha}(b)\ =\ \begin{cases}\sgn(b)\alpha, &\mbox{ if }|b| > \alpha,\\ b, &\mbox{ if }|b|\le \alpha.\end{cases}$$
The truncation operator $T_\alpha: X\rightarrow X$ is defined as 
$$T_{\alpha}(x)\ =\ \sum_{n=1}^\infty T_\alpha(e_n^*(x))e_n \ =\ \alpha 1_{\varepsilon \Gamma_{\alpha}(x)}+ P_{\Gamma_\alpha^c(x)}(x),$$
where $\Gamma_\alpha(x) = \{n: |e_n^*(x)| > \alpha\}$ and $\varepsilon = (\sgn(e_n^*(x)))$.

\begin{thm}\label{bto}\cite[Lemma 2.5]{BBG} Let $\mathcal{B}$ be a $\mathbf{C}_\ell$-suppression quasi-greedy of a Banach space. Then for any $\alpha > 0$, $\|T_\alpha\|\le \mathbf{C}_\ell$.
\end{thm}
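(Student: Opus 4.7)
The plan is to express $T_\alpha(x)$ as an integral average over $s\in(0,1)$ of greedy residues $x-G_{m(s)}(x)$, apply the suppression quasi-greedy inequality pointwise in $s$, and conclude by the triangle inequality for the integral.

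By \eqref{e11}, the set $A:=\Gamma_\alpha(x)=\{n:|e_n^*(x)|>\alpha\}$ is finite. Set
$$\lambda_n\ :=\ \begin{cases}\alpha/|e_n^*(x)|, & n\in A,\\ 1, & n\notin A,\end{cases}$$
so that $\lambda_n\in(0,1]$ and $\lambda_n e_n^*(x)$ is precisely the $n$-th coefficient of $T_\alpha(x)$; in particular $T_\alpha(x)=\alpha 1_{\varepsilon A}+P_{A^c}(x)$. Using the layer-cake identity $\lambda_n=\int_0^1\mathbf{1}_{\{\lambda_n>s\}}\,ds$ together with the fact that only the finitely many $n\in A$ contribute a non-constant integrand, I would derive the representation
$$T_\alpha(x)\ =\ \int_0^1 P_{A(s)}(x)\,ds,\qquad A(s):=\{n:\lambda_n>s\}.$$
The integrand $P_{A(s)}(x)=P_{A^c}(x)+\sum_{n\in A,\ \lambda_n>s}e_n^*(x)e_n$ is a well-defined element of $X$ and a step function of $s$ with at most $|A|+1$ distinct values, so the integral reduces to a genuine finite sum of vectors in $X$.

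The key observation is that for every $s\in(0,1)$, the complement
$$A(s)^c\ =\ \{n\in A:|e_n^*(x)|\ge\alpha/s\}$$
is a finite greedy set of $x$ of order $|A(s)^c|$: indeed $\min_{n\in A(s)^c}|e_n^*(x)|\ge\alpha/s>\max_{n\notin A(s)^c}|e_n^*(x)|$, with the latter maximum being at most $\alpha<\alpha/s$ on $A^c$ and strictly less than $\alpha/s$ on $A\setminus A(s)^c$. Hence $P_{A(s)}(x)=x-P_{A(s)^c}(x)=x-G_{|A(s)^c|}(x)$ is an admissible greedy residue, and the hypothesis yields $\|P_{A(s)}(x)\|\le\mathbf{C}_\ell\|x\|$ for every $s\in(0,1)$.

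Combining these, the triangle inequality for the finite-sum integral gives
$$\|T_\alpha(x)\|\ =\ \Bigl\|\int_0^1 P_{A(s)}(x)\,ds\Bigr\|\ \le\ \int_0^1\|P_{A(s)}(x)\|\,ds\ \le\ \mathbf{C}_\ell\|x\|,$$
which is the desired bound. The main conceptual step is the layer-cake identification, which converts the truncation operator into an average of suppression-type greedy projections; once this is in place, the verification that each $A(s)^c$ is a greedy set of $x$ is an elementary coefficient comparison, and the final estimate follows immediately.
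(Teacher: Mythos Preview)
Your argument is correct. The paper does not supply its own proof of this statement; it is quoted verbatim from \cite[Lemma 2.5]{BBG} and used as a black box throughout, so there is no in-paper proof to compare against. Your layer-cake representation of $T_\alpha(x)$ as a convex average of greedy residues $x-G_{|A(s)^c|}(x)$ is the standard route to this bound and matches the argument in the cited source.
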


\begin{defi}\label{d11}\normalfont
A basis $\mathcal{B}$ in a Banach space is said to be ($\mathbf n$, superconservative) if there exists $\mathbf C > 0$ such that
\begin{equation}\label{e2}\|1_{\varepsilon A}\|\ \le\ \mathbf C\|1_{\delta B}\|,\end{equation}
for all $(A, B)\in \mathbb{T}(\mathbf n)$ and for all signs $\varepsilon, \delta$.
The smallest constant $\mathbf C$ for \eqref{e2} to hold is denoted by $\Delta_{\mathbf n, sc}$. If \eqref{e2} holds for $\varepsilon \equiv \delta \equiv 1$, then we say that $\mathcal{B}$ is ($\mathbf n$, conservative), and the smallest $\mathbf C$ in this case is denoted by $\Delta_{\mathbf n, c}$. Note that ``($\mathbb{N}$, superconservative)" is the same as ``superconservative".
\end{defi}

Next, we introduce the notion of ($\mathbf n$, partial symmetry for largest coefficients) (or ($\mathbf n$, PSLC), for short). 

\begin{defi}\label{pslc}\normalfont
A basis $\mathcal{B}$ in a Banach space is said to be ($\mathbf n$, PSLC) if there exists $\mathbf C \ge 1$ such that
\begin{equation}\label{e3}\|x+ 1_{\varepsilon A}\|\ \le\ \mathbf C\|x + 1_{\delta B}\|,\end{equation}
for all $x\in X$ with $\|x\|_\infty \le 1$, for all $(A, B)\in \mathbb{T}(\mathbf n)$ with $A < (B\sqcup \supp(x))\cap \mathbf n$, and for all signs $\varepsilon, \delta$. The smallest $\mathbf C$ for which \eqref{e3} holds is denoted by $\Delta_{\mathbf n, pl}$. When $\mathbf n = \mathbb{N}$, we say that $\mathcal{B}$ is PSLC as in \cite[Definition 1.9]{BBL}.
\end{defi}

\begin{rek}\label{r1}\normalfont
\begin{enumerate}
\item[i)] Clearly, ($\mathbf n$, PSLC) $\Longrightarrow$ ($\mathbf n$, superconservative).
\item[ii)] It is well-known that a quasi-greedy basis has the UL property: there exist $\mathbf C_1, \mathbf C_2 > 0$ such that 
$$\frac{1}{\mathbf C_1}\min |a_n|\|1_A\|\ \le\ \left\|\sum_{n\in A}a_ne_n\right\|\ \le\ \mathbf C_2\max|a_n|\|1_A\|,$$
which was first proved in \cite{DKKT}. It follows that a quasi-greedy and ($\mathbf n$, conservative) basis is ($\mathbf n$, superconservative).
\end{enumerate}
\end{rek}

\begin{lem}\label{l10}
If $\mathcal{B}$ is $\mathbf C_q$-quasi-greedy and $\Delta_{\mathbf n, sc}$-($\mathbf n$, superconservative), then $\mathcal{B}$ is $\Delta_{\mathbf n, pl}$-($\mathbf n$, PSLC) with $\Delta_{\mathbf n, pl}\le 1+\mathbf C_q + \Delta_{\mathbf n, sc}\mathbf C_q$. 
\end{lem}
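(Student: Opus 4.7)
The plan is to prove the stated bound by combining the triangle inequality with two ingredients: control of the ``background'' vector $x$ via quasi-greediness, and control of the indicator $1_{\varepsilon A}$ via the ($\mathbf n$, superconservative) assumption. Fix $x \in X$ with $\|x\|_\infty \le 1$, $(A,B)\in\mathbb{T}(\mathbf n)$ with $A < (B\sqcup \supp(x))\cap\mathbf n$, and signs $\varepsilon,\delta$. The disjointness conventions embedded in the hypothesis give me $B\cap\supp(x)=\emptyset$ and $A\cap(B\cup\supp(x))=\emptyset$, so the vector $y := x + 1_{\delta B}$ has coefficient $\delta_n$ of modulus $1$ at each $n\in B$ and coefficients of modulus $\le 1$ elsewhere (namely on $\supp(x)$).

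The first key step is to bound $\|x\|$ by $\|y\|$. Since all coefficients of $y$ at indices in $B$ have modulus $1$ and all other coefficients have modulus at most $1$, the set $B$ is a greedy set of $y$ of order $|B|$, so $G_{|B|}(y) = 1_{\delta B}$ is a legitimate greedy sum. Applying the quasi-greedy hypothesis yields
\[
\|1_{\delta B}\| \ =\ \|G_{|B|}(y)\| \ \le\ \mathbf C_q\|y\|,
\]
and then the triangle inequality gives $\|x\| = \|y - 1_{\delta B}\| \le (1+\mathbf C_q)\|y\|$.

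The second key step is to bound $\|1_{\varepsilon A}\|$. Since $(A,B)\in\mathbb{T}(\mathbf n)$ (i.e.\ $A\subset \mathbf n$, $|A|\le |B|$, and $A < B\cap\mathbf n$), the ($\mathbf n$, superconservative) hypothesis directly gives $\|1_{\varepsilon A}\| \le \Delta_{\mathbf n, sc}\|1_{\delta B}\|$, and chaining with the previous estimate yields $\|1_{\varepsilon A}\| \le \Delta_{\mathbf n, sc}\mathbf C_q\|y\|$.

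Finally, applying the triangle inequality one more time,
\[
\|x + 1_{\varepsilon A}\| \ \le\ \|x\| + \|1_{\varepsilon A}\| \ \le\ \bigl(1 + \mathbf C_q + \Delta_{\mathbf n, sc}\mathbf C_q\bigr)\|x+1_{\delta B}\|,
\]
which is exactly the desired estimate for $\Delta_{\mathbf n, pl}$. There is no real obstacle here; the only place one must be careful is in the first step, where the disjointness assertions packaged into $A < (B\sqcup\supp(x))\cap\mathbf n$ together with $\|x\|_\infty\le 1$ are precisely what is needed to certify $B$ as a greedy set of $y$ of order $|B|$, so that the quasi-greedy constant $\mathbf C_q$ (rather than the unconditional constant, which is unavailable) suffices.
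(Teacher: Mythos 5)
Your proof is correct and follows essentially the same route as the paper's: bound $\|x\|$ by $(1+\mathbf C_q)\|x+1_{\delta B}\|$, bound $\|1_{\varepsilon A}\|$ by $\Delta_{\mathbf n, sc}\mathbf C_q\|x+1_{\delta B}\|$ via the superconservative hypothesis, and finish with the triangle inequality. The only difference is that you spell out the step the paper leaves implicit, namely that $B$ is a greedy set of $y=x+1_{\delta B}$ (thanks to $B\cap\supp(x)=\emptyset$ and $\|x\|_\infty\le 1$), so $\|1_{\delta B}\|\le \mathbf C_q\|x+1_{\delta B}\|$.
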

\begin{proof}
Let $x, A, B, \varepsilon,\delta$ be chosen as in Definition \ref{pslc}. We have
$$\|x\|\ \le\ \|x+1_{\delta B}\| + \|1_{\delta B}\|\ \le\ (1+\mathbf C_q)\|x+1_{\delta B}\|.$$
Furthermore, 
$$\|1_{\varepsilon A}\|\ \le\ \Delta_{\mathbf n, sc}\|1_{\delta B}\|\ \le\ \Delta_{\mathbf n, sc}\mathbf C_q\|x+1_{\delta B}\|.$$
By the triangle inequality, we get
$$\|x+1_{\varepsilon A}\|\ \le\ \|x\| + \|1_{\varepsilon A}\|\ \le\ (1+\mathbf C_q + \Delta_{\mathbf n, sc}\mathbf C_q)\|x+1_{\delta B}\|.$$
This completes our proof. 
\end{proof}

\subsection{Characterizations of ($\mathbf n$, strong partially greedy) bases}

We state the main result of this section. 

\begin{thm}\label{m1}
Let $\mathcal{B}$ be a basis in a Banach space. The following are equivalent:
\begin{enumerate}
    \item [i)] $\mathcal{B}$ is ($\mathbf n$, strong partially greedy).
    \item [ii)] $\mathcal{B}$ is quasi-greedy and ($\mathbf n$, PSLC).
    \item [iii)] $\mathcal{B}$ is quasi-greedy and ($\mathbf n$, superconservative).
    \item [iv)] $\mathcal{B}$ is quasi-greedy and ($\mathbf n$, conservative).
\end{enumerate}
\end{thm}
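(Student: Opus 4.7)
The plan is to close the cycle of implications (i) $\Rightarrow$ (iv) $\Leftrightarrow$ (iii) $\Leftrightarrow$ (ii) and then (iii) $\Rightarrow$ (i), exploiting the fact that most of the arrows fall out of the machinery already assembled: (ii) $\Rightarrow$ (iii) is Remark \ref{r1}.i, (iii) $\Rightarrow$ (iv) is trivial (take $\varepsilon \equiv \delta \equiv 1$), (iv) $\Rightarrow$ (iii) is Remark \ref{r1}.ii via the UL property of quasi-greedy bases, and (iii) $\Rightarrow$ (ii) is precisely Lemma \ref{l10}. The substantive content therefore reduces to (i) $\Rightarrow$ (iv) and, for the closing arrow, (iii) $\Rightarrow$ (i).

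For (i) $\Rightarrow$ (iv), quasi-greediness follows at once by choosing $k = 0$ in $\widehat{\sigma}_m^{\mathbf n}(x)$, giving $\|x - G_m(x)\| \le \mathbf C_{\mathbf n, sp}\|x\|$. To prove ($\mathbf n$, conservative), I would fix $(A, B) \in \mathbb T(\mathbf n)$, set $k^* = \max\{i: n_i \in A\}$, and let $C = \{n_1, \ldots, n_{k^*}\} \setminus A$ collect the elements of $\mathbf n$ sitting at or below $\max A$ but skipped by $A$. The test vector will be $x = 1_A + 1_{\eta C} + (1+\epsilon)1_B$ at order $m := \max(|B|, k^*)$. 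Since $B$ carries the strictly largest coefficients and $|C| = k^* - |A| \ge m - |B|$, a greedy set of order $m$ can be chosen to consist of $B$ together with $m - |B|$ entries drawn from $C$, leaving $x - G_m(x) = 1_A + 1_{\eta(C \setminus F)}$ for some $F \subset C$. The UL lower bound then yields $\|x - G_m(x)\| \ge (\mathbf C_1 \mathbf C_\ell)^{-1}\|1_A\|$, while the choice $k = k^* \le m$ produces $\widehat{\sigma}_m^{\mathbf n}(x) \le \|x - P_{k^*}^{\mathbf n}(x)\| = (1+\epsilon)\|1_B\|$; letting $\epsilon \to 0$ closes the estimate.

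The main obstacle will be (iii) $\Rightarrow$ (i). Given $x \in X$, $m \in \mathbb{N}$, a greedy set $A = G_m(x)$, and any $k \le m$, I set $\alpha = \min_{n \in A}|e_n^*(x)|$ and $y = x - P_k^{\mathbf n}(x)$. The cornerstone will be the identity
\begin{equation*}
  x - P_A(x) \ =\ P_{A'}(x) \ +\ \bigl(y - P_{A_2}(y)\bigr),
\end{equation*}
with $A_1 := A \cap \{n_1, \ldots, n_k\}$, $A_2 := A \setminus A_1$, and $A' := \{n_1, \ldots, n_k\} \setminus A_1$. The clean split $\supp(y) \cap \{n_1, \ldots, n_k\} = \emptyset$ forces $A_2$ to be a greedy set of $y$ of order $|A_2|$, so quasi-greediness delivers both $\|y - P_{A_2}(y)\| \le \mathbf C_\ell \|y\|$ and $\|P_{A_2}(y)\| \le \mathbf C_q \|y\|$. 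Moreover $(A', A_2) \in \mathbb T(\mathbf n)$, since $A' \subset \{n_1, \ldots, n_k\} \subset \mathbf n$, $|A'| = k - |A_1| \le m - |A_1| = |A_2|$, and $A' < A_2 \cap \mathbf n$. Chaining the UL upper bound $\|P_{A'}(x)\| \le \mathbf C_2 \alpha \|1_{A'}\|$ (because $A' \cap A = \emptyset$, so the coefficient moduli on $A'$ are at most $\alpha$), the ($\mathbf n$, superconservative) bound $\|1_{A'}\| \le \Delta_{\mathbf n, sc}\|1_{A_2}\|$, and the UL lower estimate $\alpha\|1_{A_2}\| \le \mathbf C_1\|P_{A_2}(y)\| \le \mathbf C_1 \mathbf C_q \|y\|$, then applying the triangle inequality and minimizing over $k$ yields (i). The most delicate bookkeeping lies in verifying $(A', A_2) \in \mathbb T(\mathbf n)$ and that $A_2$ is genuinely greedy for $y$, both of which ultimately reduce to the support split noted above.
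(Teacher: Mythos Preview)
Your proof is correct, but it diverges from the paper's route in both nontrivial directions.

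For (i) $\Rightarrow$ quasi-greedy + conservative-type, the paper (Proposition~\ref{p1}) proves the \emph{stronger} PSLC conclusion directly: given $x, A, B, \varepsilon, \delta$ as in Definition~\ref{pslc}, it sets $y = 1_{\varepsilon A} + 1_D + x + 1_{\delta B}$ with $D = \{n_1,\dots,n_m\}\setminus A$ (where $n_m = \max A$), observes $D\cup B$ is greedy of order $\ge m$, and reads off $\|x+1_{\varepsilon A}\| = \|y - P_{D\cup B}(y)\| \le \mathbf C_{\mathbf n, sp}\|y - P_m^{\mathbf n}(y)\| = \mathbf C_{\mathbf n, sp}\|x+1_{\delta B}\|$. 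This is cleaner than your test-vector argument and avoids the UL/suppression detour; conservative then falls out of PSLC for free.

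For the return direction, the paper closes via (ii) $\Rightarrow$ (i) (Proposition~\ref{p2}): it uses the reformulation of PSLC in Lemma~\ref{l2} to bound $\|x-P_A(x)\|$ by $\Delta_{\mathbf n,pl}\|x - P_A(x) - P_E(x) + \alpha 1_{\varepsilon F}\|$, recognizes this as $\Delta_{\mathbf n,pl}\|T_\alpha(x - P_k^{\mathbf n}(x))\|$, and invokes the truncation-operator bound (Theorem~\ref{bto}). Your (iii) $\Rightarrow$ (i) instead splits $x - P_A(x) = P_{A'}(x) + (y - P_{A_2}(y))$ and chains UL $+$ superconservative on the first piece with suppression quasi-greedy on the second. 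This is a valid and self-contained alternative; in fact, the paper uses exactly this decomposition later to prove the Lebesgue-type estimate Theorem~\ref{m6}(ii). The trade-off is that the paper's truncation route gives the tighter constant $\mathbf C_\ell \Delta_{\mathbf n,pl}$, while your argument lands at something like $\mathbf C_\ell + \mathbf C_1\mathbf C_2\mathbf C_q\Delta_{\mathbf n,sc}$.
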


\begin{prop}\label{p1}
If a basis $\mathcal{B}$ is $\mathbf C_{\mathbf n, sp}$-($\mathbf n$, strong partially greedy), then $\mathcal{B}$ is 
\begin{enumerate}
\item[i)] $\mathbf C_\ell$-suppression quasi-greedy with $\mathbf C_\ell\le \mathbf C_{\mathbf n, sp}$.
\item[ii)] $\Delta_{\mathbf n, pl}$-($\mathbf n$, PSLC) with $\Delta_{\mathbf n, pl}\le \mathbf C_{\mathbf n, sp}$.
\end{enumerate}
\end{prop}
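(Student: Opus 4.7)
The proof splits cleanly into the two claims. For \textbf{(i)}, the idea is simply to specialize \eqref{e1} by observing that the minimum defining $\widehat{\sigma}^{\mathbf n}_m(x)$ is at most its value at $k = 0$, namely $\|x\|$ (since $P^{\mathbf n}_0(x) = 0$). Plugging this bound into \eqref{e1} yields $\|x - G_m(x)\| \le \mathbf C_{\mathbf n, sp}\|x\|$, which is precisely the suppression quasi-greedy inequality with constant $\mathbf C_{\mathbf n, sp}$.

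For \textbf{(ii)}, fix $x \in X$ with $\|x\|_\infty \le 1$, a pair $(A, B) \in \mathbb{T}(\mathbf n)$ with $A < (B \sqcup \supp(x)) \cap \mathbf n$, and signs $\varepsilon, \delta$. My plan is to build a test vector $z$ whose greedy algorithm extracts $B$ together with a filler set, leaving $x + 1_{\varepsilon A}$ as the error. The subtlety is that $A$ is a general subset of $\mathbf n$, not necessarily an initial segment: writing $n_j := \max A$, the positional index $j$ can exceed $|A|$. I therefore introduce the filler $D := \{n_1, \ldots, n_j\} \setminus A$; the assumption $A < (B \sqcup \supp(x)) \cap \mathbf n$ guarantees that every element of $D$ lies strictly to the left of $(B \cup \supp(x)) \cap \mathbf n$, hence that $D$ is disjoint from $A \cup B \cup \supp(x)$.

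Now for parameters $\beta > \alpha > 1$, set $z := x + 1_{\varepsilon A} + \alpha 1_{\delta B} + \beta 1_D$. Since $\|x\|_\infty \le 1$ and coefficients on $A$ have modulus $1$, while those on $B \cup D$ have strictly larger moduli $\alpha$ or $\beta$, the unique greedy set of $z$ of order $|B| + |D|$ is $B \cup D$, so $z - G_{|B| + |D|}(z) = x + 1_{\varepsilon A}$. Applying \eqref{e1} with $m = |B| + |D|$ gives
$$\|x + 1_{\varepsilon A}\|\ \le\ \mathbf C_{\mathbf n, sp}\,\widehat{\sigma}^{\mathbf n}_{|B|+|D|}(z).$$
The choice $k = j$ inside the minimum defining $\widehat{\sigma}^{\mathbf n}_{|B|+|D|}(z)$ is admissible precisely because the hypothesis $|A| \le |B|$ forces $j = |A| + |D| \le |B| + |D|$; moreover, $\{n_1, \ldots, n_j\} = A \sqcup D$ is disjoint from $B \cup \supp(x)$, so $z - P^{\mathbf n}_j(z) = x + \alpha 1_{\delta B}$. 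Letting $\alpha \to 1^+$ (with the degenerate case $A = \emptyset$ handled by taking $D = \emptyset$, $z = x + \alpha 1_{\delta B}$, and $k = 0$) yields the required $\|x + 1_{\varepsilon A}\| \le \mathbf C_{\mathbf n, sp}\|x + 1_{\delta B}\|$.

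The main obstacle I foresee is locating the correct filler set $D$ and checking that the numerical constraint $j \le |B| + |D|$ is matched \emph{exactly} by the hypothesis $|A| \le |B|$; this precise matching is what allows the strong partial greedy estimate to collapse into a clean ($\mathbf n$, PSLC) bound with no multiplicative factors beyond $\mathbf C_{\mathbf n, sp}$ itself.
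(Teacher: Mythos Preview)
Your proof is correct and follows the same approach as the paper's. The only cosmetic difference is that the paper sets all coefficients in the test vector equal to $1$ (so $y = 1_{\varepsilon A} + 1_D + x + 1_{\delta B}$) and uses the fact that $D \cup B$ is then \emph{a} greedy set of $y$ (ties being allowed in the definition of greedy sets), thereby avoiding the limiting argument $\alpha \to 1^+$ entirely.
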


\begin{proof}
i) From \eqref{e1}, we obtain
$$\|x-G_m(x)\|\ \le\ \mathbf C_{\mathbf n, sp}\|x\|,\forall x\in X, \forall m\in\mathbb{N}, \forall G_m(x).$$
This yields the desired conclusion. 

ii) Choose $x, A, B, \varepsilon, \delta$ as in Definition \ref{pslc}. Let $n_m = \max A$ and $D =  \{n_1, \ldots, n_m\}\backslash A$. Then 
$$|D\cup B|\ =\ |B| + |D|\ \ge\ |A| + |D|\ =\ m.$$
Set
$$y\ :=\ 1_{\varepsilon A} + 1_D + x + 1_{\delta B}.$$ Since $D\cup B$ is a greedy sum of $y$ of order at least $m$, we have
$$\|x+1_{\varepsilon A}\|\ =\ \|y-P_{D\cup B}(y)\|\ \le\ \mathbf C_{\mathbf n, sp}\|y-P^{\mathbf n}_{m}(y)\|\ =\ \mathbf C_{\mathbf n, sp}\|x+1_{\delta B}\|.$$
This completes our proof. 
\end{proof}

\begin{prop}\label{p2}
If a basis $\mathcal{B}$ is $\mathbf C_\ell$-suppression quasi-greedy and $\Delta_{\mathbf n, pl}$-($\mathbf n$, PSLC), then it is $\mathbf C_{\mathbf n, sp}$-($\mathbf n$, strong partially greedy) with $\mathbf C_{\mathbf n, sp}\le \mathbf C_\ell\Delta_{\mathbf n, pl}$.
\end{prop}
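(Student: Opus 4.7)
The plan is to show that for every $x \in X$, $m \in \mathbb{N}$, greedy set $A \in G(x, m)$, and every $k \in \{0, 1, \ldots, m\}$,
$$\|x - P_A(x)\|\ \le\ \mathbf C_\ell \Delta_{\mathbf n, pl}\|x - P^{\mathbf n}_k(x)\|;$$
minimizing over $k$ then gives $\mathbf C_{\mathbf n, sp}\le \mathbf C_\ell \Delta_{\mathbf n, pl}$. Set $\alpha := \min_{n \in A}|e_n^*(x)|$, which we may assume is positive (else $x = P_A(x)$ and the bound is trivial). Fix $k$, and write $B = \{n_1, \ldots, n_k\}$, $A_2 = A \setminus B$, $B_2 = B \setminus A$, and $w = x - P_{A \cup B}(x)$. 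Then $(B_2, A_2) \in \mathbb{T}(\mathbf n)$: we have $B_2 \subset B \subset \mathbf n$, $|B_2| = k - |A \cap B| \le m - |A \cap B| = |A_2|$, and $\max B_2 \le n_k$ while every element of $A_2 \cap \mathbf n$ has index $> k$. Moreover, $\|w\|_\infty \le \alpha$ because $\supp(w) \subset A^c$.

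The first step is to apply the truncation operator at level $\alpha$ to $x - P_B(x) = P_{A_2}(x) + w$. On $A_2$ the coefficients of $x$ have modulus $\ge \alpha$, while on $\supp(w) \subset A^c$ they have modulus $\le \alpha$; consequently $T_\alpha(x - P_B(x)) = \alpha 1_{\varepsilon A_2} + w$ with $\varepsilon_n = \sgn(e_n^*(x))$ for $n \in A_2$. By Theorem \ref{bto},
$$\|\alpha 1_{\varepsilon A_2} + w\|\ \le\ \mathbf C_\ell\|x - P_B(x)\|.$$
The second step is to apply ($\mathbf n$, PSLC) to $w/\alpha$, which has $\|w/\alpha\|_\infty \le 1$. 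The disjointness $A_2 \cap \supp(w) = \emptyset$ holds since $A_2 \subset A$ while $\supp(w) \subset A^c$, and the ordering $B_2 < (A_2 \cup \supp(w)) \cap \mathbf n$ holds because every element of that intersection lies among $n_j$ with $j > k$. Taking the sign on $A_2$ equal to $\varepsilon$ and any sign $\eta$ on $B_2$, then rescaling by $\alpha$, we obtain
$$\|w + \alpha 1_{\eta B_2}\|\ \le\ \Delta_{\mathbf n, pl}\|w + \alpha 1_{\varepsilon A_2}\|\ \le\ \mathbf C_\ell \Delta_{\mathbf n, pl}\|x - P_B(x)\|.$$

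The final step is to replace the unit-coefficient term $\alpha 1_{\eta B_2}$ by the actual projection $P_{B_2}(x)$ via an averaging argument. For each $n \in B_2$ we have $|e_n^*(x)/\alpha| \le 1$, and any scalar in the closed unit disk is the midpoint of two unit-modulus scalars: in the complex case, writing $u = re^{i\phi}$ with $r = \cos \psi \in [0,1]$ gives $u = \tfrac{1}{2}(e^{i(\phi+\psi)} + e^{i(\phi-\psi)})$, while in the real case $u \in [-1,1]$ equals $\tfrac{1+u}{2}\cdot 1 + \tfrac{1-u}{2}\cdot(-1)$. Expanding coordinate-wise exhibits $P_{B_2}(x)$ as a convex combination $\tfrac12\alpha 1_{\eta^1 B_2} + \tfrac12\alpha 1_{\eta^2 B_2}$ of two such terms, so
$$\|x - P_A(x)\|\ =\ \|w + P_{B_2}(x)\|\ \le\ \max_{j\in\{1,2\}}\|w + \alpha 1_{\eta^j B_2}\|\ \le\ \mathbf C_\ell \Delta_{\mathbf n, pl}\|x - P_B(x)\|,$$
completing the proof. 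The main obstacle is this last step: ($\mathbf n$, PSLC) controls sums with unit-modulus coefficients on $B_2$, whereas $P_{B_2}(x)$ has coefficients whose moduli are only bounded by $\alpha$; the midpoint-on-the-circle trick bridges this gap uniformly in the real and complex cases.
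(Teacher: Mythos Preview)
Your argument is correct and follows the same underlying strategy as the paper: combine the ($\mathbf n$, PSLC) inequality with the bound $\|T_\alpha\|\le \mathbf C_\ell$ from Theorem~\ref{bto}. The paper orders the two steps oppositely: it first invokes Lemma~\ref{l2} (the reformulation of ($\mathbf n$, PSLC), whose proof already absorbs the convexity step you perform at the end) to obtain
\[
\|x-P_A(x)\|\ \le\ \Delta_{\mathbf n, pl}\bigl\|x-P_A(x)-P_{B_2}(x)+\alpha 1_{\varepsilon A_2}\bigr\|,
\]
recognizes the right-hand side as $\Delta_{\mathbf n, pl}\|T_\alpha(x-P^{\mathbf n}_k(x))\|$, and only then applies the truncation bound. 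Your route (truncate first, then PSLC, then convexity) is equally valid and yields the same constant.

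One small slip: in the real case your ``midpoint'' claim fails, since the only midpoints of elements of $\{-1,1\}$ are $-1,0,1$, so a general $u\in[-1,1]$ is not of the form $\tfrac12(\eta^1+\eta^2)$ with $\eta^j\in\{-1,1\}$. The convex-combination decomposition you wrote, $u=\tfrac{1+u}{2}\cdot 1+\tfrac{1-u}{2}\cdot(-1)$, has $n$-dependent weights, so expanding coordinate-wise produces a convex combination of $2^{|B_2|}$ sign vectors rather than two. This is harmless: one still gets
\[
\|w+P_{B_2}(x)\|\ \le\ \sup_{\eta}\|w+\alpha 1_{\eta B_2}\|\ \le\ \mathbf C_\ell\Delta_{\mathbf n, pl}\|x-P^{\mathbf n}_k(x)\|,
\]
which is exactly how the paper handles it inside the proof of Lemma~\ref{l2}. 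Your complex-case midpoint trick is correct and gives a pleasant two-term shortcut there.
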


Before proving Proposition \ref{p2}, we need the following lemma.
\begin{lem}\label{l2}
A basis $\mathcal{B}$ is $\Delta_{\mathbf n, pl}$-($\mathbf n$, PSLC) if and only if 
\begin{equation}\label{e5}\|x\|\ \le\ \Delta_{\mathbf n, pl}\|x-P_A(x) + 1_{\varepsilon B}\|,\end{equation}
for all $x\in X$ with $\|x\|_\infty\le 1$, for all signs $\varepsilon$, and for all $(A, B)\in \mathbb{T}(\mathbf n)$ with $A < (\supp(x-P_A(x))\sqcup B)\cap \mathbf n$.
\end{lem}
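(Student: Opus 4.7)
The plan is to prove the two directions separately, relating $P_A(x)$ to $1_{\varepsilon A}$ via standard support/sign bookkeeping and, in the forward direction, a layer-cake integration identity.

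For the forward direction ($\Longrightarrow$), I fix $x$, $A$, $B$, $\varepsilon$ satisfying the hypothesis of \eqref{e5} and set $z := x - P_A(x)$. Since $\|x\|_\infty \le 1$, each coefficient $a_n := e_n^*(x)$ with $n \in A$ satisfies $|a_n| \le 1$, so the layer-cake identity $a_n = \sgn(a_n)\int_0^1 \mathbf{1}_{t < |a_n|}\, dt$ lets me write
$$P_A(x) \;=\; \int_0^1 1_{\varepsilon(t) A(t)}\, dt, \qquad A(t) := \{n \in A : |a_n| > t\}, \;\; \varepsilon(t)_n := \sgn(a_n).$$
For every $t \in [0,1]$, $A(t) \subset A$ forces $(A(t), B) \in \mathbb{T}(\mathbf n)$ and $A(t) < (\supp(z) \sqcup B) \cap \mathbf n$, and clearly $\|z\|_\infty \le \|x\|_\infty \le 1$. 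Applying ($\mathbf n$, PSLC) pointwise in $t$ gives $\|z + 1_{\varepsilon(t) A(t)}\| \le \Delta_{\mathbf n, pl} \|z + 1_{\varepsilon B}\|$; since $x = z + P_A(x) = \int_0^1\bigl(z + 1_{\varepsilon(t) A(t)}\bigr)\, dt$, passing the triangle inequality through the $X$-valued Riemann integral yields \eqref{e5}.

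For the backward direction ($\Longleftarrow$), I fix $x$, $A$, $B$, $\varepsilon$, $\delta$ satisfying the hypothesis of Definition \ref{pslc}. The condition $A < (B \sqcup \supp(x)) \cap \mathbf n$ combined with $A \subset \mathbf n$ forces $A \cap \supp(x) = \emptyset$, so $x' := x + 1_{\varepsilon A}$ satisfies $\|x'\|_\infty \le 1$, $P_A(x') = 1_{\varepsilon A}$, and $x' - P_A(x') = x$. Consequently the hypothesis of \eqref{e5} for $x'$ (with the same $A$, $B$, and sign $\delta$) is exactly what is assumed for $x$, and \eqref{e5} delivers $\|x + 1_{\varepsilon A}\| = \|x'\| \le \Delta_{\mathbf n, pl}\|x + 1_{\delta B}\|$, which is the PSLC inequality. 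The only conceptually nontrivial ingredient is the layer-cake decomposition in the forward direction; the remainder is routine tracking of the support, sign, and cardinality constraints built into $\mathbb{T}(\mathbf n)$.
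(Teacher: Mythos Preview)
Your proof is correct. The backward direction matches the paper exactly. For the forward direction, the paper takes a slightly different (and shorter) route: it writes $\|x\| = \|(x - P_A(x)) + P_A(x)\| \le \sup_{\delta} \|(x - P_A(x)) + 1_{\delta A}\|$ by convexity (since $(e_n^*(x))_{n\in A}$ lies in the closed convex hull of the sign vectors on the full set $A$), and then applies ($\mathbf n$, PSLC) once with the pair $(A,B)$. Your layer-cake representation instead realizes $P_A(x)$ as an integral average of $1_{\varepsilon(t) A(t)}$ over \emph{subsets} $A(t)\subset A$ with a fixed sign, and applies ($\mathbf n$, PSLC) pointwise with the pairs $(A(t),B)$. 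Both are legitimate convexity arguments; the paper's version is a one-liner and uses only the single set $A$, while yours makes the averaging explicit and exploits that PSLC is also available for all subsets of $A$. Neither approach offers a real advantage over the other here.
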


\begin{proof}
Suppose that $\mathcal{B}$ satisfies \eqref{e5}. Choose $x, A, B, \varepsilon, \delta$ as in Definition \ref{pslc}. Let $y = x+ 1_{\varepsilon A}$. We have
$$\|x+1_{\varepsilon A}\|\ =\ \|y\| \ \stackrel{\eqref{e5}}{\le}\ \Delta_{\mathbf n, pl}\|y - P_A(y) + 1_{\delta B}\|\ =\ \Delta_{\mathbf n, pl}\|x+1_{\delta B}\|.$$

Conversely, suppose that $\mathcal{B}$ is $\Delta_{\mathbf n, pl}$-($\mathbf n$, PSLC). Choose $x, A, B, \varepsilon$ as in \eqref{e5}. We have
\begin{align*}
    \|x\|\ =\ \left\|x-P_A(x) + \sum_{n\in A}e_n^*(x)e_n\right\|&\ \le\ \sup_{(\delta)}\|x-P_A(x)+1_{\delta A}\|\\
    &\ \le\ \Delta_{\mathbf n, pl}\|x-P_A(x) + 1_{\varepsilon B}\|.
\end{align*}
This completes our proof. 
\end{proof}

\begin{proof}[Proof of Proposition \ref{p2}]
Let $x\in X$, $m\in \mathbb{N}$ and $A\in G(x, m)$. Fix $k\le m$. We need to show that 
$$\|x-P_A(x)\|\ \le\ \mathbf C_\ell \Delta_{\mathbf n, pl}\|x-P^{\mathbf n}_k(x)\|.$$
Let $E = \{n_1, n_2, \ldots, n_k\}\backslash A$, $F = A\backslash \{n_1, n_2, \ldots, n_k\}$, and $\alpha = \min_{n\in A}|e_n^*(x)|$. We verify that 
$x-P_A(x)$, $E$, and $F$ satisfy the condition of Lemma \ref{l2}:
\begin{enumerate}
    \item[i)] by definition, $E\subset \mathbf n$;
    \item[ii)] that $k\le m$ implies that $|E|\le |F|$;
    \item[iii)] by definition, $\supp(x-P_A(x)-P_E(x))\sqcup F$; $E < F\cap \mathbf n$; $E <\supp(x-P_A(x)-P_E(x))\cap \mathbf n$.
\end{enumerate}
By Lemma \ref{l2} and Theorem \ref{bto}, we get
\begin{align*}
    \|x-P_A(x)\|&\ \le\ \Delta_{\mathbf n, pl}\left\|x-P_A(x)-P_E(x)+\alpha\sum_{n\in F}\sgn(e_n^*(x))e_n\right\|\\
    &\ \le\ \Delta_{\mathbf n, pl}\left\|T_\alpha(x-P_A(x)-P_E(x)+P_F(x))\right\|\\
    &\ \le\ \Delta_{\mathbf n, pl}\mathbf C_\ell\|x-P^{\mathbf n}_k(x)\|,
\end{align*}
as desired. 
\end{proof}

\begin{proof}[Proof of Theorem \ref{m1}]
By Propositions \ref{p1} and \ref{p2}, we have i) $\Longleftrightarrow$ ii). By Remark \ref{r1}, ii) $\Longrightarrow$ iii) and iii) $\Longleftrightarrow$ iv). Finally, Lemma \ref{l10} gives iii) $\Longrightarrow$ ii).
\end{proof}

\section{Strong partially greedy bases with respect to different sequences}\label{differentSeq}

In this section, we answer the question of when an ($\mathbf m$, strong partially greedy) basis is necessarily ($\mathbf n$, strong partially greedy) for two arbitrary sequences $\mathbf m$ and $\mathbf n$. First, we define ($\mathbf n$, (super)democratic), which is stronger than ($\mathbf n$, conservative). 

\begin{defi}\normalfont A basis $\mathcal{B}$ in a Banach space is said to be ($\mathbf n$, superdemocratic) if there exists $\mathbf C \ge 1$ such that
\begin{equation}\label{e20}\|1_{\varepsilon A}\|\ \le\ \mathbf C\|1_{\delta B}\|,\end{equation}
for all $(A, B)\in \mathbb{S}(\mathbf n)$ and for all signs $\varepsilon, \delta$.
The smallest constant $\mathbf C$ for \eqref{e20} to hold is denoted by $\Delta_{\mathbf n, sd}$. If \eqref{e20} holds for $\varepsilon \equiv \delta \equiv 1$, then we say that $\mathcal{B}$ is ($\mathbf n$, democratic), and the smallest $\mathbf C$ in this case is denoted by $\Delta_{\mathbf n, d}$. 
\end{defi}

\subsection{($\mathbf n$, strong partially greedy) but not strong partially greedy bases}

\begin{thm}\label{m2}
Let $\mathbf n$ be any strictly increasing sequence such that $\mathbb{N}\backslash \mathbf n$ is infinite. There exists a $1$-unconditional basis $\mathcal{B}$ that is ($\mathbf n$, strong partially greedy) but is not conservative and thus, not strong partially greedy. 
\end{thm}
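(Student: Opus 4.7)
The plan is to mimic and generalize the $\ell_p\oplus\ell_q$ example stated just before the theorem. Set $\mathbf n^c := \mathbb N\backslash \mathbf n$ (infinite by hypothesis), fix any $1 \le p < q < \infty$, and on $X := \ell_q(\mathbf n)\oplus\ell_p(\mathbf n^c)$ define the norm
$$\|x\| \ :=\ \left(\sum_{n\in\mathbf n}|e_n^*(x)|^q\right)^{1/q} + \left(\sum_{n\in\mathbf n^c}|e_n^*(x)|^p\right)^{1/p}.$$
The canonical unit vector basis $\mathcal B = (e_n)_n$ is plainly $1$-unconditional, and hence $1$-suppression quasi-greedy. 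By Theorem \ref{m1}, to conclude that $\mathcal B$ is ($\mathbf n$, strong partially greedy) it suffices to show it is ($\mathbf n$, conservative).

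For this, I would take $(A,B)\in\mathbb T(\mathbf n)$ and write $k := |A|$, $a := |B\cap\mathbf n|$, $b := |B\cap\mathbf n^c|$; the definition of $\mathbb T(\mathbf n)$ gives $A\subset\mathbf n$ and $a+b \ge k$. Then $\|1_A\| = k^{1/q}$ while $\|1_B\| = a^{1/q}+b^{1/p}$, and the routine chain
$$k^{1/q}\ \le\ (a+b)^{1/q}\ \le\ a^{1/q}+b^{1/q}\ \le\ a^{1/q}+b^{1/p}$$
(using subadditivity of $t\mapsto t^{1/q}$ and $b^{1/q}\le b^{1/p}$ for $b\ge 1$; the $b=0$ case is immediate since then $a\ge k$) delivers $\|1_A\|\le\|1_B\|$, so $\mathcal B$ is $1$-($\mathbf n$, conservative).

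For failure of conservativity (and hence, via Theorem \ref{B}, of strong partial greediness), I would use that both $\mathbf n$ and $\mathbf n^c$ are infinite: for each $N$, select $A\subset\mathbf n^c$ with $|A|=N$ and then $B\subset\mathbf n$ with $|B|=N$ and $\min B > \max A$. Then $A < B$, $|A|=|B|$, yet $\|1_A\|/\|1_B\| = N^{1/p-1/q}\to\infty$. Overall this is really just a computation; the only obstacle is setting up the splitting so that the \emph{bad direction} for conservativity (large $\ell_p$-mass on early $\mathbf n^c$-indices vs.\ small $\ell_q$-mass on later $\mathbf n$-indices) remains available, which the hypothesis $|\mathbf n^c|=\infty$ guarantees while simultaneously making the ($\mathbf n$, conservative) side trivially hold because $A\subset\mathbf n$ forces $A$ onto the ``small'' $\ell_q$-factor.
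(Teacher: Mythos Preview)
Your proof is correct and, in fact, slightly more elementary than the paper's. The paper constructs a different space: it puts the norm
$$\|x\| \ =\ \sup_{\pi}\sum_{i}\frac{1}{\sqrt{\pi(i)}}\,|x_{n_i}| \;+\; \sum_{n\notin\mathbf n}|x_n|$$
(i.e.\ a symmetric weighted-$\ell_1$ norm on the $\mathbf n$-coordinates with weights $1/\sqrt{n}$, and an $\ell_1$-norm on $\mathbf n^c$), and then verifies the basis is $1$-($\mathbf n$, \emph{superdemocratic}) by the same case split $|B_1|\ge |A|$ vs.\ $|B_1|<|A|$ that you use. Non-conservativity is witnessed exactly as in your argument, with $\|1_D\|=N$ against $\|1_E\|\le 2\sqrt{N}$.

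The essential idea is the same in both constructions: make the norm on $\mathbf n$-coordinates grow sublinearly in cardinality while the norm on $\mathbf n^c$-coordinates grows faster, so that sets inside $\mathbf n$ are always ``small'' relative to any comparison set, yet a left-block in $\mathbf n^c$ dominates a right-block in $\mathbf n$. Your $\ell_q(\mathbf n)\oplus_1\ell_p(\mathbf n^c)$ realization is the direct generalization of the motivating $\ell_p\oplus\ell_q$ example given just before the theorem statement, and it buys a cleaner one-line computation of $\|1_A\|$ and $\|1_B\|$. The paper's weighted-$\ell_1$ version is a bit more baroque here but is chosen because variants of it (with the extra restriction $F\in\mathcal F$) are reused later to produce examples that are ($\mathbf n$, conservative) yet \emph{not} ($\mathbf n$, democratic); your $\ell_q\oplus\ell_p$ basis, like the paper's Theorem~\ref{m2} example, is automatically $1$-($\mathbf n$, democratic), so it would not serve that later purpose.
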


\begin{proof}
Define the sequence of weights $w_n = \frac{1}{\sqrt{n}}$ for $n\ge 1$. 
Let $X$ be the completion of $c_{00}$ with respect to the following norm: for $x = (x_1, x_2, \ldots)$,
$$\|x\|\ =\ \sup_{\pi}\sum_{i}w_{\pi(i)}|x_{n_i}| + \sum_{n\notin\mathbf n}|x_n|,$$
where $\pi$ is a bijection on $\mathbb{N}$.
Let $\mathcal{B}$ be the canonical basis. Then $\mathcal{B}$ is normalized and $1$-unconditional. We prove the following:
\begin{enumerate}
    \item [i)] $\mathcal{B}$ is $1$-($\mathbf n$, superdemocratic) and thus, is ($\mathbf n$, strong partially greedy).
    \item [ii)] $\mathcal{B}$ is not conservative and thus, is not strong partially greedy. 
\end{enumerate}

i) For all finite sets $(A, B)\in \mathbb{S}(\mathbf n)$ and for all signs $\varepsilon, \delta$, we have $$\|1_{\varepsilon A}\| \ =\ \sum_{i=1}^{|A|}\frac{1}{\sqrt{i}}\mbox{ and }\|1_{\delta B}\|\ =\ \sum_{i=1}^{|B_1|}\frac{1}{\sqrt{i}} + |B_2|,$$
where $B_1 = B\cap \mathbf n$ and $B_2 = B\backslash B_1$. If $|B_1| \ge |A|$, then $\|1_{\varepsilon A}\|\le\|1_{\delta B}\|$. If $|B_1| < |A|$, then
$$\|1_{\varepsilon A}\|\ =\ \sum_{i=1}^{|B_1|}\frac{1}{\sqrt{i}} + \sum_{|B_1|+1}^{|A|}\frac{1}{\sqrt{i}}\ \le\ \sum_{i=1}^{|B_1|}\frac{1}{\sqrt{i}} + |A|-|B_1|\ \le\ \sum_{i=1}^{|B_1|}\frac{1}{\sqrt{i}} + |B_2|\ =\ \|1_{\delta B}\|.$$

ii) Fix $N\in\mathbb{N}$. Choose $N$ numbers in $\mathbb{N}\backslash \mathbf n$ to form a set $D$. Let $E$ consists of $N$ numbers in $\mathbf n$ such that $E > D$. While $\|1_D\| = N$, $\|1_E\| \le 2\sqrt{N}$. Since $\|1_D\|/\|1_E\|\rightarrow \infty$ as $N\rightarrow\infty$, we know that $\mathcal{B}$ is not conservative. 
\end{proof}

\subsection{($\mathbf n$, strong partially greedy) but not ($\mathbf m$, strong partially greedy) bases}

Let $\mathbf m = m_1, m_2, \ldots$ be another strictly increasing sequence. The example in Theorem \ref{m2} is ($\mathbf n$, democratic). In the next example, we give a basis that is ($\mathbf n$, conservative) but is not ($\mathbf n$, democratic). Also, we can extend Theorem \ref{m2} to two sequences $\mathbf m$ and $\mathbf n$:

\begin{thm}\label{m3}
Let $\mathbf m,\mathbf n$ be two strictly increasing sequences such that the set $\{i: i\in\mathbf m, i\notin \mathbf n\}$ is infinite. There exists a $1$-unconditional basis $\mathcal{B}$ that is 
\begin{enumerate}
\item[i)] ($\mathbf n$, strong partially greedy).
\item[ii)] not ($\mathbf m$, strong partially greedy).
\item[iii)] not ($\mathbf n$, democratic). 
\end{enumerate}
\end{thm}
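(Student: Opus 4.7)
The plan is to construct a $1$-unconditional semi-normalized basis by extending the idea of Theorem~\ref{m2}. The construction combines an $\ell_1$-contribution on $\mathbf n^c$ with an asymmetric norm on $\mathbf n$ that is conservative but not democratic; the infinitude of $\mathbf m\setminus\mathbf n$ is then used to produce the failure of $(\mathbf m,$ conservative).

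First I would build an auxiliary Banach space $V$ with canonical $1$-unconditional semi-normalized basis $(f_i)$ such that (a) $V$ is conservative ($\|1_{A'}\|_V\le C\|1_{B'}\|_V$ for $A'<B'$ and $|A'|\le|B'|$) and (b) $V$ is \emph{not} democratic, i.e.\ there exist $A'_N,B'_N$ with $|A'_N|=|B'_N|$ and $\|1_{A'_N}\|_V/\|1_{B'_N}\|_V\to\infty$. A natural candidate is an ``admissibility-type'' norm $\|y\|_V=\sup\{\sum_{n\in F}|y_n|:F\subset\mathbb N,\ |F|\le f(\min F)\}$ for a suitably chosen $f$, or a mixed Tsirelson-type construction; the monotonicity of $f$ yields conservativity with a uniform constant, while careful calibration of $f$ (so that ``well-positioned'' late sets have linearly-growing indicator norm but ``ill-positioned'' early sets have strictly sublinear one) yields the unbounded non-democracy ratio. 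Once $V$ is in place, define the full norm by
\[\|x\|\ :=\ \|x|_{\mathbf n^c}\|_1\ +\ \|(x_{n_i})_{i\in\mathbb N}\|_V,\]
which immediately makes the canonical basis $1$-unconditional (hence quasi-greedy) and semi-normalized.

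Next I would verify the three properties. For (i), by Theorem~\ref{m1} it suffices to establish $(\mathbf n,$ conservative). Fix $(A,B)\in\mathbb{T}(\mathbf n)$ and let $A'=\{i:n_i\in A\}$, $B''=\{i:n_i\in B\cap\mathbf n\}$; note $A'<B''$. If $|A'|\le|B''|$, $V$-conservativity gives $\|1_A\|=\|1_{A'}\|_V\le C\|1_{B''}\|_V\le C\|1_B\|$. If $|A'|>|B''|$, pad $B''$ by $|A'|-|B''|$ phantom positions of $\mathbf n$ beyond $B''$ to obtain $B'''\supset B''$ with $|B'''|=|A'|$; $V$-conservativity gives $\|1_{A'}\|_V\le C\|1_{B'''}\|_V$, and an argument comparing $\|1_{B'''}\|_V$ to $\|1_{B''}\|_V+|B\cap\mathbf n^c|$ (using that the added positions lie beyond $\min B''$, so admissibility is preserved) yields $\|1_A\|\le C'\|1_B\|$. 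For (iii), transport the $V$-non-democracy through the correspondence $i\mapsto n_i$: with $A_N=\{n_i:i\in A'_N\}$ and $B_N=\{n_i:i\in B'_N\}$ we get $A_N,B_N\subset\mathbf n$, $|A_N|=|B_N|$, and $\|1_{A_N}\|/\|1_{B_N}\|=\|1_{A'_N}\|_V/\|1_{B'_N}\|_V\to\infty$. For (ii), use that $\mathbf m\setminus\mathbf n$ is infinite (and sits inside $\mathbf n^c$): take $A_N\subset\mathbf m\setminus\mathbf n$ with $|A_N|=N$, so $\|1_{A_N}\|=N$; then pick $B_N\subset\mathbf m\cap\mathbf n$, $B_N>A_N$, with $|B_N|=N$ lying in a ``well-positioned'' part of $V$ for which $\|1_{B_N}\|=\|1_{B_N''}\|_V=o(N)$. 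Then $(A_N,B_N)\in\mathbb{T}(\mathbf m)$ and $\|1_{A_N}\|/\|1_{B_N}\|\to\infty$, so $(\mathbf m,$ conservative) fails, and by Theorem~\ref{m1}, $(\mathbf m,$ strong partially greedy) fails.

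The main obstacle is the first step: constructing $V$ with a $1$-unconditional, conservative, yet non-democratic (with \emph{unbounded} ratio) basis. Straightforward candidates like Lorentz spaces, Schreier's space, or simple direct sums $\ell_p\oplus\ell_q$ either give bounded-democracy (and so fail to exhibit the required unbounded ratio) or break conservativity when sets are spread across early positions. Finding the right admissibility rule (or an analogous mixed-Tsirelson recursive norm) that simultaneously forbids ``spread early'' sets from having large norm while still allowing ``concentrated'' pairs of equal size to have dramatically different norms is the delicate technical heart of the argument; once $V$ is produced, the embedding into $X$ and the verifications above are essentially mechanical.
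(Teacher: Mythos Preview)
Your overall framework is exactly the paper's: an $\ell_1$-piece on $\mathbf n^c$ glued to a conservative-but-not-democratic space $V$ indexed by $\mathbf n$. The paper's concrete choice of $V$ is an admissibility norm \emph{with weights}: with $\mathcal{F}=\{F\subset\mathbb{N}:\sqrt{\min F}\ge |F|\}$ and $w_n=1/\sqrt{n}$, it takes
\[
\|x\|=\sup_{F\in\mathcal{F},\,\pi}\sum_{i\in F}w_{\pi(i)}|x_{n_i}|+\sum_{n\notin\mathbf n}|x_n|,
\]
so that for any $A\subset\mathbf n$ one has $\|1_A\|\le\sum_{i=1}^{|A|}1/\sqrt{i}\lesssim\sqrt{|A|}$, while for ``late'' sets (with $\min f(A)\ge |A|^2$) this upper bound is attained.

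There is a genuine tension in your sketch that the paper's weights resolve and your pure-admissibility candidate does not. With $\|y\|_V=\sup\{\sum_{n\in F}|y_n|:|F|\le f(\min F)\}$, late sets have \emph{linear} norm $\|1_{B'}\|_V=|B'|$, and early sets have sublinear norm; this gives you (iii). But in (ii) you need $\|1_{B_N}\|=o(N)$ for some $B_N>A_N$ with $A_N\subset\mathbf m\setminus\mathbf n$. Since $|A_N|=N$ forces $\max A_N\to\infty$, the $V$-coordinates $B_N''$ of any $B_N\subset\mathbf n$ with $B_N>A_N$ are forced far to the right, and in pure admissibility that makes $\|1_{B_N}\|_V\approx N$, not $o(N)$. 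The paper's weight factor caps the $V$-norm of \emph{every} size-$N$ set at $\sum_{i=1}^N 1/\sqrt{i}\approx 2\sqrt{N}$, so (ii) becomes automatic, while the admissibility constraint still produces the $\sqrt{N}$ versus $N^{1/4}$ gap for (iii). Two smaller points: in (ii) you do not need $B_N\subset\mathbf m$ (the definition of $\mathbb{T}(\mathbf m)$ only asks $A\subset\mathbf m$), which matters since $\mathbf m\cap\mathbf n$ could be finite; and in (i), your ``padding'' step for $|A'|>|B''|$ is exactly where the weights $w_i\le 1$ earn their keep: the paper uses $\sum_{i=1}^{|B_1|}1/\sqrt{i}+|B_2|\ge\sum_{i=1}^{|B|}1/\sqrt{i}$ to absorb the overflow into the $\ell_1$-part of $B$, which a pure admissibility norm cannot do.
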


\begin{rek}\normalfont
The basis we construct for Theorem \ref{m3} is in fact $1$-($\mathbf n$, strong partially greedy), which will be proved in Section \ref{1PSLC} when we have characterizations of $1$-($\mathbf n$, strong partially greedy) bases.   
\end{rek}

\begin{proof}[Proof of Theorem \ref{m3}]
Let $\mathcal{F}:= \{F\subset\mathbb{N}: \sqrt{\min F}\ge |F|\}$ and $w_n = \frac{1}{\sqrt{n}}$ for $n\ge 1$. Write $\mathbf n = n_1, n_2, \ldots$. Define $f: \mathbf n\rightarrow\mathbb{N}$ by $f(n_k) = k$. Let $X$ be the completion of $c_{00}$ under the following norm: for $x = (x_1, x_2, \ldots)$,
$$\|x\|\ =\ \sup_{\substack{F\in\mathcal{F}\\ \pi}}\sum_{i\in F}w_{\pi(i)}|x_{n_i}| + \sum_{n\notin \mathbf n}|x_n|,$$
where $\pi$ is a bijection on $\mathbb{N}$.
Let $\mathcal{B}$ be the canonical basis, which is $1$-unconditional. 

i) ($\mathbf n$, strong partially greedy): we need only to show that $\mathcal{B}$ is $1$-($\mathbf n$, conservative). Choose $(A, B)\in\mathbb{T}(\mathbf n)$. Pick $F\in \mathcal{F}$ with $F\subset f(A)$ and a bijection $\pi$ on $\mathbb{N}$. We have
$$\sum_{i\in F}w_{\pi(i)}|e^*_{n_i}(1_A)|\ \le\ \sum_{i=1}^{|F|}\frac{1}{\sqrt{i}}.$$
Set $B_1 = B\cap \mathbf n$ and $B_2 = B\backslash B_1$. 

Case 1: $|B_1|\ge |F|$. Let $F' \subset f(B_1)$ such that $|F'| = |F|$. Since $B\cap \mathbf n > A$, we know that $ F' >  F$ and so, $F'\in \mathcal{F}$. Choose a bijection $\sigma$ on $\mathbb{N}$ such that $\sigma(F') = \{1, \ldots, |F'|\}$. We obtain
$$\sum_{i=1}^{|F|}\frac{1}{\sqrt{i}} \ =\ \sum_{i=1}^{|F'|}\frac{1}{\sqrt{i}} \ =\ \sum_{i\in F'}w_{\sigma(i)}|e^*_{n_i}(1_{B_1})|\ \le\ \|1_B\|.$$

Case 2: $|B_1| < |F|$. Since $|B_1| + |B_2|\ge |A|$, we know that
$|B_2| > |A| - |F|$. Let $F' = f(B_1)$, so $F' > F$ and $|F'|< |F|$. Hence, $F'\in\mathcal{F}$. Letting $\sigma$ be a bijection on $\mathbb{N}$ such that $\sigma(F') = \{1, \ldots, |F'|\}$, we obtain
\begin{align*}\|1_B\|&\ \ge\ \sum_{i\in F'}w_{\sigma(i)}|e^*_{n_i}(1_{B_1})| + |B_2|\\
& \ =\ \sum_{i=1}^{|B_1|}\frac{1}{\sqrt{i}} + |B_2|\ \ge\ \sum_{i=1}^{|B|}\frac{1}{\sqrt{i}}\ \ge\ \sum_{i=1}^{|A|}\frac{1}{\sqrt{i}}\ \ge\ \sum_{i=1}^{|F|}\frac{1}{\sqrt{i}}.\end{align*}

In both cases, we get $\|1_B\|\ge \sum_{i=1}^{|F|}\frac{1}{\sqrt{i}} = \sum_{i\in F}w_{\pi(i)}|e_{n_i}^*(1_A)|$. Let $F\in\mathcal{F}$ and $\pi$ vary to conclude that $\|1_B\|\ge \|1_A\|$.

ii) not ($\mathbf m$, strong partially greedy): we show that $\mathcal{B}$ is not ($\mathbf m$, conservative).
Let $D := \{i: i\in \mathbf m, i\notin \mathbf n\}$ and $N\in\mathbb{N}$. Since $D$ is infinite, we can choose $E\subset D$ with $|E| = N$. Also choose $F\subset \mathbf n$ such that $F > E$ and $|F| = N$. Then 
$\|1_E\|  = N$, while $\|1_F\| \le \sum_{i=1}^{|F|}\frac{1}{\sqrt{i}}\le 2\sqrt{N}$. Hence, $\|1_E\|/\|1_F\|\rightarrow\infty$ as $N\rightarrow\infty$. We conclude that $\mathcal{B}$ is not ($\mathbf m$, conservative).

iii) not ($\mathbf n$, democratic): Let $N\in\mathbb{N}$. Write $\mathbf n = n_1, n_2, \ldots$. Choose $E = \{n_1, \ldots, n_N\}$ and $F = \{n_{N^2+1}, n_{N^2+2}, \ldots, n_{N^2+N}\}$. Clearly, $\|1_F\| = \sum_{i=1}^N \frac{1}{\sqrt{i}}\ge 2(\sqrt{N+1}-1)$. We find an upper bound for $\|1_E\|$. Let $G\in\mathcal{F}$ with $G \subset \{1, \ldots, N\}$ and a bijection $\pi$ on $\mathbb{N}$. Set $m = \min G$.

Case 1: $\sqrt{m}\ge N-m+1$. Then $m\ge N-\frac{\sqrt{4N+5}}{2}+\frac{3}{2}$ and $|G|\le N-m+1$. Hence, 
$$\sum_{i\in G}w_{\pi(i)}|e^*_{n_i}(1_E)|\ \le\ \sum_{i=1}^{N-m+1}\frac{1}{\sqrt{i}}\ \le\ 2\sqrt{N-m+1}\ \le\ \sqrt{2}\sqrt{\sqrt{4N+5}-1}.$$

Case 2: $\sqrt{m} < N-m+1$. Then $m < N - \frac{\sqrt{4N+5}}{2} + \frac{3}{2}$ and $|G|\le \sqrt{m}$. Hence,  
$$\sum_{i\in G}w_{\pi(i)}|e^*_{n_i}(1_E)|\ \le\ \sum_{i=1}^{\lfloor\sqrt{m}\rfloor}\frac{1}{\sqrt{i}}\ \le\ 2\sqrt[4]{m}\ \le\ 2\sqrt[4]{N - \frac{\sqrt{4N+5}}{2} + \frac{3}{2}}.$$

We conclude that $\|1_E\| \lesssim \sqrt[4]{N}$ and so, $\|1_F\|/\|1_E\|\rightarrow\infty$ as $N\rightarrow\infty$. Therefore, $\mathcal{B}$ is not ($\mathbf n$, democratic).
\end{proof}

The norm in the proof of Theorem \ref{m3} is independent of $\mathbf m$. We can, therefore, have a stronger version of Theorem \ref{m3}.
\begin{thm}
Let $\mathbf n$ be a strictly increasing sequence. Then there exists a $1$-unconditional basis $\mathcal{B}$ that is 
\begin{enumerate}
\item[i)] ($\mathbf n$, strong partially greedy).
\item[ii)] not ($\mathbf m$, strong partially greedy) for all strictly increasing sequence $\mathbf m$ with $\{i:i\in\mathbf m, i\notin\mathbf n\}$ is infinite.
\item[iii)] not ($\mathbf n$, democratic). 
\end{enumerate}
\end{thm}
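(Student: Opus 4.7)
The plan is to recycle the basis $\mathcal{B}$ from the proof of Theorem \ref{m3} and simply observe that the construction depends only on $\mathbf n$. Concretely, the norm
$$\|x\|\ =\ \sup_{\substack{F\in\mathcal{F}\\ \pi}}\sum_{i\in F}w_{\pi(i)}|x_{n_i}| + \sum_{n\notin \mathbf n}|x_n|$$
is defined using only the enumeration $n_1,n_2,\ldots$ of $\mathbf n$, the weights $w_n=1/\sqrt n$, and the family $\mathcal F=\{F\subset\mathbb N:\sqrt{\min F}\ge|F|\}$; no auxiliary sequence $\mathbf m$ appears. Consequently, the arguments for items (i) and (iii) given in Theorem \ref{m3} transfer verbatim, yielding the ($\mathbf n$, strong partially greedy) property and the failure of ($\mathbf n$, democratic) for this single basis.

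For item (ii), I would revisit the argument for non-($\mathbf m$, conservative) in the proof of Theorem \ref{m3} and verify its uniformity in $\mathbf m$. Fix any strictly increasing sequence $\mathbf m$ with $\{i:i\in\mathbf m,\,i\notin\mathbf n\}$ infinite. For each $N\in\mathbb N$, choose $E\subset \{i:i\in\mathbf m, i\notin \mathbf n\}$ with $|E|=N$ and then $F\subset\mathbf n$ with $|F|=N$ and $F>E$ (possible because $\mathbf n$ is infinite). Then $\|1_E\|=N$, while $\|1_F\|\le \sum_{i=1}^{N}1/\sqrt i\le 2\sqrt N$, so $\|1_E\|/\|1_F\|\to\infty$. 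Hence $\mathcal{B}$ is not ($\mathbf m$, conservative), and Theorem \ref{m1} then forbids the ($\mathbf m$, strong partially greedy) property. Since the choice of $E,F$ for each $\mathbf m$ happens \emph{inside} a fixed basis, the same $\mathcal{B}$ defeats every admissible $\mathbf m$ simultaneously.

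The main obstacle, insofar as there is one, is merely to confirm this uniformity: one has to verify that nothing in the failure argument ever feeds $\mathbf m$ back into the norm (it does not — $\mathbf m$ only enters through the choice of $E$), and that the $\mathbf m$-free estimate $\|1_F\|\le 2\sqrt N$ is all one needs on the $\mathbf n$-side. Thus the theorem reduces to the two observations that Theorem \ref{m3}'s construction is $\mathbf m$-free and that its proof of (ii) depends on $\mathbf m$ solely through the hypothesis that $\mathbf m\setminus\mathbf n$ is infinite.
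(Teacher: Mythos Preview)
Your proposal is correct and matches the paper's own approach exactly: the paper simply remarks that the norm in the proof of Theorem \ref{m3} is independent of $\mathbf m$ and states this strengthened theorem without further argument. Your more explicit verification that the non-($\mathbf m$, conservative) step uses $\mathbf m$ only through the infinitude of $\mathbf m\setminus\mathbf n$ is precisely the check the paper leaves implicit.
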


We now investigate whether the conclusion in Theorem \ref{m3} holds if we drop the condition ``$\{i: i\in \mathbf m, i\notin \mathbf n\}$ is infinite". The answer depends on the cardinality of the set ``$D = \{i: i\in\mathbf n, i\notin \mathbf m\}$". If $D$ is infinite, then we have the same conclusion as in Theorem \ref{m3}.

\begin{thm}\label{m4}
Let $\mathbf m,\mathbf n$ be two strictly increasing sequences such that the set $\{i: i\in\mathbf n, i\notin \mathbf m\}$ is infinite. There exists a $1$-unconditional basis $\mathcal{B}$ that is 
\begin{enumerate}
\item[i)] ($\mathbf n$, strong partially greedy).
\item[ii)] not ($\mathbf m$, strong partially greedy).
\item[iii)] not ($\mathbf n$, democratic). 
\end{enumerate}
\end{thm}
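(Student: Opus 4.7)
The plan is to construct a $1$-unconditional basis $\mathcal{B}$ via an explicit norm on $c_{00}$, following the template of Theorem \ref{m3}'s proof. Since the Theorem \ref{m3} norm is independent of $\mathbf{m}$ and does not, in general, break $(\mathbf{m},\mathrm{conservative})$ when $\mathbf{m}\setminus\mathbf{n}$ is finite, a new norm tailored to the present hypothesis is required. My proposed norm would combine an $\mathcal{F}$-type supremum over $\mathbf{n}$-positions (as in Theorem \ref{m3}) with an $\ell_1$ contribution from $\mathbb{N}\setminus\mathbf{n}$ and an additional position-sensitive ingredient that amplifies the contribution of indices in $\mathbf{m}$ relative to those in $\mathbf{n}\setminus\mathbf{m}$. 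Since such a norm depends only on the moduli of the coordinates, $\mathcal{B}$ is automatically $1$-unconditional (hence quasi-greedy) once semi-normalization is checked.

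For (i), Theorem \ref{m1} reduces the task to showing $(\mathbf{n},\mathrm{conservative})$, which I would prove by decomposing $B=(B\setminus\mathbf{n})\sqcup(B\cap\mathbf{m})\sqcup(B\cap(\mathbf{n}\setminus\mathbf{m}))$ and running a case analysis in the spirit of the corresponding step in Theorem \ref{m3}. For (ii), I would exhibit pairs $(A_k,B_k)\in\mathbb{T}(\mathbf{m})$ with $\|1_{A_k}\|/\|1_{B_k}\|\to\infty$; the key idea, exploiting the infinitude of $\mathbf{n}\setminus\mathbf{m}$, is to take $B_k\subset\mathbf{n}\setminus\mathbf{m}$ so that $B_k\cap\mathbf{m}=\emptyset$ (making the ordering requirement in $\mathbb{T}(\mathbf{m})$ vacuous), and to place $A_k\subset\mathbf{m}$ at positions where the amplified norm forces $\|1_{A_k}\|$ to dominate. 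For (iii), I would adapt the last step of Theorem \ref{m3}'s proof, choosing $E,F\subset\mathbf{n}$ of equal cardinality at carefully chosen positions so that the $\mathcal{F}$-family constrains $\|1_E\|$ while $\|1_F\|$ remains sizable.

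The main obstacle is the interplay between (i) and (ii): the norm must make $\|1_A\|$ large enough for $A\subset\mathbf{m}$ to defeat $(\mathbf{m},\mathrm{conservative})$, yet still satisfy $\|1_A\|\lesssim\|1_B\|$ in the scenario $A\subset\mathbf{m}$, $B\subset\mathbf{n}\setminus\mathbf{m}$, $A<B$, $|A|\le|B|$ demanded by $(\mathbf{n},\mathrm{conservative})$. A cardinality-only amplification cannot reconcile these two requirements, so the construction must be genuinely position-sensitive: only when $B$ lies to the right of $A$ in $\mathbf{n}$ should $\|1_B\|$ outweigh $\|1_A\|$. Calibrating the amplification so that this ordering-dependence holds while preserving semi-normalization is the technical heart of the proof.
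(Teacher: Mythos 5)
Your outline correctly identifies the shape of the argument (a Theorem \ref{m3}-style weighted norm, reduction of (i) to ($\mathbf n$, conservative) via Theorem \ref{m1}, a bad pair for (ii) whose second set lies in $\mathbf n\setminus\mathbf m$ so that the ordering requirement of $\mathbb{T}(\mathbf m)$ is vacuous), but the proposal stops exactly where the proof has to start: the norm is never defined, and you yourself defer the ``calibration of the amplification'' that you call the technical heart. That calibration is the entire content of the theorem, so as it stands there is a genuine gap, not a proof. Moreover, the mechanism you propose --- weighting indices of $\mathbf m$ more heavily than indices of $\mathbf n\setminus\mathbf m$, with the extra weight switched on ``only when $B$ lies to the right of $A$ in $\mathbf n$'' --- is not how the tension gets resolved in the paper, and it is not clear it can be made to work: any genuine amplification of $\mathbf m$-coordinates over $(\mathbf n\setminus\mathbf m)$-coordinates threatens ($\mathbf n$, conservativity) for pairs $(A,B)\in\mathbb{T}(\mathbf n)$ with $A\subset\mathbf m\cap\mathbf n$ and $B\subset\mathbf n\setminus\mathbf m$. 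The paper's construction uses no such asymmetry: after first disposing of the case where $\{i:i\in\mathbf m,\,i\notin\mathbf n\}$ is infinite by citing Theorem \ref{m3} (a dichotomy your sketch does not address, and whose second branch, finiteness of $\mathbf m\setminus\mathbf n$, is used both in the ($\mathbf n$, conservative) estimate and to deduce (iii) from (ii)), it equips the two subsequences $\mathbf m$ and $D=\mathbf n\setminus\mathbf m$ with \emph{identical} $\mathcal F$-weighted components, each indexed by position \emph{within its own subsequence}, plus an $\ell_1$ term off $\mathbf m\cup\mathbf n$. So $\|1_A\|$ for $A\subset\mathbf m$ and $\|1_B\|$ for $B\subset D$ of the same cardinality and comparable depth are comparable; nothing is amplified.

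What breaks ($\mathbf m$, conservativity) is a cross-sequence position mismatch: take $B=\{m_{N^2+1},\dots,m_{N^2+N}\}$, which sits so deep in $\mathbf m$ that the $\mathcal F$-constraint is inactive and $\|1_B\|\approx 2\sqrt N$, against $A=\{d_1,\dots,d_N\}$ at the start of $D$, where the constraint $\sqrt{\min F}\ge|F|$ caps the norm at roughly $N^{1/4}$; the pair $(B,A)$ lies in $\mathbb{T}(\mathbf m)$ because $A\cap\mathbf m=\emptyset$. This does not contradict ($\mathbf n$, conservativity) because for large $N$ the set $B$ is far to the right of $A$ as integers, so $(B,A)\notin\mathbb{T}(\mathbf n)$; and for genuine pairs $(A,B)\in\mathbb{T}(\mathbf n)$ the condition $A<B\cap\mathbf n$ pushes the positions of $B$ (within $\mathbf m$ or within $D$) past those of $A$, which, together with a three-way split of $B$ into $B\cap\mathbf m$, $B\cap D$, $B\setminus(\mathbf m\cup D)$ and the finiteness of $\mathbf m\setminus\mathbf n$, yields $\|1_A\|\le 24\|1_B\|$. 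In short, your diagnosis of the obstacle is accurate, but the proposal neither constructs the norm nor supplies the position-based mechanism that resolves it, so the argument is incomplete precisely at its decisive step.
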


\begin{proof}
If $\{i: i\in \mathbf m, i\notin \mathbf n\}$ is infinite, then by Theorem \ref{m3}, we are done. Assume, for the rest of the proof, that $\{i: i\in \mathbf m, i\notin \mathbf n\}$ is finite. 
Let $D = \{i: i\in \mathbf n, i\notin \mathbf m\} = \{d_1, d_2, \ldots\}$, for $d_1 < d_2 < \cdots$, $\mathcal{F} = \{F\subset\mathbb{N}: \sqrt{\min F}\ge |F|\}$, and $w_n = \frac{1}{\sqrt{n}}$ for $n\ge 1$. Write $\mathbf n = n_1, n_2, \ldots$ and $\mathbf m = m_1, m_2, \ldots$. Let $X$ be the completion of $c_{00}$ with respect to the following norm: for $x = (x_1, x_2, x_3, \ldots)$, 
$$\|x\|\ =\ \sup_{\substack{\pi, \sigma\\ F, F'\in \mathcal{F}}}\left(\sum_{i\in F}w_{\pi(i)}|x_{m_i}| + \sum_{i\in F'}w_{\sigma(i)}|x_{d_i}| + \sum_{i\notin \mathbf n\cup \mathbf m}|x_i|\right),$$
where $\pi, \sigma$ are bijections on $\mathbb{N}$.
Define $f: \mathbf m\rightarrow\mathbb{N}$ such that $f(m_k) = k$ and $g: D\rightarrow \mathbb{N}$ such that $f(d_k) = k$. Let $\mathcal{B}$ be the canonical basis. 

i) ($\mathbf n$, strong partially greedy): we need only to show that $\mathcal{B}$ is ($\mathbf n$, conservative). Let $(A, B)\in \mathbb{T}(\mathbf n)$ and $M:= |\{i: i\in\mathbf m, i\notin \mathbf n\}|$. If $|A|\le |B|\le 12M$, then 
$$\|1_A\|\ \le\ 12M\sup_{n}\|e_n\|\ \le\ 12M\sup_{n}\|e_n\|\sup_{m}\|e^*_m\|\|1_B\|\ \le\ 12Mc_2^2\|1_B\|.$$ Assume that $|B| > 12M$.
Set $A_1 = A\cap \mathbf m$ and $A_2 = A\backslash A_1$. Pick bijections $\pi, \sigma: \mathbb{N}\rightarrow\mathbb{N}$ and $F, F'\in\mathcal{F}$ such that $F\subset f(A_1)$ and $F'\subset g(A_2)$. Then
$$\sum_{i\in F}w_{\pi(i)}|e^*_{m_i}(1_A)| + \sum_{i\in F'}w_{\sigma(i)}|e^*_{d_i}(1_A)|\ \le\ \sum_{i=1}^{|F|}\frac{1}{\sqrt{i}} + \sum_{i=1}^{|F'|}\frac{1}{\sqrt{i}}\ \le\ 2\sum_{i=1}^{\ell}\frac{1}{\sqrt{i}},$$
where $\ell =  \max\{|F|, |F'|\}$.
We estimate $\|1_B\|$. Set $B_1 = B\cap \mathbf m$, $B_2 = B\cap D$,  and $B_3  = B\backslash (B_1\cup B_2)$. We proceed by case analysis.

Case 1: $|B_3| \ge |B|/3$. Then 
$$\|1_B\|\ \ge\ |B_3|\ \ge\ \frac{|B|}{3} \ \ge\ \frac{|A|}{3}\ \ge\ \frac{\ell}{3}\ \ge\ \frac{1}{3}\sum_{i=1}^{\ell}\frac{1}{\sqrt{i}}.$$

Case 2: $|B_2| \ge |B|/3$. Since $|B|\ge |A|$, we have $|B_2| \ge |A|/3\ge \ell/3$. Choose $F''\subset g(B_2)$ such that $|F''|= \lceil\ell/3\rceil\le \ell$. Since $A < B\cap \mathbf n$, we know that $F'' >  F$ and $F'' > F'$. Hence, $F''\in\mathcal{F}$. We have
$$\|1_B\|\ \ge\ \sum_{i=1}^{|F''|}\frac{1}{\sqrt{i}}\ =\ \sum_{i=1}^{\lceil\ell/3\rceil}\frac{1}{\sqrt{i}}.$$

Case 3: $|B_1| \ge |B|/3$. 
Write $B_1 = B_{1,1}\cup B_{1,2}$, where $B_{1,1} = B\cap\{i:i\in\mathbf m, i\notin\mathbf n\}$ and $B_{1,2} = B\cap \mathbf m\cap \mathbf n$. We have
$$|B_{1,2}|\ =\ |B_1| - |B_{1,1}|\ \ge\ |B_1| - M \ \ge\ \frac{|B|}{3} - M \ \ge\ \frac{|B|}{4} + \left(\frac{|B|}{12}-M\right)\ >\ \frac{|B|}{4}.$$

The same argument as in Case 2 (applied to $B_{1,2}$) gives that 
$$\|1_B\|\ \ge\ \sum_{i=1}^{\lceil \ell/4\rceil}\frac{1}{\sqrt{i}}.$$

In all cases, we obtain
\begin{align*}\|1_B\|&\ \ge\ \frac{1}{3}\sum_{i=1}^{\lceil \ell/4\rceil}\frac{1}{\sqrt{i}}\ \ge\ \frac{1}{12}\sum_{i=1}^{\ell}\frac{1}{\sqrt{i}}\ =\ \frac{1}{24}\left(2\sum_{i=1}^{\ell}\frac{1}{\sqrt{i}}\right)\\
&\ \ge\ \frac{1}{24}\left(\sum_{i\in F}w_{\pi(i)}|e^*_{m_i}(1_A)| + \sum_{i\in F'}w_{\sigma(i)}|e^*_{d_i}(1_A)|\right).\end{align*}
Letting  $\pi, \sigma, F, F'$ vary, we conclude that 
$$24\|1_B\|\ \ge\ \|1_A\|.$$

ii) not ($\mathbf m$, conservative): Let $N\in\mathbb{N}$. Choose 
$$A = \{d_1, \ldots, d_N\}\mbox{ and }B = \{m_{N^2+1}, \ldots, m_{N^2+N}\}.$$
Clearly, $(B, A) \in \mathbb{T}(\mathbf m)$.
Using the same argument as in the proof of item iii) in Theorem \ref{m3}, we have that 
$\|1_B\|/\|1_A\|\rightarrow\infty$ as $N\rightarrow\infty$.

iii) not ($\mathbf n$, democratic): Since $\{i: i\in\mathbf m, i\notin \mathbf n\}$ is finite, we know that $B = \{m_{N^2+1}, \ldots, m_{N^2+N}\}\subset \mathbf n$ for sufficiently large $N$. Therefore, item ii) implies that $\mathcal{B}$ is not ($\mathbf n$, democratic). 
\end{proof}

\begin{rek}\normalfont
The example in the proof of Theorem \ref{m4} is not $1$-($\mathbf n$, strong partially greedy) (see Remark \ref{r2}). However, there does exist a $1$-($\mathbf n$, strong partially greedy) basis that satisfies the conclusion of Theorem \ref{m4}. For an example of such a basis, see Case 2 in the proof of Theorem \ref{m8}. 
\end{rek}

The following theorem is immediate from Theorems \ref{m3} and \ref{m4}.
\begin{thm}\label{m5}
Let $\mathbf m, \mathbf n$ be two strictly increasing sequences such that the difference set $\Delta_{\mathbf m, \mathbf n}$ is infinite. There exists a $1$-unconditional basis $\mathcal{B}$ that is 
\begin{enumerate}
\item[i)] ($\mathbf n$, strong partially greedy).
\item[ii)] not ($\mathbf m$, strong partially greedy).
\item[iii)] not ($\mathbf n$, democratic). 
\end{enumerate}
\end{thm}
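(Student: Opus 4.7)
The plan is to observe that Theorem \ref{m5} is a direct corollary of Theorems \ref{m3} and \ref{m4} via a case split on which ``half'' of the symmetric difference is infinite. Recall that by definition $\Delta_{\mathbf m, \mathbf n}$ is the symmetric difference of $\mathbf m$ and $\mathbf n$, so
\[
\Delta_{\mathbf m, \mathbf n} \ =\ \{i: i \in \mathbf m,\, i \notin \mathbf n\}\ \cup\ \{i: i \in \mathbf n,\, i \notin \mathbf m\}.
\]
If $\Delta_{\mathbf m, \mathbf n}$ is infinite, then at least one of the two sets on the right-hand side must be infinite.

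First I would handle the case where $\{i: i \in \mathbf m,\, i \notin \mathbf n\}$ is infinite. Then Theorem \ref{m3} applies verbatim and produces a $1$-unconditional basis $\mathcal{B}$ with the three required properties: ($\mathbf n$, strong partially greedy), not ($\mathbf m$, strong partially greedy), and not ($\mathbf n$, democratic).

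Next I would handle the complementary case where $\{i: i \in \mathbf m,\, i \notin \mathbf n\}$ is finite, in which case the assumption forces $\{i: i \in \mathbf n,\, i \notin \mathbf m\}$ to be infinite. Then Theorem \ref{m4} applies and likewise produces a $1$-unconditional basis satisfying the three properties. (Indeed, Theorem \ref{m4} is proved by splitting further into the subcase $\{i: i \in \mathbf m,\, i \notin \mathbf n\}$ infinite—which is Theorem \ref{m3}—and the subcase where it is finite, so the construction there is self-contained.)

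There is no real obstacle here: the two cases cover every possibility, and in each case the cited theorem yields the basis with precisely the three desired properties. The only thing worth noting is that the dichotomy is exhaustive because $\Delta_{\mathbf m,\mathbf n}$ is the union of the two sets, so infinitude of the union implies infinitude of at least one summand. Hence concatenating the two quoted results gives the proof, and we are done.
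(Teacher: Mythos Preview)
Your proposal is correct and matches the paper's approach exactly: the paper states that Theorem \ref{m5} is immediate from Theorems \ref{m3} and \ref{m4}, and your case split on which half of the symmetric difference is infinite is precisely how one combines them.
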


\subsection{When ($\mathbf n$, strong partially greedy) is the same as ($\mathbf m$, strong partially greedy)}

We consider the remaining case when the difference set $\Delta_{\mathbf m, \mathbf n}$ is finite. 

\begin{thm}\label{equiv}Let $\mathbf m, \mathbf n$ be two strictly increasing sequences such that the difference set $\Delta_{\mathbf m, \mathbf n}$ is finite. Then a basis $\mathcal{B}$ is ($\mathbf n$, strong partially greedy) if and only if it is ($\mathbf m$, strong partially greedy). 
\end{thm}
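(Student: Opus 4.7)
The plan is to reduce the equivalence, via the characterization in Theorem \ref{m1}, to showing that the ($\mathbf m$, conservative) and ($\mathbf n$, conservative) properties coincide whenever $\Delta_{\mathbf m,\mathbf n}$ is finite; the quasi-greedy half of the characterization is intrinsic to $\mathcal{B}$ and does not depend on the indexing sequence. By the symmetric roles of $\mathbf m$ and $\mathbf n$ in the hypothesis, it suffices to assume $\mathcal{B}$ is $\Delta_{\mathbf n, c}$-($\mathbf n$, conservative) and deduce that $\mathcal{B}$ is ($\mathbf m$, conservative).

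Set $D := \Delta_{\mathbf m,\mathbf n}$ and observe that $\mathbf m\setminus D = \mathbf n\setminus D$. Given $(A,B)\in\mathbb{T}(\mathbf m)$, I would split $A = A_1\sqcup A_2$ with $A_1 := A\setminus D$ and $A_2 := A\cap D$, and put $B' := B\setminus D$. Then $A_1 \subset \mathbf m\setminus D \subset \mathbf n$, and
$B'\cap \mathbf n \,=\, B\cap(\mathbf n\setminus D) \,=\, B\cap(\mathbf m\setminus D) \,\subset\, B\cap \mathbf m$,
so the hypothesis $A < B\cap \mathbf m$ forces $A_1 < B'\cap \mathbf n$. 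If $|A_1| > |B'|$, I augment $B'$ with at most $|D|$ integers from $\mathbf n$ chosen strictly above $\max(A_1\cup B')$ to obtain $\widetilde B \supseteq B'$ with $|\widetilde B|\ge |A_1|$ and $A_1 < \widetilde B\cap \mathbf n$; otherwise I take $\widetilde B := B'$. In either case $(A_1,\widetilde B)\in \mathbb{T}(\mathbf n)$, so the ($\mathbf n$, conservative) hypothesis gives $\|1_{A_1}\|\le \Delta_{\mathbf n, c}\|1_{\widetilde B}\|$.

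To close the argument, I would compare $\|1_{\widetilde B}\|$ and $\|1_{A_2}\|$ with $\|1_B\|$ using only semi-normalization. The triangle inequality yields $\|1_{A_2}\|\le |D|c_2$ and $\|1_{\widetilde B}\|\le \|1_B\| + 2|D|c_2$, while evaluating any biorthogonal functional $e_n^*$ with $n\in B$ at $1_B$ shows $\|1_B\|\ge 1/c_2$. Hence every additive error term of order $|D|c_2$ is absorbed into a multiple of $\|1_B\|$, and combining the estimates produces $\|1_A\|\le \|1_{A_1}\| + \|1_{A_2}\|\le C\|1_B\|$ for a constant $C$ depending only on $\Delta_{\mathbf n, c}$, $|D|$, and $c_2$. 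This is the desired ($\mathbf m$, conservative) inequality. The one genuine technical point is the cardinality mismatch between $|A_1|$ and $|B'|$ created by discarding $D$; this is precisely where the augmentation of $B'$ by tail elements of $\mathbf n$, which is possible because $\mathbf n$ is infinite, is essential, and the finiteness of $D$ then makes every remaining perturbation harmless.
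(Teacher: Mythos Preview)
Your argument is correct and follows the same overall scheme as the paper—reduce to Theorem \ref{m1} and transfer conservativity—but the implementation differs in two pleasant ways. The paper splits $A$ and $B$ by a threshold $N$ beyond which $\mathbf m$ and $\mathbf n$ agree, and then handles the possible cardinality mismatch $|A_2|>|B_2|$ by partitioning $A_2$ into two pieces, invoking suppression quasi-greediness to pass from $\|1_{B_2}\|$ to $\|1_B\|$. You instead split by membership in $D=\Delta_{\mathbf m,\mathbf n}$, fix the cardinality mismatch by augmenting $B'$ with at most $|D|$ tail elements of $\mathbf n$, and compare $\|1_{\widetilde B}\|$ to $\|1_B\|$ using only the triangle inequality and semi-normalization. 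Your route therefore actually establishes the slightly stronger fact that ($\mathbf n$, conservative) alone implies ($\mathbf m$, conservative) when $\Delta_{\mathbf m,\mathbf n}$ is finite, without appealing to quasi-greediness in that step; the paper's version is marginally less sharp there but gives cleaner explicit constants in terms of $\mathbf C_{\mathbf n,sp}$.
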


\begin{proof}
Assume that $\mathcal{B}$ is $\mathbf C_{\mathbf n, sp}$-($\mathbf n$, strong partially greedy). Let $D_1 = \{i: i\in\mathbf m, i\notin \mathbf n\}$ and $D_2 = \{i: i\in \mathbf n, i\notin\mathbf m\}$. Since $D_1$ and $D_2$ are finite, there exists $N$ such that $n_i, m_i > N$ implies that $m_i\in \mathbf n$, $n_i\in \mathbf m$. According to Theorem \ref{m1}, we need only to show that $\mathcal{B}$ is ($\mathbf m$, conservative). Pick finite sets $(A,B)\in \mathbb{T}(\mathbf m)$ with $2(N+1)\le |A|\le |B|$. Define 
\begin{align*}
    A_1 &\ =\ \{i\in A: i\le N\}\mbox{ and }A_2 \ =\ A\backslash A_1,\\
    B_1 &\ =\ \{i\in B: i\le N\}\mbox{ and }B_2 \ =\ B\backslash B_1.
\end{align*}
For $j\in B$, we have
$$\|1_{A_1}\|\ \le\ N\sup_n\|e_n\|\mbox{ and } \|1_B\|\sup_{n}\|e_n^*\|\ \ge\ |e_j^*(1_B)|\ =\ 1.$$
Hence, 
\begin{equation}\label{e10}\|1_{A_1}\|\ \le\ N\sup_{n}\|e_n\|\sup_{n}\|e_n^*\|\|1_B\|\ \le\ Nc_2^2\|1_B\|.\end{equation}
We claim that $A_2 < B_2\cap \mathbf n$. Suppose otherwise; that is, there exist $k\in B_2\cap \mathbf n$ and $j\in A_2$ such that $j \ge k$. Since $k\in B_2\cap \mathbf n$, $k\in B\cap \mathbf m$. Since $j\in A_2$, $j\in A$. However, the fact that $j \ge k$ contradicts our assumption that $A < B\cap \mathbf m$.

Case 1: $|A_2|\le |B_2|$. Since $\mathcal{B}$ is $\mathbf C_{\mathbf n, sp}$-($\mathbf n$, conservative) and $\mathbf C_{\mathbf n, sp}$-suppression quasi-greedy, we get 
$$\|1_{A_2}\|\ \le\ \mathbf C_{\mathbf n, sp}\|1_{B_2}\|\ \le\  \mathbf C^2_{\mathbf n, sp}\|1_{B}\|,$$
which, together with \eqref{e10}, gives
$$\|1_{A}\|\ \le\ \|1_{A_1}\| + \|1_{A_2}\|\ \le\ (Nc_2^2 + \mathbf C^2_{\mathbf n, sp})\|1_B\|.$$

Case 2: $|A_2| > |B_2|$. We have $|A_2|\le |A|$ and $|B_2|\ge |B| - N\ge |A|-N$. Since $|A|\ge 2(N+1)$, we get 
$$\frac{|A_2|}{2}+1\ \le \ \frac{|A|}{2}+1\ \le\ |A|-N\ \le\ |B_2|.$$
Therefore, we can partition $A_2$ into two disjoint sets $A_{2,1}$ and $A_{2,2}$ whose sizes $|A_{2,1}|, |A_{2,2}|\le |B_2|$.
Using the same argument as in Case 1, we obtain
$$\|1_{A_{2,i}}\|\ \le\ \mathbf C^2_{\mathbf n, sp}\|1_{B}\|, \mbox{ for }i = 1,2.$$
Hence,
$$\|1_A\|\ \le\ \|1_{A_1}\| + \|1_{A_2}\|\ \le\ (Nc_2^2 + 2\mathbf C^2_{\mathbf n, sp})\|1_B\|.$$

We have shown that for $(A, B)\in \mathbb{T}(\mathbf m)$ with $2(N+1)\le |A| \le |B|$, it holds that 
$$\|1_A\|\ \le\ (Nc_2^2 + 2\mathbf C^2_{\mathbf n, sp})\|1_B\|.$$
If $|A| < 2(N+1)$, then 
$$\|1_A\| < 2(N+1)\sup_{n}\|e_n\|\le 2(N+1)\sup_{n}\|e_n\|\sup_{n}\|e_n^*\|\|1_B\|\ \le\ 2(N+1)c_2^2\|1_B\|.$$
This completes our proof that $\mathcal{B}$ is ($\mathbf m$, conservative) and thus, ($\mathbf m$, strong partially greedy).
\end{proof}

\section{Lebesgue inequalities for the ($\mathbf n$, strong partially greedy) parameter $\widehat{\mathbf L}_m^{\mathbf n}$}\label{Lebesgue}

\subsection{($\mathbf n$, strong partially greedy)-type parameters}

For $m\in\mathbb{N}_0: = \mathbb{N}\cup \{0\}$, we capture the error term of the TGA by 
$$\gamma_m(x)\ : =\ \sup_{G_m(x)}\|x-G_m(x)\|.$$
Let $\widehat{\mathbf L}^{\mathbf n}_{m}$ denote the smallest constant such that 
$$\gamma_m(x) \ \le\ \widehat{\mathbf L}^{\mathbf n}_{m}\widehat{\sigma}^{\mathbf n}_m(x), \forall x\in X.$$
A greedy operator, $\mathcal{G}_m:X\rightarrow X$, maps each $x$ to a greedy sum of $x$ of order $m$ in $G(x,m)$; that is, $\mathcal{G}_m(x) = P_{A}(x)$ for some $A\in G(x, m)$.
Given two greedy operators $\mathcal G_m$ and $\mathcal G_n$, we write $\mathcal G_m < \mathcal G_n$ when $m < n$ and $\supp(\mathcal G_m(x))\subset \supp(\mathcal G_n(x))$ for all $x\in X$.
We list ($\mathbf n$, strong partially greedy)-type parameters that are of interests:
\begin{enumerate}
    \item Quasi-greedy constants:
    \begin{align*}g_m&\ :=\ \sup\{\|\mathcal{G}_k\|: k\le m\}\mbox{ and }g_m^c\ :=\ \sup\{\|I-\mathcal{G}_k\|: k\le m\},\\
    \tilde{g}_m&\ :=\ \sup\{\|\mathcal{G}_k - \mathcal{G}_j\|: 0\le k < j\le m, \mathcal{G}_k < \mathcal{G}_j\}.
    \end{align*}
    \item ($\mathbf n$, superconservative) constant:
        $$sc^{\mathbf n}_m\ :=\ \sup\left\{\frac{\|1_{\varepsilon A}\|}{\|1_{\delta B}\|}: (A,B)\in \mathbb{T}(\mathbf n), |A|\le |B| \le m, A\le n_m, \mbox{ signs }\varepsilon, \delta\right\}.$$
    \item ($\mathbf n$, PSLC) constant: 
    \begin{align*}
    &\omega^{\mathbf n}_m \ :=\ \\
    &\sup_{\substack{\varepsilon, \delta\\ \|x\|_\infty\le 1}}\left\{\frac{\|x+1_{\varepsilon A}\|}{\|x+ 1_{\delta B}\|}: |A|\le |B|\le m, A < (\supp(x)\sqcup B)\cap \mathbf n, A\le n_m, A\subset \mathbf n\right\}.\end{align*}
\end{enumerate}

\begin{lem}\cite[Lemmas 2.3 and 2.5]{BBG}\label{b2g}
If $x\in X$ and $\varepsilon = (\sgn(e_n^*(x)))$, then
\begin{enumerate}
    \item[i)] $\min_{n\in\Lambda} |e_n^*(x)|\|1_{\varepsilon \Lambda}\| \le \tilde{g}_m\|x\|, \forall \Lambda\in G(x, m)$.
    \item[ii)] $\|T_\alpha(x)\|\le g^c_{|\Lambda_\alpha|}\|x\|$,
\end{enumerate}
where $T_\alpha$ is the truncation operator at $\alpha$ and $\Lambda_\alpha = \{n: |e_n^*(x)|> \alpha\}$.
\end{lem}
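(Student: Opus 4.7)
The natural strategy is to realize each of $\alpha 1_{\varepsilon \Lambda}$ and $T_\alpha(x)$ as the image of a suitable auxiliary element under, respectively, a difference of nested greedy operators and a greedy remainder operator. For (i), set $\alpha := \min_{n \in \Lambda}|e_n^*(x)|$ and consider the auxiliary element $y := T_\alpha(x) = \alpha 1_{\varepsilon \Lambda} + P_{\Lambda^c}(x)$. Since $\Lambda \in G(x,m)$ forces $\max_{n \notin \Lambda}|e_n^*(x)| \le \alpha$, all coefficients of $y$ on $\Lambda$ have modulus exactly $\alpha$ while those on $\Lambda^c$ satisfy $|e_n^*(y)| \le \alpha$. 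Hence $\Lambda$ is a greedy set of $y$ of order $m$, and a suitable greedy operator satisfies $\mathcal{G}_m(y) = P_\Lambda(y) = \alpha 1_{\varepsilon \Lambda}$. Applying the definition of $\tilde{g}_m$ to the nested pair $\mathcal G_0 < \mathcal G_m$ yields
\[
\alpha \|1_{\varepsilon \Lambda}\| \;=\; \|\mathcal G_m(y) - \mathcal G_0(y)\| \;\le\; \tilde g_m \|y\| \;=\; \tilde g_m \|T_\alpha(x)\|,
\]
so (i) reduces, up to a multiplicative factor arising from the truncation estimate, to controlling $\|T_\alpha(x)\|$ by $\|x\|$, which is precisely the content of (ii).

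For (ii), with $k = |\Lambda_\alpha|$, the plan is to realize $T_\alpha(x) = (I - \mathcal G_k)(z)$ for an auxiliary element $z$ with $\|z\| \le \|x\|$. The candidate construction is to perturb the coefficients of $x$ on $\Lambda_\alpha$ by multiplying them by a large scalar $\lambda$, so that $\Lambda_\alpha$ remains the strict greedy set of order $k$ for the perturbed $z$, and to arrange that $z - P_{\Lambda_\alpha}(z) = T_\alpha(x)$. A limiting argument (letting $\lambda \to \infty$, or taking a weak accumulation point of such perturbations) would transfer the norm bound, yielding $\|T_\alpha(x)\| = \|(I - \mathcal G_k)(z)\| \le g_k^c \|z\| \le g_k^c \|x\|$.

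The main obstacle is coupling the two parts tightly enough to achieve the sharp constants: the chain above naturally yields $\alpha \|1_{\varepsilon \Lambda}\| \le \tilde g_m g_k^c \|x\|$ for (i), which is a priori weaker than the claimed bound $\tilde g_m \|x\|$. The sharp constant in (i) presumably requires an approach that avoids $T_\alpha(x)$ as the intermediary --- for instance, selecting a single nested pair $(\mathcal G_k, \mathcal G_j)$ applied to $x$ itself whose difference is directly comparable to $\alpha 1_{\varepsilon \Lambda}$ via a summation-by-parts decomposition of $P_\Lambda(x)$ along the levels of $|e_n^*(x)|$. Likewise, for (ii) the naive substitution of coefficients on $\Lambda_\alpha$ does not preserve the norm bound, so the argument must work at the level of the limiting perturbation, where the greedy projection dominates and the remainder stabilizes to $T_\alpha(x)$. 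Once (ii) is established in this refined form, (i) follows immediately from the display above.
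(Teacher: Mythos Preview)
The paper does not prove this lemma; it is quoted from \cite{BBG} without proof, so there is no in-paper argument to compare against. I will therefore assess your proposal on its own merits.

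Your approach to (ii) has a structural error. You propose to realize $T_\alpha(x)$ as $(I-\mathcal G_k)(z)$ for some auxiliary $z$ with $\Lambda_\alpha$ as its greedy set of order $k$. But $(I-\mathcal G_k)(z)=z-P_{\Lambda_\alpha}(z)$ has \emph{zero} coefficients on $\Lambda_\alpha$, whereas $T_\alpha(x)$ has coefficients of modulus exactly $\alpha$ there. No choice of $z$ (and no limiting procedure in $\lambda$) can repair this, so the plan cannot work as stated. The argument in \cite{BBG} proceeds instead by Abel summation: ordering $\Lambda_\alpha=\{k_1,\dots,k_m\}$ with $a_j:=|e_{k_j}^*(x)|$ decreasing and setting $b_j:=\alpha/a_j\in(0,1)$, one checks that
\[
T_\alpha(x)\;=\;(1-b_m)\,(I-\mathcal G_m)(x)\;+\;b_1\,x\;+\;\sum_{j=1}^{m-1}(b_{j+1}-b_j)\,(I-\mathcal G_j)(x),
\]
a genuine convex combination of vectors each of norm at most $g_m^c\|x\|$.

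For (i), you correctly diagnose that routing through $T_\alpha(x)$ loses the sharp constant, and your parenthetical hint about a ``summation-by-parts decomposition of $P_\Lambda(x)$ along the levels of $|e_n^*(x)|$'' is exactly the right idea --- but you do not carry it out. With the same ordering and $c_j:=\alpha/a_j$ (now $c_m=1$), one has
\[
\alpha\,1_{\varepsilon\Lambda}\;=\;\sum_{j=1}^{m}c_j\bigl(\mathcal G_j-\mathcal G_{j-1}\bigr)(x)\;=\;\sum_{j=1}^{m}(c_j-c_{j-1})\bigl(\mathcal G_m-\mathcal G_{j-1}\bigr)(x),
\]
again a convex combination (take $c_0=0$), and each term is a difference of nested greedy operators applied to $x$, giving $\alpha\|1_{\varepsilon\Lambda}\|\le \tilde g_m\|x\|$ directly. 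Both parts are thus single, self-contained convexity arguments applied to $x$; neither reduces to the other in the way you outline.
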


\subsection{Bounds for $\widehat{\mathbf L}^{\mathbf n}_m$}

\begin{thm}[Upper bounds]\label{m6}
For $m\in\mathbb{N}$,
\begin{enumerate}
    \item[i)] $\widehat{\mathbf L}^{\mathbf n}_m \le 1+ 2\kappa m$, where $\kappa = \sup_{j,k}\|e_j\|\|e_k^*\|$.
    \item[ii)] $\widehat{\mathbf L}^{\mathbf n}_m \le g_m^c + \tilde{g}_m sc^{\mathbf n}_m$.
    \item[iii)] $\widehat{\mathbf L}^{\mathbf n}_m\le g^c_{m-1}\omega^{\mathbf n}_m$.
\end{enumerate}
\end{thm}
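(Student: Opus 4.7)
Setup. Fix $x\in X$, a greedy set $A$ of $x$ of order $m$, and $0\le k\le m$. Put $B=\{n_1,\ldots,n_k\}$, $y=x-P^{\mathbf n}_k(x)=x-P_B(x)$, $F=A\setminus B$, $E=B\setminus A$, $\alpha=\min_{n\in A}|e_n^*(x)|$, and $\varepsilon_n=\operatorname{sgn}(e_n^*(x))$. Directly from $A\in G(x,m)$ and $B=\{n_1,\ldots,n_k\}$, one has $|E|\le|F|\le m$, $E\subset\mathbf n$, $E<F\cap\mathbf n$, $E\le n_m$, and $E<\operatorname{supp}(x-P_A(x)-P_E(x))\cap\mathbf n$. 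In every part, the task is to bound $\|x-P_A(x)\|$ in terms of $\|y\|$ and then infimize over $k$.

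\emph{Proof of (i).} Apply the triangle inequality to the identity $x-P_A(x)=y+P_E(x)-P_F(x)$. For $n\in F$ we have $e_n^*(x)=e_n^*(y)$, hence $|e_n^*(x)|\,\|e_n\|\le\|e_n\|\|e_n^*\|\|y\|\le\kappa\|y\|$. For $n\in E$ pick any $n'\in F$ (possible because $|F|\ge|E|$); the greedy inequality gives $|e_n^*(x)|\le|e_{n'}^*(x)|=|e_{n'}^*(y)|\le\|e_{n'}^*\|\|y\|$, so again $|e_n^*(x)|\,\|e_n\|\le\kappa\|y\|$. Summing at most $m$ terms for each of $P_F(x)$ and $P_E(x)$ gives $\|x-P_A(x)\|\le(1+2m\kappa)\|y\|$.

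\emph{Proof of (iii).} Mimic the proof of Proposition \ref{p2} with the sharper constant $\omega_m^{\mathbf n}$ in place of $\Delta_{\mathbf n,pl}$. The compatibilities above allow the Lemma \ref{l2}-style characterisation of $\omega_m^{\mathbf n}$ to be applied to $x-P_A(x)$ with perturbation set $E$ and auxiliary set $F$, yielding
\[
\|x-P_A(x)\|\ \le\ \omega_m^{\mathbf n}\bigl\|x-P_A(x)-P_E(x)+\alpha 1_{\varepsilon F}\bigr\|.
\]
A coordinate check shows the vector on the right equals $T_\alpha(y)$. Now $\Lambda_\alpha(y)\subset F$, and in fact $|\Lambda_\alpha(y)|\le m-1$: the minimum $\alpha$ is attained at some $n_0\in A$; if $n_0\in F$ then $n_0\notin\Lambda_\alpha(y)$, and if $n_0\in A\cap B$ then $|F|\le m-1$ already. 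Lemma \ref{b2g}.ii) gives $\|T_\alpha(y)\|\le g_{m-1}^c\|y\|$, and (iii) follows.

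\emph{Proof of (ii).} Decompose $x-P_A(x)=\bigl(x-P_{A\cup B}(x)\bigr)+P_E(x)$. Since $F$ is a greedy set of $y$ of order $|F|\le m$ and $x-P_{A\cup B}(x)=y-P_F(y)=(I-\mathcal G_{|F|})(y)$, the first summand satisfies $\|x-P_{A\cup B}(x)\|\le g_m^c\|y\|$. For the second, Lemma \ref{b2g}.i) applied to $y$ with the greedy set $F$ yields $\alpha\|1_{\varepsilon F}\|\le\tilde g_m\|y\|$. Since the pair $(E,F)$ meets the size and location constraints of the $sc_m^{\mathbf n}$-definition, $\|1_{\delta E}\|\le sc_m^{\mathbf n}\|1_{\varepsilon F}\|$ with $\delta_n=\operatorname{sgn}(e_n^*(x))$ on $E$, so $\alpha\|1_{\delta E}\|\le\tilde g_m sc_m^{\mathbf n}\|y\|$. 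A truncation-based comparison, exploiting that $|e_n^*(x)|\le\alpha$ on $E$ together with Lemma \ref{b2g}.ii), converts this into $\|P_E(x)\|\le\alpha\|1_{\delta E}\|$, and adding the two estimates yields (ii).

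\emph{Main obstacle.} Parts (i) and (iii) are essentially mechanical once the combinatorial compatibilities of $(E,F)$ are verified and the key identity $T_\alpha(y)=x-P_A(x)-P_E(x)+\alpha 1_{\varepsilon F}$ is recorded. The delicate point is the last step of (ii): converting the unit-vector estimate $\alpha\|1_{\delta E}\|\le\tilde g_m sc_m^{\mathbf n}\|y\|$ into a norm bound on $P_E(x)$ itself without picking up extraneous quasi-greedy constants. This requires carefully pairing the coefficient-size bound $|e_n^*(x)|\le\alpha$ on $E$ with the truncation inequality, avoiding the looser UL-type estimates that would insert an extra factor.
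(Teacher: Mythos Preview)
Your arguments for parts (i) and (iii) are correct and essentially match the paper's proof (for (i) you spell out the estimate the paper merely cites; for (iii) your identity $T_\alpha(y)=x-P_A(x)-P_E(x)+\alpha 1_{\varepsilon F}$ and the case split for $|\Lambda_\alpha(y)|\le m-1$ are exactly right).

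Part (ii) has a genuine gap at the very step you flag as the ``main obstacle.'' The inequality $\|P_E(x)\|\le\alpha\|1_{\delta E}\|$ for the \emph{single} sign $\delta_n=\operatorname{sgn}(e_n^*(x))$ is not true in general: it asserts that replacing each coefficient $e_n^*(x)$ (of modulus $\le\alpha$) by $\alpha\,\operatorname{sgn}(e_n^*(x))$ cannot decrease the norm, which is an unconditionality-type statement that fails for arbitrary bases (e.g.\ in $\mathbb{R}^3$ with $\|(a,b,c)\|=\max(|a|,|b|,|c|,|a-b+c|)$ one has $\|e_1+e_3\|=2>1=\|e_1+e_2+e_3\|$). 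Lemma~\ref{b2g}(ii) does not help here: truncation lowers large coefficients to $\alpha$, it does not raise small ones, so there is no way to realize $\alpha 1_{\delta E}$ as $T_\alpha$ of something dominated by $P_E(x)$.

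The fix is elementary and introduces no extra quasi-greedy constants: use the convexity bound
\[
\|P_E(x)\|\ \le\ \max_{n\in E}|e_n^*(x)|\cdot\sup_{\delta}\|1_{\delta E}\|\ \le\ \alpha\,\sup_{\delta}\|1_{\delta E}\|,
\]
which holds for any basis because $(e_n^*(x)/\alpha)_{n\in E}$ lies in the closed convex hull of sign vectors. Since $sc_m^{\mathbf n}$ is by definition a supremum over \emph{all} signs, one still gets $\sup_\delta\|1_{\delta E}\|\le sc_m^{\mathbf n}\|1_{\varepsilon F}\|$, and then your application of Lemma~\ref{b2g}(i) finishes the estimate $\|P_E(x)\|\le \tilde g_m\,sc_m^{\mathbf n}\|y\|$. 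This is exactly how the paper proceeds; the only change from your write-up is replacing the single $\delta$ by the supremum and invoking convexity rather than truncation.
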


\begin{thm}[Lower bounds]\label{m7}
For $m\in\mathbb{N}$, $$\max\{g_m^c, \omega^{\mathbf n}_m\}\ \le\ \max_{1\le k\le m}\widehat{\mathbf L}^{\mathbf n}_k.$$
\end{thm}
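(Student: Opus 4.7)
The plan is to bound $g_m^c$ and $\omega_m^{\mathbf n}$ separately by $L := \max_{1 \le k \le m} \widehat{\mathbf L}^{\mathbf n}_k$. The bound on $g_m^c$ is immediate: for any $k \le m$, $x \in X$, and greedy operator $\mathcal{G}_k$,
$$\|x - \mathcal{G}_k(x)\|\ \le\ \gamma_k(x)\ \le\ \widehat{\mathbf L}^{\mathbf n}_k\, \widehat{\sigma}^{\mathbf n}_k(x)\ \le\ \widehat{\mathbf L}^{\mathbf n}_k\, \|x\|,$$
where the last step uses the $j = 0$ term in the minimum defining $\widehat{\sigma}^{\mathbf n}_k(x)$; taking the supremum over $k \le m$ gives $g_m^c \le L$.

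The main step is the bound $\omega_m^{\mathbf n} \le L$. I will fix admissible $(x, A, B, \varepsilon, \delta)$ with $A \ne \emptyset$ (the case $A = \emptyset$ reduces to the $g_m^c$ argument applied to $y = x + (1+\eta) 1_{\delta B}$ with order $|B|$ and $j = 0$). Let $n_{i_p} := \max A$, so $i_p \le m$, and set $k := \max\{|B|, i_p\} \le m$. Since $|A| \le |B|$, $|\{n_1, \ldots, n_{i_p}\} \setminus A| = i_p - |A| \ge k - |B|$, so one may choose $C \subset \{n_1, \ldots, n_{i_p}\} \setminus A$ with $|C| = k - |B|$. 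The constraint $A < (\supp(x) \sqcup B) \cap \mathbf n$, combined with $C \subset \{n_1, \ldots, n_{i_p}\}$, guarantees that $A, C, \supp(x), B$ are pairwise disjoint and that $\supp(y) \cap \{n_1, \ldots, n_{i_p}\} = A \cup C$ for the auxiliary vector
$$y\ :=\ x + 1_{\varepsilon A} + (1 + \tfrac{\eta}{2})\, 1_{\xi C} + (1+\eta)\, 1_{\delta B}, \qquad \eta > 0,\ \xi\ \text{any sign}.$$
The magnitudes of the coefficients of $y$ are $1 + \eta$ on $B$, $1 + \tfrac{\eta}{2}$ on $C$, $1$ on $A$, and at most $\|x\|_\infty \le 1$ on $\supp(x)$, which forces the unique greedy set of $y$ of order $k = |B| + |C|$ to be $B \cup C$, yielding $y - \mathcal{G}_k(y) = x + 1_{\varepsilon A}$. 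Meanwhile, the identity $\supp(y) \cap \{n_1, \ldots, n_{i_p}\} = A \cup C$ gives $P^{\mathbf n}_{i_p}(y) = 1_{\varepsilon A} + (1 + \tfrac{\eta}{2})\, 1_{\xi C}$, hence $y - P^{\mathbf n}_{i_p}(y) = x + (1+\eta)\, 1_{\delta B}$. Since $i_p \le k$, $\widehat{\sigma}^{\mathbf n}_k(y) \le \|x + (1+\eta)\, 1_{\delta B}\|$, and combining these estimates yields
$$\|x + 1_{\varepsilon A}\|\ =\ \|y - \mathcal{G}_k(y)\|\ \le\ \widehat{\mathbf L}^{\mathbf n}_k\, \|x + (1+\eta)\, 1_{\delta B}\|.$$
Letting $\eta \to 0^+$ gives $\|x + 1_{\varepsilon A}\| \le L\, \|x + 1_{\delta B}\|$, which is the desired bound.

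The principal obstacle is designing a single auxiliary vector $y$ for which a specific greedy operator and the projection $P^{\mathbf n}_{i_p}$ cleanly extract complementary pieces whose norms are, respectively, the two sides of the ratio appearing in $\omega_m^{\mathbf n}$. The fill set $C$ is the crucial device in the regime $i_p > |B|$: without it, the greedy order needed to expose $1_{\varepsilon A}$ would exceed $|B|$ and force the greedy operator to invade $A$ itself, whereas with $C$ in place the top-$k$ greedy set $B \cup C$ avoids $A$, and simultaneously $A \cup C$ fills out $\supp(y) \cap \{n_1, \ldots, n_{i_p}\}$ exactly, so that $P^{\mathbf n}_{i_p}$ captures $A \cup C$ and leaves $x + (1+\eta)\, 1_{\delta B}$. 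The requirement that such a $C$ exist with cardinality $k - |B|$ inside $\{n_1, \ldots, n_{i_p}\} \setminus A$ is exactly where the hypothesis $|A| \le |B|$ is used.
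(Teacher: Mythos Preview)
Your proof is correct and follows essentially the same approach as the paper: construct an auxiliary vector $y$ whose greedy remainder of a suitable order $k\in[i_p,m]$ equals $x+1_{\varepsilon A}$ and whose projection $P^{\mathbf n}_{i_p}$ leaves $x+1_{\delta B}$, then apply the definition of $\widehat{\mathbf L}^{\mathbf n}_k$. The only differences are cosmetic: the paper uses all-$1$ coefficients (so $D\cup B$ is merely \emph{some} greedy set, which suffices since $\gamma_k$ is a supremum over greedy sums) and argues existence of a fill set $D$ with $s\le |D\cup B|\le m$ by trimming $\{n_1,\dots,n_s\}\setminus A$, whereas you perturb by $\eta$ and $\eta/2$ to force uniqueness of the greedy set and pin down $k=\max\{|B|,i_p\}$ explicitly; neither refinement is necessary, but neither does any harm.
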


\begin{cor}\label{c1}
It holds that $\widehat{\mathbf L}^{\mathbf n}_1 = \omega^{\mathbf n}_1$.
\end{cor}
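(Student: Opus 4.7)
The plan is to derive the equality by specializing the two inequalities already provided in Theorems \ref{m6} and \ref{m7} to the case $m=1$, and showing that the constant $g^c_0$ that appears equals $1$, so the general upper bound collapses onto $\omega^{\mathbf n}_1$.

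First I would apply Theorem \ref{m6}(iii) with $m=1$ to obtain $\widehat{\mathbf L}^{\mathbf n}_1 \le g^c_0\,\omega^{\mathbf n}_1$. Since $g^c_0 = \sup\{\|I-\mathcal{G}_k\|:k\le 0\}$ and the only admissible $k$ is $k=0$, with $\mathcal{G}_0(x)=0$ for every $x\in X$, we have $\|I-\mathcal{G}_0\| = \|I\| = 1$, so $g^c_0 = 1$. This immediately gives the upper estimate $\widehat{\mathbf L}^{\mathbf n}_1 \le \omega^{\mathbf n}_1$.

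For the reverse direction, I would apply Theorem \ref{m7} with $m=1$, which yields
\[
\omega^{\mathbf n}_1 \ \le\ \max\{g^c_1, \omega^{\mathbf n}_1\} \ \le\ \max_{1\le k\le 1}\widehat{\mathbf L}^{\mathbf n}_k \ =\ \widehat{\mathbf L}^{\mathbf n}_1.
\]
Combining the two bounds gives $\widehat{\mathbf L}^{\mathbf n}_1 = \omega^{\mathbf n}_1$, which is the claim of the corollary. There is no real obstacle here: the corollary is purely a matter of chasing the base case $m=1$ through the two previously established inequalities, and noting that the quasi-greedy contribution $g^c_{m-1}$ trivializes at $m=1$.
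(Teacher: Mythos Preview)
Your proof is correct and follows exactly the same route as the paper: use Theorem~\ref{m6}(iii) at $m=1$ together with $g^c_0=1$ for the upper bound, and Theorem~\ref{m7} at $m=1$ for the lower bound. The only difference is that you spell out why $g^c_0=1$, which the paper leaves implicit.
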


\begin{proof}
By Theorem \ref{m6} item iii), $\widehat{\mathbf L}^{\mathbf n}_1 \le g^c_0\omega^{\mathbf n}_1 = \omega^{\mathbf n}_1$. By Theorem \ref{m7}, $\omega^{\mathbf n}_1\le \widehat{\mathbf L}^{\mathbf n}_1$. Hence, $\omega^{\mathbf n}_1 = \widehat{\mathbf L}^{\mathbf n}_1$.
\end{proof}

Before proving Theorem \ref{m6}, we need the following lemma. 
\begin{lem}\label{fl}
For $m\in\mathbb{N}$, let
 \begin{align*}
    &\widehat{\omega}^{\mathbf n}_m \ :=\ \\
    &\sup_{\substack{\varepsilon\\\|x\|_\infty\le 1\\A\subset\mathbf n, A\le n_m}}\left\{\frac{\|x\|}{\|x-P_A(x)+1_{\varepsilon B}\|}: |A|\le|B|\le m, A < (\supp(x-P_A(x))\sqcup B)\cap \mathbf n\right\}.\end{align*}
Then $\omega^{\mathbf n}_m = \widehat{\omega}^{\mathbf n}_m$.
\end{lem}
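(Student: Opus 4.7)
The plan is to prove both inequalities $\omega^{\mathbf n}_m \le \widehat{\omega}^{\mathbf n}_m$ and $\widehat{\omega}^{\mathbf n}_m \le \omega^{\mathbf n}_m$, following the same two-step pattern used for Lemma \ref{l2}. The underlying bijection between admissible data is the substitution $y \leftrightarrow x + 1_{\varepsilon A}$, or equivalently $x \leftrightarrow y - P_A(y)$, and the only nontrivial ingredient is a standard convexity trick on the coefficients $e_n^*(x)$ with $n\in A$.

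For the direction $\omega^{\mathbf n}_m \le \widehat{\omega}^{\mathbf n}_m$, I would start from admissible $x, A, B, \varepsilon, \delta$ for $\omega^{\mathbf n}_m$ and set $y := x + 1_{\varepsilon A}$. Since $A \subset \mathbf n$ and $A < (\supp(x)\sqcup B)\cap \mathbf n$, the set $A$ is disjoint from $\supp(x) \cup B$, so $P_A(y) = 1_{\varepsilon A}$ and $y - P_A(y) = x$. All conditions in the definition of $\widehat{\omega}^{\mathbf n}_m$ then hold for $(y, A, B, \delta)$: in particular $\|y\|_\infty \le 1$, and $A < (\supp(y - P_A(y))\sqcup B)\cap \mathbf n$ is exactly the given condition on $x$. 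Applying the defining inequality of $\widehat{\omega}^{\mathbf n}_m$ yields $\|x + 1_{\varepsilon A}\| = \|y\| \le \widehat{\omega}^{\mathbf n}_m \|x + 1_{\delta B}\|$.

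For the reverse direction, given admissible $x, A, B, \varepsilon$ for $\widehat{\omega}^{\mathbf n}_m$, I would use the fact that $\|x\|_\infty \le 1$ puts each $e_n^*(x)$ in the closed unit disk of $\mathbb{F}$, so we may write $e_n^*(x) = (\alpha_n + \beta_n)/2$ with $|\alpha_n| = |\beta_n| = 1$. Assembling coordinatewise produces signs $\alpha, \beta$ with $P_A(x) = \tfrac{1}{2}(1_{\alpha A} + 1_{\beta A})$, hence
$$x \ =\ \tfrac{1}{2}\bigl((x - P_A(x)) + 1_{\alpha A}\bigr) + \tfrac{1}{2}\bigl((x - P_A(x)) + 1_{\beta A}\bigr),$$
and the triangle inequality supplies a sign $\sigma \in \{\alpha, \beta\}$ with $\|x\| \le \|(x - P_A(x)) + 1_{\sigma A}\|$. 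Setting $y := x - P_A(x)$, the tuple $(y, A, B, \sigma, \varepsilon)$ is admissible for $\omega^{\mathbf n}_m$, and I would conclude $\|x\| \le \omega^{\mathbf n}_m \|x - P_A(x) + 1_{\varepsilon B}\|$.

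The only delicate point will be bookkeeping the set-theoretic side conditions: specifically, using $A \subset \mathbf n$ to upgrade $A < B \cap \mathbf n$ into outright disjointness $A \cap B = \emptyset$, and verifying that the support/ordering hypothesis $A < (\supp(\cdot) \sqcup B) \cap \mathbf n$ transports cleanly between $x$ and either $x + 1_{\varepsilon A}$ or $x - P_A(x)$. None of this is deep, but the definitions are dense with constraints, so this is where a careless write-up could slip.
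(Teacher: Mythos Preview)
Your proposal is correct and follows essentially the same approach as the paper: the substitution $y = x + 1_{\varepsilon A}$ for one direction, and for the other direction the convexity argument that bounds $\|x - P_A(x) + P_A(x)\|$ by $\sup_{\delta}\|x - P_A(x) + 1_{\delta A}\|$. The paper writes the convexity step tersely as a supremum over signs $\delta$, whereas you spell it out via the two-point decomposition $e_n^*(x) = (\alpha_n + \beta_n)/2$; these are the same idea, and your bookkeeping of the side conditions is accurate.
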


\begin{proof}
Let $x, A, B, \varepsilon$ be as in the definition of $\widehat{\omega}^{\mathbf n}_m$. We have
\begin{align*}
    \|x\|\ =\ \left\|x-P_A(x) + \sum_{n\in A}e_n^*(x)e_n\right\|&\ \le\ \sup_{\delta}\left\|x-P_A(x) + 1_{\delta A}\right\|\\
    &\ \le\ \omega^{\mathbf n}_m\left\|x-P_A(x) + 1_{\varepsilon B}\right\|.
\end{align*}
Hence, $\widehat{\omega}^{\mathbf n}_m\le \omega^{\mathbf n}_m$. 

Now, let $x, A, B, \varepsilon, \delta$ be as in the definition of $\omega^{\mathbf n}_m$. Let $y = x + 1_{\varepsilon A}$. We have
\begin{align*}
    \|x + 1_{\varepsilon A}\| \ =\ \|y\|\ \le\ \widehat{\omega}^{\mathbf n}_m\|y - P_A(y) + 1_{\delta B}\| \ =\ \widehat{\omega}^{\mathbf n}_m\|x + 1_{\delta B}\|.
\end{align*}
Hence, $\widehat{\omega}^{\mathbf n}_m\ge \omega^{\mathbf n}_m$, and we conclude that $\widehat{\omega}^{\mathbf n}_m= \omega^{\mathbf n}_m$.
\end{proof}

\begin{proof}[Proof of Theorem \ref{m6}]
i) For $m\ge 1$, let $\widetilde{\mathbf L}_m$ be the smallest constant such that 
$$\gamma_m(x)\ \le\ \widetilde{\mathbf L}_m\widetilde{\sigma}_m(x).$$
By \cite[Theorem 1.8]{BBG}, $\widetilde{\mathbf L}_m\le 1+ 2\kappa m$. Since $\widehat{\mathbf L}^{\mathbf n}_m\le \widetilde{\mathbf L}_m$, we obtain $\widehat{\mathbf L}^{\mathbf n}_m\le 1 + 2\kappa m$.

ii) Pick $x\in X$, $m\in\mathbb{N}$, and $A\in G(x, m)$. Choose $D = \{n_1, \ldots, n_k\}$ for some $k\le m$. Write 
\begin{equation}\label{e31}x-P_A(x)\ =\ P_{(A\cup D)^c}(x) + P_{D\backslash A}(x).\end{equation}
Since $A\backslash D$ is a greedy set of $x - P_{k}^{\mathbf n}(x)$ and 
\begin{align*}
    P_{(A\cup D)^c}(x)\ =\ (I-P_{A\backslash D})(x-P^{\mathbf n}_k(x)),
\end{align*}
we have \begin{equation}\label{e32}\|P_{(A\cup D)^c}(x)\|\ \le\ g_m^c\|x-P_k^\mathbf n(x)\|.\end{equation}
Let $\varepsilon = (\sgn(e_n^*(x)))$. Observe the following:
\begin{enumerate}
    \item[i)] $D\backslash A < (A\backslash D)\cap \mathbf n$;
    \item[ii)] $D\backslash A\subset\mathbf n$, $D\backslash A\le n_m$;
    \item[iii)] $|D\backslash A|\le |A\backslash D|\le m$;
\end{enumerate}
therefore, $\|1_{\delta (D\backslash A)}\|\le sc_m^{\mathbf n}\|1_{\varepsilon (A\backslash D)}\|$ for all signs $\delta$. We have
\begin{align}
    \|P_{D\backslash A}(x)\|&\ \le\ \max_{n\in D\backslash A}|e_n^*(x)|\sup_{\delta}\|1_{\delta (D\backslash A)}\|\mbox{ by convexity}\nonumber\\
    &\ \le\ sc_m^{\mathbf n}\min_{n\in A\backslash D}|e_n^*(x-P_k^{\mathbf n}(x))|\|1_{\varepsilon (A\backslash D)}\|\nonumber\\
    &\ \le\ \tilde{g}_{m}sc_m^{\mathbf n}\|x-P_k^{\mathbf n}(x)\|\mbox{ by Lemma \ref{b2g}}\label{e33}
\end{align}
By \eqref{e31}, \eqref{e32}, and \eqref{e33}, we obtain
$$\|x-P_A(x)\|\ \le\ (g_m^c+\tilde{g}_{m}sc_m^{\mathbf n})\|x-P^{\mathbf n}_k(x)\|,$$
and so, $\widehat{\mathbf L}^{\mathbf n}_m\le g_m^c+\tilde{g}_{m}sc_m^{\mathbf n}$.

iii) Let $x\in X$, $m\in\mathbb{N}$, and $A\in G(x, m)$. Fix $k\le m$. Set $E = \{1, \ldots, n_k\}\backslash A$, $F = A\backslash \{1, \ldots, n_k\}$, and $\alpha = \min_{n\in A}|e_n^*(x)|$. We check that $x-P_A(x), E, F$ satisfy the definition of $\widehat{\omega}^{\mathbf n}_m$:
\begin{enumerate}
    \item[a)] $|E|\le |F|\le m$, $E\subset\mathbf n$, and $E \le n_m$,
    \item[b)] $\|x-P_A(x)\|_\infty\le \alpha$,
    \item[c)] $E < (\supp(x-P_A(x)-P_E(x))\sqcup F)\cap \mathbf n$.
\end{enumerate}
Let $\varepsilon = (\sgn(e_n^*(x))$
By Lemma \ref{fl} and Lemma \ref{b2g}, we get
\begin{align*}
    \|x-P_A(x)\|&\ \le\ \omega^{\mathbf n}_m\|x-P_A(x) - P_E(x) + \alpha 1_{\varepsilon F}\|\\
                &\ =\ \omega^{\mathbf n}_m\|T_\alpha(x-P_{A\cup E}(x) + P_F(x))\|\\
                &\ \le \ \omega^{\mathbf n}_mg^c_{|\Lambda_\alpha|}\|x-P_k^{\mathbf n}(x)\|,
\end{align*}
where $\Lambda_\alpha = \{n: |e_n^*(x-P_k^{\mathbf n}(x))| > \alpha\}$. Since $|\Lambda_\alpha|\le m-1$, we obtain
$$\|x-P_A(x)\|\ \le\ \omega^{\mathbf n}_mg^c_{m-1}\|x-P_k^{\mathbf n}(x)\|.$$
This completes our proof that $\widehat{\mathbf L}^{\mathbf n}_m\le  \omega^{\mathbf n}_mg^c_{m-1}$.
\end{proof}

\begin{proof}[Proof of Theorem \ref{m7}]
First, we show that $\max_{1\le k\le m}\widehat{\mathbf L}^{\mathbf n}_k\ge g_m^c$.
Pick $x\in X$, $m\in\mathbb{N}$, and $A\in G(x, j)$ for some $j\le m$. We have
$$\|(I-P_A)x\|\ \le\ \widehat{\mathbf L}_j^{\mathbf n}\min_{k\le j}\|x-P^{\mathbf n}_{k}(x)\|\ \le\ \max_{1\le k\le m}\widehat{\mathbf L}_k^{\mathbf n}\|x\|.$$
Therefore, $g_m^c\le \max_{1\le k\le m}\widehat{\mathbf L}_k^{\mathbf n}$.

Next, we show that $\max_{1\le k\le m}\widehat{\mathbf L}^{\mathbf n}_k\ge \omega^{\mathbf n}_m$. Let $x, A, B, \varepsilon, \delta$ be chosen as in the definition of $\omega^{\mathbf n}_m$. Let $n_s = \max A$. Choose $D\subset\mathbf{n}$ such that $\max D < n_s$, $D\sqcup A$, and $s\le t := |D\cup B|\le m$. We show that $D$ actually exists, consider $D':= \{n_1, \ldots, n_s\}\backslash A$ and have
$$|D'\cup B| \ =\ |D'| + |B| \ =\ s + (|B| - |A|) \ \ge\ s.$$
If $|D'\cup B| \le m$, then choose $D = D'$. If $|D'\cup B| = m+p$ for some $p\ge 1$, then $|D'|\ge p$. Discard $p$ numbers from $D'$ to get $D$. Set 
$$y\ : =\ 1_{\varepsilon A} + 1_D + x+ 1_{\delta B}.$$
We have
$$\|x + 1_{\varepsilon A}\|\ =\ \|y - P_{D\cup B}(y)\|\ \le\ \widehat{\mathbf L}^{\mathbf n}_{|D\cup B|}\|y-P^{\mathbf n}_{s}(y)\|\ \le\ \max_{1\le k\le m}\widehat{\mathbf L}^{\mathbf n}_k\|x+1_{\delta B}\|.$$
We have shown that $\omega_m^{\mathbf n}\le \max_{1\le k\le m}\widehat{\mathbf L}^{\mathbf n}_k$.
\end{proof}

\subsection{Examples of optimality}

Our first example is a slight modification of \cite[Example 3.4]{BBL} (the summing basis), which shows the optimality of Theorem \ref{m6} item i) and Theorem \ref{m7}.

\begin{exa}\normalfont
Let $X$ be the completion of $c_{00}$ under the following norm: for $x = (x_1, x_2, \ldots)$, 
$$\|x\|\: = \ \sup_{m\ge 1}\left|\sum_{i=1}^{m}x_{n_i}\right| + \max_{n\notin \mathbf n}|x_n|.$$
The canonical basis $\mathcal{B} = (e_n)_{n=1}^\infty$ is monotone and normalized. The following hold
\begin{enumerate}
    \item[i)] $\|e^*_{n}\| = 1$ for all $n\notin \mathbf n$; $\|e^*_{n_1}\| = 1$; $\|e^*_{n_s}\| = 2$ for all $s> 1$.
    \item[ii)] $\omega^{\mathbf n}_m = \widehat{\mathbf L}^{\mathbf n}_m = \max_{1\le k\le m} \widehat{\mathbf L}^{\mathbf n}_k = 1 + 4m$.
\end{enumerate}

\begin{proof}[Proof of i)]
Let $n\notin \mathbf n$. Fix $x = (x_1, x_2, \ldots)\in X$. We have $|e^*_n(x)| = |x_n|\le \|x\|$. Hence, $\|e^*_n\|\le 1$. On the other hand, $e^*_n(e_n) = 1$, so $\|e^*_n\|\ge 1$. We conclude that $\|e^*_n\| = 1$. 

Assume $n = n_s\in \mathbf n$ and $s>1$.  We have 
$$|e^*_n(x)| \ = \ |x_n|\ \le\ \left|\sum_{i=1}^{s-1}x_{n_i}\right| + \left|\sum_{i=1}^sx_{n_i}\right|\ \le\ 2\|x\|;$$
hence, $\|e_n^*\|\le 2$. For a lower bound, we consider $y = -e_{n_{s-1}} + 2e_{n_s}$. Then $\|y\| = 1$, while 
$|e_n^*(y)|\ =\ 2$. Therefore, $\|e_n^*\|\ge 2$; we conclude that $\|e_n^*\| = 2$.

Finally, it is easy to check that $\|e^*_{n_1}\| = 1$.
\end{proof}

\begin{proof}[Proof of ii)]
By Theorem \ref{m6} item i), we have 
$\widehat{\mathbf L}^{\mathbf n}_k\le 1 + 4k$, for $\kappa = 2$. Hence, $\max_{1\le k\le m}\widehat{\mathbf L}^{\mathbf n}_k\le 1+4m$. We show that $\omega^{\mathbf n}_m\ge 1+4m$. Let 
\begin{align*}
    A &\ =\ \{n_1, n_2, n_3, \ldots, n_m\},\\
    B &\ =\ \{n_{m+2}, n_{m+5}, n_{m+8},\ldots, n_{4m-1}\},\\
    C &\ =\ \{n_{m+1}, n_{m+4}, n_{m+7}, \ldots, n_{4m-2}\}\cup \{n_{m+3}, n_{m+6}, \ldots, n_{4m}\}\cup\{n_{4m+1}\}.
\end{align*}
Let $x = \frac{1}{2}1_C$. Observe that 
$|A| = |B| = m$, $A< (B\sqcup \supp(x))\cap \mathbf n$, $A\subset\mathbf n$, $A\le n_m$.
Clearly, $\|x+1_A\| = 2m+\frac{1}{2}$, while $\|x-1_B\| = \frac{1}{2}$. Hence, 
$$\omega_m^\mathbf n\ \ge\ \frac{\|x+1_A\|}{\|x-1_B\|}\ =\ 4m + 1,$$
as desired. Therefore, 
$$4m+1\ \le\ \omega^{\mathbf n}_m\ \le\ \max_{1\le k\le m}\widehat{\mathbf L}^{\mathbf n}_k\ \le\ 1+4m.$$
Using $\widehat{\mathbf L}^{\mathbf n}_k\le 1 + 4k$ for all $k\in\mathbb{N}$, we obtain item ii). 
\end{proof}
\end{exa}

Next, we modify \cite[Example 3.5]{BBL} to show the optimality of Theorem \ref{m6} items ii) and iii). 

\begin{exa}\normalfont
Let $X$ be the completion of $c_{00}$ with respect to the following norm: for $x = (x_1, x_2, \ldots)$, 
$$\|x\| \ =\ \max\left\{\sum_{i=1}^{\infty}|x_{n_{2i-1}}|, \max_{n\notin\mathbf n}|x_n|, \max_{i}|x_{n_{2i}}|\right\}.$$
Let $\mathcal{B} = (e_n)_n$ be the canonical basis of $X$. Then $\mathcal{B}$ is $1$-unconditional and normalized. The following hold
\begin{enumerate}
    \item[i)] $g_m = \tilde{g}_m = g_m^c = 1$.
    \item[ii)] $sc^{\mathbf n}_{2m} = sc^{\mathbf n}_{2m-1} = m$.
    \item[iii)] $\omega^{\mathbf n}_{2m} = \omega^{\mathbf n}_{2m-1} = \widehat{\mathbf L}^{\mathbf n}_{2m-1} = \widehat{\mathbf L}^{\mathbf n}_{2m} = m+1$.
\end{enumerate}

\begin{proof}[Proof of i)]
Item i) follows from the $1$-unconditionality of $\mathcal{B}$.
\end{proof}

\begin{proof}[Proof of ii)] Let $m\in\mathbb{N}$.
Taking any set $A\subset\mathbf n$ with $A\le n_{2m}$ and sign $\varepsilon$, we have $\|1_{\varepsilon A}\|\le m$. Also, for any nonempty $B\subset\mathbb{N}$ and sign $\delta$, we have $\|1_{\delta B}\|\ge 1$. Hence, $sc^{\mathbf n}_{2m}\le m$. Since $(sc^{\mathbf n}_m)_m$ is increasing, it suffices to show that $sc^{\mathbf n}_{2m-1}\ge m$. Let
$$A_m = \{n_1, n_3,\ldots, n_{2m-1}\}\mbox{ and }B_m = \{n_{2m+2}, n_{2m+4}, \ldots, n_{4m}\}.$$
Then $|A_m| = |B_m| = m$, $A_m\subset\mathbf n$, $A_m \le n_{2m-1}$, and $A_m < B_m\cap \mathbf n$. We have
$$\|1_{A_m}\| \ = \ m, \mbox{ while }\|1_{B_m}\| \ = \ 1.$$
Hence, $sc_{2m-1}^{\mathbf n}\ge m$. We conclude that $sc_{2m-1}^{\mathbf n} = sc_{2m}^{\mathbf n} = m$.
\end{proof}

\begin{proof}[Proof of iii)]
By items i), ii), and Theorem \ref{m6} item ii), we obtain $\widehat{\mathbf L}_{2m-1}^{\mathbf n}\le m+1$ and $\widehat{\mathbf L}_{2m}^{\mathbf n}\le m+1$. Observe that 
$$\|1_{A_m}+e_{n_{2m+1}}\|\ =\ m+1, \mbox{ while }\|1_{B_m} + e_{n_{2m+1}}\|\ =\ 1.$$
Hence, $\omega_{2m-1}^{\mathbf n}\ge m+1$. According to Theorem \ref{m7}, we get
\begin{equation}\label{e40}m+1\ \le\ \omega^{\mathbf n}_{2m-1}\ \le\ \omega^{\mathbf n}_{2m}\ \le\ \max_{1\le k\le 2m}\widehat{\mathbf L}_k^{\mathbf n}\ \le\ m+1.\end{equation}
Let $\eta > 0$ and set 
$$x\ :=\ 1_{A_m} + e_{n_{2m+1}} + (1+\eta)1_{B_{2m}}.$$
Then
$$\|x-G_{2m-1}(x)\|\ \ge\ \|x-G_{2m}(x)\|\ =\ \|1_{A_m} + e_{n_{2m+1}}\|\ =\ m+1.$$
On the other hand,
$$\|x - P^{\mathbf n}_{2m}(x)\|\ \le\ \|x-P^{\mathbf n}_{2m-1}(x)\|\ =\ \|e_{n_{2m+1}} + (1+\eta)1_{B_{2m}}\|\ =\ 1+\eta.$$
Letting $\eta\rightarrow 0$, we get $\widehat{\mathbf L}^{\mathbf n}_{2m-1}\ge m+1$ and $\widehat{\mathbf L}^{\mathbf n}_{2m}\ge m+1$, which, combined with \eqref{e40}, gives iii). 
\end{proof}
\end{exa}

\section{Characterizations of $1$-($\mathbf n$, strong partially greedy) bases}\label{1PSLC}
\subsection{Characterizations of $1$-($\mathbf n$, strong partially greedy) bases}
Researchers have studied different greedy-type bases with constant $1$. Albiac and Wojtaszczyk \cite{AW} characterized $1$-greedy bases using $1$-suppression unconditionality and the so-called Property (A). Later, Albiac and Anserona \cite{AA0} showed that a basis is $1$-quasi-greedy if and only if it is $1$-suppression unconditional. They also showed that a basis is $1$-almost greedy if and only if it has Property (A) \cite{AA}. It is an open problem whether there exists a conditional basis which has Property (A). Recent advance in this direction gives an example of a basis that has Property (A) but is not $1$-suppression unconditional \cite{AABCO}.

Recently, Berasategui et al.\ characterized $1$-strong partially greedy bases as being $1$-partially symmetric for largest coefficients \cite{BBL}. We have the corresponding characterization of $1$-($\mathbf n$, strong partially greedy) bases.

\begin{thm}\label{m9}
A basis $\mathcal{B}$ is $1$-($\mathbf n$, strong partially greedy) if and only if it is $1$-($\mathbf n$, PSLC).
\end{thm}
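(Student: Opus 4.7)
The forward implication is immediate from Proposition \ref{p1}~ii): the bound $\Delta_{\mathbf n, pl} \le \mathbf C_{\mathbf n, sp}$ forces $\Delta_{\mathbf n, pl} \le 1$ whenever $\mathbf C_{\mathbf n, sp} = 1$.

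For the converse, the plan is to sharpen the proof of Proposition \ref{p2} so that the constant stays exactly equal to $1$. Fix $x \in X$, $m \in \mathbb{N}$, $A \in G(x, m)$, and $k \le m$, and adopt the notation of that proof: let $\alpha = \min_{n \in A}|e_n^*(x)|$ (WLOG positive and rescaled to $\alpha = 1$), $D = \{n_1, \ldots, n_k\}$, $E = D \setminus A$, $F = A \setminus D$, and $z = x - P_{A \cup D}(x)$. A direct check---using that both $F \cap \mathbf n$ and $\supp(z) \cap \mathbf n$ lie in $\{n_j : j > k\}$, while $E \subset \{n_1, \ldots, n_k\}$---confirms that the pair $(A', B') = (E, F)$ and sign $\delta_n = \sgn(e_n^*(x))$ satisfy the hypotheses of Lemma \ref{l2}. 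Since $\Delta_{\mathbf n, pl} = 1$, the lemma yields
$$\|x - P_A(x)\|\ \le\ \|z + 1_{\delta F}\|,$$
and the remaining work is to establish the ``scale-up'' inequality
\begin{equation*} \|z + 1_{\delta F}\|\ \le\ \|z + P_F(x)\|\ =\ \|x - P^{\mathbf n}_k(x)\|. \tag{$\star$} \end{equation*}

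For $(\star)$, the plan is to combine a Bernoulli-type convex decomposition with a further invocation of Lemma \ref{l2}. Writing $q_n := |e_n^*(x)| \ge 1$ on $F$, the identity $\delta_n e_n = \tfrac{1}{q_n}(q_n \delta_n e_n) + (1 - \tfrac{1}{q_n}) \cdot 0$ applied coordinate by coordinate represents $1_{\delta F}$ as an expectation $\mathbb{E}_\chi[P_\chi(x)]$ over random subsets $\chi \subset F$ with $\mathrm{Pr}(n \in \chi) = 1/q_n$ independently. Convexity of the norm then gives $\|z + 1_{\delta F}\| \le \mathbb{E}_\chi\|z + P_\chi(x)\|$, and each term $\|z + P_\chi(x)\|$ is to be bounded by $\|z + P_F(x)\|$ via Lemma \ref{l2} applied with $A' = \emptyset$ and $B' = F \setminus \chi$---after an appropriate rescaling so that the hypothesis $\|\cdot\|_\infty \le 1$ remains valid. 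The hard part will be carrying out this last step without introducing any factor greater than $1$: the intermediate vectors $z + P_\chi(x)$ can have coefficients of modulus exceeding $1$ on $\chi$, so the rescaling factors must be carefully tracked and absorbed, exploiting both the ordering $E < F \cap \mathbf n$ and the one-parameter structure of the Bernoulli decomposition. This delicate bookkeeping is the main technical ingredient of the proof and parallels the classical argument for $\mathbf n = \mathbb{N}$.
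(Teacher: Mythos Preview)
Your forward direction is fine. For the converse, you correctly isolate $(\star)$ as the crux, but the Bernoulli route you sketch does not complete. After the convexity step you need $\|z + P_\chi(x)\| \le \|z + P_F(x)\|$ for every $\chi \subset F$; invoking Lemma~\ref{l2} with $A'=\emptyset$, $B'=F\setminus\chi$ after rescaling by $M=\|z+P_\chi(x)\|_\infty$ only yields $\|z+P_\chi(x)\|\le\|z+P_\chi(x)+M\cdot 1_{\varepsilon(F\setminus\chi)}\|$, whose $(F\setminus\chi)$-coefficients are all equal to $M$, not the required individual values $q_n$. Passing from there to $\|z+P_F(x)\|$ would itself require a truncation-type bound, which is precisely what $(\star)$ asserts. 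The ``delicate bookkeeping'' you defer is therefore the entire content of the step, not a routine cleanup, and no classical argument along the lines you describe is available for this purpose.

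The paper avoids $(\star)$ altogether via a bootstrap on the Lebesgue parameters. From the hypothesis one has $\omega^{\mathbf n}_m=1$ for all $m$; then Theorem~\ref{m6} item iii) and Theorem~\ref{m7} combine to give $g_j^c \ge \max_{1\le k\le j+1}\widehat{\mathbf L}^{\mathbf n}_k \ge g_{j+1}^c$, and since $g_j^c\le g_{j+1}^c$ trivially, the sequence is constant: $g_j^c\equiv g_0^c=1$, whence $\widehat{\mathbf L}^{\mathbf n}_m=1$ for all $m$. If you wish to rescue a direct argument, the missing observation is that $1$-($\mathbf n$, PSLC) already forces $1$-suppression quasi-greedy: taking $A=\emptyset$ and $|B|=1$ in Definition~\ref{pslc} (after normalizing so the largest coefficient has modulus $1$) shows that removing a single largest coefficient never increases the norm, and iterating handles any greedy set. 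With $\mathbf C_\ell=1$ in hand, Theorem~\ref{bto} gives $\|T_1\|\le 1$, which is exactly $(\star)$.
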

\begin{proof}If $\mathcal{B}$ is $1$-($\mathbf n$, strong partially greedy), then Proposition \ref{p1} states that it is $1$-($\mathbf n$, PSLC). Assume that $\mathcal{B}$ is $1$-($\mathbf n$, PSLC), we know that $\omega^\mathbf n_m = 1$ for all $m\in\mathbb{N}$. Let $j\in\mathbb{N}_0$. By Theorem \ref{m6} item iii) and Theorem  \ref{m7}, we get
$$g_{j}^c\ \ge\ \max_{1\le k\le j+1}\widehat{\mathbf L}^{\mathbf n}_k\ \ge\ g_{j+1}^c;$$
hence, $g_j^c\ge g_{j+1}^c$ for all $j \in\mathbb{N}_0$. However, by definition, $g_{j}^c\le g_{j+1}^c$. Therefore, $g_j^c = g_{j+1}^c$ for all $j\in \mathbb{N}_0$. Since $g_0^c = 1$, we obtain that $g_j^c = 1$ for all $j\in\mathbb{N}_0$. This gives
$\widehat{\mathbf L}^{\mathbf n}_j = 1$ for all $j\in\mathbb{N}_0$ and so, $\mathcal{B}$ is $1$-($\mathbf n$, strong partially greedy).
\end{proof}

\begin{prop}\label{p10}
Let $\mathcal{B}$ be a basis of a Banach space $X$. The following are equivalent:
\begin{enumerate}
    \item [i)] $\mathcal{B}$ is $1$-($\mathbf n$, PSLC).
    \item [ii)] $\mathcal{B}$ satisfies the two following conditions simultaneously:
    \begin{enumerate}
        \item[a)] For $x\in X$ with $\|x\|_\infty\le 1$ and for all $k\notin \supp(x)$, 
        \begin{equation}\label{e42}\|x\|\ \le\ \|x + e_k\|.\end{equation}
        \item[b)] For $x\in X$ with $\|x\|_\infty \le 1$, for $s, t\in\mathbb{F}$ with $|s| = |t| = 1$, and for $j\in\mathbf n$, $j < (\{k\}\sqcup \supp(x))\cap \mathbf n$, 
        \begin{equation}\label{e43}\|x+se_j\|\ \le\ \|x+te_k\|.\end{equation}
    \end{enumerate}
\end{enumerate}
\end{prop}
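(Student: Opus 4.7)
The plan is to prove the two implications separately.

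For $(i)\Rightarrow(ii)$, I would specialize the $1$-$(\mathbf n,$ PSLC) inequality. Taking $A=\emptyset$, $B=\{k\}$, and $\delta_k=1$ reduces the inequality to (a); all hypotheses placing $(A,B)\in\mathbb{T}(\mathbf n)$ and $A<(B\sqcup\supp(x))\cap\mathbf n$ are either vacuous or reduce to $k\notin\supp(x)$. Taking $A=\{j\}$, $B=\{k\}$, $\varepsilon_j=s$, and $\delta_k=t$ reduces the inequality to (b), and the assumptions $j\in\mathbf n$ and $j<(\{k\}\sqcup\supp(x))\cap\mathbf n$ are precisely what place $(A,B)$ in $\mathbb{T}(\mathbf n)$ with $A<(B\sqcup\supp(x))\cap\mathbf n$.

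For the converse $(ii)\Rightarrow(i)$, my first step is to upgrade (a) to its signed form: for $\|x\|_\infty\le 1$, $k\notin\supp(x)$, and any scalar $\delta$ with $|\delta|=1$, $\|x\|\le\|x+\delta e_k\|$. This is immediate by applying (a) to $y=\delta^{-1}x$ (which has the same $\|\cdot\|_\infty$ and support as $x$) and invoking $|\delta|=1$ to identify $\|\delta^{-1}x\|=\|x\|$ and $\|\delta^{-1}x+e_k\|=\|x+\delta e_k\|$.

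Given $x, A, B, \varepsilon, \delta$ satisfying the hypotheses of $1$-$(\mathbf n,$ PSLC), I would then build a finite chain $x+1_{\varepsilon A}=w_0,w_1,\ldots,w_N=x+1_{\delta B}$ with $\|w_i\|\le\|w_{i+1}\|$ at every step. Enumerate $A=\{a_1<\cdots<a_r\}$, fix any injection $\phi:A\hookrightarrow B$, and set $B''=B\setminus\phi(A)$. In the \emph{swap phase}, for $i=1,\ldots,r$ in increasing order, apply (b) with $j=a_i$, $k=\phi(a_i)$, and base vector the current state minus $\varepsilon_{a_i}e_{a_i}$, which replaces $\varepsilon_{a_i}e_{a_i}$ by $\delta_{\phi(a_i)}e_{\phi(a_i)}$. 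Processing in increasing order is exactly what ensures the hypothesis $j<(\{k\}\sqcup\supp(\cdot))\cap\mathbf n$ of (b): the untouched $a_{i+1},\ldots,a_r$ lie in $\mathbf n$ and exceed $a_i$; the already-placed $\phi(a_{i'})$ for $i'<i$, when in $\mathbf n$, exceed $a_i$ because $A<B\cap\mathbf n$; $\supp(x)\cap\mathbf n$ exceeds $a_i$ by $A<\supp(x)\cap\mathbf n$; and $a_i<\phi(a_i)$ whenever $\phi(a_i)\in\mathbf n$. In the \emph{add phase}, for each $b\in B''$ in any order, apply the signed form of (a) to append $\delta_b e_b$; the required disjointness of $b$ from the current support follows from $B\cap\supp(x)=\emptyset$, $A\cap B=\emptyset$, and distinctness of the $B$-elements.

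The main obstacle I anticipate is the sign issue in the add phase: condition (a) as written only permits additions with coefficient $+1$, whereas $1$-$(\mathbf n,$ PSLC) demands arbitrary unimodular coefficients $\delta_b$. The scalar rescaling argument that upgrades (a) to its signed form dispatches this obstacle cleanly; once that is in place, verifying the (b)-hypothesis at each swap step reduces to tracking a handful of disjointness and order relations, all of which follow directly from $A<(B\sqcup\supp(x))\cap\mathbf n$ combined with $A\subset\mathbf n$.
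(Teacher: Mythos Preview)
Your proposal is correct and follows essentially the same strategy as the paper: both directions are handled by specializing to singletons for $(i)\Rightarrow(ii)$, and by chaining one-step applications of (a) and (b) for $(ii)\Rightarrow(i)$. The paper packages the chain as an induction on $|B|$ (peeling off $p=\max A$ and some $q\in B$ at each stage), whereas you unwind it directly via a swap phase in increasing order of $A$ followed by an add phase; these are equivalent bookkeeping choices. Your explicit upgrade of (a) to its signed form via the rescaling $y=\delta^{-1}x$ is a detail the paper leaves implicit, so your write-up is slightly more careful on that point.
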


\begin{proof}
Clearly, if $\mathcal{B}$ is $1$-($\mathbf n$, PSLC), then $\mathcal{B}$ satisfies \eqref{e42} and \eqref{e43}. Suppose that $\mathcal{B}$ satisfies both \eqref{e42} and \eqref{e43}. Choose $x, A, B, \varepsilon, \delta$ as in the definition of ($\mathbf n$, PSLC). We show that 
\begin{equation}\label{e44}\|x+1_{\varepsilon A}\|\ \le\ \|x + 1_{\delta B}\|\end{equation}
inductively on $|B|$. Base case: $|B| = 1$. If $A = \emptyset$, then \eqref{e42} implies \eqref{e44}. If $|A| = 1$, then \eqref{e43} implies \eqref{e44}. Inductive hypothesis (I.H.): assume that for some $k\in\mathbb{N}$, \eqref{e44} holds for $|B|\le k$. We show that \eqref{e44} holds for $|B| = k+1$. If $A = \emptyset$, then we use \eqref{e42} inductively to obtain \eqref{e44}. Assume that $|A| \ge 1$. Let $p =  \max A$, $q\in B$, $A' = A\backslash \{p\}$, and $B' = B\backslash \{q\}$. We have
\begin{align*}\|x+1_{\varepsilon A}\|\ =\ \|(x + \varepsilon_p e_p) + 1_{\varepsilon A'}\|&\ \le\ \|(x+\varepsilon_p e_p) + 1_{\delta B'}\|\mbox{ by I.H.}\\
&\ \le\ \|(x+\delta_k e_k) + 1_{\delta B'}\|\mbox{ by \eqref{e43}}\\
&\ =\ \|x+1_{\delta B}\|.\end{align*}
This shows that $\mathcal{B}$ is $1$-($\mathbf n$, PSLC). 
\end{proof}

\subsection{An 1-($\mathbf n$, PSLC) basis that is not ($\mathbf m$, conservative)}

Berasategui et al. gave \cite[Example 4.3]{BBL} of a basis that is $1$-PSLC but is not democratic, thus answering negatively the question of whether a $1$-strong partially greedy basis is necessarily almost greedy. We now offer a stronger example than \cite[Example 4.3]{BBL}.

\begin{thm}\label{m8}
Let $\mathbf m$, $\mathbf n$ be sequences such that the difference set $\Delta_{\mathbf m, \mathbf n}$ is infinite. There exists a $1$-unconditional basis $\mathcal{B}$ that is $1$-($\mathbf n$, PSLC) but is not ($\mathbf m$, conservative) (and thus, not democratic).
\end{thm}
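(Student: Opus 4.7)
The plan is to reuse the basis from the proof of Theorem \ref{m3} (which the remark following that theorem already claims to be $1$-($\mathbf n$, strong partially greedy), hence $1$-($\mathbf n$, PSLC) by Theorem \ref{m9}) and, working with a single construction, verify both required properties -- the case split is used only for non-conservativity. Concretely, set $w_i = 1/\sqrt{i}$, $\mathcal F = \{F \subset \mathbb{N} : \sqrt{\min F} \ge |F|\}$, and let $X$ be the completion of $c_{00}$ under
$$\|x\| \ =\ \sup_{F\in\mathcal F,\,\pi}\,\sum_{i\in F}w_{\pi(i)}|x_{n_i}| \ +\ \sum_{n\notin\mathbf n}|x_n|,$$
with canonical basis $\mathcal B$; this is $1$-unconditional and normalized.

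The first step is to prove $1$-($\mathbf n$, PSLC) via Proposition \ref{p10}. Condition (a) is immediate since adding a unit vector at a new coordinate never decreases the Lorentz sup (the pointwise inequality of values passes through) nor the $\ell_1$ piece. For condition (b) with $j\in\mathbf n$, I would split on whether $k\in\mathbf n$. When $k\in\mathbf n$, take an optimal $F$ for $\|x+se_j\|$ containing the $\mathbf n$-index of $j$ and replace that index by the $\mathbf n$-index of $k$; because $\min F$ only increases under this swap (since $j<k$ in $\mathbb N$ translates to the same order on $\mathbf n$-indices, and the rest of $F$ already exceeds the index of $j$ by the hypothesis $j<\supp(x)\cap\mathbf n$), admissibility in $\mathcal F$ is preserved and the multisets of values agree, so the Lorentz sups coincide. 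When $k\notin\mathbf n$, one has $\|x+te_k\| = \|x\|+1$ exactly, and it suffices to show $\|x+se_j\|_{\mathbf n}\le\|x\|_{\mathbf n}+1$; this follows by discarding $j$'s index from an optimal $F$ (which remains admissible since $|F|$ drops and $\min F$ can only rise) and bounding the dropped contribution by $w_1=1$.

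The second step is non-($\mathbf m$, conservative), handled in two cases. Case A: $\{i\in\mathbf m:i\notin\mathbf n\}$ is infinite. Take $A_N\subset\mathbf m\setminus\mathbf n$ with $|A_N|=N$ and $B_N\subset\mathbf n$ with $|B_N|=N$ and $B_N>A_N$ in $\mathbb N$. Then $\|1_{A_N}\|=N$ (all in $\mathbf n^c$, contributing to the $\ell_1$ term) while $\|1_{B_N}\|\le\sum_{i=1}^N w_i\asymp 2\sqrt N$, so the ratio diverges; this is essentially the witness used in Theorem \ref{m3}(ii). Case B: $\{i\in\mathbf n:i\notin\mathbf m\}$ is infinite while $\{i\in\mathbf m:i\notin\mathbf n\}$ is finite. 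Choose $A_N=\{m_{j_1},\ldots,m_{j_N}\}\subset\mathbf m\cap\mathbf n$ (possible for large $j_\ell$ since only finitely many $m_j\notin\mathbf n$) with all $\mathbf n$-indices at least $N^2$, so the admissible $F$ may be taken to be the full $N$-element index set and $\|1_{A_N}\|\asymp 2\sqrt N$; and take $B_N=\{d_1,\ldots,d_N\}$, the first $N$ elements of $\mathbf n\setminus\mathbf m$, whose $\mathbf n$-indices are bounded by a constant independent of $N$, forcing $\|1_{B_N}\|=O(1)$ by admissibility. In both cases $B_N\cap\mathbf m=\emptyset$, so $A_N<B_N\cap\mathbf m$ vacuously and $(A_N,B_N)\in\mathbb T(\mathbf m)$.

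The main obstacle is the $1$-($\mathbf n$, PSLC) verification -- specifically, the admissibility-preserving swap argument for condition (b) in both sub-cases, which hinges on the observation that replacing or deleting $j$'s index keeps $F$ in $\mathcal F$ because $\min F$ never decreases under these moves. The Case B witness is the genuinely new content beyond Theorem \ref{m3}: it exploits that the constraint $\sqrt{\min F}\ge|F|$ is sensitive to the $\mathbf n$-index of the smallest element of $F$, permitting $\|1_{A_N}\|$ to outgrow $\|1_{B_N}\|$ whenever $A_N$ sits at large $\mathbf n$-indices and $B_N$ at small ones.
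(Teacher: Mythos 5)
Your Case A (where $\{i\in\mathbf m: i\notin\mathbf n\}$ is infinite) together with the $1$-($\mathbf n$, PSLC) verification of the Theorem \ref{m3} norm is essentially the paper's Case 1, and that part is sound: the swap/deletion argument for condition (b) of Proposition \ref{p10} is exactly what is needed, and the non-conservativity witness is the one from Theorem \ref{m3}(ii). The genuine gap is Case B. Your key claim there -- that the first $N$ elements $d_1,\ldots,d_N$ of $\mathbf n\setminus\mathbf m$ have $\mathbf n$-indices ``bounded by a constant independent of $N$'', forcing $\|1_{B_N}\|=O(1)$ -- is false: $N$ distinct elements of $\mathbf n$ have $N$ distinct $\mathbf n$-indices, so the largest is at least $N$, and in general these indices can grow arbitrarily fast. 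Concretely, take $\mathbf n=\mathbb N$ and $\mathbf m=\mathbb N\setminus\{4^j: j\ge 1\}$, so $\Delta_{\mathbf m,\mathbf n}$ is infinite and you are in Case B. Then $f(d_j)=4^j$, and for $B_N=\{4,16,\ldots,4^N\}$ the set $F$ consisting of the top $k$ indices is admissible whenever $2^{N-k+1}\ge k$, which allows $k\approx N-\log_2 N$; hence $\|1_{B_N}\|\approx 2\sqrt N$, the same order as $\|1_{A_N}\|$, and your ratio does not diverge. Worse, the failure is not just of this witness: for this pair $(\mathbf m,\mathbf n)$ the Theorem \ref{m3} basis actually \emph{is} ($\mathbf m$, conservative) (if $(A,B)\in\mathbb T(\mathbf m)$ and $A$ has $k$ elements $\ge k^2$, then either $B\cap\mathbf m$ contributes at least $k/2$ elements beyond $\max A\ge k^2$, or $B$ has at least $k/2$ elements in the lacunary set $\{4^j\}$, all but $\log_2 k$ of which exceed $(k/4)^2$; either way $\|1_B\|\gtrsim\|1_A\|$). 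So no choice of witness can rescue the single-construction plan.

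This is exactly why the paper's proof splits into two genuinely different constructions: in its Case 2 the admissibility condition is $\sqrt{\phi(\min F)}\ge |F|$, where $\phi$ counts how many elements of $K=\mathbf n\setminus\mathbf m$ lie below $\min F$. That makes the norm of the first $M$ elements of $K$ at most about $\sqrt M$ \emph{regardless} of how sparsely $K$ sits inside $\mathbf n$, while sets in $\mathbf m\cap\mathbf n$ placed beyond many elements of $K$ have norm $M$; this yields non-($\mathbf m$, conservativity) in full generality. Of course this new norm then needs its own $1$-($\mathbf n$, PSLC) verification (the paper redoes the Proposition \ref{p10} check for it), which your proposal omits since it assumed one basis would serve both cases. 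To repair your argument you would need to replace Case B by a construction of this type (or prove an analogous index-renormalization), not merely adjust the witness sets.
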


\begin{proof}
Case 1: $\{i: i\in \mathbf m, i\notin \mathbf n\}$ is infinite. We reuse the example in Theorem \ref{m3}: let $\mathcal{F}:= \{F\subset\mathbb{N}: \sqrt{\min F}\ge |F|\}$ and $w_n = \frac{1}{\sqrt{n}}$ for $n\ge 1$. Write $\mathbf n = n_1, n_2, \ldots$. Define $f: \mathbf n\rightarrow\mathbb{N}$ by $f(n_k) = k$. Let $X$ be the completion of $c_{00}$ under the following norm: for $x = (x_1, x_2, \ldots)$,
$$\|x\|\ =\ \sup_{\substack{F\in\mathcal{F}\\ \pi}}\sum_{i\in F}w_{\pi(i)}|x_{n_i}| + \sum_{n\notin \mathbf n}|x_n|,$$
where $\pi$ is a bijection on $\mathbb{N}$.
Let $\mathcal{B}$ be the canonical basis, which is $1$-unconditional and thus, satisfies \eqref{e42}. Let us check that $\mathcal{B}$ satisfies \eqref{e43}. Choose $x, j, k, s, t$ as in \eqref{e43}. Let $F\in\mathcal{F}$ and $\pi$ be a bijection. Consider 
\begin{align*}h_{F, \pi}(x+se_j)&\ :=\ \sum_{i\in F}w_{\pi(i)}|e^*_{n_i}(x+se_j)| + \sum_{n\notin \mathbf n}|e_n^*(x+se_j)|\\
&\ =\ \sum_{i\in F}w_{\pi(i)}|e^*_{n_i}(x+se_j)| + \sum_{n\notin \mathbf n}|e_n^*(x)|.
\end{align*}
\begin{enumerate}
    \item[i)] Case 1.1: $f(j)\notin F$. Then 
    $$h_{F, \pi}(x+se_j) \ \le\ \|x\|\ \le\ \|x+te_k\|.$$
    \item[ii)] Case 1.2: $f(j)\in F$ and $k\in\mathbf n$. Without loss of generality, assume that $f(k)\notin F$ because $k\notin \supp(x+se_j)$. Define $F' = (F\backslash f(j))\cup f(k)$. Since $j < k$ and $|F| = |F'|$, we know that $F'\in \mathcal{F}$. Define a bijection $\pi'$ on $\mathbb{N}$ such that
    $\pi'(n)\ = \begin{cases}
    \pi(n) &\mbox{ if } n\neq f(j), f(k)\\
    \pi(f(j)) &\mbox{ if } n = f(k)\\
    \pi(f(k)) &\mbox{ if } n = f(j)
    \end{cases}$.
    We have
    \begin{align*}
        \|x+te_k\|&\ \ge\ h_{F', \pi'}(x+te_k)\\
        &\ =\ \sum_{i\in F'}w_{\pi'(i)}|e^*_{n_i}(x+te_k)| + \sum_{n\notin \mathbf n}|e_n^*(x+te_k)|\\
        &\ =\ \sum_{i\in F\backslash f(j)}w_{\pi'(i)}|e^*_{n_i}(x)| + w_{\pi'(f(k))} + \sum_{n\notin \mathbf n}|e_n^*(x)|\\
        &\ =\  \sum_{i\in F\backslash f(j)}w_{\pi(i)}|e^*_{n_i}(x)| + w_{\pi(f(j))} + \sum_{n\notin \mathbf n}|e_n^*(x)|\\
        &\ =\ h_{F, \pi}(x+se_j).
    \end{align*}
    \item[iii)] Case 1.3: $f(j)\in F$ and $k\notin\mathbf n$. By definition of $\|\cdot\|$,
    $$\|x+te_k\|\ =\ \|x\| + 1\ =\ \|x\| + \|se_j\|\ \ge\ \|x+se_j\|.$$
\end{enumerate}
    Since $F$ and $\pi$ are arbitrary, we obtain from the three cases that $\|x+se_j\|\ \le\ \|x+te_k\|$. By Proposition \ref{p10}, $\mathcal{B}$ is $1$-($\mathbf n$, PSLC). The proof of Theorem \ref{m3} shows that $\mathcal{B}$ is not ($\mathbf m$, conservative).

Case 2: $\{i: i\in \mathbf n, i\notin \mathbf m\}$ is infinite. By Case 1, we can assume that $H:= \{i: i\in \mathbf m, i\notin \mathbf n\}$ is finite. Let $K = \{i: i\in\mathbf n, i\notin \mathbf m\} = k_1 < k_2 < \cdots$.
Define a function $\phi: \mathbf n\rightarrow \mathbb{N}$ as 
$$\phi(n) \ =\ \begin{cases}1&\mbox{ if } n \le k_1,\\ j &\mbox{ if } k_{j}+1\le  n\le k_{j+1} \mbox{ for some }j\ge 1.\end{cases}$$
Let $X$ be the completion of $c_{00}$ under the following norm: for $x = (x_1, x_2, \ldots)$,
$$\|x\|\ :=\ \sup_{F}\left(\sum_{n\notin \mathbf n}|x_n| + \sum_{m\in F}|x_m|\right),$$
where $F\subset \mathbf n$ and $\sqrt{\phi(\min F)}\ge |F|$. Let $\mathcal{B}$ be the canonical basis, which is $1$-unconditional and normalized. Clearly, $\mathcal{B}$ satisfies \eqref{e42}. We now verify that $\mathcal{B}$ satisfies \eqref{e43}. Choose $x, k, j, s, t$ as in \eqref{e43}. Let $F\subset \mathbf n$ and $\sqrt{\phi(\min F)}\ge |F|$. Consider 
\begin{align*}h_{F}(x + se_j) &\ := \ \sum_{n\notin \mathbf n}|e_n^*(x+se_j)| + \sum_{m\in F}|e_m^*(x+se_j)|\\
&\ =\ \sum_{n\notin \mathbf n}|e_n^*(x)| + \sum_{m\in F}|e_m^*(x+se_j)|.
\end{align*}
We shall prove that $h_F(x+se_j)\le \|x+te_k\|$.
\begin{enumerate}
    \item [i)] Case 2.1: $j\notin F$. Then $se_j$ does not contribute to $h_F(x+se_j)$. Hence, 
    $$h_F(x+se_j) \ =\ h_F(x) \ \le\ \|x\|\ \le\ \|x+te_k\|.$$
    \item [ii)] Case 2.2: $j\in F$ and $k\notin \mathbf n$. We have
    \begin{align*}
    \|x+te_k\|\ \ge\ h_F(x+te_k) &\ =\ \sum_{n\notin \mathbf n}|e_n^*(x+te_k)| + \sum_{m\in F}|e_m^*(x+te_k)|\\
    &\ =\ \sum_{n\notin \mathbf n}|e_n^*(x)| + 1 + \sum_{m\in F}|e_m^*(x)|\\
    &\ =\  \sum_{n\notin \mathbf n}|e_n^*(x)| + \sum_{m\in F}|e_m^*(x+se_j)|\\
    &\ =\ h_F(x+se_j).
    \end{align*}
    \item [iii)] Case 2.3: $j\in F$ and $k\in \mathbf n$. Then $j < k$ because $j < (\supp(x)\sqcup \{k\})\cap \mathbf n$. Without loss of generality, assume that $k\notin F$, since $k\notin \supp(x+se_j)$.
    Form $F' = (F\backslash \{j\})\cup \{k\}$. Then $|F'| = |F|$, while $\phi(\min F') \ge \phi(\min F)$. It follows that $F'\in\mathcal{F}$. It is easy to check that $h_F(x+se_j) = h_{F'}(x+te_k)\le \|x+te_k\|$.
\end{enumerate}
In all cases, we obtain $h_{F}(x+se_j)\le \|x+te_k\|$. Since $F$ is arbitrary, $\|x+se_j\|\le \|x+te_k\|$ and so, $\mathcal{B}$ is $1$-($\mathbf n$, PSLC).

We show that $\mathcal{B}$ is not ($\mathbf m$, conservative). Let $M\in\mathbb{N}$,  $B = \{k_1, k_2, \ldots, k_M\}$, 
$A\subset \mathbf m\cap \mathbf n$, $|A| = M$, and $\min A$ is sufficiently large such that $\sqrt{\phi(\min A)} \ge M$.
Observe that $(A, B)\in \mathbb T(\mathbf m)$. While $\|1_A\| = M$, the same argument as in Theorem \ref{m3} item iii) gives $\|1_B\|\lesssim \sqrt{M}$. Therefore, $\|1_A\|/\|1_B\|\rightarrow\infty$ as $M\rightarrow\infty$, which shows that $\mathcal{B}$ is not ($\mathbf m$, conservative). 
\end{proof}

\begin{rek}\label{r2}\normalfont
The example in the proof of Theorem \ref{m4} is not $1$-($\mathbf n$, PSLC). Indeed, since $|\{i: i\in \mathbf m, i\notin \mathbf n\}|<\infty$, pick $i_0\in \mathbf m\cap \mathbf n$. Choose $n_1, n_2\in\{i: i\in \mathbf n, i\notin \mathbf m\}$, which is infinite, such that $i_0 < n_1 < n_2$. We obtain $$\|e_{n_1}+e_{i_0}\| \ =\ 2,\mbox{ while }\|e_{n_1} + e_{n_2}\|\ =\ 1 + \frac{1}{\sqrt{2}},$$
which violates \eqref{e43}. 
\end{rek}

\section{Consecutive projections onto a sequence $\mathbf n$}\label{consecutive}
\subsection{Characterization of ($\mathbf n$, almost greedy) bases}
We define ($\mathbf n$, almost greedy) bases that shall be shown to lie strictly between the realm of almost greedy bases and the realm of ($\mathbf n$, strong partially greedy) bases. 
\begin{defi}\normalfont
A basis $\mathcal{B}$ is said to be ($\mathbf n$, almost greedy) if there exists a constant $\mathbf C\ge 1$ such that
$$\|x-G_m(x)\|\ \le\ \mathbf C\widetilde{\sigma}^{\mathbf n}_m(x), \forall x\in X, \forall m\in\mathbb{N}, \forall G_m(x),$$
where 
$$\widetilde{\sigma}^{\mathbf n}_m(x)\ =\ \inf\left\{\|x-P_A(x)\|: A\subset\mathbf n, |A| = m\right\}.$$
The least such $\mathbf C$ is denoted by $\mathbf C_{\mathbf n, a}$.
\end{defi}

\begin{prop}\label{pp1}
Let $\mathcal{B}$ be a $\mathbf C_{\mathbf n, a}$-($\mathbf n$, almost greedy) basis. Then
\begin{equation}\label{ee8}\|x-G_m(x)\|\ \le\ \mathbf C_{\mathbf n, a}\min_{0\le k\le m}\widetilde{\sigma}^{\mathbf n}_k(x), \forall x\in X, \forall m\in\mathbb{N}, \forall G_m(x).\end{equation}
\end{prop}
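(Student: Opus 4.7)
The plan is to derive the strengthened inequality $\|x-G_m(x)\| \le \mathbf C_{\mathbf n, a}\widetilde{\sigma}^{\mathbf n}_k(x)$ for each fixed $k \in \{0, 1, \dots, m\}$ by a padding trick and then take the minimum over $k$. The case $k = m$ is just the definition of $(\mathbf n$, almost greedy$)$, so I fix $0 \le k < m$, set $r := m-k$, fix an arbitrary $A \in G(x, m)$, and, given $\epsilon > 0$, choose $F \subset \mathbf n$ with $|F| = k$ satisfying $\|x - P_F(x)\| \le \widetilde{\sigma}^{\mathbf n}_k(x) + \epsilon$. Using \eqref{e11}, I pick disjoint sets $D, G \subset \mathbf n$ each of size $r$, both disjoint from $A \cup F$, with $\max_{n \in D \cup G}|e_n^*(x)| < \eta$ for some small $\eta > 0$; this is possible because the coefficients of $x$ tend to $0$ and $\mathbf n$ is infinite.

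I then take $\alpha > \|x\|_\infty$ and $\varepsilon_n = \sgn(e_n^*(x))$ for $n \in D$, and form the auxiliary vector
$$y\ :=\ x - P_D(x) + \alpha\sum_{n \in D}\varepsilon_n e_n.$$
Its coefficients satisfy $e_n^*(y) = \alpha\varepsilon_n$ on $D$ and $e_n^*(y) = e_n^*(x)$ elsewhere. Since $\alpha$ exceeds every $|e_n^*(x)|$ and $A$ holds the $m$ largest coefficients of $x$ disjointly from $D$, the set $D \cup A$ is a greedy set of $y$ of order $2m - k$, so the $(\mathbf n$, almost greedy$)$ hypothesis at this level gives
$$\|y - P_{D \cup A}(y)\|\ \le\ \mathbf C_{\mathbf n, a}\widetilde{\sigma}^{\mathbf n}_{2m-k}(y).$$
A direct computation using disjointness shows $y - P_{D \cup A}(y) = x - P_A(x) - P_D(x)$, and taking the candidate $H := D \sqcup F \sqcup G \subset \mathbf n$ of size $2m - k$ in the infimum yields $y - P_H(y) = x - P_F(x) - P_D(x) - P_G(x)$, hence
$$\widetilde{\sigma}^{\mathbf n}_{2m-k}(y)\ \le\ \|x - P_F(x)\| + \|P_D(x)\| + \|P_G(x)\|.$$

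The main obstacle is that $D$ and $G$ cannot in general be chosen disjoint from $\supp(x)$ (e.g. when $\mathbf n \subseteq \supp(x)$), so $P_D(x)$ and $P_G(x)$ need not vanish outright. I circumvent this quantitatively: each projection is a sum of at most $r$ terms whose coefficients are bounded by $\eta$, and each $\|e_n\| \le c_2$ by item c) in the definition of a basis, so $\|P_D(x)\|, \|P_G(x)\| \le r c_2 \eta$. Chaining the two displayed inequalities gives
$$\|x - P_A(x)\|\ \le\ \mathbf C_{\mathbf n, a}\|x - P_F(x)\| + (1 + 2\mathbf C_{\mathbf n, a})\,r c_2\eta.$$
Letting $\eta \to 0^+$, then $\epsilon \to 0^+$, and finally taking the minimum over $0 \le k \le m$ yields \eqref{ee8}.
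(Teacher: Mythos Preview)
Your proof is correct, but it takes a more roundabout path than the paper's. The paper observes that one can work directly at level $m$: given $F\subset\mathbf n$ with $|F|=k$, it pads $F$ with a set $B_N\subset\mathbf n$ of size $m-k$ chosen far to the right (so $B_N>\max F+N$), applies the $(\mathbf n,\text{almost greedy})$ inequality at order $m$ with the \emph{original} greedy sum $G_m(x)$ and the candidate $F\cup B_N$, and then lets $N\to\infty$ so that $\|P_{B_N}(x)\|\to 0$. No modification of $x$ is needed, and only one padding set suffices.

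Your argument instead inflates the greedy order: you boost the coefficients of $x$ on an auxiliary set $D$ to force $D\cup A$ to be a greedy set of order $2m-k$, and then you need \emph{two} padding sets $D,G$ on the projection side to match that order. This works, and the quantitative control via $\eta$ is handled correctly, but the passage to the auxiliary vector $y$ and the second padding set $G$ are avoidable overhead. The paper's route is shorter and keeps the same constant $\mathbf C_{\mathbf n,a}$ with less bookkeeping. (A minor point: your choice $\varepsilon_n=\sgn(e_n^*(x))$ on $D$ is irrelevant to the argument; any signs of modulus $1$ would do, since you only use $|e_n^*(y)|=\alpha$ on $D$.)
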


\begin{proof}
Fix $x\in X$, $m\in\mathbb{N}$, $G_m(x)$, and $0\le k\le m$. Let $A\subset\mathbf n$ with $|A| = k$. We shall show that $\|x-G_m(x)\|\le \mathbf C_{\mathbf n, a}\|x-P_A(x)\|$. Choose a set $B_N\subset\mathbf n$ such that $B_N > \max A + N$, and $|B_N| = m-k$. Since $\mathcal{B}$ is $\mathbf C_{\mathbf n, a}$-($\mathbf n$, almost greedy), we have
$$\|x-G_m(x)\|\ \le\ \mathbf C_{\mathbf n, a}\|x-P_A(x)-P_{B_N}(x)\|\ \le\ \mathbf C_{\mathbf n, a}\|x-P_A(x)\| + \mathbf C_{\mathbf n, a}\|P_{B_N}(x)\|.$$
Let $\alpha_N = \max_{n > \max A + N}|e_n^*(x)|$. We get
$$\|P_{B_N}(x)\|\ \le\ m\alpha_N\sup_{n}\|e_n\|\ \le\ m\alpha_N c_2\rightarrow 0\mbox{ as }N\rightarrow\infty.$$
Therefore, $\|x-G_m(x)\|\le \mathbf C_{\mathbf n, a}\|x-P_A(x)\|$, as desired. 
\end{proof}

\begin{thm}\label{mm2}
A basis $\mathcal{B}$ is ($\mathbf n$, almost greedy) if and only if $\mathcal{B}$ is quasi-greedy and ($\mathbf n$, democratic).
\end{thm}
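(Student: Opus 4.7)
The plan is to mirror the Dilworth--Kalton--Kutzarova--Temlyakov characterization (Theorem \ref{DKKTcha}), noting that ($\mathbf n$, almost greedy) differs from almost greedy only in that the comparison projections must land in $\mathbf n$.

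For the forward direction, I would first deduce quasi-greediness directly from Proposition \ref{pp1}: taking $k=0$ in \eqref{ee8} gives $\widetilde\sigma^{\mathbf n}_0(x)=\|x\|$, hence $\|x-G_m(x)\|\le \mathbf C_{\mathbf n,a}\|x\|$. For ($\mathbf n$, democracy), take $(A,B)\in \mathbb S(\mathbf n)$; by absorbing $A\cap B$ into a standard suppression-quasi-greedy bound I can reduce to the disjoint case. In the disjoint case, the novelty compared to the classical argument is that $A$ may be strictly smaller than $B$, so I would pad $A$ to $A^*\subset \mathbf n$ with $|A^*|=|B|=:m$ and $A^*\cap B=\emptyset$ (possible because $\mathbf n\setminus(A\cup B)$ is infinite), and apply the ($\mathbf n$, almost greedy) inequality to $x_\epsilon=(1+\epsilon)1_B+1_{A^*}$, whose unique greedy set of order $m$ is $B$. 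This yields $\|1_{A^*}\|\le \mathbf C_{\mathbf n,a}(1+\epsilon)\|1_B\|$, and then $\|1_A\|\le \mathbf C_\ell\|1_{A^*}\|$ (suppression quasi-greediness, since $A$ is a greedy subset of $1_{A^*}$) finishes the job after sending $\epsilon\to 0$.

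For the reverse direction, I would first upgrade ($\mathbf n$, democratic) to ($\mathbf n$, superdemocratic) via the UL property of quasi-greedy bases (Remark \ref{r1}\,ii)). Then, given $x\in X$, $m\in\mathbb N$, a greedy sum $P_A(x)$, and any $D\subset\mathbf n$ with $|D|=m$, decompose
\begin{equation*}
x-P_A(x)\;=\;(x-P_{A\cup D}(x))+P_{D\setminus A}(x).
\end{equation*}
The first term is handled by the observation that $A\setminus D$ is a greedy set of $x-P_D(x)$ of order $m-|A\cap D|$, so suppression quasi-greediness bounds it by $\mathbf C_\ell\|x-P_D(x)\|$. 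For the second term, the plan is the classical chain: upper UL, then ($\mathbf n$, superdemocracy) applied to the balanced pair $(D\setminus A,\,A\setminus D)$ (both subsets of $\mathbf n$ of equal cardinality), then lower UL, and finally quasi-greediness on $A\setminus D$. This bounds $\|P_{D\setminus A}(x)\|$ by a constant multiple of $\|x-P_D(x)\|$, with the constant depending on $\mathbf C_\ell$, $\mathbf C_q$, $\mathbf C_{\mathbf n,sd}$ and the UL constants.

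The main obstacle is bookkeeping rather than ideas. The two places where ($\mathbf n$) enters non-trivially are the padding step in the forward direction (one must be able to enlarge $A$ inside $\mathbf n$, which uses that $\mathbf n$ is infinite) and the identity $|D\setminus A|=|A\setminus D|$ in the reverse direction (which legitimately invokes ($\mathbf n$, superdemocracy) because $D\setminus A\subset \mathbf n$). Both are routine once noticed.
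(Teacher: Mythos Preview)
Your proposal is correct and follows essentially the same DKKT-style route as the paper, with minor cosmetic differences. Two remarks:

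\textbf{Forward direction.} Your padding-and-perturbation argument for ($\mathbf n$, democracy) works, but the paper's is slicker: for $(A,B)\in\mathbb S(\mathbf n)$ set $x=1_{A\cup B}$; then $B\setminus A$ is a greedy set and $A\setminus B\subset\mathbf n$ with $|A\setminus B|\le |B\setminus A|$, so Proposition~\ref{pp1} gives $\|1_A\|=\|x-P_{B\setminus A}(x)\|\le \mathbf C_{\mathbf n,a}\|x-P_{A\setminus B}(x)\|=\mathbf C_{\mathbf n,a}\|1_B\|$ directly, with no reduction to the disjoint case and no $\epsilon$.

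\textbf{Reverse direction.} Your two-term decomposition $x-P_A(x)=(x-P_{A\cup D}(x))+P_{D\setminus A}(x)$ is equivalent to the paper's three-term one. One slip: in your parenthetical you say $D\setminus A$ and $A\setminus D$ are ``both subsets of $\mathbf n$''. The set $A\setminus D$ need \emph{not} lie in $\mathbf n$ (the greedy set $A$ is arbitrary). Fortunately this is harmless: in the definition of $\mathbb S(\mathbf n)$ only the \emph{first} set must lie in $\mathbf n$, and $D\setminus A\subset D\subset\mathbf n$, so ($\mathbf n$, superdemocracy) still applies to the pair $(D\setminus A,\,A\setminus D)$.
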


\begin{proof}
Assume that $\mathcal{B}$ is $\mathbf C_{\mathbf n, a}$-($\mathbf n$, almost greedy). Substituting $k=0$ into \eqref{ee8}, we see that $\mathcal{B}$ is $\mathbf C_{\mathbf n, a}$-suppression quasi-greedy. Now take $(A, B)\in \mathbb{S}(\mathbf n)$. Let $x = 1_{A\cup B}$.
We have 
$$\|1_{A}\|\ \le\ \|x-P_{B\backslash A}(x)\|\ \le\ \mathbf C_{\mathbf n, a}\widetilde{\sigma}^{\mathbf n}_{|A\backslash B|}(x)\ \le\ \mathbf C_{\mathbf n, a}\|x-P_{A\backslash B}(x)\|\ =\ \mathbf C_{\mathbf n, a}\|1_B\|.$$
Therefore, $\mathcal{B}$ is $\mathbf C_{\mathbf n, a}$-($\mathbf n$, democratic). 

Next, assume that $\mathcal{B}$ is $\mathbf C_q$-quasi-greedy and $\Delta_{\mathbf n, d}$-($\mathbf n$, democratic). By Remark \ref{r1}, $\mathcal{B}$ is $\Delta_{\mathbf n, sd}$-($\mathbf n$, superdemocratic). Pick $x\in X$, $m\in \mathbb{N}$, $A\in G(x, m)$, and $B\subset\mathbf n$ with $|B| = m$. Write 
$$x - P_A(x)\ =\ P_{B^c}(x) + P_{B\backslash A}(x) - P_{A\backslash B}(x).$$
Let $\varepsilon = (\sgn(e_n^*(x)))_n$.
We have $\|P_{A\backslash B}(x)\|\le \mathbf C_q\|x-P_B(x)\|$ and 
\begin{align*}
\|P_{B\backslash A}(x)\|&\ \le\ \max_{n\in B\backslash A}|e_n^*(x)|\sup_{\delta}\|1_{\delta (B\backslash A)}\|\\
&\ \le\ \min_{n\in A\backslash B}|e_n^*(x)|\Delta_{\mathbf n, sd}\|1_{\varepsilon (A\backslash B)}\|\\
&\ \le\ 2\mathbf C_q\Delta_{\mathbf n, sd}\|x-P_B(x)\|\mbox{ by Lemma \ref{b2g}}.
\end{align*}
We conclude that 
$$\|x-P_A(x)\|\ \le\ (1 + (1+ 2\Delta_{\mathbf n, sd})\mathbf C_q)\|x-P_B(x)\|.$$
This completes our proof. 
\end{proof}

\subsection{The realm of ($\mathbf n$, almost greedy) bases}
From definitions, we know that for any sequence $\mathbf n$, almost greedy $\Longrightarrow$ ($\mathbf n$, almost greedy) $\Longrightarrow$ ($\mathbf n$, strong partially greedy). We now show that neither of the reverse implications hold. In fact, we show more general results. 

\begin{prop}\label{pe1}
\begin{enumerate}
\item[i)] If $\{i: i\in \mathbf m, i\notin \mathbf n\}$ is infinite, then there exists an ($\mathbf n$, almost greedy) basis that is not ($\mathbf m$, almost greedy). 
\item[ii)] If $\{i: i\in\mathbf m, i\notin \mathbf n\}$ is finite, then an ($\mathbf n$, almost greedy) basis is necessarily ($\mathbf m$, almost greedy). 
\item[iii)] For every sequence $\mathbf n$, there exists a $1$-($\mathbf n$, strong partially greedy) basis that is not ($\mathbf n$, almost greedy).
\end{enumerate}
\end{prop}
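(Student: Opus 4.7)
The plan is to reduce all three parts, via Theorem \ref{mm2}, to questions about ($\mathbf k$, democratic): a basis is ($\mathbf k$, almost greedy) iff it is quasi-greedy and ($\mathbf k$, democratic).

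For (i), I would recycle the norm from the proof of Theorem \ref{m2}: set $w_n = 1/\sqrt n$ and $\|x\| = \sup_\pi \sum_i w_{\pi(i)}|x_{n_i}| + \sum_{n\notin\mathbf n}|x_n|$ on the completion of $c_{00}$. The canonical basis is $1$-unconditional (so quasi-greedy) and $1$-($\mathbf n$, superdemocratic) by the calculation in Theorem \ref{m2}, whence Theorem \ref{mm2} delivers ($\mathbf n$, almost greedy). To kill ($\mathbf m$, democratic), pick $N$ elements $D$ from the infinite set $\{i:i\in\mathbf m,\,i\notin\mathbf n\}$ and $N$ elements $E \subset \mathbf n$; then $\|1_D\| = N$ while $\|1_E\| \le 2\sqrt N$, so the ratio diverges and Theorem \ref{mm2} rules out ($\mathbf m$, almost greedy).

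For (ii), let $M = |\{i:i\in\mathbf m,\,i\notin\mathbf n\}| < \infty$ and suppose $\mathcal B$ is ($\mathbf n$, almost greedy), so by Theorem \ref{mm2} it is quasi-greedy and $\Delta_{\mathbf n, d}$-($\mathbf n$, democratic). Given $(A, B) \in \mathbb S(\mathbf m)$, I split $A = A_1 \sqcup A_2$ with $A_1 = A \cap \mathbf n$ and $|A_2| \le M$. Then $(A_1, B) \in \mathbb S(\mathbf n)$ gives $\|1_{A_1}\| \le \Delta_{\mathbf n, d}\|1_B\|$, while the identity $|e_j^*(1_B)| = 1$ for any $j \in B$ yields $\|1_B\| \ge 1/c_2$ (the case $B = \emptyset$ forces $A = \emptyset$ and is trivial), so $\|1_{A_2}\| \le M c_2 \le M c_2^2 \|1_B\|$. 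Combining, $\mathcal B$ is $(\Delta_{\mathbf n,d} + Mc_2^2)$-($\mathbf m$, democratic), and Theorem \ref{mm2} returns ($\mathbf m$, almost greedy).

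For (iii), fix $\mathbf n$ and use the basis of Theorem \ref{m3}: with $\mathcal F = \{F \subset \mathbb N : \sqrt{\min F} \ge |F|\}$ and $w_n = 1/\sqrt n$, take $\|x\| = \sup_{F \in \mathcal F, \pi} \sum_{i \in F} w_{\pi(i)} |x_{n_i}| + \sum_{n \notin \mathbf n} |x_n|$. Case 1 of Theorem \ref{m8} verifies that the canonical basis is $1$-($\mathbf n$, PSLC)---its argument for condition \eqref{e43} never touches the auxiliary $\mathbf m$---and Theorem \ref{m9} then promotes this to $1$-($\mathbf n$, strong partially greedy). On the other hand, the computation in item iii) of Theorem \ref{m3}, applied with $E = \{n_1, \ldots, n_N\}$ and $F = \{n_{N^2+1}, \ldots, n_{N^2+N}\}$, shows $\|1_F\|/\|1_E\| \to \infty$, yielding failure of ($\mathbf n$, democratic); since both sets lie in $\mathbf n$, this argument depends only on $\mathbf n$ being infinite. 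Theorem \ref{mm2} then rules out ($\mathbf n$, almost greedy). The main obstacle I foresee is purely bookkeeping: verifying that the proofs of Theorems \ref{m3} and \ref{m8}, although stated relative to an auxiliary sequence $\mathbf m$, in fact establish $1$-($\mathbf n$, PSLC) and failure of ($\mathbf n$, democratic) without ever invoking that extra sequence.
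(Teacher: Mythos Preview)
Your proposal is correct. Parts i) and ii) follow essentially the same route as the paper: for i) the paper uses a $\max$ variant of the norm from Theorem \ref{m2} rather than the sum version, but the verification of ($\mathbf n$, democratic) and the failure of ($\mathbf m$, democratic) proceed identically; for ii) the paper splits $A$ by position (below or above a threshold $N$) rather than by membership in $\mathbf n$, but the two partitions yield the same bound $\|1_A\|\le (\Delta_{\mathbf n,d}+O(1)c_2^2)\|1_B\|$.

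For iii) you and the paper invoke different constructions. The paper points to Case~2 of Theorem \ref{m8}, which builds a norm from an auxiliary infinite set $K\subset\mathbf n$ via the level function $\phi$; you instead use the norm from Theorem \ref{m3} (equivalently Case~1 of Theorem \ref{m8}), which is defined purely in terms of $\mathbf n$ and the family $\mathcal F=\{F:\sqrt{\min F}\ge|F|\}$. Both work: each basis is $1$-unconditional, the respective arguments for \eqref{e43} establish $1$-($\mathbf n$, PSLC) without reference to any auxiliary $\mathbf m$, and in each case the witnessing pair for the failure of ($\mathbf n$, democratic) consists of two subsets of $\mathbf n$, so Theorem \ref{mm2} applies. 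Your choice has the minor advantage that the bookkeeping concern you raise is moot---the norm in Theorem \ref{m3} never mentions $\mathbf m$ to begin with---whereas the paper's Case~2 example nominally depends on choosing some infinite $K\subset\mathbf n$, though the resulting properties do not.
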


\begin{proof}
i) The set $\{i: i\in \mathbf m, i\notin \mathbf n\}$ is infinite. Let $w_n = \frac{1}{\sqrt{n}}$ for $n\ge 1$ and $X$ be the completion of $c_{00}$ under the following norm:
$$\|(x_i)_i\| \ =\ \max\left\{\|(x_i)_i\|_\infty, \sup_{\pi}\sum_{i\in \mathbf n}w_{\pi(i)}|x_i|, \sum_{i\notin \mathbf n}|x_i|\right\},$$
where $\pi: \mathbf n\rightarrow\mathbb{N}$ is a bijection. Let $\mathcal{B}$ be the canonical basis. Since $\mathcal{B}$ is $1$-unconditional, by Theorem \ref{mm2}, we need only to show that $\mathcal{B}$ is ($\mathbf n$, democratic) but is not ($\mathbf m$, democratic).

Pick $N\in\mathbb{N}$, $A_N \subset \{i: i\in \mathbf m, i\notin \mathbf n\}$ with $|A_N| = N$, and $B_N \subset \mathbf n$ with $|B_N| = N$. It is easy to check that $\|1_{A_N}\| = N$ and $\|1_{B_N}\|\le 2\sqrt{N}$. Since $\|1_{A_N}\|/\|1_{B_N}\|\rightarrow\infty$ as $N\rightarrow\infty$, $\mathcal{B}$ is not ($\mathbf m$, democratic). 

Pick $(A, B)\in \mathbb{S}(\mathbf n)$. Assume that $|B|\ge 2$, since when $|B| \le 1$, we always get $\|1_A\|\le \|1_B\|$. 
Let us estimate $\|1_B\|$. Write $B_1 = B\cap \mathbf n$ and $B_2 = B\backslash B_1$.  If $|B_1|\ge |B|/2$, then 
$$\|1_B\|\ \ge\ \sum_{n=1}^{\lfloor |B|/2\rfloor}\frac{1}{\sqrt{n}}\ \ge\ \frac{1}{2}\sum_{n=1}^{|B|}\frac{1}{\sqrt{n}}\ \ge\ \frac{1}{2}\sum_{n=1}^{|A|}\frac{1}{\sqrt{n}}\ =\ \frac{1}{2}\|1_A\|.$$
If $|B_2|\ge |B|/2$, then 
$$\|1_B\|\ \ge\ \frac{|B|}{2}\ \ge\ \frac{1}{2}|A|\ \ge\ \frac{1}{2}\sum_{n=1}^{|A|}\frac{1}{\sqrt{n}}\ =\ \frac{1}{2}\|1_A\|.$$
Therefore, $\mathcal{B}$ is ($\mathbf n$, democratic).

ii) By Theorem \ref{mm2}, it suffices to show that a $\Delta_{\mathbf n, d}$-($\mathbf n$, democratic) basis is ($\mathbf m$, democratic).  Since $\{i: i\in\mathbf m, i\notin \mathbf n\}$ is finite, there exists $N\in\mathbb{N}$ such that if $\mathbf m\cap [N,\infty)\subset \mathbf n$. Let $(A, B)\in \mathbb{S}(\mathbf m)$. 
Write $A = A_1\cup A_2$, where $A_1 = A\cap [1, N-1]$ and $A_2 = A\cap [N,\infty)\subset \mathbf n$. We have
\begin{align*}
    \|1_{A_1}\|&\ \le\ (N-1)\sup_{n}\|e_n\|\ \le\ (N-1)c_2\sup_{n}\|e_n^*\|\|1_B\|\ \le\ (N-1)c_2^2\|1_B\|,\\
    \|1_{A_2}\|&\ \le\ \Delta_{\mathbf n, d}\|1_B\|.
\end{align*}
Hence, $$\|1_A\|\ \le\ \|1_{A_1}\| + \|1_{A_2}\|\ \le\ ((N-1)c_2^2 + \Delta_{\mathbf n, d})\|1_B\|.$$
This completes our proof. 

iii) Use the example in Case 2 in the proof of Theorem \ref{m8}.
\end{proof}

\subsection{($\mathbf n$, consecutive almost greedy) bases and characterizations}
We now strengthen the notion of ($\mathbf n$, strong partially greedy) bases and show that the new notion is equivalent to the ($\mathbf n$, almost greedy) property. For $m\in\mathbb{N}_0$, let
\begin{align*}
\mathcal{I}^{\mathbf n, (m)}&\ =\ \{A\subset \mathbf n: A = \{n_{k+1}, n_{k+2}, \ldots, n_{k+m}\}\mbox{ for some }k\ge 0\},\\
\mathcal{I}^{\mathbf n, \le m}&\ =\ \bigcup_{0\le k\le m} \mathcal{I}^{\mathbf n, (k)}\mbox{ and } \mathcal{I}^{\mathbf n} \ =\ \bigcup_{m\in\mathbb{N}_0} \mathcal{I}^{\mathbf n, (m)}. 
\end{align*}

\begin{defi}\normalfont
A basis $\mathcal{B}$ is said to be ($\mathbf n$, consecutive almost greedy) of type I (or CAG($\mathbf n$, I), for short) if there exists a constant $\mathbf C\ge 1$ such that 
$$\|x-G_m(x)\|\ \le\ \mathbf C\widecheck{\sigma}^{\mathbf n}_m(x), \forall x\in X, \forall m\in\mathbb{N}, \forall G_m(x),$$
where 
$$\widecheck{\sigma}^{\mathbf n}_m(x)\ =\ \inf\left\{\|x-P_{I}(x)\|: I\in \mathcal{I}^{\mathbf n, (m)}\right\}.$$
The least constant $\mathbf C$ is denoted by $\mathbf C_{\mathbf n, ca}$.
\end{defi}

\begin{rek}\normalfont
For the definition of CAG($\mathbf n$, I), we still project on consecutive vectors with indices in the sequence $\mathbf n$, but we do not restrict the projection to the first $m$ vectors in $\mathbf n$. 
\end{rek}

Consider the condition: there exists a constant $\mathbf C\ge 1$ such that
\begin{equation}\label{ee5}\|x-G_m(x)\|\ \le\ \mathbf C\min_{0\le k\le m}\widecheck{\sigma}^{\mathbf n}_k(x), \forall x\in X, \forall m\in\mathbb{N}, \forall G_m(x).\end{equation}

\begin{thm}[Generalization of Theorem 1.7 in \cite{BBC}]\label{mm5} Let $\mathcal{B}$ be a basis of a Banach space $X$.
The following statements are equivalent:
\begin{enumerate}
    \item[i)] $\mathcal{B}$ is ($\mathbf n$, almost greedy).
    \item[ii)] $\mathcal{B}$ satisfies \eqref{ee5}.
    \item[iii)] $\mathcal{B}$ is CAG($\mathbf n$, I).
\end{enumerate}
\end{thm}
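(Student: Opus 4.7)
The plan is to prove the loop (i) $\Rightarrow$ (ii) $\Rightarrow$ (iii) $\Rightarrow$ (ii) $\Rightarrow$ (i); the two easy implications at the front are immediate. For (i) $\Rightarrow$ (ii), I would apply Proposition \ref{pp1} to upgrade the ($\mathbf n$, almost greedy) estimate to $\min_{0 \le k \le m}\widetilde{\sigma}^{\mathbf n}_k(x)$, then observe that the collection used to define $\widecheck{\sigma}^{\mathbf n}_k$ (consecutive blocks of $\mathbf n$) is a sub-collection of that used for $\widetilde{\sigma}^{\mathbf n}_k$ (all $k$-subsets of $\mathbf n$), so $\widetilde{\sigma}^{\mathbf n}_k(x) \le \widecheck{\sigma}^{\mathbf n}_k(x)$. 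For (ii) $\Rightarrow$ (iii), simply specialize $k = m$.

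For the core step (iii) $\Rightarrow$ (ii), I would first harvest suppression quasi-greediness from (iii) by considering the consecutive blocks $I_N := \{n_N, n_{N+1}, \ldots, n_{N+m-1}\}$: since $\|P_{I_N}(x)\| \le m c_2 \max_{j \ge N}|e_{n_j}^*(x)| \to 0$ by \eqref{e11}, one has $\widecheck{\sigma}^{\mathbf n}_m(x) \le \|x\|$ and hence $\|x - G_m(x)\| \le \mathbf{C}_{\mathbf n, ca}\|x\|$; write $\mathbf{C}_\ell$ for the resulting suppression quasi-greedy constant. Next, for any $k \le m$ and any greedy set $G_m(x) = P_{\Gamma_m}(x)$, let $\Gamma_k$ consist of the $k$ largest-modulus indices inside $\Gamma_m$. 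Then $\Gamma_k$ is a valid greedy set of $x$ of order $k$, and $\Gamma_m \setminus \Gamma_k$ is a valid greedy set of order $m-k$ for the residual $x - G_k(x)$. Suppression quasi-greediness applied to this residual gives $\|x - G_m(x)\| \le \mathbf{C}_\ell \|x - G_k(x)\|$, and applying (iii) to the greedy sum $G_k(x)$ yields $\|x - G_k(x)\| \le \mathbf{C}_{\mathbf n, ca} \widecheck{\sigma}^{\mathbf n}_k(x)$. Composing produces (ii) with constant $\mathbf{C}_\ell \mathbf{C}_{\mathbf n, ca}$.

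For (ii) $\Rightarrow$ (i), I would invoke Theorem \ref{mm2} and reduce to establishing quasi-greediness and ($\mathbf n$, democratic). The $k = 0$ case of (ii) (with $\widecheck{\sigma}^{\mathbf n}_0(x) = \|x\|$) gives quasi-greediness. For ($\mathbf n$, democratic), observe that $\{n_1, \ldots, n_k\} \in \mathcal{I}^{\mathbf n,(k)}$ so $\widecheck{\sigma}^{\mathbf n}_k(x) \le \|x - P^{\mathbf n}_k(x)\|$; thus (ii) forces ($\mathbf n$, strong partially greedy), and Theorem \ref{m1} supplies ($\mathbf n$, conservative) with some constant $\mathbf{C}_{\mathbf n, c}$. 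Now given $(A, B) \in \mathbb{S}(\mathbf n)$, reduce to $A \cap B = \emptyset$ using quasi-greediness, and select a consecutive block $C \subset \mathbf n$ with $|C| = |A|$ placed far to the right of $A \cup B$. Since $(A, C) \in \mathbb{T}(\mathbf n)$, conservativity gives $\|1_A\| \le \mathbf{C}_{\mathbf n, c}\|1_C\|$. To bound $\|1_C\|$ by $\|1_B\|$, apply (iii) to $y := 1_B + 1_C$ with greedy set $G_{|B|}(y) = B$ (valid since all coefficients of $y$ equal $1$) and with $I$ the consecutive block of $\mathbf n$ of size $|B|$ obtained by extending $C$ further to the right; then $\|y - P_I(y)\| = \|1_B\|$, so $\|1_C\| \le \mathbf{C}_{\mathbf n, ca}\|1_B\|$. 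Composition produces ($\mathbf n$, democratic), and Theorem \ref{mm2} closes the argument.

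The most delicate point will be the nested greedy set construction in (iii) $\Rightarrow$ (ii): one must verify that $\Gamma_k$ is a bona fide greedy set of $x$ and that $\Gamma_m \setminus \Gamma_k$ is a bona fide greedy set of the residual $x - G_k(x)$, both of which follow from the coefficient ordering inherited from $\Gamma_m$. All remaining steps rely on the freedom to push auxiliary consecutive blocks arbitrarily far to the right in $\mathbf n$, which is available because $\mathbf n$ is infinite.
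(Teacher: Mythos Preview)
Your argument is correct. The implication chain and each step hold as stated; in particular, the nested greedy-set step in (iii) $\Rightarrow$ (ii) is valid because $\Gamma_k$ really is a greedy set of $x$ of order $k$ and $\Gamma_m\setminus\Gamma_k$ is a greedy set of the residual, and the ``far block'' construction in (ii) $\Rightarrow$ (i) works since $I\supset C$, $I\cap B=\emptyset$, and $B$ is a legitimate greedy set of $1_B+1_C$.

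Your route differs from the paper's in two places. The paper closes the loop by proving (iii) $\Rightarrow$ (i) directly: from CAG($\mathbf n$, I) it extracts suppression quasi-greediness (as you do), and then proves ($\mathbf n$, democratic) by covering an arbitrary $A\subset\mathbf n$ with two consecutive $\mathbf n$-intervals $I_1,I_2$ so that $|A\cap I_1|=|A\cap I_2|=|A|/2$, and using that at least one of $B\setminus I_1$, $B\setminus I_2$ has size $\ge|A|/2$ to compare with $\|1_B\|$ via the CAG($\mathbf n$, I) inequality applied to well-chosen vectors; Theorem~\ref{mm2} then yields (i). In contrast, you insert the intermediate step (iii) $\Rightarrow$ (ii) via the nested greedy-set trick, and for (ii) $\Rightarrow$ (i) you pass through ($\mathbf n$, strong partially greedy) and Theorem~\ref{m1} to get ($\mathbf n$, conservative), then upgrade conservative to democratic with a single far-right comparison block $C$. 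Your approach is more modular---it recycles Theorem~\ref{m1} and a standard quasi-greedy telescoping---while the paper's interval-splitting argument is more self-contained and yields a constant depending polynomially on $\mathbf C_{\mathbf n,ca}$ alone rather than on the implicit constants hidden in Theorem~\ref{m1}. Both arguments exploit the same structural freedom: consecutive $\mathbf n$-blocks can be pushed arbitrarily far to the right. The reduction to $A\cap B=\emptyset$ in your democratic step is unnecessary (your far-block argument works regardless), but it does no harm.
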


\begin{proof}
By Proposition \ref{pp1}, i) $\Longrightarrow$ ii). That ii) $\Longrightarrow$ iii) is obvious. We shall show that iii) $\Longrightarrow$ i). Assume that $\mathcal{B}$ is $\mathbf C_{\mathbf n, ca}$-CAG($\mathbf n$, I). Using Theorem \ref{mm2}, we need to show that $\mathcal{B}$ is quasi-greedy and ($\mathbf n$, democratic).

i) $\mathcal{B}$ is quasi-greedy: Pick $x\in X$ and $m\in\mathbb{N}$. Let $I_N = \{n_{N+1}, \ldots, n_{N+m}\}$. Since $\mathcal{B}$ is CAG($\mathbf n$, I), 
$$\|x-G_m(x)\|\ \le\ \mathbf C_{\mathbf n, ca}\|x-P_{I_N}(x)\|\ \le\ \mathbf C_{\mathbf n, ca}(\|x\| + \|P_{I_N}(x)\|).$$
Clearly, 
$$\|P_{I_N}(x)\|\ \le\ m\max_{n > N}|e^*_n(x)|\sup_{\ell}\|e_\ell\|\ \le\ mc_2\max_{n>N}|e_n^*(x)|\rightarrow 0\mbox{ as }N\rightarrow 0.$$
Hence, 
$$\|x-G_m(x)\|\ \le\ \mathbf C_{\mathbf n, ca}\|x\|,$$
which shows that $\mathcal{B}$ is $\mathbf C_{\mathbf n, ca}$-suppression quasi-greedy. 

ii) $\mathcal{B}$ is ($\mathbf n$, democratic): Let $(A, B)\in\mathbb{S}(\mathbf n)$. First, assume that $|A|$ is even. Pick $I_1, I_2\in\mathcal{I}^{\mathbf n}$ such that $I_1\sqcup I_2$, $A\subset I_1\cup I_2$,  and $$|A\cap I_1|\ = \ |A\cap I_2|\ =\ |A|/2.$$
Then either $|B\backslash I_1|\ge |A|/2$ or $|B\backslash I_2|\ge |A|/2$. Otherwise, we have the contradiction
$$|A|\ \le\ |B|\ \le\ |B\backslash I_1| + |B\backslash I_2|\ <\ |A|.$$
Without loss of generality, assume that 
$|B\backslash I_1|\ge |A|/2$. Choose $B_1\subset B\backslash I_1$ such that $|B_1| = |A|/2$. Let 
$$x \ =\ 1_{A\cap I_1} + 1_{I_1\backslash A} + 1_{B_1}.$$
Since $\mathcal{B}$ is $\mathbf C_{\mathbf n, ca}$-CAG($\mathbf n$, I) and $\mathbf {C}_{\mathbf n, ca}$-suppression quasi-greedy, we have
$$\|1_{A\cap I_1}\|\ =\ \|x - P_{(I_1\backslash A)\cup B_1}(x)\|\ \le\ \mathbf C_{\mathbf n, ca}\|x-P_{I_1}(x)\|\ =\ \mathbf C_{\mathbf n, ca}\|1_{B_1}\|\ \le\ \mathbf C^2_{\mathbf n, ca}\|1_{B}\|.$$
Let 
$$y\ =\ 1_{A\cap I_2} + 1_{I_2\backslash A} + 1_{A\cap I_1}.$$
Observe that 
    $$|I_2| \ =\ |A\cap I_2| + |I_2\backslash A| \ =\ |A\cap I_1| + |I_2\backslash A|.$$
    We have
\begin{align*}\|1_{A\cap I_2}\|\ =\ \|y-P_{(I_2\backslash A)\cup (A\cap I_1)}(y)\|&\ \le\ \mathbf C_{\mathbf n, ca}\|y - P_{I_2}(y)\|\\
&\ \le\ \mathbf C_{\mathbf n, ca}\|1_{A\cap I_1}\|\ \le\ \mathbf C_{\mathbf n, ca}^3\|1_{B}\|.\end{align*}
    We obtain
    $$\|1_A\|\ \le\ \|1_{A\cap I_1}\| + \|1_{A\cap I_2}\|\ \le\ (\mathbf C_{\mathbf n, ca}^2 + \mathbf C_{\mathbf n, ca}^3)\|1_B\|.$$
    
We consider the case when $|A|$ is odd. Let $A' = A\backslash \{\max A\}$. By above, we have
\begin{align*}
    \|1_{A'}\|\ \le\ \|1_A\| + c_2&\ \le\ (\mathbf C_{\mathbf n, ca}^2 + \mathbf C_{\mathbf n, ca}^3)\|1_B\| + c_2\sup_{k}\|e_k^*\|\|1_B\|\\
    &\ \le\ (\mathbf C_{\mathbf n, ca}^2 + \mathbf C_{\mathbf n, ca}^3 + c_2^2)\|1_B\|.
\end{align*}

Therefore, $\mathcal{B}$ is ($\mathbf n$, democratic). 
\end{proof}

In order to obtain tight estimates as in \cite[Theorem 3.3]{AA}, \cite[Theorem 2]{DKOSZ}, \cite[Theorems 4.1 and 4.3]{BDKOW}, and \cite[Theorems 1.14 and 1.15]{C2}, we define another type of ($\mathbf n$, consecutive almost greedy) bases and what we call ($\mathbf n$, restricted symmetry for largest coefficients) (or ($\mathbf n$, RSLC)).

\begin{defi}\normalfont
A basis $\mathcal{B}$ is said to be ($\mathbf n$, consecutive almost greedy) of type II (or CAG($\mathbf n$, II), for short) if there exists a constant $\mathbf C\ge 1$ such that 
\begin{equation}\label{ee6}\|x-G_m(x)\|\ \le\ \mathbf C\overline{\sigma}^{\mathbf n}_m(x), \forall x\in X, \forall m\in\mathbb{N}, \forall G_m(x),\end{equation}
where 
$$\overline{\sigma}^{\mathbf n}_m(x)\ =\ \inf\left\{\|x-P_{I}(x)\|: I\in \mathcal{I}^{\mathbf n}, |I\cap \supp(x)|\le m\right\}.$$
The least constant $\mathbf C$ is denoted by $\mathbf C'_{\mathbf n, ca}$.
\end{defi}

\begin{defi}\normalfont\label{dnRSLC}
A basis $\mathcal{B}$ is ($\mathbf n$, RSLC) if there exists the least constant $\mathbf C_{\mathbf n, rs}\ge 1$ such that
$$\|x+1_{\varepsilon A}\|\ \le\ \mathbf C_{\mathbf n, rs}\|x+1_{\delta B}\|,$$
for all $x\in X$ with $\|x\|_\infty\le 1$, for all $(A, B)\in\mathbb{S}(\mathbf n)$ with $A < (\supp(x)\sqcup B)|_{\min A}\cap \mathbf n$, and for all signs $\varepsilon, \delta$.
\end{defi}

\begin{prop}\label{pp2}
A basis $\mathcal{B}$ is $\mathbf C_{\mathbf n, rs}$-($\mathbf n$, RSLC) if and only if 
$$\|x\|\ \le\ \mathbf C_{\mathbf n, rs}\|x-P_A(x)+1_{\varepsilon B}\|,$$
for all $x\in X$ with $\|x\|_\infty\le 1$, for all $(A, B)\in\mathbb{S}(\mathbf n)$ with $A < (\supp(x-P_A(x))\sqcup B)|_{\min A}\cap \mathbf n$, and for all signs $\varepsilon$. 
\end{prop}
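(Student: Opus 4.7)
The plan is to mimic the proof of Lemma~\ref{l2} with the necessary adjustments for the ($\mathbf n$, RSLC) setting. For the ``only if'' direction, I would fix $x$, $(A,B)\in\mathbb S(\mathbf n)$, and a sign $\varepsilon$ satisfying the hypotheses of the claimed inequality, and write
\[
x \ =\ (x-P_A(x)) + \sum_{n\in A} e_n^*(x)\,e_n .
\]
Since $|e_n^*(x)|\le \|x\|_\infty \le 1$, a standard convexity/extreme-point argument (the function $(d_n)\mapsto \|(x-P_A(x))+\sum d_n e_n\|$ is convex on the polydisk and so attains its maximum at a sign vector) yields
\[
\|x\| \ \le\ \sup_{\delta}\|(x-P_A(x)) + 1_{\delta A}\|.
\]
For each such sign $\delta$, I would then apply the ($\mathbf n$, RSLC) property to the vector $x-P_A(x)$ (which satisfies $\|x-P_A(x)\|_\infty\le 1$) with pair $(A,B)\in\mathbb S(\mathbf n)$ and signs $\delta$ on $A$, $\varepsilon$ on $B$. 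The support/order hypothesis required by Definition~\ref{dnRSLC} is $A < (\supp(x-P_A(x))\sqcup B)|_{\min A}\cap \mathbf n$, which is precisely our assumption. Combining these two bounds gives $\|x\|\le \mathbf C_{\mathbf n, rs}\|(x-P_A(x))+1_{\varepsilon B}\|$.

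For the converse, I would take $x$, $(A,B)\in\mathbb S(\mathbf n)$, and signs $\varepsilon,\delta$ as in Definition~\ref{dnRSLC}, and set $y := x + 1_{\varepsilon A}$. A brief observation is that the hypothesis $A < (\supp(x)\sqcup B)|_{\min A}\cap \mathbf n$ forces $A\cap \supp(x)=\emptyset$: indeed, any $n\in A\cap \supp(x)\subset \mathbf n$ would lie in $(\supp(x)\cup B)|_{\min A}\cap \mathbf n$ yet satisfy $n\ge \min A$, contradicting the strict inequality. Consequently, $P_A(y) = 1_{\varepsilon A}$, $y-P_A(y) = x$, and $\|y\|_\infty = 1$, while the support/order condition transfers verbatim to $y-P_A(y)$. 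The claimed inequality applied to $y$ with sign $\delta$ on $B$ then gives
\[
\|x + 1_{\varepsilon A}\|\ =\ \|y\|\ \le\ \mathbf C_{\mathbf n, rs}\|y - P_A(y) + 1_{\delta B}\|\ =\ \mathbf C_{\mathbf n, rs}\|x + 1_{\delta B}\|,
\]
which is the ($\mathbf n$, RSLC) estimate with the same constant.

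The overall argument is routine and essentially a transposition of Lemma~\ref{l2} to the setting of pairs in $\mathbb S(\mathbf n)$ and the ``$|_{\min A}$'' ordering. The only point requiring a moment's care is the translation between the ($\mathbf n$, RSLC) inequality and its ``residual'' formulation; concretely, in the converse I must identify $P_A(y)$ and $y-P_A(y)$ cleanly, which rests on the small observation that $A\cap \supp(x)=\emptyset$ is forced by the ordering hypothesis. I do not expect any genuine obstacle beyond bookkeeping with the support/order condition.
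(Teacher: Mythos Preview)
Your proposal is correct and is precisely the expected adaptation of Lemma~\ref{l2} to the ($\mathbf n$, RSLC) setting; the paper in fact states Proposition~\ref{pp2} without proof, implicitly relying on exactly this argument. The only extra ingredient beyond Lemma~\ref{l2} is your observation that the ordering hypothesis $A < (\supp(x)\sqcup B)|_{\min A}\cap \mathbf n$ forces $A\cap\supp(x)=\emptyset$ (since $A\subset\mathbf n$), which you handle correctly and which is what makes $P_A(y)=1_{\varepsilon A}$ and $\|y\|_\infty=1$ in the converse direction.
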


\begin{thm}\label{mm1}
Let $\mathcal{B}$ be a basis of a Banach space. The following hold
\begin{enumerate}
    \item[i)] If $\mathcal{B}$ is $\mathbf C'_{\mathbf n, ca}$-CAG($\mathbf n$, II), then $\mathcal{B}$ is $\mathbf C'_{\mathbf n, ca}$-suppression quasi-greedy and $\mathbf C'_{\mathbf n, ca}$-($\mathbf n$, RSLC).
    \item[ii)] If $\mathcal{B}$ is $\mathbf C_\ell$-suppression quasi-greedy and $\mathbf C_{\mathbf n, rs}$-($\mathbf n$, RSLC), then the basis $\mathcal{B}$ is $\mathbf C_\ell\mathbf C_{\mathbf n, rs}$-CAG($\mathbf n$, II).
\end{enumerate}
\end{thm}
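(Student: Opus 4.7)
The plan is to mimic the strategy used in Propositions~\ref{p1} and~\ref{p2}, adapting the constructions to handle an arbitrary consecutive $\mathbf n$-interval $I\in\mathcal{I}^{\mathbf n}$ in place of an initial segment. Part~(i) will essentially amount to exhibiting, for a generic ($\mathbf n$, RSLC)-configuration, a vector $y$ whose greedy sum and interval-projection residue coincide with $x+1_{\varepsilon A}$ and $x+1_{\delta B}$. Part~(ii) will pass through the characterization given by Proposition~\ref{pp2}, combined with Theorem~\ref{bto}.

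For part~(i), suppression quasi-greediness follows by taking $I=\emptyset$ in the definition of $\overline{\sigma}^{\mathbf n}_m$. For ($\mathbf n$, RSLC), fix $x,A,B,\varepsilon,\delta$ as in Definition~\ref{dnRSLC}; let $n_{p+1}=\min A$, $n_q=\max A$, and set $D:=\{n_{p+1},\ldots,n_q\}\setminus A$ and $I:=A\cup D\in\mathcal{I}^{\mathbf n}$. The condition $A<(\supp(x)\sqcup B)|_{\min A}\cap\mathbf n$ forces $D$ to be disjoint from $\supp(x)\cup B$, so the vector
$$y\ :=\ x+1_{\varepsilon A}+1_D+1_{\delta B}$$
is well-defined with pairwise disjoint supports. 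Now $D\cup B$ is a valid greedy set of $y$ of order $|D|+|B|$ (every coefficient in $D\cup B$ has modulus exactly $1$, dominating the $|e_n^*(x)|\le 1$ on $\supp(x)$), while $I$ is a consecutive $\mathbf n$-interval with $|I\cap\supp(y)|=|A|+|D|\le |B|+|D|=|D\cup B|$. Then $y-P_{D\cup B}(y)=x+1_{\varepsilon A}$ and $y-P_I(y)=x+1_{\delta B}$, so \eqref{ee6} applied to $y$ yields the ($\mathbf n$, RSLC) inequality with constant $\mathbf{C}'_{\mathbf n,ca}$.

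For part~(ii), fix $x\in X$, $m\in\mathbb{N}$, $A\in G(x,m)$, and $I\in\mathcal{I}^{\mathbf n}$ with $|I\cap\supp(x)|\le m$. Put $I':=I\cap\supp(x)$ (so that $P_I(x)=P_{I'}(x)$), and set $E:=I'\setminus A\subset\mathbf n$, $F:=A\setminus I$, $\alpha:=\min_{n\in A}|e_n^*(x)|$, and $\varepsilon:=(\sgn e_n^*(x))_n$. The size inequality $|E|\le|F|$ becomes $|I'|\le m$, which is precisely the hypothesis on $I$. The position condition $E<(\supp(x-P_{A\cup E}(x))\sqcup F)|_{\min E}\cap\mathbf n$ required by Proposition~\ref{pp2} follows from the crucial structural fact that $I$ contains \emph{every} element of $\mathbf n$ between $\min I$ and $\max I$: any element of $\mathbf n$ lying between $\min E$ and $\max E$ must therefore lie in $I$, hence in $I'\subset A\cup E$, and is excluded from both $F=A\setminus I$ and $\supp(x-P_{A\cup E}(x))$. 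Applying Proposition~\ref{pp2} to $\alpha^{-1}(x-P_A(x))$ gives
$$\|x-P_A(x)\|\ \le\ \mathbf{C}_{\mathbf n,rs}\bigl\|x-P_{A\cup E}(x)+\alpha\,1_{\varepsilon F}\bigr\|.$$

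The final step is the identification $x-P_{A\cup E}(x)+\alpha\,1_{\varepsilon F}=T_\alpha(x-P_I(x))$. Using the disjoint decomposition $A=F\sqcup(A\cap I)$ one checks $A\cup E=F\sqcup I'$, so $x-P_I(x)$ equals $x-P_{A\cup E}(x)$ on $F^c$ and equals $e_n^*(x)$ (of modulus $\ge\alpha$) on $F\subset A$; truncating at level $\alpha$ replaces the $F$-coordinates by $\varepsilon_n\alpha$ and leaves the rest untouched (on $(A\cup E)^c\subset A^c$ all coefficients of $x$ have modulus $\le\alpha$). Theorem~\ref{bto} then yields $\|T_\alpha(x-P_I(x))\|\le \mathbf{C}_\ell\|x-P_I(x)\|$, and combining gives the desired bound $\mathbf{C}'_{\mathbf n,ca}\le \mathbf{C}_\ell\mathbf{C}_{\mathbf n,rs}$. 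The main obstacle is the position bookkeeping in part~(ii); the consecutiveness of $I$ in $\mathbf n$ is what makes everything fit, and is essentially the only place where the CAG($\mathbf n$, II) hypothesis is heavier than the ($\mathbf n$, strong partially greedy) hypothesis from Section~\ref{characterization}.
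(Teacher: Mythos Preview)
Your proof is correct and follows essentially the same approach as the paper. The only cosmetic difference is in part~(i): you pad the test vector with $1_D$ so that $D\cup B$ becomes the greedy set, whereas the paper simply takes $y=x+1_{\varepsilon A}+1_{\delta B}$ and uses $B$ as the greedy set, exploiting the fact that the CAG($\mathbf n$, II) definition only constrains $|I\cap\supp(y)|$ (which equals $|A|$ for $I=[\min A,\max A]\cap\mathbf n$) rather than $|I|$ itself---but both arguments are equivalent in spirit and in the constants obtained.
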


\begin{proof}
(1) Assume that $\mathcal{B}$ is $\mathbf C'_{\mathbf n, ca}$-CAG(II). Taking $I = \emptyset$ in \eqref{ee6}, we know that $\mathcal{B}$ is $\mathbf C'_{\mathbf n, ca}$-suppression quasi-greedy. We shall prove that $\mathcal{B}$ is $\mathbf C'_{\mathbf n, ca}$-($\mathbf n$, RSLC). Let $x, A, B, \varepsilon, \delta$ be chosen as in Definition \ref{dnRSLC}. Set $y = x + 1_{\varepsilon A} + 1_{\delta B}$. If $A = \emptyset$, then $\|x\|\le \mathbf C'_{\mathbf n, ca}\|x+1_{\delta B}\|$ because $\mathcal{B}$ is $\mathbf C'_{\mathbf n, ca}$-suppression quasi-greedy. Otherwise, let $D = [\min A, \max A]\cap \mathbf n$. 

\begin{claim}
We have $D\cap \supp(y) = A$.
\end{claim}

\begin{proof}
Clearly, $A\subset D\cap \supp(y)$. Suppose, for a contradiction, that there exists $j\in (D\cap \supp(y))\backslash A$. Then $j\in (\supp(x)\cup B)\cap \mathbf n$ and $\min A < j < \max A$, which contradicts that $A < (\supp(x)\sqcup B)|_{\min A}\cap \mathbf n$.
\end{proof}
Hence, $|D\cap \supp(y)| = |A|\le |B|$. We have
$$\|x+ 1_{\varepsilon A}\| \ =\ \|y - P_B(y)\|\ \le\ \mathbf C'_{\mathbf n, ca}\overline{\sigma}^{\mathbf n}_{|B|}(y)\ =\ \mathbf C'_{\mathbf n, ca}\|y-P_D(y)\|\ =\ \mathbf C'_{\mathbf n, ca}\|x + 1_{\delta B}\|.$$
Therefore, $\mathcal{B}$ is $\mathbf C'_{\mathbf n, ca}$-($\mathbf n$, RSLC). 

(2) Assume that $\mathcal{B}$ is $\mathbf C_\ell$-suppression quasi-greedy and $\mathbf C_{\mathbf n, rs}$-($\mathbf n$, RSLC). We show that $\mathcal{B}$ is $\mathbf C_\ell\mathbf C_{rs}$-CAG($\mathbf n$, II). Let $x\in X$, $A\in G(x, m)$ for some $m\in\mathbb{N}$ and $A\subset \supp(x)$. Let $B = \{n_{q+1}, n_{q+2}, \ldots, n_{q+p}\}$ with $|B\cap \supp(x)| \le |A|$. We need to show that
$$\|x-P_A(x)\|\ \le\ \mathbf C_{\mathbf n, rs}\mathbf C_\ell\|x-P_B(x)\|.$$ Set $D = B\cap \supp(x)$, $\varepsilon = (\sgn(e_n^*(x)))_n$. 
Let $E = D\backslash A$ and $F = A\backslash D$. Since $|D|\le |A|$, we have $|E|\le |F|$. Furthermore, $E\subset\mathbf n$. If $E = \emptyset$, then $D\subset A$. We have
$$\|x-P_A(x)\| \ =\ \|x-P_D(x) - P_{A\backslash D}(x)\|\ \le\ \mathbf C_\ell\|x-P_D(x)\|$$
because $A\backslash D$ is a greedy set of $x-P_D(x)$. For the rest of the proof, assume that $E\neq \emptyset$.

\begin{claim}
We have $F|_{\min E}\cap \mathbf n > E$. 
\end{claim}

\begin{proof}
We have
\begin{align*}
    \min E&\ \ge\ \min D\ \ge\ \min B\ \ge\ n_{q+1},\\
    \max E&\ \le\ \max D\ \le \ \max B\ \le\ n_{q+p}.
\end{align*}
Hence, it suffices to show that $F|_{n_{q+1}}\cap \mathbf n > n_{q+p}$. Suppose, for a contradiction, that there exists $\ell\in F\cap \mathbf n$ such that $\ell \ge n_{q+1}$ and $\ell \le n_{q+p}$. Hence, $\ell\in B\cap F$, which implies that $\ell\notin \supp(x)$. However, that $\ell\in F$ implies that $\ell\in A\subset\supp(x)$, a contradiction.
\end{proof}

\begin{claim}
We have $\supp(x-P_A(x)-P_{E}(x))|_{\min E} \cap \mathbf n > E$.
\end{claim}

\begin{proof}
Since $\min E \ge n_{q+1}$ and $\max E\le n_{q+p}$, it suffices to show that 
$$\supp(x-P_{A\cup E}(x))|_{n_{q+1}}\cap \mathbf n\ >\ n_{q+p}.$$
Suppose, for a contradiction, that there exists $n_\ell\in \supp(x-P_{A\cup E}(x))$ such that $n_{q+1}\le n_\ell \le n_{q+p}$. Then $n_\ell\in B\cap \supp(x) = D$. However, $n_\ell\in \supp(x-P_{A\cup E}(x))$ implies that $n_\ell\notin A\cup E = A\cup D$. Hence, $n_\ell\notin D$, a contradiction. 
\end{proof}

Set $\alpha = \min_{n\in F}|e_n^*(x)|$. By the above claims, we can apply Proposition \ref{pp2} and Theorem \ref{bto} to have
\begin{align*}
    \|x-P_A(x)\|&\ \le\ \mathbf C_{\mathbf n, rs}\|x-P_A(x)-P_{E}(x) + \alpha 1_{\varepsilon F}\|\\
    &\ =\ \mathbf C_{\mathbf n, rs}\|T_\alpha(x-P_A(x)-P_E(x)+P_F(x))\|\\
    &\ \le\ \mathbf C_{\mathbf n, rs}\mathbf C_\ell\|x-P_D(x)\|\ =\ \mathbf C_{\mathbf n, rs}\mathbf C_\ell\|x-P_B(x)\|,
\end{align*}
as desired. This completes our proof. 
\end{proof}

\begin{defi}\normalfont\label{dnslc}
A basis $\mathcal{B}$ is ($\mathbf n$, symmetric for largest coefficients) (or ($\mathbf n$, SLC), for short) if there exists the least constant $\mathbf C_{\mathbf n, s}$ such that 
$$\|x+1_{\varepsilon A}\|\ \le\ \mathbf C_{\mathbf n, s}\|x+1_{\delta B}\|,$$
for all $x\in X$ with $\|x\|_\infty \le 1$, for all $(A, B)\in \mathbb{S}(\mathbf n)$ with $A\sqcup B\sqcup \supp(x)$, and for all signs $\varepsilon, \delta$.
\end{defi}

\begin{prop}\label{pp3}Let $\mathcal{B}$ be $\mathbf C_q$-quasi-greedy. Then $\mathcal{B}$ is $\mathbf C_{\mathbf n, s}$-($\mathbf n$, SLC) if and only if $\mathcal{B}$ is $\mathbf C_{\mathbf n, rs}$-($\mathbf n$, RSLC). Moreover, $\mathbf C_{\mathbf n, rs}\le \mathbf C_{\mathbf n, s}$ and 
$$\mathbf C_{\mathbf n, s}\ \le\ 1+(1 + \mathbf C^2_{\mathbf n, rs})\mathbf C_q.$$
\end{prop}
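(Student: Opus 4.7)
The plan is to handle the two directions of the equivalence separately, with the easy direction giving $\mathbf C_{\mathbf n,rs}\le\mathbf C_{\mathbf n,s}$ and the harder direction producing the quantitative bound $\mathbf C_{\mathbf n,s}\le 1+(1+\mathbf C_{\mathbf n,rs}^2)\mathbf C_q$.

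\textbf{Easy direction.} I would first verify that any tuple $(x,A,B,\varepsilon,\delta)$ satisfying the hypotheses of $(\mathbf n,$ RSLC$)$ automatically satisfies the hypotheses of $(\mathbf n,$ SLC$)$. The disjointness $\supp(x)\sqcup B$ is built into the premise. Moreover, since $A\subset \mathbf n$, every element of $A$ lies in $\mathbf n$ and is $\ge \min A$; if any such element also belonged to $\supp(x)\cup B$, it would contradict $A<(\supp(x)\sqcup B)|_{\min A}\cap\mathbf n$. So $A$, $B$, $\supp(x)$ are pairwise disjoint, $(A,B)\in\mathbb{S}(\mathbf n)$, and applying $(\mathbf n,$SLC$)$ to the same tuple yields $\|x+1_{\varepsilon A}\|\le\mathbf C_{\mathbf n,s}\|x+1_{\delta B}\|$; minimality of $\mathbf C_{\mathbf n,rs}$ gives the inequality $\mathbf C_{\mathbf n,rs}\le\mathbf C_{\mathbf n,s}$.

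\textbf{Main direction.} Assume $(x,A,B,\varepsilon,\delta)$ satisfies the SLC premise. The idea is to use the triangle inequality
$$\|x+1_{\varepsilon A}\|\ \le\ \|x\|+\|1_{\varepsilon A}\|$$
and handle each piece via quasi-greediness and two applications of RSLC with a far-right auxiliary set. For $\|x\|$: since $\|x\|_\infty\le 1$, the set $B$ is a greedy set of order $|B|$ of $y:=x+1_{\delta B}$, so quasi-greediness gives $\|1_{\delta B}\|=\|P_B(y)\|\le\mathbf C_q\|y\|$, and therefore $\|x\|\le(1+\mathbf C_q)\|x+1_{\delta B}\|$.

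For $\|1_{\varepsilon A}\|$: since $\mathbf n$ is unbounded, choose $A^*\subset\mathbf n$ with $|A^*|=|A|$ and $\min A^*>\max(A\cup B\cup\supp(x))$. I would then apply $(\mathbf n,$RSLC$)$ twice, each time with the fixed vector equal to $0$. First, applied to the pair $(A,A^*)\in\mathbb{S}(\mathbf n)$: since $A^*|_{\min A}=A^*$ and $A<A^*$, the position condition holds, giving $\|1_{\varepsilon A}\|\le\mathbf C_{\mathbf n,rs}\|1_{\delta' A^*}\|$ for any sign $\delta'$. Second, applied to the pair $(A^*,B)\in\mathbb{S}(\mathbf n)$: since $B|_{\min A^*}=\emptyset$, the position condition is vacuous, giving $\|1_{\delta' A^*}\|\le\mathbf C_{\mathbf n,rs}\|1_{\delta B}\|$. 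Combining with the quasi-greedy bound $\|1_{\delta B}\|\le\mathbf C_q\|x+1_{\delta B}\|$ yields $\|1_{\varepsilon A}\|\le\mathbf C_{\mathbf n,rs}^2\mathbf C_q\|x+1_{\delta B}\|$. Adding this to the bound on $\|x\|$ produces the claimed constant.

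\textbf{Main obstacle.} There is no deep obstacle here; the only conceptual point is the introduction of the auxiliary far-right block $A^*\subset\mathbf n$, which simultaneously trivializes the RSLC position hypothesis in both applications (one because $A$ now lies to the left of $A^*$, the other because $B$ is entirely to the left of $A^*$). This lets a generic SLC configuration be compared with itself via two RSLC steps plus a single quasi-greedy truncation, and the cardinality requirement in $\mathbb{S}(\mathbf n)$ is preserved throughout thanks to $|A^*|=|A|\le|B|$.
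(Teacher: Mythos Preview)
Your proof is correct and follows essentially the same approach as the paper: the easy direction is the trivial inclusion of RSLC configurations among SLC configurations, and for the hard direction both you and the paper use the triangle inequality together with quasi-greediness to control $\|x\|$ (equivalently, $\|1_{\delta B}\|$), and then bound $\|1_{\varepsilon A}\|$ by two applications of RSLC via an auxiliary far-right block $A^*\subset\mathbf n$ (the paper calls it $D$ and states this as a separate claim).
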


\begin{proof}
Observe that if $(A, B)\in\mathbb{S}(\mathbf n)$ and $A < (B\sqcup \supp(x))|_{\min A}\cap \mathbf n$, then $A\sqcup B\sqcup \supp(x)$. Hence, if $\mathcal{B}$ is $\mathbf C_{\mathbf n, s}$-($\mathbf n$, SLC), then $\mathcal{B}$ is $\mathbf C_{\mathbf n, s}$-($\mathbf n$, RSLC). 

We prove the implication ($\mathbf n$, RSLC) $\Longrightarrow$ ($\mathbf n$, SLC). 
Assume that $\mathcal{B}$ is $\mathbf C_{\mathbf n, rs}$-($\mathbf n$, RSLC). 

\begin{claim}\label{cl1}
If $(A, B)\in \mathbb{S}(\mathbf n)$, then $\|1_{\varepsilon A}\|\le \mathbf C^2_{\mathbf n, rs}\|1_{\delta B}\|$ for all signs $\varepsilon, \delta$.
\end{claim}
\begin{proof}
Choose $D\subset\mathbf n$ such that $A\cup B < D$ and $|D| = |A|$. Due to ($\mathbf n$, RSLC), we have
$$\|1_{\varepsilon A}\|\ \le\ \mathbf C_{\mathbf n, rs}\|1_D\|\mbox{ and }\|1_D\|\ \le\ \mathbf C_{\mathbf n, rs}\|1_{\delta B}\|.$$
Therefore, $\|1_{\varepsilon A}\|\le \mathbf C_{\mathbf n, rs}^2\|1_{\delta B}\|$.
\end{proof}
We are ready to show that $\mathcal{B}$ is ($\mathbf n$, SLC). Let $x, A, B, \varepsilon, \delta$ be chosen as in Definition \ref{dnslc}. We have
\begin{align*}
    \|x+1_{\varepsilon A}\|&\ \le\ \|x + 1_{\delta B}\| + \|1_{\delta B}\| +  \|1_{\varepsilon A}\|\\
    &\ \le\ (1+\mathbf C_q)\|x+1_{\delta B}\| + \mathbf C^2_{\mathbf n, rs}\|1_{\delta B}\| \mbox{ by Claim \ref{cl1}}\\
    &\ \le\ (1+\mathbf C_q + \mathbf C^2_{\mathbf n, rs}\mathbf C_q)\|x+1_{\delta B}\|.
\end{align*}
This shows that $\mathcal{B}$ is ($\mathbf n$, SLC). 
\end{proof}

\begin{cor}
Let $\mathcal{B}$ be a basis of a Banach space. The following are equivalent
\begin{enumerate}
    \item[i)] $\mathcal{B}$ is ($\mathbf n$, almost greedy).
    \item[ii)] $\mathcal{B}$ is CAG($\mathbf n$, I).
    \item[iii)] $\mathcal{B}$ is CAG($\mathbf n$, II).
    \item[iv)] $\mathcal{B}$ is quasi-greedy and ($\mathbf n$, RSLC).
    \item[v)] $\mathcal{B}$ is quasi-greedy and ($\mathbf n$, SLC).
    \item[vi)] $\mathcal{B}$ is quasi-greedy and ($\mathbf n$, democratic). 
    \item[vii)] $\mathcal{B}$ is quasi-greedy and ($\mathbf n$, superdemocratic). 
\end{enumerate}
\end{cor}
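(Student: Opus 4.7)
The strategy is to partition the seven statements into two blocks whose internal equivalences follow at once from earlier results, and then splice the blocks together with two short implications. Theorem \ref{mm5} supplies i) $\Longleftrightarrow$ ii), Theorem \ref{mm2} gives i) $\Longleftrightarrow$ vi), Theorem \ref{mm1} yields iii) $\Longleftrightarrow$ iv), and Proposition \ref{pp3} provides iv) $\Longleftrightarrow$ v). Consequently i), ii), vi) are already mutually equivalent, as are iii), iv), v), so the remaining work is to fold in vii) and to connect the two blocks.

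To dispose of vi) $\Longleftrightarrow$ vii), the direction vii) $\Longrightarrow$ vi) is immediate upon setting $\varepsilon\equiv\delta\equiv 1$ in \eqref{e20}. For vi) $\Longrightarrow$ vii) I would invoke the UL property recalled in Remark \ref{r1}: for a $\mathbf C_q$-quasi-greedy basis there are constants $\mathbf C_1,\mathbf C_2$ with $\mathbf C_1^{-1}\|1_A\|\le\|1_{\varepsilon A}\|\le\mathbf C_2\|1_A\|$ for every finite $A$ and every sign $\varepsilon$, and chaining these two-sided bounds with ($\mathbf n$, democratic) promotes the estimate to ($\mathbf n$, superdemocratic). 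To connect the two blocks I would first observe iii) $\Longrightarrow$ ii) by inspection: if $I\in\mathcal{I}^{\mathbf n,(m)}$, then $|I\cap\supp(x)|\le |I|=m$, so $I$ is admissible in the infimum defining $\overline{\sigma}^{\mathbf n}_m(x)$, giving $\overline{\sigma}^{\mathbf n}_m(x)\le\widecheck{\sigma}^{\mathbf n}_m(x)$ and thus CAG($\mathbf n$, II) $\Longrightarrow$ CAG($\mathbf n$, I).

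To close the circle in the opposite direction I would prove vii) $\Longrightarrow$ v). Fix $x,A,B,\varepsilon,\delta$ as in Definition \ref{dnslc}, so $\|x\|_\infty\le 1$, $A\sqcup B\sqcup\supp(x)$, and $(A,B)\in\mathbb{S}(\mathbf n)$. The key observation is that since $B\cap\supp(x)=\emptyset$ and $\|x\|_\infty\le 1$, every coefficient of $x+1_{\delta B}$ on $B$ has modulus $1$ while every coefficient off $B$ has modulus at most $1$; hence $B$ is a greedy set of $x+1_{\delta B}$ of order $|B|$, and quasi-greediness yields $\|1_{\delta B}\|=\|P_B(x+1_{\delta B})\|\le\mathbf C_q\|x+1_{\delta B}\|$. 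Combining this with ($\mathbf n$, superdemocratic) applied to $(A,B)\in\mathbb{S}(\mathbf n)$ and with the triangle inequality gives
\begin{align*}
\|x+1_{\varepsilon A}\|&\ \le\ \|x+1_{\delta B}\|+\|1_{\delta B}\|+\|1_{\varepsilon A}\|\\
&\ \le\ \bigl(1+\mathbf C_q+\Delta_{\mathbf n,sd}\mathbf C_q\bigr)\|x+1_{\delta B}\|,
\end{align*}
which is the required ($\mathbf n$, SLC) bound. No step presents a real obstacle: each implication either invokes a result already proved in the paper or reduces to a single triangle-inequality estimate, and the only subtlety — checking that $B$ qualifies as a greedy set of $x+1_{\delta B}$ — is handled by the disjoint-support hypothesis in Definition \ref{dnslc} together with $\|x\|_\infty\le 1$.
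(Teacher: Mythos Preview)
Your proof is correct and follows essentially the same approach as the paper: both assemble the equivalences i)\,$\Longleftrightarrow$\,ii)\,$\Longleftrightarrow$\,vi) and iii)\,$\Longleftrightarrow$\,iv)\,$\Longleftrightarrow$\,v) from the earlier theorems, handle vi)\,$\Longleftrightarrow$\,vii) via the UL property in Remark~\ref{r1}, and bridge the two blocks with a short triangle-inequality argument (your vii)\,$\Longrightarrow$\,v) is exactly what the paper means by ``the proof of Proposition~\ref{pp3} gives vi)\,$\Longrightarrow$\,v)''). The one cosmetic difference is the reverse bridge: the paper quotes Claim~\ref{cl1} to get iv)\,$\Longrightarrow$\,vi), while you observe directly that $\overline{\sigma}^{\mathbf n}_m\le\widecheck{\sigma}^{\mathbf n}_m$ to get iii)\,$\Longrightarrow$\,ii); both are one-line remarks.
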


\begin{proof}
Theorems \ref{mm2} and \ref{mm5}  give i) $\Longleftrightarrow$ ii) $\Longleftrightarrow$ vi). 
That iii) $\Longleftrightarrow$ iv) $\Longleftrightarrow$ v) is due to Theorem \ref{mm1} and 
Proposition \ref{pp3}. Claim \ref{cl1} shows that ($\mathbf n$, RSLC) $\Longrightarrow$ ($\mathbf n$, democratic), so iv) $\Longrightarrow$ vi). Furthermore, the proof of Proposition \ref{pp3} gives that vi) $\Longrightarrow$ v). Finally, Remark \ref{r1} gives vi) $\Longleftrightarrow$ vii).  
\end{proof}

\subsection{1-CAG($\mathbf n$, I) and 1-CAG($\mathbf n$, II) bases}
First, we characterize $1$-($\mathbf n$, almost greedy) bases and $1$-($\mathbf n$, SLC) bases.
\begin{prop}\label{ppp1}
A basis $\mathcal{B}$ is $1$-($\mathbf n$, almost greedy) if and only if for any $x\in X$ and $j\in\mathbf n$, 
\begin{equation}\label{ee13}\|x-G_1(x)\|\ \le\ \|x-e^*_j(x)e_j\|.\end{equation}
\end{prop}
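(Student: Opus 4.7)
The forward direction is immediate: given that $\mathcal{B}$ is $1$-($\mathbf n$, almost greedy), for any $x\in X$ and $j\in\mathbf n$ the set $\{j\}\subset\mathbf n$ is admissible in the definition of $\widetilde{\sigma}^{\mathbf n}_1(x)$, so $\|x-G_1(x)\|\le\|x-e_j^*(x)e_j\|$. For the converse, my plan is to prove by induction on $m\ge 1$ the statement
$$\|x-P_A(x)\|\ \le\ \|x-P_B(x)\|\qquad\text{for all } x\in X,\ A\in G(x,m),\ B\subset\mathbf n \text{ with }|B|=m,$$
whose base case $m=1$ is exactly the hypothesis \eqref{ee13}.

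For the inductive step from $m-1$ to $m$, I would split on whether $A\cap B$ is empty. If $A\cap B\ne\emptyset$, pick any $i\in A\cap B$ and set $y:=x-e_i^*(x)e_i$; since the $i$-th coordinate of $y$ vanishes and $A$ is greedy for $x$, one checks that $A\setminus\{i\}\in G(y,m-1)$, and then the induction hypothesis applied to $y$, $A\setminus\{i\}$, and $B\setminus\{i\}\subset\mathbf n$ delivers the desired inequality after simplification. If $A\cap B=\emptyset$, I plan a one-element swap followed by a reduction to the previous case. Let $a\in A$ achieve $|e_a^*(x)|=\min_{j\in A}|e_j^*(x)|$ and pick any $b\in B$. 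On $u:=x-P_{A\setminus\{a\}}(x)$ the unique largest coordinate sits at $a$, so $\{a\}\in G(u,1)$; applying \eqref{ee13} to $u$ with index $b\in\mathbf n$ yields
$$\|x-P_A(x)\|\ =\ \|u-e_a^*(u)e_a\|\ \le\ \|u-e_b^*(u)e_b\|\ =\ \|x-P_{A'}(x)\|,$$
where $A':=(A\setminus\{a\})\cup\{b\}$. Setting $y:=x-e_b^*(x)e_b$, I would verify that $A'\setminus\{b\}=A\setminus\{a\}$ is a greedy set of $y$ of order $m-1$ and apply the induction hypothesis to $y$, $A'\setminus\{b\}$, and $B\setminus\{b\}\subset\mathbf n$ to obtain $\|x-P_{A'}(x)\|\le\|x-P_B(x)\|$, which chains with the swap inequality to close the induction.

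The only genuinely delicate step will be the combinatorial check in Case 2 that $A'\setminus\{b\}$ is a greedy set of $y$. It relies on two observations: the minimality of $|e_a^*(x)|$ within $A$ guarantees $|e_j^*(y)|=|e_j^*(x)|\ge|e_a^*(x)|=|e_a^*(y)|$ for every $j\in A\setminus\{a\}$, while the coordinate of $y$ at $b$ is zero by construction. Together with the greediness of $A$ for $x$, these force the greediness of $A\setminus\{a\}$ for $y$, after which the rest of the argument is routine algebraic unwinding.
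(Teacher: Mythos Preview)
Your argument is correct and is essentially the same approach as the paper's: both proofs work by iterated one-coordinate swaps using the $m=1$ hypothesis, with the minimum-modulus element of the greedy set being traded for an element of $B\subset\mathbf n$ at each step. The paper first passes to $y=x-P_{A\cap B}(x)$ (thereby reducing to the disjoint case in one stroke) and then iterates the swap directly, whereas you package the same mechanism as a formal induction on $m$ with a case split on whether $A\cap B$ is empty; your Case~1 is precisely the paper's initial reduction, and your Case~2 is one step of the paper's iteration. One small wording issue: in Case~2 you write ``the unique largest coordinate sits at $a$''---uniqueness need not hold (ties are possible), but all you actually use is $\{a\}\in G(u,1)$, which is correct since $A$ is greedy for $x$ and $a$ was chosen to minimize $|e_a^*(x)|$ over $A$.
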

\begin{proof}
If $\mathcal{B}$ is $1$-($\mathbf n$, almost greedy), then clearly, $\mathcal{B}$ satisfies \eqref{ee13}. We prove the converse. 
Let $A$ be a greedy set of $x$ of order $m$ and $B \subset \mathbf n$ with $|B| = m$. Let $D = A\cap B$ and $y = x - P_D(x)$. We have
$$x-P_A(x)\ =\ y - P_{A\backslash D}(y)\mbox{ and }x - P_B(x) \ =\ y - P_{B\backslash D}(y).$$
Note that $(A\backslash D) \sqcup (B\backslash D)$ and $|A\backslash D| = |B\backslash D|$. For ease of notation, let $E = A\backslash D$ and $F = B\backslash D$. Write $E = \{k_1, \ldots, k_{|E|}\}$, where $|e^*_{k_1}(y)|\ge \cdots\ge|e^*_{k_{|E|}}(y)|$ and write $F = \{\ell_1, \ldots, \ell_{|F|}\}$. Since $E$ is a greedy set of $y$, we have
\begin{eqnarray*}\|y-P_E(y)\|&\ =\ &\|y - P_{E\backslash \{k_{|E|}\}}(y) - e^*_{k_{|E|}}(y -  P_{E\backslash \{k_{|E|}\}}(y))e_{k_{|E|}}\|\\
&\ \stackrel{\eqref{ee13}}{\le}\ &\|y-P_{E\backslash \{k_{|E|}\}}(y) - e^*_{\ell_1}(y-P_{E\backslash \{k_{|E|}\}}(y))e_{\ell_1}\|\\
&\ \le\ & \|y-P_{E\backslash \{k_{|E|}\}}(y) - e^*_{\ell_1}(y)e_{\ell_1}\|\\
&\ \le\ &\cdots \ \le\ \|y  - P_{F}(y)\|. 
\end{eqnarray*}
Substituting $y = x-P_{A\cap B}(x)$ to get 
$$\|x-P_A(x)\|\ \le\ \|x-P_B(x)\|.$$
This completes our proof. 
\end{proof}

\begin{prop}\label{ppp2}
A basis $\mathcal{B}$ is $1$-($\mathbf n$, SLC) if and only if for any $x\in X$ with $\|x\|_\infty\le 1$, we have
\begin{equation}\label{ee10}\|x+se_j\|\ \le\ \|x+te_k\|,\end{equation}
for any $j\in \mathbf n$,  $\{k\}\sqcup \{j\}\sqcup \supp(x)$, and for any scalars $s, t$ such that $|s| = |t| = 1$. 
\end{prop}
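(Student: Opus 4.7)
The forward direction is immediate: specializing Definition \ref{dnslc} to $A = \{j\}$ with sign $\varepsilon_j = s$ and $B = \{k\}$ with sign $\delta_k = t$ yields exactly \eqref{ee10}, since $(A,B) \in \mathbb{S}(\mathbf n)$ (we have $j \in \mathbf n$ and $|A|=|B|=1$) and the disjointness $A \sqcup B \sqcup \supp(x)$ is assumed.

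For the converse the plan is to assume \eqref{ee10} and recover the full $1$-($\mathbf n$, SLC) property in three stages. The first and subtlest step will be to prove the ``add one element'' inequality $\|x\| \le \|x+te_k\|$ for every $x$ with $\|x\|_\infty \le 1$, every $k\notin \supp(x)$, and every $|t|=1$; this is needed later to enlarge the smaller set $A$ and does not follow from \eqref{ee10} by direct substitution, since \eqref{ee10} forces $|s|=1$. The trick will be to exploit that $\mathbf n$ is infinite: choose any $j \in \mathbf n \setminus (\supp(x) \cup \{k\})$ and apply \eqref{ee10} twice, once with $s=1$ and once with $s=-1$, obtaining $\|x+e_j\|\le \|x+te_k\|$ and $\|x-e_j\|\le \|x+te_k\|$. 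The triangle inequality then gives
$$2\|x\|\ =\ \|(x+e_j)+(x-e_j)\|\ \le\ \|x+e_j\|+\|x-e_j\|\ \le\ 2\|x+te_k\|.$$
Since $\|x+te_k\|_\infty \le 1$ whenever $k\notin \supp(x)$ and $|t|=1$, this inequality can be iterated to yield $\|x\| \le \|x+1_{\delta B}\|$ for every finite $B$ disjoint from $\supp(x)$ and every sign $\delta$.

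The second stage handles the equal-size case $|A|=|B|=m$ by swapping pairs one at a time. Enumerate $A=\{a_1,\ldots,a_m\}$ and $B=\{b_1,\ldots,b_m\}$. At step $i$, I apply \eqref{ee10} to the vector $w_{i-1} := x + \sum_{\ell<i}\delta_{b_\ell}e_{b_\ell} + \sum_{\ell>i}\varepsilon_{a_\ell}e_{a_\ell}$ with $j=a_i \in \mathbf n$ and $k=b_i$; the hypotheses $\|w_{i-1}\|_\infty\le 1$ and $\{a_i\}\sqcup\{b_i\}\sqcup\supp(w_{i-1})$ are routine to check from $A\sqcup B\sqcup \supp(x)$. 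Telescoping these $m$ applications gives $\|x+1_{\varepsilon A}\|\le \|x+1_{\delta B}\|$.

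The third stage reduces the general case $|A|\le |B|$ to the previous two. Pick any subset $B'\subset B$ with $|B'|=|A|$; the equal-size step gives $\|x+1_{\varepsilon A}\|\le \|x+1_{\delta B'}\|$, and applying the iterated one-element inequality from the first stage to $y:=x+1_{\delta B'}$ (with the extra indices $B\setminus B'$, which are disjoint from $\supp(y)$) yields $\|x+1_{\delta B'}\|\le \|x+1_{\delta B}\|$. Concatenating the two inequalities gives the full $1$-($\mathbf n$, SLC) condition. The main obstacle is the first stage; once the averaging trick for \eqref{ee10} is in place, the remaining steps are routine bookkeeping with the appropriate disjointness checks.
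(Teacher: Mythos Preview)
Your proposal is correct and follows essentially the same approach as the paper, which also first derives the one-element inequality $\|x\|\le\|x+te_k\|$ via convexity (your averaging with $s=\pm 1$ is precisely this argument), then swaps $A$ into $B$ one index at a time, and finally absorbs the remaining indices of $B$ using the one-element inequality. The only omission is that to guarantee $\mathbf n\setminus(\supp(x)\cup\{k\})\neq\emptyset$ you should, as the paper does, reduce by density to finitely supported $x$.
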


\begin{proof}
Clearly, if $\mathcal{B}$ is $1$-($\mathbf n$, SLC), then $\mathcal{B}$ satisfies \eqref{ee10}. Conversely, assume that $\mathcal{B}$ satisfies \eqref{ee10}. We claim that $\mathcal{B}$ satisfies the following condition
\begin{equation}\label{ee11}\|x\|\ \le\ \|x+te_k\|, \end{equation}
for any $k\in\mathbb{N}\backslash \supp(x)$ and for any scalar $t$ with $|t| = 1$. 
Indeed, let $x, t, k$ be chosen as in \eqref{ee11}. Pick $j\in\mathbf n\cap (\mathbb N\backslash (\supp(x)\cup \{k\}))$ (here we use density to assume $\supp(x)$ is finite). We have
$$\|x\|\ \le\ \sup_{|s|=1}\|x+se_j\|\ \stackrel{\eqref{e10}}{\le}\ \|x+te_k\|.$$
Let $x, A, B, \varepsilon, \delta$ be chosen as in Definition \ref{dnslc}. If $A = \emptyset$, then we use \eqref{ee11} inductively to show that $\|x\|\le \|x+1_{\delta B}\|$. Suppose that $|A| \ge 1$. Write $A = \{a_1, a_2, \ldots, a_{|A|}\}$ and $B = \{b_1, b_2, \ldots, b_{|B|}\}$. We have
\begin{align*}
\|x+1_{\varepsilon A}\|\ =\ \|x + 1_{\varepsilon (A\backslash \{a_1\})} + \varepsilon_{a_1}e_{a_1}\|&\ \stackrel{\eqref{ee10}}{\le}\ \|x+ 1_{\varepsilon (A\backslash \{a_1\})}+\delta_{b_1}e_{b_1}\|\\
&\ \stackrel{\eqref{ee10}}{\le}\ \cdots \ \stackrel{\eqref{ee10}}{\le}\ \|x + 1_{\delta \{b_1, \ldots, b_{|A|}\}}\|\\
&\ \stackrel{\eqref{ee11}}{\le}\ \|x+1_{\delta B}\|.
\end{align*}
Hence, $\mathcal{B}$ is $1$-($\mathbf n$, SLC). 
\end{proof}

\begin{cor}
If a basis $\mathcal{B}$ is $1$-($\mathbf n$, SLC), then it is $1$-suppression quasi-greedy. 
\end{cor}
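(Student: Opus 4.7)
We aim to show $\|x-G_m(x)\|\le\|x\|$ for every $x\in X$, $m\in\mathbb{N}$, and every greedy set $A\in G(x,m)$. The starting point is the one-vector inequality \eqref{ee11}, established inside the proof of Proposition \ref{ppp2}: $\|x\|\le\|x+te_k\|$ whenever $\|x\|_\infty\le 1$, $k\notin\supp(x)$, and $|t|=1$. The plan is to upgrade this inequality by a trivial rescaling, then iterate it across the support of a greedy set in a carefully chosen order.

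The first step is homogeneity: applying \eqref{ee11} to $x/|t|$ and $\sgn(t)$ and multiplying through by $|t|$ yields
\[
\|x\|\ \le\ \|x+te_k\|\qquad\text{whenever $\|x\|_\infty\le|t|$ and $k\notin\supp(x)$},
\]
with the case $t=0$ trivial. This is the only genuinely new ingredient and is essentially immediate.

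Next, fix $x\in X$ and $A\in G(x,m)$, and put $\alpha:=\min_{n\in A}|e_n^*(x)|$. The case $\alpha=0$ is trivial: the greediness of $A$ forces $e_n^*(x)=0$ for $n\notin A$, so $x-P_A(x)=0$. Otherwise I would enumerate $A=\{a_1,\dots,a_m\}$ in \emph{increasing} order of coefficient modulus, $|e_{a_1}^*(x)|\le\dots\le|e_{a_m}^*(x)|$, and set
\[
y_0\ =\ x-P_A(x),\qquad y_k\ =\ y_0+\sum_{i=1}^{k}e_{a_i}^*(x)\,e_{a_i}\quad(1\le k\le m),
\]
so that $y_m=x$. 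Two bookkeeping checks are needed at each step: that $a_k\notin\supp(y_{k-1})$ (immediate since $a_k\notin\supp(x)\setminus A$ and $a_k\ne a_i$ for $i<k$) and that $\|y_{k-1}\|_\infty\le|e_{a_k}^*(x)|$ (using $\|y_0\|_\infty\le\alpha=|e_{a_1}^*(x)|$ together with the monotone ordering). The scaled version of \eqref{ee11} with $t=e_{a_k}^*(x)$ then gives $\|y_{k-1}\|\le\|y_k\|$, and telescoping yields $\|x-P_A(x)\|=\|y_0\|\le\|y_m\|=\|x\|$.

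The only real subtlety -- and the reason the argument goes through -- is the insistence on processing $A$ in increasing order of coefficient modulus: any other enumeration would violate $\|y_{k-1}\|_\infty\le|e_{a_k}^*(x)|$ at some intermediate step and break the iteration. Apart from this ordering, no further machinery (unconditionality, truncation bounds, or the democracy-type properties used elsewhere in the paper) is required.
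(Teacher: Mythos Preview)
Your argument is correct and is precisely the intended unpacking of the paper's one-line proof ``Use \eqref{ee11} inductively'': you rescale \eqref{ee11} by homogeneity and then add back the coefficients of $A$ one at a time in increasing order of modulus so that the $\|\cdot\|_\infty$ hypothesis is maintained at each step. This is the same approach as the paper, with the details spelled out.

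One small remark: your separate treatment of the case $\alpha=0$ asserts $x-P_A(x)=0$ from $e_n^*(x-P_A(x))=0$ for all $n$, which is the Markushevich condition (item d)) and need not hold for a general basis in this paper's sense. No harm is done, though: the iteration already covers this case, since whenever $e_{a_k}^*(x)=0$ the step $\|y_{k-1}\|\le\|y_k\|$ is a trivial equality and no appeal to \eqref{ee11} is needed. You can simply drop the separate case.
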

\begin{proof}
Use \eqref{ee11} inductively. 
\end{proof}

\begin{thm}[Generalization of Theorem 2.3 in \cite{AA}]\label{mm50}
A basis $\mathcal{B}$ is $1$-($\mathbf n$, almost greedy) if and only if $\mathcal{B}$ is $1$-($\mathbf n$, SLC). 
\end{thm}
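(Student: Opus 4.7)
The plan rests on the two characterizations just proved: Proposition \ref{ppp1} reduces ``$1$-($\mathbf n$, almost greedy)'' to the one-vector comparison \eqref{ee13}, while Proposition \ref{ppp2} reduces ``$1$-($\mathbf n$, SLC)'' to the perturbation inequality \eqref{ee10}. With both sides rephrased in these forms, the theorem becomes a translation between the two conditions.

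For the forward direction, I fix $x$ with $\|x\|_\infty\le 1$, $j\in\mathbf n$, $k\notin\{j\}\cup\supp(x)$, and signs with $|s|=|t|=1$, and form $y:=x+se_j+te_k$. Both $\{j\}$ and $\{k\}$ are greedy sets of $y$ of order $1$, because their coefficients have modulus $1$ while every other coefficient of $y$ has modulus $\le 1$. Applying \eqref{ee13} to $y$ with greedy sum $G_1(y)=te_k$ (legitimate since $j\in\mathbf n$) yields $\|y-te_k\|\le\|y-se_j\|$, which is exactly $\|x+se_j\|\le\|x+te_k\|$, i.e. \eqref{ee10}.

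The reverse direction is the technical heart. Assuming $1$-($\mathbf n$, SLC), hence $1$-suppression quasi-greedy by the preceding Corollary, I fix $x\in X$ and $j\in\mathbf n$, choose $k$ with $G_1(x)=e_k^*(x)e_k$, and set $\alpha:=|e_k^*(x)|$, $y:=P_{\{j,k\}^c}(x)$, $\beta:=e_j^*(x)$, $\gamma:=e_k^*(x)$. The goal becomes $\|y+\beta e_j\|\le\|y+\gamma e_k\|$. The obstacle is that Proposition \ref{ppp2} demands the replaced coefficient have modulus equal to $|\gamma|=\alpha$, yet $|\beta|$ may be strictly smaller. I resolve this by convexity: writing $\mu:=|\beta|/\alpha\in[0,1]$ and $s:=\beta/|\beta|$ (any sign if $\beta=0$),
\[
\|y+\beta e_j\|\ \le\ \mu\,\|y+\alpha s e_j\| + (1-\mu)\,\|y\|.
\]
The first term is bounded by $\|y+\gamma e_k\|$ via Proposition \ref{ppp2} applied to $y/\alpha$ (since $\|y/\alpha\|_\infty\le 1$, $k\notin\supp(y)\cup\{j\}$, and $|s|=|\gamma/\alpha|=1$). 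The second term is bounded by $\|y+\gamma e_k\|$ via $1$-suppression quasi-greediness: $\{k\}$ is a greedy set of $P_{\{j\}^c}(x)=y+\gamma e_k$ because all its remaining coefficients have modulus $\le\alpha=|\gamma|$, so projecting off $\{k\}$ does not increase the norm. Both bounds equal $\|y+\gamma e_k\|=\|x-e_j^*(x)e_j\|$, so the convex combination collapses, yielding \eqref{ee13}. Proposition \ref{ppp1} then concludes that $\mathcal{B}$ is $1$-($\mathbf n$, almost greedy).

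The main obstacle is precisely this modulus mismatch in the reverse direction: \eqref{ee10} only compares coordinates of equal largest modulus, whereas \eqref{ee13} must compare the greedy coordinate against an arbitrary coordinate of possibly smaller modulus. The convex-combination trick together with the ``free'' quasi-greedy bound on $\|y\|$ is what bridges this gap.
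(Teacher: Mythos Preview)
Your proof is correct and follows essentially the same route as the paper: both directions reduce to \eqref{ee13} and \eqref{ee10} via Propositions \ref{ppp1} and \ref{ppp2}, set up the same auxiliary vector $y$, and handle the modulus mismatch by convexity. The only difference is that in the reverse direction the paper writes $\beta$ as a convex combination of points on the full circle $\{s\alpha:|s|=1\}$ and bounds each extreme point by a single application of \eqref{ee10}, so it never needs your separate appeal to $1$-suppression quasi-greediness to control the $\|y\|$ term.
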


\begin{proof}
Assume that $\mathcal{B}$ is $1$-($\mathbf n$, almost greedy). Let $x, j, k, s, t$ be as in \eqref{ee10} and set $y= x + se_j + te_k$. We have 
$$\|x+se_j\|\ =\ \|y-te_k\|\ \le\ \widetilde{\sigma}^{\mathbf n}_1(y)\ \le\ \|y-se_j\|\ =\ \|x+te_k\|.$$
Hence, $\mathcal{B}$ satisfies \eqref{ee10} and thus, is $1$-($\mathbf n$, SLC). 

Next, assume that $\mathcal{B}$ is $1$-($\mathbf n$, SLC). We follow the proof of \cite[Theorem 2.3]{AA}. Let $x\in X$, $j\in\mathbf n$, and $k\in\mathbb{N}$ such that $G_1(x) = e_k^*(x)e_k$. By Proposition \ref{ppp1}, it suffices to show that $\|x-e_k^*(x)e_k\|\le \|x-e_j^*(x)e_j\|$. Assume that $k\neq j$. Let $y = x - e_j^*(x)e_j - e_k^*(x)e_k$. Since $|e_j^*(x)|\le |e_k^*(x)|$, we know that
\begin{eqnarray*}\|x-e_k^*(x)e_k\| \ =\ \|y + e_j^*(x)e_j\|\ \le\ \sup_{|s| = 1}\|y + se_k^*(x)e_j\|&\ \stackrel{\eqref{ee10}}{\le}\ & \|y + e_k^*(x)e_k\|\\
&\ =\ &\|x-e_j^*(x)e_j\|,
\end{eqnarray*}
as desired.
\end{proof}

We now show that being $1$-CAG($\mathbf n$, I) and being $1$-CAG($\mathbf n$, II) are both equivalent to being $1$-($\mathbf n$, almost greedy).   
\begin{prop}\label{ppp3}
A basis $\mathcal{B}$ is 1-CAG($\mathbf n$, II) if and only if $\mathcal{B}$ is $1$-($\mathbf n$, RSLC). 
\end{prop}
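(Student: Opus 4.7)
The plan is to establish both directions using Theorem \ref{mm1}, closely following the template of the proof of Theorem \ref{m9}.

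The forward direction ``$1$-CAG($\mathbf n$, II) $\Longrightarrow$ $1$-($\mathbf n$, RSLC)'' is immediate from Theorem \ref{mm1}(i) applied with $\mathbf C'_{\mathbf n, ca}=1$.

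For the converse, the crux is to show that $1$-($\mathbf n$, RSLC) already forces $\mathcal B$ to be $1$-suppression quasi-greedy; once this is known, Theorem \ref{mm1}(ii) (with $\mathbf C_\ell=\mathbf C_{\mathbf n, rs}=1$) concludes the proof. To this end, let $\widetilde{\mathbf L}^{\mathbf n}_m$ denote the smallest constant satisfying $\gamma_m(x)\le\widetilde{\mathbf L}^{\mathbf n}_m\overline{\sigma}^{\mathbf n}_m(x)$ for all $x\in X$. I will prove the two complementary bounds
$$\widetilde{\mathbf L}^{\mathbf n}_m\ \le\ g^c_{m-1}\,\mathbf C_{\mathbf n, rs}\qquad\text{and}\qquad g^c_m\ \le\ \max_{1\le k\le m}\widetilde{\mathbf L}^{\mathbf n}_k.$$
The second bound follows by taking $I=\emptyset$ in the definition of $\overline{\sigma}^{\mathbf n}_m(x)$, so that $\overline{\sigma}^{\mathbf n}_m(x)\le\|x\|$. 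The first bound is obtained by sharpening the proof of Theorem \ref{mm1}(ii): in the notation used there, the truncation level $\alpha=\min_{n\in F}|e_n^*(x)|$ is attained on $F$, while greediness of $A$ keeps $|e_n^*(x-P_D(x))|\le\alpha$ on $\supp(x)\setminus A$. Hence $\Lambda_\alpha:=\{n:|e_n^*(x-P_D(x))|>\alpha\}$ is contained in $F$ and omits at least the minimizer, giving $|\Lambda_\alpha|\le|F|-1\le m-1$. By Lemma \ref{b2g}(ii), this upgrades the crude $\mathbf C_\ell$ factor recorded in Theorem \ref{mm1}(ii) to $g^c_{m-1}$, producing the first inequality above.

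Substituting $\mathbf C_{\mathbf n, rs}=1$, the two bounds combine to yield $g^c_{j+1}\le\max_{1\le k\le j+1}\widetilde{\mathbf L}^{\mathbf n}_k\le\max_{1\le k\le j+1}g^c_{k-1}=g^c_j$. Together with the monotonicity $g^c_j\le g^c_{j+1}$ and the base case $g^c_0=1$, induction forces $g^c_j=1$ for every $j\in\mathbb{N}_0$, i.e.\ $\mathcal B$ is $1$-suppression quasi-greedy, finishing the argument.

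The principal obstacle is the refined bookkeeping $|\Lambda_\alpha|\le m-1$, which must be carefully extracted from the inside of the proof of Theorem \ref{mm1}(ii); once this Lebesgue-type inequality parallel to Theorem \ref{m6}(iii) is in place, the rest of the argument is routine.
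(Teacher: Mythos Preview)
Your argument is correct, but it takes a considerably more elaborate route than the paper's proof. The paper dispatches the converse in two lines: setting $A=\emptyset$ and $|B|=1$ in Definition~\ref{dnRSLC} immediately yields condition~\eqref{ee11}, namely $\|x\|\le\|x+te_k\|$ whenever $\|x\|_\infty\le 1$, $|t|=1$, and $k\notin\supp(x)$; a routine induction (adding back the entries of a greedy set one at a time, largest first) then shows $\mathcal{B}$ is $1$-suppression quasi-greedy, after which Theorem~\ref{mm1}(ii) finishes.

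Your approach instead mirrors the proof of Theorem~\ref{m9}: you introduce a Lebesgue-type parameter $\widetilde{\mathbf L}^{\mathbf n}_m$ for $\overline{\sigma}^{\mathbf n}_m$, establish the sandwich $g^c_m\le\max_{k\le m}\widetilde{\mathbf L}^{\mathbf n}_k$ and $\widetilde{\mathbf L}^{\mathbf n}_m\le g^c_{m-1}\mathbf C_{\mathbf n,rs}$, and bootstrap to $g^c_j\equiv 1$. The refined count $|\Lambda_\alpha|\le|F|-1\le m-1$ is indeed correct (the minimizer of $|e_n^*(x)|$ on $F$ is excluded from $\Lambda_\alpha$), and the edge case $F=\emptyset$ forces $A=D$, making the inequality trivial. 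So the machinery works.

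The paper's proof buys simplicity and exposes the conceptual point that $1$-($\mathbf n$, RSLC) already contains $1$-suppression quasi-greediness as the degenerate case $A=\emptyset$. Your proof buys uniformity with the Lebesgue-inequality framework of Section~\ref{Lebesgue} and Theorem~\ref{m9}, and as a byproduct records the sharper estimate $\widetilde{\mathbf L}^{\mathbf n}_m\le g^c_{m-1}\mathbf C_{\mathbf n,rs}$, which is of independent interest but not needed here.
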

\begin{proof}
According to Theorem \ref{mm1}, we need only to prove that a $1$-($\mathbf n$, RSLC) basis is $1$-suppression quasi-greedy. In Definition \ref{dnRSLC}, set $A = \emptyset$ and let $B$ be a singleton to see that $1$-($\mathbf n$, RSLC) implies \eqref{ee11}, which, by induction, shows that $\mathcal{B}$ is $1$-suppression quasi-greedy. 
\end{proof}

\begin{prop}\label{ppp20}
A basis $\mathcal{B}$ is $1$-($\mathbf n$, SLC) if and only if $\mathcal{B}$ is $1$-($\mathbf n$, RSLC).
\end{prop}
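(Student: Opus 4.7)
The plan is to prove the two implications separately. The forward direction ($1$-SLC $\Rightarrow$ $1$-RSLC) will be essentially immediate from unpacking the definitions, and the reverse direction ($1$-RSLC $\Rightarrow$ $1$-SLC) will reduce to the single-coefficient characterization already established in Proposition \ref{ppp2}.

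For the forward direction, I will show that every admissible tuple $(x,A,B,\varepsilon,\delta)$ in the definition of $1$-($\mathbf n$, RSLC) is also admissible in the definition of $1$-($\mathbf n$, SLC). The key observation is that the positional hypothesis $A<(\supp(x)\sqcup B)|_{\min A}\cap\mathbf n$ already forces full pairwise disjointness of $A$, $B$, $\supp(x)$: the symbol $\sqcup$ encodes $\supp(x)\cap B=\emptyset$, while the order relation, combined with $A\subset\mathbf n$, shows that any $a\in A\cap(B\cup\supp(x))$ would lie in $(\supp(x)\cup B)\cap\mathbf n$ at height $a\ge \min A$ and would need to satisfy $a>\max A$, a contradiction. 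Thus the $1$-SLC inequality, restricted to these narrower tuples, yields $1$-RSLC with the same constant.

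For the reverse direction, I will invoke Proposition \ref{ppp2}, so that it suffices to derive the single-coefficient condition \eqref{ee10} from $1$-RSLC. Given $x$ with $\|x\|_\infty\le 1$, $j\in\mathbf n$, $k\in\mathbb{N}$ with $\{k\}\sqcup\{j\}\sqcup\supp(x)$, and unit-modulus scalars $s,t$, I will apply $1$-($\mathbf n$, RSLC) with $A=\{j\}$, $B=\{k\}$, and signs $\varepsilon,\delta$ satisfying $\varepsilon_j=s$, $\delta_k=t$. The membership $(A,B)\in\mathbb{S}(\mathbf n)$ is immediate, and the positional hypothesis collapses, for the singleton $A=\{j\}$, to the statement that $j\notin(\supp(x)\cup\{k\})\cap\mathbf n$, which is guaranteed by the disjointness assumption. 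The resulting RSLC inequality is exactly \eqref{ee10}, and Proposition \ref{ppp2} then delivers $1$-($\mathbf n$, SLC).

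There is essentially no substantive obstacle. The only point demanding care is the careful parsing of the symbol $A<(\supp(x)\sqcup B)|_{\min A}\cap\mathbf n$ in two opposite regimes: extracting full pairwise disjointness from it in the forward direction, and verifying that it becomes vacuous when $A$ is a singleton disjoint from $\supp(x)\cup B$ in the reverse direction. Once both readings are in place, Proposition \ref{ppp2} does all the remaining work.
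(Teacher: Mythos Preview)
Your proposal is correct and follows essentially the same approach as the paper. The paper's forward direction is a one-line ``by definitions'' (the disjointness observation you spell out was already noted in the proof of Proposition \ref{pp3}), and the reverse direction is identical to yours: apply $1$-($\mathbf n$, RSLC) with $A=\{j\}$, $B=\{k\}$ to obtain \eqref{ee10}, then invoke Proposition \ref{ppp2}.
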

\begin{proof}
By definitions, $1$-($\mathbf n$, SLC) $\Longrightarrow$ $1$-($\mathbf n$, RSLC). Conversely, assume that $\mathcal{B}$ is $1$-($\mathbf n$, RSLC). Take $x, j, k, s, t$ as in \eqref{ee10}. Using the notation in Definition \ref{dnRSLC}, we set 
$A = \{j\}$ and  $B = \{k\}$. Clearly, $A < (\supp(x)\sqcup B)|_{\min A}\cap \mathbf n$. Therefore, 
$$\|x + se_j\|\ \le\ \|x+te_k\|.$$
By Proposition \ref{ppp2}, $\mathcal{B}$ is $1$-($\mathbf n$, SLC). 
\end{proof}

\begin{cor}
Let $\mathcal{B}$ be a basis of a Banach space and $\mathbf n$ be a sequence. The following are equivalent
\begin{enumerate}
    \item [i)] $\mathcal{B}$ is $1$-($\mathbf n$, almost greedy).
    \item [ii)] $\mathcal{B}$ is $1$-CAG($\mathbf n$, I).
    \item [iii)] $\mathcal{B}$ is $1$-CAG($\mathbf n$, II).
    \item [iv)] $\mathcal{B}$ is $1$-($\mathbf n$, SLC).
    \item [v)] $\mathcal{B}$ is $1$-($\mathbf n$, RSLC). 
\end{enumerate}
\end{cor}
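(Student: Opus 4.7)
The plan is to assemble the five equivalences from four essentially separate pieces that are already in place, so that what remains is a single new implication. First, I would recall the three equivalences already proved: Theorem \ref{mm50} gives (i) $\Longleftrightarrow$ (iv); Proposition \ref{ppp20} gives (iv) $\Longleftrightarrow$ (v); and Proposition \ref{ppp3} gives (iii) $\Longleftrightarrow$ (v). These four conditions (i), (iii), (iv), (v) are therefore mutually equivalent with constant $1$, so it only remains to hook (ii) onto the chain.

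The easy direction is (i) $\Longrightarrow$ (ii), which is immediate from the inclusion $\mathcal{I}^{\mathbf n,(m)} \subset \{A\subset \mathbf n : |A|=m\}$: for any $I\in\mathcal{I}^{\mathbf n,(m)}$ one has $\widetilde{\sigma}^{\mathbf n}_m(x) \le \|x-P_I(x)\|$, hence $\widetilde{\sigma}^{\mathbf n}_m(x) \le \widecheck{\sigma}^{\mathbf n}_m(x)$, and so being $1$-($\mathbf n$, almost greedy) forces $\|x-G_m(x)\| \le \widecheck{\sigma}^{\mathbf n}_m(x)$, i.e.\ $1$-CAG($\mathbf n$, I).

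The only genuinely new argument is (ii) $\Longrightarrow$ (iv), which I would obtain by verifying the one-parameter condition \eqref{ee10} of Proposition \ref{ppp2}. Given $x$ with $\|x\|_\infty\le 1$, $j\in\mathbf n$, $k$ with $\{k\}\sqcup\{j\}\sqcup \supp(x)$, and scalars $s,t$ of modulus $1$, set $y:=x+se_j+te_k$. Then $\|y\|_\infty = 1$ and the coefficient $te_k$ of $y$ achieves this maximum, so $\{k\}\in G(y,1)$. Since $\{j\}\in\mathcal{I}^{\mathbf n,(1)}$, applying the assumed $1$-CAG($\mathbf n$, I) inequality to $y$ and the greedy set $\{k\}$ yields
\[
\|x+se_j\| \;=\; \|y-te_k\| \;\le\; \widecheck{\sigma}^{\mathbf n}_1(y) \;\le\; \|y-se_j\| \;=\; \|x+te_k\|,
\]
which is exactly \eqref{ee10}. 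Proposition \ref{ppp2} then delivers $1$-($\mathbf n$, SLC).

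Combining everything, the implications $\text{(i)}\Longrightarrow\text{(ii)}\Longrightarrow\text{(iv)}\Longleftrightarrow\text{(i)}$ close the loop through (ii), and the already-established equivalence of (iii), (iv), (v) finishes the corollary. There is no genuine obstacle here beyond recognizing that $m=1$ is enough to extract the SLC condition from $1$-CAG($\mathbf n$, I); the subtlety is only to keep the direction of each inequality straight (the $\sigma$'s are infima, so (i) $\Longrightarrow$ (ii) uses the inclusion of index sets, while (ii) $\Longrightarrow$ (iv) uses that the greedy choice for $y$ need not equal the projection that witnesses $\widecheck{\sigma}^{\mathbf n}_1(y)$).
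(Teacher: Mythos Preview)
Your proof is correct and follows essentially the same approach as the paper's own proof: both assemble (i)$\Longleftrightarrow$(iv)$\Longleftrightarrow$(v)$\Longleftrightarrow$(iii) from Theorem~\ref{mm50} and Propositions~\ref{ppp3}, \ref{ppp20}, and then close the loop through (ii) by extracting condition~\eqref{ee10} from the $m=1$ case of $1$-CAG($\mathbf n$,\,I) via the vector $y=x+se_j+te_k$. The only cosmetic difference is that the paper records the easy direction as (iii)$\Longrightarrow$(ii) (from $\overline{\sigma}^{\mathbf n}_m\le\widecheck{\sigma}^{\mathbf n}_m$) while you use (i)$\Longrightarrow$(ii) (from $\widetilde{\sigma}^{\mathbf n}_m\le\widecheck{\sigma}^{\mathbf n}_m$); both are immediate.
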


\begin{proof}
By Theorem \ref{mm50} and Propositions \ref{ppp3} and \ref{ppp20}, i) $\Longleftrightarrow$ iv) $\Longleftrightarrow$ v) $\Longleftrightarrow$ iii). By definitions, we know that iii) $\Longrightarrow$ ii). We show that ii) $\Longrightarrow$ iv) or equivalently, ii) $\Longrightarrow$ \eqref{ee10}. Choose $x, j, k, s, t$ as in \eqref{ee10}. We have
$$\|x + se_j\|\ =\ \|(x+se_j + te_k) - te_k\|\ \le\ \widecheck{\sigma}^{\mathbf n}_1(x+se_j +te_k)\ \le\ \|x+te_k\|.$$
This completes our proof. 
\end{proof}

\section{On ($\mathbf n$, strong partially greedy) bases with gaps}\label{gaps}

\subsection{The theory of greedy-type properties with gaps}

In 2017, Oikhberg \cite{O} defined and studied the $\mathbf n$-quasi-greedy property, a variant of the quasi-greedy property, as follows: let $\mathbf s = s_1, s_2, \ldots$ be a strictly increasing sequence of positive integers\footnote{In the literature of gaps, the standard notation for a gap sequence is $\mathbf n$; however, we use $\mathbf s$ to not confuse ourselves with our sequence $\mathbf n$ in the definition of ($\mathbf n$, strong partially greedy) bases.}; a basis $\mathcal{B}$ is said to be $\mathbf s$-quasi-greedy if 
\begin{equation}\label{e50}\lim_{i} G_{s_i}(x) \ =\ x, \forall x\in X, \forall (G_{s_i}(x))_i.\end{equation}
\cite[Theorem 2.1]{O} states that for a Markushevich basis, \eqref{e50} is equivalent to the condition: there exists a constant $\mathbf C > 0$ such that 
$$\|G_{s_i}(x)\|\ \le\ \mathbf C\|x\|, \forall x\in X,\forall i\in\mathbb{N}, \forall G_{s_i}(x).$$
Clearly, a quasi-greedy basis is $\mathbf s$-quasi-greedy for any sequence $\mathbf s$. However, the reverse is not true. 
\begin{defi}\normalfont
A strictly increasing sequence $\mathbf s$ is said to have bounded quotient gaps if there exists $\ell\in\mathbb{N}_{\ge 2}$ such that
$$\sup_{k}\frac{s_{k+1}}{s_k}\ \le\ \ell.$$
We then say that $\mathbf s$ has $\ell$-bounded quotient gaps. On the other hand, if
$$\sup_{k}\frac{s_{k+1}}{s_k}\ =\ \infty,$$
we say that $\mathbf s$ has arbitrarily large quotient gaps. 
\end{defi}

\begin{defi}\normalfont
A strictly increasing sequence $\mathbf n$ is said to have bounded additive gaps if there is $\ell\in\mathbb{N}$ such that
$$\max_{k}(n_{k+1}-n_k) \ \le\ \ell.$$
We then say that $\mathbf n$ has $\ell$-bounded additive gaps. On the other hand, if 
$$\sup_{k}(n_{k+1}-n_k) \ =\ \infty,$$
we say that $\mathbf n$ has arbitrary large additive gaps. 
\end{defi}

\begin{thm}\cite[Proposition 3.1]{O}
If $\mathbf s$ has arbitrarily large quotient gaps, there exists a Banach space with an $\mathbf s$-quasi-greedy Schauder basis that is not quasi-greedy.  
\end{thm}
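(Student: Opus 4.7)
The plan is to build the space as a direct sum of finite-dimensional ``bricks'', each of which encodes a controlled bad behavior of the thresholding algorithm in one of the long quotient gaps of $\mathbf s$. Concretely, using $\sup_k s_{k+1}/s_k = \infty$, I would first extract a strictly increasing subsequence $k_1<k_2<\cdots$ with $s_{k_j+1}/s_{k_j}\ge 2^j$, pick intermediate indices $t_j$ with $s_{k_j}<t_j<s_{k_j+1}$, and set $N_j:=s_{k_j+1}$. On each index block $B_j$ of size $N_j$ I would design a finite-dimensional brick $(X_j,(f_i^{(j)})_{i=1}^{N_j})$ together with a test vector $y_j\in X_j$ of norm $\le 1$ such that the greedy sum of $y_j$ of order $t_j$ has norm at least $j$, while for \emph{every} $s_i$ the greedy sum of $y_j$ of order $s_i$ has norm at most a universal constant $C_0$.

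The bricks would be built as follows. Take $y_j$ whose coefficients with respect to $(f_i^{(j)})$ consist of $s_{k_j}$ entries slightly above a threshold $\alpha_j$ (call this the ``top layer'') and $N_j-s_{k_j}$ entries slightly below $\alpha_j$ (the ``bottom layer''), with carefully chosen signs on the bottom layer so that a suitable partial sum of them has large norm but the full bottom-layer sum is small (an averaging cancellation, modeled on the summing/Rademacher basis). The norm on $X_j$ would be a supremum of $\|\cdot\|_\infty$, a large-coefficient norm that keeps $\|y_j\|\le 1$, and a ``conditional'' seminorm that detects the non-cancelling partial sum of the bottom layer; by choosing the threshold $\alpha_j$ so that greedy sums of order $s_i$ either take no bottom-layer coordinate or all of them, one arranges that only orders strictly between $s_{k_j}$ and $s_{k_j+1}$ trigger the bad seminorm. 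Assemble via an $\ell_2$- or $c_0$-direct sum $X=(\bigoplus_j X_j)_{\ell_2}$ in the given block order, so that the concatenated sequence $(f_i^{(j)})_{j,i}$ is a Schauder basis. Non-quasi-greediness is witnessed by $y_j$ viewed in $X$: the greedy sum of order $t_j$ grows unboundedly in $j$. Conversely, for any $x\in X$ an order-$s_i$ greedy sum respects the block decomposition asymptotically, and on each brick the arithmetic arrangement of $\alpha_j$ forces the $s_i$-greedy sum to be $C_0$-bounded; summing blockwise and applying Theorem~2.1 of \cite{O} (equivalence of convergence and uniform boundedness of $\mathbf s$-indexed greedy sums) gives $\mathbf s$-quasi-greediness.

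The main obstacle is the brick construction: the test vector $y_j$ must simultaneously defeat order $t_j$ \emph{and} satisfy the $s_i$-greedy bound for \emph{every} $s_i$, not just $s_{k_j}$ and $s_{k_j+1}$. The delicate point is that different indices $s_i\le s_{k_j}$ or $s_i\ge s_{k_j+1}$ produce greedy sums that either avoid $B_j$'s bottom layer entirely (if $s_i\le s_{k_j}$, only the top layer is picked) or engulf it entirely (if $s_i\ge s_{k_j+1}$, whence the full bottom-layer cancellation kicks in); this dichotomy is arranged by choosing the gap between the top-layer values and $\alpha_j$ strictly smaller than the gap between $\alpha_j$ and the bottom-layer values, with both gaps shrinking in $j$ but bounded below across the brick. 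The secondary difficulty is verifying Schauderness of the concatenated basis inside the $\ell_2$-sum, which reduces to uniform boundedness of the basis constants of the $X_j$, a byproduct of the explicit construction. Together these give an $\mathbf s$-quasi-greedy Schauder basis that fails to be quasi-greedy.
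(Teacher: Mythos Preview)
The paper does not prove this statement; it is quoted verbatim as \cite[Proposition 3.1]{O} and no argument is given here. So there is no ``paper's own proof'' to compare your proposal against.

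That said, your sketch is in the right spirit and broadly matches Oikhberg's original construction: extract a sparse subsequence of gap indices, build finite-dimensional blocks whose test vectors have a ``top layer'' of large coefficients and a ``bottom layer'' of smaller ones with signs arranged so that intermediate greedy sums blow up while greedy sums of any order in $\mathbf s$ either miss the bottom layer or swallow it whole, and assemble via a direct sum. The one place where your outline is genuinely hand-wavy is the claim that for an \emph{arbitrary} $x\in X$ the $\mathbf s$-indexed greedy sums are uniformly bounded: a greedy set of order $s_i$ for a general $x$ can cut across many blocks simultaneously, and you cannot simply ``sum blockwise'' without an argument that the restriction of a greedy set to each block is itself a greedy set of an admissible order for that block's test-vector structure. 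In Oikhberg's paper this is handled by making the norm on each block an honest $\ell_\infty$-type norm plus a carefully controlled conditional seminorm, so that arbitrary projections are automatically bounded except on the specific bad partial sums; your ``supremum of $\|\cdot\|_\infty$, a large-coefficient norm, and a conditional seminorm'' is gesturing at exactly this, but the verification that the conditional seminorm never fires for $\mathbf s$-greedy sums of a \emph{general} $x$ (not just $y_j$) is the step that needs to be written out.
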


Interestingly, if $\mathbf s$ has bounded quotient gaps, then for a Schauder basis, the $\mathbf s$-quasi-greedy property is equivalent to the quasi-greedy property (see \cite[Theorem 5.2]{BB}). Recently, research on greedy-type properties with gaps has made much progress. For example, Berasategui and Bern\'{a} \cite{BB3} investigated $\mathbf s$-democracy-like properties including 
unconditionality for constant coefficients, UL-property, democracy, symmetry for largest coefficients, to name a few.  
Meanwhile, \cite{BB1} studied more in depth $\mathbf s$-quasi-greedy bases, $\mathbf s$-bidemocracy (another popular and useful notion in the literature), $\mathbf s$-semi-greedy bases (first introduced in \cite{DKK}), and $\mathbf s$-strong partially greedy bases. 

By definition, we have the following implications, none of which can be reversed
$$\mbox{ greedy}\Longrightarrow \mbox{ almost greedy}\Longrightarrow \mbox{ strong partially greedy}\Longrightarrow\mbox{ quasi-greedy}.$$
\begin{defi}\normalfont
A basis $\mathcal{B}$ is $\mathbf s$-greedy if there exists a constant $\mathbf C \ge 1$ such that 
$$\|x-G_m(x)\|\ \le\ \mathbf C\sigma_m(x),\forall x\in X, \forall m\in\mathbf s, \forall G_m(x).$$
The definition of $\mathbf s$-almost greedy bases replaces $\sigma_m(x)$ with $\widetilde{\sigma}_m(x)$.
\end{defi}

By \cite[Remark 1.1]{O}, the greedy (almost greedy, resp.) property is so strong that for any sequence $\mathbf s$, the $\mathbf s$-greedy ($\mathbf s$-almost greedy, resp.) property implies greedy (almost greedy, resp.) property. Berasategui and Lassalle \cite{BL} proved that a Markushevich basis is almost greedy if and only if it is semi-greedy, which gave the conjecture that for any sequence $\mathbf s$, an $\mathbf s$-semi-greedy Markushevich basis is also semi-greedy. Indeed, \cite[Theorem 5.2]{BB1} confirmed the conjecture. However, the strong partially greedy property and the quasi-greedy property are not as strong for the same conclusion to hold. 

\begin{defi}\normalfont
A basis $\mathcal B$ is $\mathbf s$-($\mathbf n$, strong partially greedy) if there exists $\mathbf C\ge 1$ such that
$$\|x-G_m (x)\|\ \le\ \mathbf C\widehat{\sigma}^{\mathbf n}_m(x), \forall x\in X, \forall m\in \mathbf{s}, \forall G_m(x).$$
\end{defi}

For pedagogical purpose, we use the term ``($\mathbb N$, strong partially greedy)" in place of ``strong partially greedy" and ``($\mathbb N$, conservative)" in place of "conservative" (Definitions \ref{d10} and \ref{d11}).
Berasategui and Bern\'{a} \cite{BB1} proved the following proposition. 

\begin{prop}\cite[Proposition 6.9 and Proposition 6.14]{BB1}\label{p21}
\begin{enumerate}
    \item[i)] Let $\mathbf s$ be a sequence with arbitrarily large quotient gaps. There is a Banach space $X$ with a monotone Schauder basis $\mathcal{B}$ that is $\mathbf s$-($\mathbb{N}$, strong partially greedy), but the basis is neither ($\mathbb{N}$, conservative) nor quasi-greedy.
    \item[ii)] Let $\mathbf s$ be a sequence with arbitrarily large additive gaps. There is a Banach space $X$ with a $1$-unconditional basis $\mathcal{B}$ that is $\mathbf s$-($\mathbb{N}$, strong partially greedy), but the basis is not ($\mathbb{N}$, conservative), and thus not strong partially greedy. 
\end{enumerate}
\end{prop}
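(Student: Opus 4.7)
The plan is to furnish two explicit constructions, one for each part. Throughout, write $\mathbf s = s_1 < s_2 < \cdots$, and use the characterization from Section \ref{characterization} (Theorem \ref{m1}) that ($\mathbb N$, strong partially greedy) amounts to quasi-greedy plus ($\mathbb N$, conservative); here we only need the analogous conditions to hold at indices $m \in \mathbf s$.

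For part i), select a subsequence $(s_{k_j})$ with $s_{k_j+1}/s_{k_j}\to\infty$, so that the large quotient gaps provide entire scales of indices where the basis can misbehave. The idea is to complete $c_{00}$ under a summing-type norm calibrated to $\mathbf s$, for instance
\[
\|x\|\;=\;\max\Bigl\{\|x\|_\infty,\; \sup_{N\notin \mathbf s}\Bigl|\sum_{n=1}^N x_n\Bigr|\Bigr\},
\]
so that the canonical basis becomes a monotone Schauder basis. Testing on $x = \sum_n (-1)^n a_n e_n$ with slowly decreasing $a_n$, the greedy sums at indices outside $\mathbf s$ can be made arbitrarily large, witnessing the failure of quasi-greediness. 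Failure of ($\mathbb N$, conservative) is witnessed by explicit pairs $A < B$ with $|A| = |B|$ both lying in a window $(s_{k_j}, s_{k_j+1})$ where $\|1_A\|/\|1_B\|$ blows up as $j\to \infty$. Finally, the $\mathbf s$-($\mathbb N$, strong partially greedy) inequality should reduce, via an argument parallel to Theorem \ref{m6} restricted to $m \in \mathbf s$, to showing that for such $m$ the summing-norm supremum is already realized at some $k \le m$ with $k \notin \mathbf s$, giving $\|x - P_{\{1,\ldots,k\}}(x)\| \gtrsim \|x - G_m(x)\|$.

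For part ii), $1$-unconditionality forces quasi-greediness, so only conservativity can fail. Here I would follow the weighted constructions of Theorems \ref{m3}, \ref{m4}, and \ref{m8}, now tuned to the additive gap structure of $\mathbf s$. Specifically, choose a subsequence $(s_{k_j})$ with $s_{k_j+1}-s_{k_j}\to\infty$ and place ``fat'' intervals $J_j$ of large but finite length strictly inside each additive gap. A norm of the form
\[
\|x\|\;=\;\max\Bigl\{\|x\|_\infty,\; \sup_{F\in\mathcal F,\,\pi}\sum_{i\in F}w_{\pi(i)}|x_i|\;+\;\sum_{n\in \bigcup_j J_j}|x_n|\Bigr\},
\]
with a suitable admissible family $\mathcal F$ and weights $w_n = 1/\sqrt{n}$, yields a $1$-unconditional normalized basis. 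The fat intervals contribute $|A|$ directly to $\|1_A\|$ when $A \subset J_j$, while for $B$ with indices far past all $J_j$ the norm only grows like $\sqrt{|B|}$, which violates ($\mathbb N$, conservative). The $\mathbf s$-($\mathbb N$, strong partially greedy) inequality holds for $m \in \mathbf s$ because one can pick $k \le m$ (just past the nearest preceding $J_j$) so that $P_{\{1,\ldots,k\}}$ absorbs the ``bad'' block mass, leaving a tail with the controllable $\sqrt{\cdot}$-growth.

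The main obstacle in both parts is matching the sparsity of $\mathbf s$ to the norm so that the $\mathbf s$-($\mathbb N$, strong partially greedy) property survives despite the failure of ($\mathbb N$, conservative); this amounts to choosing $k \le m$ in $\widehat{\sigma}^{\mathbb N}_m(x)$ adaptively as a function of the gap of $\mathbf s$ containing $m$. A secondary technical point in part i) is to verify that the constructed norm really yields a Schauder basis with basis constant $1$, not merely a Markushevich basis; this follows once the norm is written as a supremum of linear functionals whose coordinate supports form a nested chain.
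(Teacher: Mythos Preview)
The proposition is cited from \cite{BB1} and not proved in the paper itself; what the paper does prove are the generalizations to arbitrary $\mathbf n$ (the propositions labelled ``Generalization of Proposition~\ref{p21} item i)'' and ``item ii)''). Your sketch diverges substantially from those constructions, and for part~i) the specific norm you propose cannot work.

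With your norm $\|x\|=\max\bigl\{\|x\|_\infty,\ \sup_{N\notin\mathbf s}\bigl|\sum_{n=1}^N x_n\bigr|\bigr\}$, one computes for any finite $A\subset\mathbb N$ that
\[
\|1_A\|\;=\;\max\Bigl\{1,\ \sup_{N\notin\mathbf s}\bigl|A\cap[1,N]\bigr|\Bigr\}\;=\;|A|,
\]
because $|A\cap[1,N]|$ is nondecreasing in $N$ and there are infinitely many $N\notin\mathbf s$ beyond $\max A$. Hence this basis is $1$-democratic, in particular $(\mathbb N$, conservative$)$, directly contradicting your claimed failure. The same computation shows that the monotone-Schauder claim is problematic when $\mathbf s$ contains long blocks of consecutive integers (arbitrarily large quotient gaps do not prevent this): for $m$ deep inside such a block, $\|S_m(x)\|$ picks up $\bigl|\sum_{n=1}^m x_n\bigr|$ via any $N>m$ with $N\notin\mathbf s$, and this partial sum need not be controlled by $\|x\|$. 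The construction actually used (both in \cite{BB1} and in the paper's generalization) is far more delicate: the norm has a third component $\sup_{S\in\mathcal S}\sum_{i\in S}|x_i|$ over sets $S$ with $|S|\in\mathbf s$ and $|S|<\min S$, together with summing-type functionals supported on carefully placed blocks $\{s_{k_j}+1,\dots,(j+1)s_{k_j}\}$; the interplay of these pieces is what simultaneously forces $\mathbf s$-strong partial greediness and destroys conservativity and quasi-greediness.

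For part~ii) your ``fat interval'' idea is in the right spirit, but the norm you write down is too vague to verify anything, and in particular you give no argument for the $\mathbf s$-($\mathbb N$, strong partially greedy) inequality. The actual construction in the paper's generalization does not use weights $w_n=1/\sqrt n$ at all; it uses a family of $\ell_{p_k}$-norms over admissible sets $S$ with $|S|=10^k$ and $\min S>s_k$, together with a carefully chosen decreasing sequence $p_k\downarrow 1$ satisfying $\sup_j j\bigl(1/p_{k_j+1}-1/p_{k_j}\bigr)=\infty$, so that the conservativity failure comes from the gap between exponents rather than from an additive block contribution.
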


\begin{rek}\normalfont
Unlike quasi-greedy bases, even when $\mathbf s$ has bounded quotient gaps, an $\mathbf s$-($\mathbb N$, strong partially greedy) Schauder basis is not necessarily ($\mathbb N$, strong partially greedy). However, by \cite[Lemma 6.16]{BB1}, if $\mathbf s$ has bounded additive gaps, an $\mathbf s$-($\mathbb{N}$, strong partially greedy) Markushevich basis is ($\mathbb{N}$, strong partially greedy). 
\end{rek}

Berasategui and Bern\'{a} characterized $\mathbf s$-($\mathbb{N}$, strong partially greedy) Schauder bases when $\mathbf s$ has bounded quotient gaps. Let $\mathbb{T}(\mathbf n, \mathbf s)$ be the collection of all ordered pairs of finite sets $(A, B)$ such that $A\subset\mathbf n, |A|\le |B|, A\le n_s < B\cap \mathbf n$ for some $s\in \mathbf s$. Obviously,
$$\mathbb{T}(\mathbf n, \mathbf s)\ \subset\ \mathbb{T}(\mathbf n), \forall \mathbf n, \mathbf s.$$

\begin{rek}\normalfont\label{r3}
Let $(A, B)\in \mathbb{T}(\mathbf n)$. If $B\cap \mathbf n = \emptyset$, then $(A, B)\in \mathbb{T}(\mathbf n, \mathbf s)$. Indeed, choose $s\in\mathbf s$ such that $A < n_s$. Since $B\cap\mathbf n = \emptyset$, we get $A < n_s < B\cap \mathbf n$. Hence, $(A, B)\in \mathbb{T}(\mathbf n, \mathbf s)$.
\end{rek}

\begin{defi}\normalfont\label{defoc}
A basis $\mathcal{B}$ in a Banach space $X$ is $\mathbf s$-order-($\mathbf n$, superconservative) if there exists $\mathbf C > 0$ such that 
\begin{equation}\label{e60} \|1_{\varepsilon A}\| \ \le\ \mathbf C\|1_{\delta B}\|,\end{equation}
for all $(A, B)\in \mathbb{T}(\mathbf n, \mathbf s)$ and for all signs $\varepsilon$, $\delta$. The smallest constant satisfying \eqref{e60} is denoted by $\Delta^{\mathbf n}_{osc}$, and we say $\mathcal{B}$ is $\Delta^{\mathbf n}_{osc}$-$\mathbf s$-order-($\mathbf n$, superconservative). If \eqref{e60} holds for $\varepsilon \equiv \delta \equiv 1$, we say that $\mathcal{B}$ is $\Delta^{\mathbf n}_{oc}$-$\mathbf s$-order-($\mathbf n$, conservative), where $\Delta^{\mathbf n}_{oc}$ is the smallest constant for the inequality to hold. When $\mathbf n = \mathbb{N}$, we obtain \cite[Definition 6.5]{BB1} in the case of a Schauder basis (see \cite[Remark 6.6]{BB1}).
\end{defi}

\begin{prop}\cite[Proposition 6.13]{BB1}\label{p20}
Let $\mathcal{B}$ be a Schauder basis. If $\mathbf s$ has bounded quotient gaps, then $\mathcal{B}$ is $\mathbf s$-($\mathbb{N}$, strong partially greedy) if and only if $\mathcal{B}$ is quasi-greedy and $\mathbf s$-order-($\mathbb{N}$, superconservative). 
\end{prop}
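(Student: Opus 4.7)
The plan is to follow Theorem~\ref{m1}'s two-directional strategy closely, with the new subtlety that the $\mathbf s$-strong partially greedy inequality only controls orders $m \in \mathbf s$, while $\widehat{\sigma}^{\mathbb N}_m(x)$ still involves an infimum over all $k \le m$. Bounded quotient gaps (to upgrade $\mathbf s$-quasi-greediness to full quasi-greediness via \cite[Theorem 5.2]{BB}) and the Schauder-basis hypothesis (to bridge the two kinds of partial sums) are precisely what reconcile this mismatch.

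For the forward direction, I would first note that setting $k = 0$ in the hypothesis yields $\mathbf s$-suppression quasi-greediness, which upgrades to (full) suppression quasi-greediness (with constant $\mathbf C_\ell$) by \cite[Theorem 5.2]{BB}. To establish $\mathbf s$-order-($\mathbb N$, superconservative), given $(A, B) \in \mathbb T(\mathbb N, \mathbf s)$ with witness $s \in \mathbf s$ (so $A \le s < B$) and signs $\varepsilon, \delta$, I would pick any $B' \subset B$ with $|B'| = |A|$ and test the hypothesis against
\[
y\ :=\ 1_{\varepsilon A} + 1_{\{1,\ldots,s\}\setminus A} + 1_{\delta B}, \qquad D\ :=\ (\{1,\ldots,s\}\setminus A)\cup B'.
\]
The point is $|D| = s \in \mathbf s$ and $P_D(y)$ is a greedy sum of $y$ of order $s$ (since every nonzero coefficient of $y$ has modulus one), while $\widehat{\sigma}^{\mathbb N}_s(y) \le \|y - S_s(y)\| = \|1_{\delta B}\|$. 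Applying the $\mathbf s$-strong partially greedy inequality yields $\|1_{\varepsilon A} + 1_{\delta(B\setminus B')}\| \le \mathbf C\|1_{\delta B}\|$, and one more invocation of suppression quasi-greediness (with $B \setminus B'$ as the greedy set of the left-hand vector) produces $\|1_{\varepsilon A}\| \le \mathbf C_\ell \mathbf C\|1_{\delta B}\|$.

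For the reverse direction, the plan is to first adapt Lemma~\ref{l10} to show that quasi-greediness plus $\mathbf s$-order-($\mathbb N$, superconservative) implies an ``$\mathbf s$-order ($\mathbb N$, PSLC)'' statement; the triangle-inequality proof of Lemma~\ref{l10} transfers verbatim, since we only invoke the superconservative inequality for pairs in $\mathbb T(\mathbb N, \mathbf s)$. Lemma~\ref{l2}'s proof then converts this to the ``suppressed'' form valid for $(E, F) \in \mathbb T(\mathbb N, \mathbf s)$. Given $x \in X$, $m \in \mathbf s$, and $A \in G(x, m)$, the choice $E = \{1,\ldots,m\}\setminus A$, $F = A \setminus \{1,\ldots,m\}$ has $|E| = |F|$ and $E \le m < F$ with $m \in \mathbf s$, so $(E, F) \in \mathbb T(\mathbb N, \mathbf s)$. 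Running the truncation argument of Proposition~\ref{p2} (invoking Theorem~\ref{bto} to bound $\|T_\alpha(x - S_m(x))\|$) then produces
\[
\|x - P_A(x)\|\ \le\ \Delta'\,\mathbf C_\ell\,\|x - S_m(x)\|.
\]

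The bridging step is the principal obstacle: the previous inequality controls $\|x - G_m(x)\|$ only by $\|x - S_m(x)\|$, whereas $\widehat{\sigma}^{\mathbb N}_m(x)$ is an infimum over all $0 \le k \le m$. Here the Schauder-basis constant $\mathbf K_b$ intervenes: for each $k \le m$, setting $y := x - S_k(x)$ and using $S_m(x) - S_k(x) = S_m(y)$ together with $\|S_m(y)\| \le \mathbf K_b\|y\|$ gives $\|x - S_m(x)\| \le (1 + \mathbf K_b)\|x - S_k(x)\|$. Minimizing over $k$ yields $\|x - S_m(x)\| \le (1 + \mathbf K_b)\widehat{\sigma}^{\mathbb N}_m(x)$, which combined with the previous step finishes the proof. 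Without the Schauder hypothesis, the $\mathbf s$-order-superconservative assumption would only produce the strong partially greedy inequality with the partial infimum $\min_{k \in \mathbf s \cup \{0\},\,k \le m}\|x - S_k(x)\|$ rather than the full $\widehat{\sigma}^{\mathbb N}_m(x)$.
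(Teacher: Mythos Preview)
Your proposal is correct and follows essentially the same approach as the paper's proof of the generalization Proposition~\ref{p30} (specialized to $\mathbf n = \mathbb N$): the forward direction uses \cite[Theorem~5.2]{BB} plus the same test vector $y = 1_{\varepsilon A} + 1_{\{1,\ldots,s\}\setminus A} + 1_{\delta B}$, and the reverse direction upgrades to an $\mathbf s$-order PSLC via Lemma~\ref{ll1}, runs the truncation argument with $E = \{1,\ldots,m\}\setminus A$, $F = A\setminus\{1,\ldots,m\}$, and then invokes the Schauder constant exactly as you do to pass from $\|x - S_m(x)\|$ to $\widehat{\sigma}^{\mathbb N}_m(x)$.
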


\subsection{Generalization to $\mathbf s$-($\mathbf n$, strong partially greedy) bases}
We generalize the above results to ($\mathbf n$, strong partially greedy) bases with gaps. 

\subsubsection{$\mathbf n$-Schauder bases}

\begin{defi}\normalfont
A basis $\mathcal{B}$ is said to be $\mathbf n$-Schauder if there exists $\mathbf C>0$ such that 
$$\|P^{\mathbf n}_{m}(x)\|\ \le\ \mathbf C\|x\|, \forall x\in X, \forall m\in \mathbb{N}.$$
The least constant $\mathbf C$ is called the $\mathbf n$-basis constant. 
\end{defi}

\begin{prop}
If the difference set $\Delta_{\mathbf n, \mathbf m}$ is finite, then a basis is $\mathbf n$-Schauder if and only if it is $\mathbf m$-Schauder. 
\end{prop}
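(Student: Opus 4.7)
The plan is to prove both directions by a symmetric argument, so it suffices to show that if $\mathcal{B}$ is $\mathbf n$-Schauder with constant $\mathbf C_{\mathbf n}$, then it is $\mathbf m$-Schauder. The key observation is that finiteness of $\Delta_{\mathbf n, \mathbf m}$ means the two strictly increasing sequences agree on a common tail. More precisely, since $\mathbf n \setminus \mathbf m$ and $\mathbf m \setminus \mathbf n$ are finite sets, there exist integers $K_1, K_2 \ge 1$ such that $\{n_i : i \ge K_1\} = \{m_j : j \ge K_2\}$, and because both sequences are strictly increasing and enumerate this common set in increasing order, we have $n_{K_1 + j} = m_{K_2 + j}$ for every $j \ge 0$.

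Next, fix $x \in X$ and $m \in \mathbb{N}$; we bound $\|P^{\mathbf m}_m(x)\|$. If $m < K_2$, then $P^{\mathbf m}_m(x)$ is a sum of fewer than $K_2$ terms of the form $e^*_{m_i}(x) e_{m_i}$, so by the triangle inequality and property c) of a basis, $\|P^{\mathbf m}_m(x)\| \le (K_2 - 1) c_2^2 \|x\|$. For $m \ge K_2$, I would split
\[
P^{\mathbf m}_m(x) \ = \ \sum_{i=1}^{K_2 - 1} e^*_{m_i}(x)\, e_{m_i} \ + \ \sum_{j=0}^{m - K_2} e^*_{n_{K_1 + j}}(x)\, e_{n_{K_1 + j}},
\]
where the second sum uses the identification $m_{K_2+j} = n_{K_1+j}$. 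Setting $k = K_1 + (m - K_2)$, the second sum equals $P^{\mathbf n}_k(x) - P^{\mathbf n}_{K_1 - 1}(x)$, so by the $\mathbf n$-Schauder hypothesis,
\[
\|P^{\mathbf m}_m(x)\| \ \le \ (K_2 - 1) c_2^2 \|x\| \ + \ \|P^{\mathbf n}_k(x)\| \ + \ \|P^{\mathbf n}_{K_1 - 1}(x)\| \ \le \ \bigl((K_2 - 1) c_2^2 + 2\mathbf C_{\mathbf n}\bigr)\|x\|.
\]
This gives a uniform bound and completes the implication; the reverse direction follows by interchanging the roles of $\mathbf n$ and $\mathbf m$.

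There is no real obstacle here: the proof is a routine finite-correction argument, and the only thing one needs to be careful about is making the matching of tail indices explicit (i.e.\ verifying that the two increasing enumerations of the common tail synchronize with a fixed index shift $K_1 - K_2$). Once that is in hand, the decomposition and the uniform estimate follow immediately.
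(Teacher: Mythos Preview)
Your proof is correct and follows essentially the same approach as the paper: both split the $\mathbf m$-partial sum into a finite initial segment (bounded crudely via $c_2^2$) and a tail segment that, by the finiteness of $\Delta_{\mathbf n,\mathbf m}$, is an interval in $\mathbf n$ and hence expressible as a difference $P^{\mathbf n}_k - P^{\mathbf n}_{K_1-1}$ of two initial $\mathbf n$-projections. Your presentation is slightly more explicit about the index synchronization $n_{K_1+j}=m_{K_2+j}$, whereas the paper phrases it via the cutoff $N$ with $\mathbf n\cap[N,\infty)=\mathbf m\cap[N,\infty)$, but the arguments are the same in substance.
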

\begin{proof}
Assume that $\mathcal{B}$ is $\mathbf C$-$\mathbf n$-Schauder. Since $\Delta_{\mathbf n, \mathbf m}$ is finite, there exists $N\in\mathbb{N}$ such that $\mathbf n\cap [N,\infty)\subset\mathbf m$ and $\mathbf m\cap [N, \infty)\subset\mathbf n$. Pick $M\in\mathbb{N}$ and $x\in X$. Let $A_1 = \{m_1, \ldots, m_M\}\cap [N,\infty)$ and $A_2 = \{m_1, \ldots, m_M\}\backslash A_1$. We have 
$$\|P_{A_2}(x)\|\ \le\ \sup_{|A|< N}\|P_A\|\|x\|\ \le\ (N-1)\sup_{n}(\|e_n\|\|e_n^*\|)\|x\|\ \le\ (N-1)c_2^2\|x\|.$$
Furthermore, since $A_1\in \mathcal{I}^{\mathbf n}$ and $\mathcal{B}$ is $\mathbf n$-Schauder,
$$\|P_{A_1}(x)\|\ \le\ \|P_{\{n_1, \ldots, \max A_1\}}(x)\| + \|P_{\{n_1, \ldots, n_s\}}(x)\|\ \le\ 2\mathbf C\|x\|,$$
where $n_s$ is the number in $\mathbf n$ that is right before $\min A_1$. We obtain
$$\|P_{\{m_1, \ldots, m_M\}}(x)\|\ \le\ \|P_{A_1}(x)\| + \|P_{A_2}(x)\|\ \le\ ((N-1)c_2^2+2\mathbf C)\|x\|.$$
This shows that $\mathcal{B}$ is $\mathbf m$-Schauder.
\end{proof}

\begin{exa}\label{e111}\normalfont Let $\mathbf n = n_1, n_2, \ldots$ be a strictly increasing sequence such that $\mathbb{N}\backslash \mathbf n$ is infinite.
We give an example of a basis $\mathcal{B}$ that is $\mathbf n$-Schauder but is not Schauder. Let $(e_n)_n$ be a Markushevich basis of a space $X$ such that $(e_n)_n$ is not Schauder, i.e., there exists nonzero and normalized $(x_n)_{n}\subset X$ such that 
$\sup_{n}\|S_n(x_n)\|_{X} = \infty$.
Let $\mathbb {Y} = X\oplus c_0$ under the $\ell_1$-norm. Denote the canonical basis of $c_0$ to be $(f_n)_n$. Let $H: \mathbb{N}\backslash \mathbf n\rightarrow \mathbb{N}$ be the increasing bijection. Define the basis $\mathcal{B} = (g_n)_{n}$, where $g_{n_j} = (0, f_{j})$ and $g_{n} = (e_{H(n)}, 0)$ for $n\notin \mathbf n$. 

We check that $\mathcal{B}$ is $\mathbf n$-Schauder. Pick $y:= (x, x')\in \mathbb{Y}$ and $m\in \mathbb{N}$. We have
$$\|P^{\mathbf n}_m(y)\|_{\mathbb{Y}}\ =\ \left\|\sum_{j=1}^m g^*_{n_j}(y)g_{n_j}\right\|_{\mathbb{Y}}\ =\ \left\|\sum_{j=1}^m f_j^*(x')f_j\right\|_{c_0}\ \le\ \|x'\|_{c_0}\ \le\ \|y\|_{\mathbb{Y}}.$$

Now we show that $\mathcal{B}$ is not Schauder. Let $m = H^{-1}(n)$. Define $y_n: =  (x_n, 0)$. We have
$\|S_m(y_n)\|_{\mathbb{Y}} = \left\|S_n(x_n)\right\|_{X}, \mbox{ while }\|y_n\|_{\mathbb{Y}} = 1$.
Hence, $$\sup_m\frac{\|S_m(y_n)\|_{\mathbb{Y}}}{\|y_n\|_{\mathbb{Y}}}\ =\ \infty,$$ so $\mathcal{B}$ is not Schauder. 
\end{exa}

\begin{exa}\label{e112}\normalfont Let $\mathbf n = n_1, n_2, \ldots$ such that $\mathbb{N}\backslash \mathbf n$ is infinite. We give an example of a basis $\mathcal{B}$ that is Schauder but is not $\mathbf n$-Schauder. Since $\mathbb{N}\backslash \mathbf n$ is infinite, we can find a subsequence $\mathbf {n'} = n'_1, n'_2, \ldots$ of $\mathbf n$ such that $n'_j - 1\notin \mathbf n$ for all $j\in\mathbb{N}$.  Let $\mathbf m = m_1, m_2, \ldots$, where $m_{2j-1} = n'_j-1$ and $m_{2j} = n'_j$ for all $j\in\mathbb{N}$. Let $X$ be the completion of $c_{00}$ under the following norm: for $x = (x_1, x_2, \ldots)$, 
$$\|x\|\ =\ \sup_{N}\left|\sum_{j=1}^N x_{m_j}\right| + \max_{n\notin\mathbf m}|x_n|.$$
Let $\mathcal{B}$ be the canonical basis, which is clearly Schauder with basis constant $1$. To see that $\mathcal{B}$ is not $\mathbf n$-Schauder, consider the vector $x_{2n} = \sum_{i=1}^{2n} (-1)^i e_{m_i}$ for $n\in\mathbb{N}$. While $\|x_{2n}\| = 1$, for sufficiently large $N:= N_n$,
$\|P^{\mathbf n}_N(x_{2n})\| = n$. That $\lim_{n\rightarrow \infty}\frac{\|P^{\mathbf n}_N(x_{2n})\|}{\|x_{2n}\|}=\infty$ implies that $\mathcal{B}$ is not $\mathbf n$-Schauder.
\end{exa}

\begin{rek}\normalfont
Examples \ref{e111} and \ref{e112} can be generalized to any two sequences $\mathbf m$ and $\mathbf n$ having infinite difference set $\Delta_{\mathbf m, \mathbf n}$.
\end{rek}

\subsubsection{Characterization of $\mathbf s$-($\mathbf n$, strong partially greedy) bases}

\begin{prop}[Generalization of Proposition \ref{p20}]\label{p30}
Let $\mathcal{B}$ be both a Schauder and $\mathbf n$-Schauder basis. If $\mathbf s$ has bounded quotient gaps, then $\mathcal{B}$ is $\mathbf s$-($\mathbf n$, strong partially greedy) if and only if $\mathcal{B}$ is quasi-greedy and $\mathbf s$-order-($\mathbf n$, superconservative). 
\end{prop}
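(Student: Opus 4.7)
The proof splits into two implications, mirroring the scheme of \cite[Proposition 6.13]{BB1} but requiring a careful splitting argument in the converse to handle the interplay between the indexing sequences $\mathbf n$ and $\mathbf s$. For the forward direction, specializing the $\mathbf s$-$(\mathbf n,\text{strong partially greedy})$ inequality to $k=0$ gives $\|x-G_m(x)\|\le \mathbf C\|x\|$ for every $m\in\mathbf s$, i.e., $\mathbf s$-suppression-quasi-greediness; combined with the Schauder and bounded-quotient-gap hypotheses, \cite[Theorem 5.2]{BB} promotes this to full quasi-greediness of $\mathcal{B}$. To derive the $\mathbf s$-order-$(\mathbf n,\text{superconservative})$ property, fix $(A,B)\in \mathbb{T}(\mathbf n,\mathbf s)$ with witness $s\in\mathbf s$, signs $\varepsilon,\delta$, and a parameter $\alpha\in(0,1)$, and consider the test vector
$$x\ =\ \alpha 1_{\varepsilon A}+1_{C}+1_{\delta B},\qquad C\ :=\ \{n_1,\ldots,n_s\}\setminus A.$$
Because $A\subset\{n_1,\ldots,n_s\}$ and $|B|\ge|A|=s-|C|$, for any $B'\subset B$ with $|B'|=|A|$ the set $C\cup B'$ is a valid greedy set of $x$ of order $s$, and $P^{\mathbf n}_s(x)=\alpha 1_{\varepsilon A}+1_{C}$. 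Applying the $\mathbf s$-$(\mathbf n,\text{SPG})$ inequality at $m=s$ with the choice $k=s$ yields $\|\alpha 1_{\varepsilon A}+1_{\delta(B\setminus B')}\|\le \mathbf C\|1_{\delta B}\|$, and the triangle inequality together with the quasi-greedy bound $\|1_{\delta(B\setminus B')}\|\le \mathbf C_q\|1_{\delta B}\|$ (valid because $1_{\delta B}$ has coefficients of constant modulus) produces $\alpha\|1_{\varepsilon A}\|\le(\mathbf C+\mathbf C_q)\|1_{\delta B}\|$; sending $\alpha\to 1^-$ completes this direction.

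For the converse, fix $x\in X$, $m\in\mathbf s$, $A\in G(x,m)$, and $k\le m$; set $B=\{n_1,\ldots,n_k\}$, $E=B\setminus A$, $F=A\setminus B$, $\alpha=\min_{n\in A}|e_n^*(x)|$, and $\varepsilon=(\sgn e_n^*(x))$. The triangle inequality $\|x-P_A(x)\|\le \|x-P_B(x)\|+\|P_E(x)\|+\|P_F(x)\|$ reduces the task to dominating $\|P_E(x)\|$ and $\|P_F(x)\|$ by $\|x-P_B(x)\|$. One checks that $F$ is a greedy set of $x-P_B(x)$ of order $|F|$ (its coefficients have modulus $\ge \alpha$ while the complement has modulus $\le \alpha$), so quasi-greediness gives $\|P_F(x)\|\le \mathbf C_q\|x-P_B(x)\|$. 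The UL property upper-bounds $\|P_E(x)\|\le \mathbf C_2\alpha\|1_E\|$, while its lower companion, combined with the previous estimate, yields $\alpha\|1_{\varepsilon F}\|\le \mathbf C_1\|P_F(x)\|\le \mathbf C_1\mathbf C_q\|x-P_B(x)\|$; hence it suffices to bound $\|1_E\|$ by a constant multiple of $\|1_{\varepsilon F}\|$.

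The main obstacle is that the pair $(E,F)$ need not lie in $\mathbb{T}(\mathbf n,\mathbf s)$, since there is no reason for $\mathbf s$ to contain an index that separates $E$ from $F\cap\mathbf n$. The cure is the bounded-quotient-gap hypothesis: let $s^*$ be the smallest element of $\mathbf s$ with $s^*\ge k$, so that $s^*\le m$ since $m\in\mathbf s$ and $k\le m$, and split $F=F_1\sqcup F_2$ with $F_1:=F\cap\{n_{k+1},\ldots,n_{s^*}\}$ and $F_2:=F\setminus F_1$. The crude estimate $|F_1|\le s^*-k\le m-k$, together with $|E|=k-|A\cap B|$ and $|F|=m-|A\cap B|$, forces $|E|\le |F_2|$; combined with $F_2\cap\mathbf n>n_{s^*}$ and $E\le n_k\le n_{s^*}$, this places $(E,F_2)$ in $\mathbb{T}(\mathbf n,\mathbf s)$ with witness $s^*$. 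The $\mathbf s$-order-$(\mathbf n,\text{superconservative})$ hypothesis then gives $\|1_E\|\le \Delta^{\mathbf n}_{osc}\|1_{\varepsilon F_2}\|$, and a final quasi-greedy extraction $\|1_{\varepsilon F_2}\|\le \mathbf C_q\|1_{\varepsilon F}\|$ (since $F_2\subset F$ and $1_{\varepsilon F}$ has coefficients of constant modulus) closes the loop, producing the desired universal constant.
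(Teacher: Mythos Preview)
Your proof is correct, and the forward direction is essentially the paper's argument (the paper uses coefficients of equal modulus on $A$, $C$, and $B$ and invokes suppression quasi-greediness to pass from $|C\cup B|$ to order $s$, whereas you perturb by $\alpha<1$ and pick an explicit greedy set of order exactly $s$; both are standard devices).

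The converse is where your argument genuinely diverges from the paper's. The paper first upgrades $\mathbf s$-order-$(\mathbf n,\text{superconservative})$ to $\mathbf s$-order-$(\mathbf n,\text{PSLC})$ via Lemma~\ref{ll1}, then uses Proposition~\ref{x} together with the truncation operator (Theorem~\ref{bto}) to bound $\|x-P_A(x)\|$ only by $\|x-P^{\mathbf n}_m(x)\|$ (i.e., only at the single value $k=m\in\mathbf s$), and finally invokes the $\mathbf n$-Schauder constant $\mathbf K_{\mathbf n}$ to pass to $\widehat{\sigma}^{\mathbf n}_m(x)$. Your route---the triangle decomposition $x-P_A(x)=(x-P_B(x))+P_E(x)-P_F(x)$, the UL property, and the key splitting $F=F_1\sqcup F_2$ via $s^*=\min\{s\in\mathbf s:s\ge k\}$ to force $(E,F_2)\in\mathbb T(\mathbf n,\mathbf s)$---handles every $0\le k\le m$ directly. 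As a byproduct, your converse uses neither the $\mathbf n$-Schauder hypothesis nor bounded quotient gaps: the only place those hypotheses enter is the forward direction (Schauder plus bounded quotient gaps, via \cite[Theorem~5.2]{BB}, to get quasi-greediness). This is worth noting, since it shows the implication ``quasi-greedy $+$ $\mathbf s$-order-$(\mathbf n,\text{superconservative})$ $\Rightarrow$ $\mathbf s$-$(\mathbf n,\text{SPG})$'' holds unconditionally. The trade-off is that the paper's approach gives a cleaner multiplicative constant $\mathbf C_\ell\,\Delta^{\mathbf n}_{pl}\,\mathbf K_{\mathbf n}$, whereas yours accumulates several UL and quasi-greedy factors.
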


We give the definition and an useful characterization of $\mathbf s$-order-($\mathbf n$, PSLC), which shall be used in the proof of Proposition \ref{p30}.
\begin{defi}\normalfont\label{defpslc}
A basis $\mathcal{B}$ in a Banach space $X$ is $\mathbf s$-order-($\mathbf n$, PSLC) if there exists $\mathbf C \ge 1$ such that
\begin{equation}\label{e66} \|x+1_{\varepsilon A}\| \ \le\ \mathbf C\|x+1_{\delta B}\|,\end{equation}
for all $(A, B)\in \mathbb{T}(\mathbf n, \mathbf s)$, for all signs $\varepsilon$, $\delta$, and for all $x\in X$ with $\|x\|_\infty\le 1$ and $A < (\supp(x)\sqcup B)\cap \mathbf n$. The smallest constant satisfying \eqref{e66} is denoted by $\Delta^{\mathbf n}_{pl}$.
\end{defi}

\begin{prop}\label{x}
A basis $\mathcal{B}$ is $\Delta^{\mathbf n}_{pl}$-$\mathbf s$-order-($\mathbf n$, PSLC) if and only if 
\begin{equation}\label{e70}\|x\|\ \le\ \Delta^{\mathbf n}_{pl}\|x-P_A(x) + 1_{\varepsilon B}\|,\end{equation}
for all $(A, B)\in \mathbb{T}(\mathbf n, \mathbf s)$, for all signs $\varepsilon$, and for all $x\in X$ with $\|x\|_\infty\le 1$ and $A < (\supp(x-P_A(x))\sqcup B)\cap \mathbf n$.
\end{prop}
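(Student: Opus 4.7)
The plan is to mirror the proof of Lemma~\ref{l2} almost verbatim, since Proposition~\ref{x} is the $\mathbf s$-order analogue of that earlier biconditional. The key structural point to verify is that membership $(A,B)\in\mathbb{T}(\mathbf n,\mathbf s)$ is preserved under the substitutions used in each direction; this is immediate because $A$ and $B$ themselves are not altered, and the condition ``$A\le n_s<B\cap\mathbf n$ for some $s\in\mathbf s$'' depends only on $A$, $B$, $\mathbf n$, and $\mathbf s$.

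For the ``only if'' direction, assume $\mathcal{B}$ is $\Delta^{\mathbf n}_{pl}$-$\mathbf s$-order-($\mathbf n$, PSLC) and pick $x$, $(A,B)\in\mathbb{T}(\mathbf n,\mathbf s)$, and a sign $\varepsilon$ satisfying the hypothesis of \eqref{e70}. Set $y:=x-P_A(x)$; then $\|y\|_\infty\le 1$, $\supp(y)\cap A=\emptyset$, and $A<(\supp(y)\sqcup B)\cap\mathbf n$ by assumption. Writing $x=y+\sum_{n\in A}e_n^*(x)e_n$ and using $|e_n^*(x)|\le 1$ for $n\in A$ (so that $P_A(x)$ is a barycenter of vectors of the form $1_{\delta A}$, exactly as in the proof of Lemma~\ref{l2}), we obtain
\[
\|x\|\ \le\ \sup_\delta\|y+1_{\delta A}\|\ \le\ \Delta^{\mathbf n}_{pl}\|y+1_{\varepsilon B}\|,
\]
where the last inequality is $\mathbf s$-order-($\mathbf n$, PSLC) applied to $y$ with the pair $(A,B)\in\mathbb{T}(\mathbf n,\mathbf s)$.

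For the ``if'' direction, assume \eqref{e70} and pick $x$, $(A,B)\in\mathbb{T}(\mathbf n,\mathbf s)$, and signs $\varepsilon,\delta$ as in Definition~\ref{defpslc}. The condition $A<(\supp(x)\sqcup B)\cap\mathbf n$ combined with $A\subset\mathbf n$ forces $A\cap\supp(x)=\emptyset$. Set $y:=x+1_{\varepsilon A}$; then $P_A(y)=1_{\varepsilon A}$, $y-P_A(y)=x$, and hence $\|y-P_A(y)\|_\infty\le 1$ together with $A<(\supp(y-P_A(y))\sqcup B)\cap\mathbf n$, which is exactly the hypothesis required to apply \eqref{e70}. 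Doing so with the sign $\delta$ yields
\[
\|x+1_{\varepsilon A}\|\ =\ \|y\|\ \le\ \Delta^{\mathbf n}_{pl}\|y-P_A(y)+1_{\delta B}\|\ =\ \Delta^{\mathbf n}_{pl}\|x+1_{\delta B}\|,
\]
the $\mathbf s$-order-($\mathbf n$, PSLC) inequality with constant $\Delta^{\mathbf n}_{pl}$.

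I anticipate no genuine obstacle: the entire argument is a direct transcription of the proof of Lemma~\ref{l2}, and the only thing to double-check is that $(A,B)$ remains in the refined class $\mathbb{T}(\mathbf n,\mathbf s)$ across both substitutions, which it trivially does since neither $A$ nor $B$ is modified.
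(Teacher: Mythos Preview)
Your proof is correct and follows essentially the same approach as the paper's own proof of Proposition~\ref{x}, which is indeed a verbatim transcription of the argument for Lemma~\ref{l2}. The only cosmetic difference is that you spell out why $A\cap\supp(x)=\emptyset$ in the ``if'' direction, whereas the paper leaves this implicit; note that to invoke \eqref{e70} you technically need $\|y\|_\infty\le 1$ rather than just $\|y-P_A(y)\|_\infty\le 1$, but this follows immediately since $P_A(y)=1_{\varepsilon A}$ has coefficients of modulus exactly $1$.
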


\begin{proof}
Suppose that $\mathcal{B}$ satisfies \eqref{e70}. Choose $x, A, B, \varepsilon, \delta$ as in Definition \ref{defpslc}. Let $y = x+ 1_{\varepsilon A}$. We have
$$\|x+1_{\varepsilon A}\|\ =\ \|y\| \ \stackrel{\eqref{e70}}{\le}\ \Delta^{\mathbf n}_{pl}\|y - P_A(y) + 1_{\delta B}\|\ =\ \Delta^{\mathbf n}_{pl}\|x+1_{\delta B}\|.$$

Conversely, suppose that $\mathcal{B}$ is $\Delta^{\mathbf n}_{pl}$-$\mathbf s$-order-($\mathbf n$, PSLC). Choose $x, A, B, \varepsilon$ as in \eqref{e70}. We have
\begin{align*}
    \|x\|\ =\ \left\|x-P_A(x) + \sum_{n\in A}e_n^*(x)e_n\right\|&\ \le\ \sup_{(\delta)}\|x-P_A(x)+1_{\delta A}\|\\
    &\ \le\ \Delta^{\mathbf n}_{pl}\|x-P_A(x) + 1_{\varepsilon B}\|.
\end{align*}
This completes our proof. 
\end{proof}

\begin{lem}\label{ll1}
If $\mathcal{B}$ be $\mathbf C_q$-quasi-greedy and $\Delta^{\mathbf n}_{osc}$-$\mathbf s$-order-($\mathbf n$, superconservative), then $\mathcal{B}$ is $\Delta^{\mathbf n}_{pl}$-$\mathbf s$-order-($\mathbf n$, PSLC) with $\Delta^{\mathbf n}_{pl}\le 1+\mathbf C_q + \Delta^{\mathbf n}_{osc}\mathbf C_q$. 
\end{lem}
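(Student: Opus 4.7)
The plan is to follow the template of Lemma \ref{l10} essentially verbatim, since the step up from ($\mathbf n$, superconservative) to $\mathbf s$-order-($\mathbf n$, superconservative) only restricts the pairs $(A,B)$ to the subclass $\mathbb{T}(\mathbf n,\mathbf s)\subset\mathbb{T}(\mathbf n)$, and this restriction is already baked into both hypothesis and conclusion. So I would fix $x,A,B,\varepsilon,\delta$ as in Definition \ref{defpslc}, observe that $(A,B)\in\mathbb{T}(\mathbf n,\mathbf s)$ by assumption, and decompose $\|x+1_{\varepsilon A}\|$ by the triangle inequality into $\|x\|+\|1_{\varepsilon A}\|$.

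For the first summand, I would note that since $\supp(x)\cap B=\emptyset$ (an immediate consequence of the condition $A<(\supp(x)\sqcup B)\cap\mathbf n$ in Definition \ref{defpslc}) and $\|x\|_\infty\le 1$, the set $B$ is a greedy set of $y:=x+1_{\delta B}$ of order $|B|$; hence the quasi-greedy bound gives $\|1_{\delta B}\|=\|P_B(y)\|\le\mathbf C_q\|y\|$, and then
\[
\|x\|\ \le\ \|x+1_{\delta B}\|+\|1_{\delta B}\|\ \le\ (1+\mathbf C_q)\|x+1_{\delta B}\|.
\]
For the second summand, I would apply the $\mathbf s$-order-($\mathbf n$, superconservative) hypothesis directly (which is legal because $(A,B)\in\mathbb{T}(\mathbf n,\mathbf s)$) to get $\|1_{\varepsilon A}\|\le\Delta^{\mathbf n}_{osc}\|1_{\delta B}\|$, and then chain with the greedy bound on $\|1_{\delta B}\|$ to obtain $\|1_{\varepsilon A}\|\le\Delta^{\mathbf n}_{osc}\mathbf C_q\|x+1_{\delta B}\|$. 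Adding the two bounds gives the desired constant $1+\mathbf C_q+\Delta^{\mathbf n}_{osc}\mathbf C_q$.

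There is no real obstacle: the only thing to verify carefully is that the quasi-greedy lemma is applicable to $y=x+1_{\delta B}$, which requires disjointness of $\supp(x)$ and $B$ and the sup-norm bound $\|x\|_\infty\le 1$, both supplied by the hypotheses of Definition \ref{defpslc}. Everything else is a triangle inequality and a direct invocation of the two assumed properties, so the proof will be a near copy of Lemma \ref{l10} with $\Delta_{\mathbf n,sc}$ replaced by $\Delta^{\mathbf n}_{osc}$ and the $\mathbb{T}(\mathbf n)$ pairs narrowed to $\mathbb{T}(\mathbf n,\mathbf s)$.
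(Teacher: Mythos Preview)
Your proposal is correct and follows essentially the same approach as the paper's proof: both decompose $\|x+1_{\varepsilon A}\|\le\|x\|+\|1_{\varepsilon A}\|$, bound $\|x\|$ via $\|x+1_{\delta B}\|+\|1_{\delta B}\|\le(1+\mathbf C_q)\|x+1_{\delta B}\|$ using quasi-greediness, and bound $\|1_{\varepsilon A}\|$ via the superconservative hypothesis chained with the same quasi-greedy estimate on $\|1_{\delta B}\|$. Your added justification that $B$ is a greedy set of $x+1_{\delta B}$ (from disjointness and $\|x\|_\infty\le 1$) is a detail the paper leaves implicit, but otherwise the arguments coincide.
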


\begin{proof}
Let $x, A, B, \varepsilon, \delta$ be chosen as in Definition \ref{defpslc}. We have
$$\|x\|\ \le\ \|x+1_{\delta B}\| + \|1_{\delta B}\|\ \le\ (1+\mathbf C_q)\|x+1_{\delta B}\|.$$
Furthermore, 
$$\|1_{\varepsilon A}\|\ \le\ \Delta^{\mathbf n}_{osc}\|1_{\delta B}\|\ \le\ \Delta^{\mathbf n}_{osc}\mathbf C_q\|x+1_{\delta B}\|.$$
By the triangle inequality, we get
$$\|x+1_{\varepsilon A}\|\ \le\ \|x\| + \|1_{\varepsilon A}\|\ \le\ (1+\mathbf C_q + \Delta^{\mathbf n}_{osc}\mathbf C_q)\|x+1_{\delta B}\|.$$
This completes our proof. 
\end{proof}

\begin{proof}[Proof of Proposition \ref{p30}]
Assume that $\mathcal{B}$ is $\mathbf C$-$\mathbf s$-($\mathbf n$, strong partially greedy). We have
\begin{equation}\label{e4}\|x-G_{n}(x)\|\ \le\ \mathbf C\|x\|, \forall x\in X, \forall n\in \mathbf s, \forall G_n(x).\end{equation}
By \cite[Theorem 5.2]{BB}, $\mathcal{B}$ is quasi-greedy. 
Let us show that $\mathcal{B}$ is $\mathbf s$-order-($\mathbf n$, superconservative). Fix $A, B, \varepsilon, \delta$ as in Definition \ref{defoc}. Assume that $A \le n_s < B\cap \mathbf n$ for some $s\in\mathbf s$. Let 
$$x\ : =\ 1_{\varepsilon A} + 1_D + 1_{\delta B},$$
where $D:= \{n_1, \ldots, n_s\}\backslash A$. Since $D$ and $B$ are disjoint,
$$|D\cup B| \ =\ |D| + |B|\ \ge\ |D| + |A| \ =\ s.$$
Let $E\subset D\cup B$ such that $|E| = s$. By above, assume that $\mathcal{B}$ is $\mathbf C_\ell$-suppression quasi-greedy. 
By the $\mathbf s$-($\mathbf n$, strong partially greedy) property, we have 
$$\|1_{\varepsilon A}\|\ =\ \|x-P_{D\cup B}(x)\|\ \le\  \mathbf C_\ell\|x-P_E(x)\|  \ \le\ \mathbf C_\ell\mathbf C\|x-P^{\mathbf n}_s(x)\|\ =\ \mathbf C_\ell\mathbf C\|1_{\delta B}\|.$$
Therefore, $\mathcal{B}$ is $\mathbf s$-order-($\mathbf n$, superconservative).

Next, assume that $\mathcal{B}$ is $\mathbf C_\ell$-suppression quasi-greedy and $\mathbf s$-order-($\mathbf n$, superconservative). By Lemma \ref{ll1}, $\mathcal{B}$ is $\Delta^{\mathbf n}_{pl}$-$\mathbf s$-order-($\mathbf n$, PSLC) for some constant $\Delta^{\mathbf n}_{pl}\ge 1$. Let $x\in X$, $s\in\mathbf s$, $A\in G(x, s)$. Set $E:= \{n_1, \ldots, n_s\}\backslash A$, $F:= A\backslash \{n_1, \ldots, n_s\}$, and $\alpha = \min_{n\in A}|e_n^*(x)|$. We verify that $x-P_A(x)-P_E(x)$, $E$, and $F$ satisfy the conditions in Proposition \ref{x}: note that $(E, F)\in \mathbb{T}(\mathbf n, \mathbf s)$, $E < (\supp(x-P_A(x)-P_E(x))\sqcup F)\cap \mathbf n$. By Proposition \ref{x} and Theorem \ref{bto}, we obtain
\begin{align*}
    \|x-P_A(x)\|&\ \le\ \Delta^{\mathbf n}_{pl}\left\|x - P_A(x) - P_E(x) + \alpha\sum_{n\in F}\sgn(e_n^*(x))e_n\right\|\\
    &\ \le\ \Delta^{\mathbf n}_{pl}\left\|T_\alpha\left(x - P_A(x) - P_E(x) + P_F(x)\right)\right\|\\
    &\ \le\ \Delta_{\lambda, pl}\mathbf C_{\ell}\|x-P^{\mathbf n}_s(x)\|\\
    &\ \le\ \Delta_{\lambda, pl}\mathbf C_{\ell}\mathbf K_\mathbf{n} \widehat{\sigma}^{\mathbf n}_s(x),
\end{align*}
where $\mathbf K_\mathbf{n}$ is the $\mathbf n$-basis constant. 
Therefore, $\mathcal{B}$ is $\mathbf s$-($\mathbf n$, strong partially greedy).
\end{proof}

We now give an example that shows the equivalence in Proposition \ref{p30} fails when $\mathbf s$ has arbitrarily large quotient gaps. The example also shows that 
$\mathbf s$-($\mathbf n$, strong partially greedy) bases are not necessarily ($\mathbf n$, strong partially greedy) when
$\mathbf s$ has arbitrarily large quotient gaps. Our example is a modification of the example in \cite[Proposition 6.9]{BB1}. 

\begin{prop}[Generalization of Proposition \ref{p21} item i)]
Let $\mathbf s$ be a sequence with arbitrarily large quotient gaps. There exists a Banach space $\mathbb X$ with a Schauder and $\mathbf n$-Schauder basis $\mathcal{B}$ such that $\mathcal{B}$ is $\mathbf s$-($\mathbf n$, strong partially greedy) but is neither ($\mathbf n$, conservative) nor quasi-greedy. 
\end{prop}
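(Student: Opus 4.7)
The plan is to adapt the construction of \cite[Proposition 6.9 item i)]{BB1} so that the basis produced is additionally $\mathbf n$-Schauder. Since $\mathbf s$ has arbitrarily large quotient gaps, I extract a strictly increasing subsequence $(s_{k_j})_{j\ge 1}\subset\mathbf s$ with $s_{k_j+1}/s_{k_j}\to\infty$, say $s_{k_j+1}\ge 4^j s_{k_j}$. For each $j$ I use the $\mathbf n$-block $J_j := \{n_{s_{k_j}+1},\ldots, n_{s_{k_j+1}}\}$ as the home of disjoint subsets $A_j,B_j\subset \mathbf n\cap J_j$ with $A_j<B_j$ and $|A_j|=|B_j|=2^j$. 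The key observation is that the $\mathbf n$-initial-segment projection $P^{\mathbf n}_{s_{k_j}}$ completely removes $J_i$ for all $i\ge j$; this isolates any bad structure placed inside $J_j$ from the $\mathbf s$-$(\mathbf n,\text{strong partially greedy})$ test at any level $m\in\mathbf s$ with $s_{k_j}\le m$, once we choose $k=s_{k_j}\le m$.

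Following the recipe in \cite[Proposition 6.9]{BB1}, I construct a norm on $c_{00}$ as the maximum of $\|x\|_\infty$, a control functional guaranteeing the Schauder and $\mathbf n$-Schauder basis constants (obtained, as in \cite{BB1}, by including suitable partial sums and their alternating variants), a family $\{F_j\}_j$ of ``conservativity-breaking'' functionals tuned so that $\|1_{A_j}\|/\|1_{B_j}\|\to\infty$ as $j\to\infty$, and a family $\{H_j\}_j$ of ``quasi-greediness-breaking'' alternating-sum functionals supported on $A_j\cup B_j$, constructed exactly as in \cite{BB1}. Let $\mathcal B$ be the canonical basis of the completion $\mathbb X$ of $c_{00}$ under this norm. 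Normalization, the Schauder property, the $\mathbf n$-Schauder property, and the failures of $(\mathbf n,\text{conservative})$ and of quasi-greediness are then immediate from the design of the functionals.

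The main obstacle is verifying that $\mathcal B$ is $\mathbf s$-$(\mathbf n,\text{strong partially greedy})$. Given $x\in \mathbb X$, $m\in\mathbf s$ and $A\in G(x,m)$, let $j$ be the unique index with $s_{k_j}\le m<s_{k_{j+1}}$ and set $k:=s_{k_j}\le m$. Since $\{n_1,\ldots,n_{s_{k_j}}\}$ is disjoint from $J_i$ for every $i\ge j$, the truncation $x-P^{\mathbf n}_k(x)$ retains all of the ``bad'' coefficients inside $J_j$ and beyond, whereas $x-G_m(x)$ may differ from $x-P^{\mathbf n}_k(x)$ only through at most $m$ pieced-off coefficients of $x$. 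A case analysis, tracking which functionals $F_i, H_i$ are activated by $x-G_m(x)$ versus $x-P^{\mathbf n}_k(x)$, and using the rapid-gap condition $s_{k_{j+1}}\ge 4^j s_{k_j}$ to show that any partial greedy piercing of $J_i$ for $i> j$ contributes only a lower-order error, then yields a uniform bound $\|x-G_m(x)\|\le C\|x-P^{\mathbf n}_k(x)\|$, in close parallel with \cite[Proof of Proposition 6.9]{BB1} but with extra bookkeeping to keep the $\mathbf n$-projections under control.
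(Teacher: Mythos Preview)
Your plan has a genuine gap in the verification step. The definition of $\mathbf s$-($\mathbf n$, strong partially greedy) requires
\[
\|x-G_m(x)\|\ \le\ \mathbf C\,\widehat{\sigma}^{\mathbf n}_m(x)\ =\ \mathbf C\min_{0\le k\le m}\|x-P^{\mathbf n}_k(x)\|
\]
for every $m\in\mathbf s$, so you must bound $\|x-G_m(x)\|$ by $\|x-P^{\mathbf n}_k(x)\|$ for \emph{every} $0\le k\le m$, not just for one convenient choice. Your argument fixes $k=s_{k_j}$ and only argues that the bad blocks $J_i$ with $i\ge j$ survive the projection $P^{\mathbf n}_{s_{k_j}}$; this gives no control when the adversary chooses, say, $k=0$ (so that $\widehat\sigma^{\mathbf n}_m(x)\le \|x\|$), or any $k$ strictly between $s_{k_j}$ and $m$, or even $k=m$ itself. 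In those cases the comparison vector $x-P^{\mathbf n}_k(x)$ may well have much smaller norm than $x-P^{\mathbf n}_{s_{k_j}}(x)$, and your sketch offers no mechanism to handle this.

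Relatedly, the functionals are described only by their intended effect (``conservativity-breaking'', ``quasi-greediness-breaking'') without being written down, so there is no way to check the crucial interaction: the same functionals that make $\|1_{A_j}\|$ large or that blow up the alternating sums must be shown \emph{not} to make $\|x-G_m(x)\|$ large relative to $\|x-P^{\mathbf n}_k(x)\|$ for every $k$. The paper resolves this by a very specific choice of norm, built from sets $S$ with $|S|\in\mathbf s$ and $n_{|S|}<S\cap\mathbf n$ together with explicit partial-sum functionals over $\{n_{s_{k_j}+1},\dots,n_{(j+1)s_{k_j}}\}$, and then carries out a three-case analysis (according to which family of functionals realizes $\|x-P_A(x)\|$) that works uniformly over all $0\le k\le m$. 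The key combinatorial trick is that whenever a functional $S\in\mathcal S$ sees too many coordinates of $B_k\setminus A$, one can manufacture a new $S'\in\mathcal S$ of the same cardinality, disjoint from $B_k$, witnessing at least as much mass of $x-P^{\mathbf n}_k(x)$. Your outline has no analogue of this set-replacement argument, and without it the ``case analysis'' you promise cannot close.
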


\begin{proof} 
Define $$\mathcal{S}\ :=\ \left\{S\subset\mathbb{N}: |S|\in \mathbf s \mbox{ and }n_{|S|} < S\cap \mathbf n\right\}.$$
Since $\mathbf s$ has arbitrarily large quotient gaps, we can find a subsequence $(s_{k_j})_j$ such that for all $j$, 
\begin{equation}\label{ee1}
    s_{k_j+1}\ >\ 3(j+1)s_{k_j}.
\end{equation}
Let $X$ be the completion of $c_{00}$ with respect to the following norm: for $x = (x_1, x_2, \ldots)$, 
$$\|x\|\ :=\ \max\left\{\|(x_i)_i\|_{\infty}, \sup_{S\in\mathcal{S}}\sum_{i\in S}|x_i|, \sup_{j\in\mathbb{N}}\sup_{1\le \ell \le js_{k_j}}\left|\sum_{i=1}^\ell x_{n_{s_{k_j}+i}}\right|\right\}.$$
Let $\mathcal{B}$ be the canonical basis of $X$. Clearly, $\mathcal{B}$ is both Schauder and $\mathbf n$-Schauder. Let $B_0 = \emptyset$ and $B_m = \{n_1, \ldots, n_k\}$ for $m\ge 1$.

i) $\mathcal{B}$ is $\mathbf s$-($\mathbf n$, strong partially greedy): Pick $x\in X$, $s\in\mathbf s$, $0\le m\le s$, and $A\in G(x, s)$. We shall show that $$\|x-P_A(x)\|\ \le\ 2\|x-P_m^{\mathbf n}(x)\|.$$

Case 1: $\|x-P_A(x)\| = \|x-P_A(x)\|_{\infty}$. Then $$\|x-P_A(x)\|\ \le\ \|x-P_m^{\mathbf n}(x)\|_{\infty}\ \le\ \|x-P_m^{\mathbf n}(x)\|.$$

Case 2: $\|x-P_A(x)\| = \sup_{S\in\mathcal{S}}\sum_{i\in S}|e_i^*(x-P_A(x))|$. Fix $S\in\mathcal{S}$. First, assume that $|S\cap (B_m\backslash A)| \le |S\cap (A\backslash B_m)|$. We have
\begin{align*}
    &\sum_{i\in S}|e_i^*(x-P_A(x))|\\
    &\ =\ \sum_{i\in S\backslash (A\cup B_m)}|e_i^*(x-P_A(x))| + \sum_{S\cap (B_m\backslash A)}|e_i^*(x)|\\
    &\ \le\ \sum_{i\in S\backslash (A\cup B_m)}|e_i^*(x-P_m^{\mathbf n}(x))| + |S\cap (B_m\backslash A)|\max_{i\in S\cap (B_m\backslash A)}|e_i^*(x)|\\
    &\ \le\  \sum_{i\in S\backslash (A\cup B_m)}|e_i^*(x-P_m^{\mathbf n}(x))| + |S\cap (A\backslash B_m)|\min_{i\in S\cap (A\backslash B_m)}|e_i^*(x-P_m^{\mathbf n}(x))|\\
    &\ \le\ \sum_{i\in S}|e_i^*(x-P_m^{\mathbf n}(x))|\ \le\ \|x-P_m^{\mathbf n}(x)\|.
\end{align*}
Next, assume that $|S\cap (B_m\backslash A)| > |S\cap (A\backslash B_m)|$. We write
\begin{align*}
    |A| &\ =\ |A\backslash (B_m\cup S)| + |S\cap (A\backslash B_m)| + |A\cap B_m\cap S| + |(A\cap B_m)\backslash S|\\
    |B_m| &\ =\ |B_m\backslash (A\cup S)| + |S\cap (B_m\backslash A)| + |A\cap B_m\cap S| + |(A\cap B_m)\backslash S|.
\end{align*}
Since $|A| = s\ge m = |B_m|$, we know that 
$$|A\backslash (B_m\cup S)| + |S\cap (A\backslash B_m)|\ \ge\ |S\cap (B_m\backslash A)|\ >\ |S\cap (A\backslash B_m)|.$$
Therefore, there exists $D\subset A\backslash (B_m\cup S)$ such that 
$$|S\cap (A\backslash B_m)| + |D| \ =\ |S\cap (B_m\backslash A)|.$$
Pick a set $E > A\cup B_m\cup S$, $E\cap \mathbf n > n_{|S|}$, and $|E| = |S\cap (A\backslash B_m)|$. Form 
$$S'\ :=\ (S\backslash (S\cap (B_m\backslash A)))\cup D\cup E\ =\ (S\backslash (B_m\backslash A))\cup D\cup E.$$
Then $|S'| = |S\backslash (B_m\backslash A)| + |D| + |E| = |S|$. We check that $S'\cap \mathbf n > n_{|S|}$. By how we define $S'$, we need only to check that $D\cap \mathbf n > n_{|S|}$. Since $|S\cap (B_m\backslash A)| > 0$, $S$ contains $n_j$ for some $1\le j\le m$. So, $n_{|S|} < n_m < D\cap \mathbf n$ as $D\cap B_m = \emptyset$. We have
\begin{align*}
&\sum_{i\in S}|e_i^*(x-P_A(x))|\\
&\ =\ \sum_{i\in S\backslash (A\cup B_m)}|e_i^*(x-P_A(x))| + \sum_{i\in S\cap (B_m\backslash A)}|e_i^*(x)|\\
&\ \le\ \sum_{i\in S\backslash (A\cup B_m)}|e_i^*(x-P^{\mathbf n}_{m}(x))| + |S\cap (B_m\backslash A)|\max_{i\in S\cap (B_m\backslash A)}|e_i^*(x)|\\
&\ \le\ \sum_{i\in S\backslash (A\cup B_m)}|e_i^*(x-P^{\mathbf n}_{m}(x))| + |D\cup(S\cap (A\backslash B_m))|\min_{i\in D\cup (S\cap (A\backslash B_m))}|e_i^*(x)|\\
&\ \le\ \sum_{i\in S\backslash (A\cup B_m)}|e_i^*(x-P^{\mathbf n}_{m}(x))| + \sum_{i\in D\cup E\cup(S\cap(A\backslash B_m))}|e_i^*(x)|\\
&\ =\ \sum_{i\in S'\backslash (A\cup D\cup E)}|e_i^*(x-P^{\mathbf n}_{m}(x))| + \sum_{i\in D\cup E\cup (S\cap A)}|e_i^*(x-P^{\mathbf n}_{m}(x))|\\
&\ =\ \sum_{i\in S'}|e_i^*(x-P^{\mathbf n}_{m}(x))|\ \le\ \|x-P^{\mathbf n}_{m}(x)\|.
\end{align*}
Since $S\in\mathcal{S}$ is arbitrary, we conclude that $\|x-P_A(x)\|\le \|x-P^{\mathbf n}_m(x)\|$ in this case. 

Case 3: $$\|x-P_A(x)\| \ =\ \sup_{j\in\mathbb{N}}\sup_{1\le \ell \le js_{k_j}}\left|\sum_{i=1}^\ell e^*_{n_{s_{k_j}+i}}(x-P_A(x))\right|.$$
Fix $j, \ell\in\mathbb{N}$ such that $1\le \ell \le js_{k_j}$. First, assume that $s = |A|\le s_{k_j}$. Choose $S\subset \mathbb{N}$ such that $|S| = s_{k_j}$, $n_{s_{k_j}} < S\cap \mathbf n$, and 
$$A\cap \{n_{s_{k_j}+1}, \ldots, n_{s_{k_j} + \ell}\}\ \subset\ S.$$
Then $S\in\mathcal{S}$ and 
$$B_m \ <\ n_m + 1 \ \le \ n_{|A|} + 1\ \le\ n_{s_{k_j}}+1\ \le\ n_{s_{k_j} + 1}.$$
Therefore, we obtain 
\begin{align*}
    \left|\sum_{i=1}^\ell e^*_{n_{s_{k_j}+i}}(x-P_A(x))\right|&\ \le\ \left|\sum_{i=1}^\ell e^*_{n_{s_{k_j}+i}}(x)\right| + \sum_{i=1}^\ell\left|e^*_{n_{s_{k_j}+i}}(P_A(x))\right|\\
    &\ \le\ \left|\sum_{i=1}^\ell e^*_{n_{s_{k_j}+i}}(x-P_m^{\mathbf n}(x))\right| + \sum_{i\in S}\left|e^*_{i}(x-P_m^{\mathbf n}(x))\right|\\
    &\ \le\ 2\|x-P_m^{\mathbf n}(x)\|.
\end{align*}
Next, assume that $|A| > s_{k_j}$. Since $|A|\in\mathbf s$, $|A|\ge s_{k_j+1}$. Let 
$$D\ =\ \{n_{s_{k_j}+1}, n_{s_{k_j}+2}, \ldots, n_{s_{k_j}+\ell}\}\mbox{ and }E \ =\ \{n_1, \ldots, n_{|A|}\}.$$
By \eqref{ee1}, $D\subset E$. If $D\subset A$, then 
$$\left|\sum_{i=1}^\ell e^*_{n_{s_{k_j}+i}}(x-P_A(x))\right|\ =\ 0\ \le\ \|x-P_m^{\mathbf n}(x)\|.$$
Suppose that $D\not\subset A$. Since $|D\backslash A|\le |E\backslash A| = |A\backslash E|$, we can choose $S_1\subset A\backslash E$ such that $|S_1| = |D\backslash A|$. Choose $S_2 > \max\{n_{|A|}, \max A\}$ with $|S_2| = |A| - |S_1|$. Form $S = S_1\cup S_2$. We have $|S| = |A|$, $n_{|A|} < S\cap \mathbf n$, and $|D\backslash A| = |A\cap S|$. Since $S\in\mathcal{S}$, we get
\begin{align*}
    \left|\sum_{i=1}^\ell e^*_{n_{s_{k_j}+i}}(x-P_A(x))\right|\ =\ \left|\sum_{i\in D\backslash A}e_i^*(x)\right|&\ \le\ |D\backslash A|\max_{i\in D\backslash A}|e_i^*(x)|\\
    &\ \le\ |A\cap S|\min_{i\in A\cap S}|e_i^*(x)|\\
    &\ \le\ \sum_{i\in S}|e_i^*(x-P_m^{\mathbf n}(x))|\\
    &\ \le\ \|x-P_m^{\mathbf n}(x)\|.
\end{align*}

From these cases, we know that $\|x-P_A(x)\|\le 2\|x-P_m^{\mathbf n}(x)\|$; therefore, $\mathcal{B}$ is $\mathbf s$-($\mathbf n$, strong partially greedy). 

ii) $\mathcal{B}$ is not ($\mathbf n$, conservative): Let 
$$D_j\ = \ \left\{n_{s_{k_j}+1}, \ldots, n_{(j+1)s_{k_j}}\right\}\mbox{ and }E_j\ =\ \left\{n_{(j+1)s_{k_j}+1}, \ldots, n_{2(j+1)s_{k_j}}\right\}.$$
By the definition of $\|\cdot\|$, $\|1_{D_j}\| = js_{k_j}$. We shall show that $\|1_{E_j}\|\ \le\ s_{k_j}$. If $j'\le j$, then 
$$n_{(j'+1)s_{k_{j'}}}  \ <\ E_j,$$
while if $j' > j$, then by \eqref{ee1}, 
$$n_{s_{k_{j'}}+1} \ >\ E_j.$$
Therefore, $$\sup_{j'\in\mathbb{N}}\sup_{1\le \ell \le j's_{k_{j'}}}\left|\sum_{i=1}^\ell e^*_{n_{s_{k_{j'}}+i}}(1_{E_j})\right| \ =\ 0.$$
Pick $S\in\mathcal{S}$. If $|S|\ge s_{k_j+1}$, then 
$$E_j \ <\ n_{s_{k_j+1}}\ \le\ n_{|S|} \ <\ S\cap \mathbf n.$$
So, $\sum_{i\in S}|e_i^*(1_{E_j})| = 0$. Since $|S|\in\mathbf s$, we can suppose that $|S|\le s_{k_j}$. Then
$$\|1_{E_j}\|\ =\ \max\left\{\|1_{E_j}\|_\infty, \sup_{\substack{S\in \mathcal{S}\\ |S|\le s_{k_j}}}\sum_{i\in S}|e_i^*(1_{E_j})|\right\}\ \le\ s_{k_j}.$$
We have $$\|1_{D_j}\|/\|1_{E_j}\|\rightarrow\infty\mbox{ as }j\rightarrow\infty.$$
Therefore, $\mathcal{B}$ is not ($\mathbf n$, conservative). 

iii) $\mathcal{B}$ is not quasi-greedy. Let $\varepsilon = (1, -1, 1, -1, \ldots)$ and consider 
$\|1_{\varepsilon B_{{s_{k_j+1}}}}\|$. Clearly,
$$\sup_{j'\in\mathbb{N}}\sup_{1\le \ell \le j's_{k_{j'}}}\left|\sum_{i=1}^\ell e^*_{n_{s_{k_{j'}}+i}}(1_{\varepsilon B_{{s_{k_j+1}}}})\right| \ =\ 1.$$
Pick $S\in\mathcal{S}$. If $|S| \ge s_{k_j+1}$, then $S\cap \mathbf n > n_{s_{k_j+1}} \ge B_{s_{k_j+1}}$ and so, 
$$\sum_{i\in S}|e_i^*(1_{\varepsilon B_{{s_{k_j+1}}}})| \ =\ 0.$$
If $|S|\le s_{k_j}$, then 
$$\sum_{i\in S}|e_i^*(1_{\varepsilon B_{{s_{k_j+1}}}})| \ \le\ s_{k_j}.$$
We have shown that 
$$\|1_{\varepsilon B_{{s_{k_j+1}}}}\|\ \le\ s_{k_j}.$$
On the other hand, it is easy to check that 
$$\left\|1_{B_{{s_{k_j+1}}}}\right\| \ \ge\ js_{k_j}.$$
Therefore, $\mathcal{B}$ does not have the UL property. By Remark \ref{r1}, $\mathcal{B}$ is not quasi-greedy. 
\end{proof}

The proof of Proposition \ref{p30} uses the Schauder condition to prove the quasi-greedy property. We can drop the Schauder condition and require that $\mathbf s$ has bounded additive gaps instead. 

\begin{prop}\label{p31}
Let $\mathcal{B}$ be an $\mathbf n$-Schauder Markushevic basis. If $\mathbf s$ has bounded additive gaps, then $\mathcal{B}$ is $\mathbf s$-($\mathbf n$, strong partially greedy) if and only if $\mathcal{B}$ is quasi-greedy and $\mathbf s$-order-($\mathbf n$, superconservative). 
\end{prop}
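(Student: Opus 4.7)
The plan is to mimic the argument of Proposition \ref{p30}, with the bounded additive gap assumption replacing the Schauder plus bounded-quotient-gap hypothesis in the derivation of the full quasi-greedy property from its $\mathbf s$-restricted version. Assume first that $\mathcal{B}$ is $\mathbf C$-$\mathbf s$-($\mathbf n$, strong partially greedy). Setting $k=0$ in the defining inequality gives
$$\|x-G_m(x)\|\ \le\ \mathbf C\|x\|,\quad \forall x\in X,\ \forall m\in\mathbf s,\ \forall G_m(x),$$
so $\mathcal{B}$ is $\mathbf s$-quasi-greedy. To upgrade this to quasi-greedy, let $\ell$ be the additive gap bound. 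Fix $m\in\mathbb{N}$, $x\in X$, and $A\in G(x,m)$. Choose $s\in\mathbf s$ with $m\le s\le m+\ell$, and extend $A$ to some $B\in G(x,s)$ with $A\subset B$ by appending the next $s-m$ largest coefficients of $x$. Then
$$\|x-P_A(x)\|\ \le\ \|x-P_B(x)\|+\|P_{B\setminus A}(x)\|\ \le\ \mathbf C\|x\|+(s-m)c_2^2\|x\|\ \le\ (\mathbf C+\ell c_2^2)\|x\|,$$
using $|e_k^*(x)|\le c_2\|x\|$ and $\|e_k\|\le c_2$. Hence $\mathcal{B}$ is quasi-greedy.

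For $\mathbf s$-order-($\mathbf n$, superconservative), copy the corresponding step in the proof of Proposition \ref{p30}: given $(A,B)\in\mathbb{T}(\mathbf n,\mathbf s)$ with $A\le n_s<B\cap\mathbf n$ for some $s\in\mathbf s$, set $D=\{n_1,\ldots,n_s\}\setminus A$ and $x:=1_{\varepsilon A}+1_D+1_{\delta B}$; since $|D\cup B|\ge s$, pick any $E\subset D\cup B$ with $|E|=s$ and apply the $\mathbf s$-partial-greediness inequality together with suppression quasi-greediness to conclude $\|1_{\varepsilon A}\|\le \mathbf C_\ell\mathbf C\|1_{\delta B}\|$.

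For the converse, assume $\mathcal{B}$ is $\mathbf C_q$-quasi-greedy and $\Delta^{\mathbf n}_{osc}$-$\mathbf s$-order-($\mathbf n$, superconservative). By Lemma \ref{ll1}, $\mathcal{B}$ is $\Delta^{\mathbf n}_{pl}$-$\mathbf s$-order-($\mathbf n$, PSLC) with $\Delta^{\mathbf n}_{pl}\le 1+\mathbf C_q+\Delta^{\mathbf n}_{osc}\mathbf C_q$. Given $x\in X$, $s\in\mathbf s$, and $A\in G(x,s)$, set $E=\{n_1,\ldots,n_s\}\setminus A$, $F=A\setminus\{n_1,\ldots,n_s\}$ (so $|E|=|F|\le s$), and $\alpha=\min_{n\in A}|e_n^*(x)|$. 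A direct check shows $(E,F)\in\mathbb{T}(\mathbf n,\mathbf s)$ with $E<(\supp(x-P_A(x)-P_E(x))\sqcup F)\cap\mathbf n$; then Proposition \ref{x} and the truncation bound of Theorem \ref{bto}, applied exactly as in the proof of Proposition \ref{p30}, yield
$$\|x-P_A(x)\|\ \le\ \Delta^{\mathbf n}_{pl}\mathbf C_\ell\|x-P^{\mathbf n}_s(x)\|\ \le\ \Delta^{\mathbf n}_{pl}\mathbf C_\ell(1+\mathbf K_{\mathbf n})\widehat{\sigma}^{\mathbf n}_s(x),$$
where the last step uses that $\mathcal{B}$ is $\mathbf n$-Schauder, namely $\|x-P^{\mathbf n}_s(x)\|=\|(I-P^{\mathbf n}_s)(x-P^{\mathbf n}_k(x))\|\le (1+\mathbf K_{\mathbf n})\|x-P^{\mathbf n}_k(x)\|$ for every $k\le s$.

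The main obstacle is the quasi-greedy upgrade: in Proposition \ref{p30} this step went through \cite[Theorem 5.2]{BB}, which requires bounded quotient gaps and the Schauder structure. Our replacement exploits the additive gap bound directly, completing a greedy set of order $m$ to a greedy set of some order $s\in\mathbf s$ with $s-m\le\ell$; this argument is available for any Markushevich basis and does not need $\mathcal{B}$ to be Schauder. All other ingredients, in particular Lemma \ref{ll1}, Proposition \ref{x}, and the truncation estimate, transfer verbatim from the proof of Proposition \ref{p30}.
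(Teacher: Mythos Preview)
Your proof is correct and follows essentially the same route as the paper: the paper simply says ``argue as in Proposition \ref{p30}, replacing the appeal to \cite[Theorem 5.2]{BB} by \cite[Proposition 4.1]{O}.'' You reproduce the argument of Proposition \ref{p30} verbatim for the superconservative step and the converse, and in place of the citation to Oikhberg you give the direct elementary upgrade from $\mathbf s$-suppression quasi-greedy to quasi-greedy via completing a greedy set of order $m$ to one of order $s\in\mathbf s$ at most $\ell$ larger---this is precisely the content of \cite[Proposition 4.1]{O}. One tiny point: the choice ``$s\in\mathbf s$ with $m\le s\le m+\ell$'' may fail when $m<s_1$; just replace $\ell$ by $\max(\ell,s_1)$ there.
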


\begin{proof}
The proof is similar to that of Proposition \ref{p30} but uses \cite[Proposition 4.1]{O} instead of \cite[Theorem 5.2]{BB} to prove the quasi-greedy property. 
\end{proof}

\subsubsection{When an $\mathbf s$-($\mathbf n$, strong partially greedy) basis is ($\mathbf n$, strong partially greedy)}

\begin{prop}[Generalization of Lemma 6.16 in \cite{BB1}]\label{p32}
If $\mathbf s$ has bounded additive gaps, an $\mathbf s$-($\mathbf n$, strong partially greedy) Markushevich basis is ($\mathbf{n}$, strong partially greedy). 
\end{prop}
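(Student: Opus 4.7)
The plan is to pass from the restricted $\mathbf s$-indexed inequality to the full $\mathbf n$-indexed one by extending an arbitrary greedy set of order $m$ to a greedy set of order $s \in \mathbf s$ that is close to $m$, and then controlling the resulting error with the UL property.

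First, I would verify that $\mathcal{B}$ is quasi-greedy. Setting $k=0$ in the definition of $\widehat{\sigma}_m^{\mathbf n}$ yields $\|x - G_m(x)\| \le \mathbf{C}\|x\|$ for all $m \in \mathbf s$, so $\mathcal{B}$ is $\mathbf s$-(suppression) quasi-greedy. Because $\mathcal{B}$ is Markushevich and $\mathbf s$ has bounded additive gaps, \cite[Proposition 4.1]{O} upgrades this to the full quasi-greedy property; let $\mathbf{C}_q$ denote the quasi-greedy constant and $\mathbf{C}_2$ the upper UL-constant furnished by Remark \ref{r1} ii).

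Next, set $L := \max\{s_1,\ \sup_k(s_{k+1} - s_k)\}$, which is finite by hypothesis, so that for every $m \in \mathbb{N}$ there exists $s \in \mathbf s$ with $m \le s \le m + L$. Fix $x \in X$, $A \in G(x, m)$, and $0 \le k \le m$; the goal is $\|x - P_A(x)\| \le \mathbf{C}'\|x - P_k^{\mathbf n}(x)\|$ for a constant $\mathbf{C}'$ depending only on $\mathbf s$, $\mathbf{C}$, and $\mathcal{B}$. Let $B$ consist of the $s - m$ indices of largest-modulus coordinates of $x$ outside $A$, so that $A \cup B \in G(x, s)$. The $\mathbf s$-($\mathbf n$, strong partially greedy) hypothesis, applied with $k \le m \le s$, yields
$$\|x - P_{A \cup B}(x)\| \le \mathbf{C}\|x - P_k^{\mathbf n}(x)\|.$$
To control $\|P_B(x)\|$ I would exclude the trivial case $k = m$ with $A = \{n_1,\ldots,n_k\}$ (where the inequality is immediate with constant $1$) and pick some $n_0 \in A \setminus \{n_1,\ldots,n_k\}$, which gives
$$\alpha := \min_{n \in A}|e_n^*(x)| \le |e_{n_0}^*(x - P_k^{\mathbf n}(x))| \le c_2 \|x - P_k^{\mathbf n}(x)\|.$$
Since every coefficient of $P_B(x)$ has modulus at most $\alpha$ and $|B| \le L$, the UL property gives $\|P_B(x)\| \le \mathbf{C}_2 \alpha \|1_B\| \le \mathbf{C}_2 L c_2^2 \|x - P_k^{\mathbf n}(x)\|$, and the triangle inequality $\|x - P_A(x)\| \le \|x - P_{A \cup B}(x)\| + \|P_B(x)\|$ finishes the proof with $\mathbf{C}' := \mathbf{C} + \mathbf{C}_2 L c_2^2$.

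The main obstacle is the upgrade from $\mathbf s$-quasi-greedy to quasi-greedy, where both the Markushevich assumption and the bounded-additive-gap hypothesis are essential; once quasi-greediness (and hence the UL property) is in hand, the rest is a short and fairly routine gap-filling argument.
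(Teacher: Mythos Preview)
Your proof is correct, and it takes a genuinely different route from the paper's argument. Both proofs begin the same way, invoking \cite[Proposition~4.1]{O} to pass from $\mathbf s$-quasi-greedy to quasi-greedy under the Markushevich and bounded-additive-gap hypotheses. After that, the paper proceeds indirectly: it first shows (reusing the argument from Proposition~\ref{p30}) that $\mathcal B$ is $\mathbf s$-order-($\mathbf n$, superconservative), then upgrades this to full ($\mathbf n$, superconservative) by a short case analysis (the key point being that any $(A,B)\in\mathbb T(\mathbf n)$ differs from a pair in $\mathbb T(\mathbf n,\mathbf s)$ by at most $\ell-1$ elements of $A$), and finally invokes the characterization Theorem~\ref{m1}. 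Your approach is more elementary and self-contained: you bypass the conservative machinery entirely and verify the ($\mathbf n$, strong partially greedy) inequality directly, by padding an order-$m$ greedy set to an order-$s$ one (with $s-m\le L$) and absorbing the overshoot $P_B(x)$ via the crude bound $\|P_B(x)\|\le Lc_2\alpha$. The paper's route has the advantage of isolating a reusable intermediate statement ($\mathbf s$-order conservative $\Rightarrow$ conservative under bounded additive gaps), while your route is shorter and does not rely on Theorem~\ref{m1}. As a minor remark, you do not actually need the UL property to bound $\|P_B(x)\|$: since $|B|\le L$ and each coefficient has modulus at most $\alpha$, the plain triangle inequality already gives $\|P_B(x)\|\le Lc_2\alpha\le Lc_2^2\|x-P_k^{\mathbf n}(x)\|$.
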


\begin{proof}
Assume that $\mathcal{B}$ is $\mathbf C$-$\mathbf s$-($\mathbf n$, strong partially greedy). We have
\begin{equation}\label{e444}\|x-G_{n}(x)\|\ \le\ \mathbf C\|x\|, \forall x\in X, \forall n\in \mathbf s, \forall G_n(x).\end{equation}
By \cite[Proposition 4.1]{O}, $\mathcal{B}$ is quasi-greedy. Using the same argument as in the proof of Proposition \ref{p30}, we know that $\mathcal{B}$ is $\Delta^{\mathbf n}_{osc}$-$\mathbf s$-order-($\mathbf n$, superconservative) for some $\Delta^{\mathbf n}_{osc} > 0$. Let us show that $\mathcal{B}$ is ($\mathbf n$, superconservative). Pick $(A, B)\in \mathbb{T}(\mathbf n)$ and signs $\varepsilon, \delta$. Let $s = \min \mathbf s$. We proceed by case analysis.

Case 1: $B\cap \mathbf n = \emptyset$. By Remark \ref{r3}, $(A, B)\in \mathbb{T}(\mathbf n, \mathbf s)$, and we get $$\|1_{\varepsilon A}\|\ \le\ \Delta^{\mathbf n}_{osc}\|1_{\delta B}\|.$$

Case 2: $B\cap \mathbf n\neq \emptyset$ and $\min (B\cap \mathbf n) \le n_s$. Then $A < n_s$. Since $A\subset\mathbf n$, $A = \emptyset$; hence, $\|1_{\varepsilon A}\| = 0 \le \|1_{\delta B}\|$.

Case 3: $B\cap \mathbf n\neq \emptyset$ and $B\cap \mathbf n > n_s$. Let $N:= \max\{k\in \mathbf s: n_k < B\cap \mathbf n\}$ and $A' = \{a\in A: a\le n_N\}$. Observe that $(A', B)\in \mathbb{T}(\mathbf n, \mathbf s)$. Since $\mathcal{B}$ is $\Delta^{\mathbf n}_{osc}$-$\mathbf s$-order-($\mathbf n$, superconservative),
$$\|1_{\varepsilon A'}\|\ \le\ \Delta^{\mathbf n}_{osc}\|1_{\delta B}\|.$$
Consider $A'' = A\backslash A'$. Let $M$ be the next number after $N$ in $\mathbf s$. We have
$$n_N\ <\ A''\ <\ \min(B\cap \mathbf n)\ \le\ n_{M}\mbox{ and }A''\ \subset\ \mathbf n.$$
By hypothesis, $\mathbf s$ has $\ell$-bounded additive gaps for some $\ell\ge 1$. Hence, $|A''|\le \ell-1$, which gives
$$\|1_{\varepsilon A''}\|\ \le\ (\ell-1)\sup_{n}\|e_n\|\ \le\ (\ell-1)\sup_{n}\|e_n\|\sup_{n}\|e_n^*\|\|1_{\delta B}\|\ \le\ (\ell-1) c_2^2\|1_{\delta B}\|.$$
We obtain that 
$$\|1_{\varepsilon A}\|\ \le\ \|1_{\varepsilon A'}\| + \|1_{\varepsilon A''}\|\ \le\ (\Delta^{\mathbf n}_{osc}+(\ell-1) c_2^2)\|1_{\delta B}\|.$$

From these cases, we know that $\mathcal{B}$ is ($\mathbf n$, superconservative). By Theorem \ref{m1}, $\mathcal{B}$ is ($\mathbf n$, strong partially greedy).
\end{proof}

When $\mathbf s$ has arbitrarily large additive gaps, we give an example of a Schauder basis that is $\mathbf s$-($\mathbf n$, strong partially greedy) but is not ($\mathbf{n}$, strong partially greedy). 

\begin{prop}[Generalization of Proposition \ref{p21} item ii)]
Suppose $\mathbf s$ has arbitrarily large additive gaps. There is a Banach space $X$ with a $1$-unconditional basis that is $\mathbf s$-($\mathbf n$, strong partially greedy) but is not ($\mathbf{n}$, conservative). 
\end{prop}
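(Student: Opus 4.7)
The plan is to follow the strategy of \cite[Proposition 6.14]{BB1} (which handles the case $\mathbf n = \mathbb{N}$) and adapt it to a general sequence $\mathbf n$. Since $\mathbf s$ has arbitrarily large additive gaps, extract a subsequence $(s_{k_j})_j$ of $\mathbf s$ with $s_{k_j+1}-s_{k_j}\ge j^2$, so that the gap sets $U_j:=\{n_{s_{k_j}+1},\ldots,n_{s_{k_j}+j}\}$ sit strictly inside the $\mathbf n$-window between $n_{s_{k_j}}$ and $n_{s_{k_j+1}}$. Let $X$ be the completion of $c_{00}$ under
\[
\|x\|\ =\ \max\Bigl(\|x\|_\infty,\ \sup_{F\in\mathcal F}\sum_{i\in F}|e_i^*(x)|\Bigr),
\]
where $\mathcal F$ is a carefully chosen family of finite subsets of $\mathbb N$ containing all the gap sets $U_j$ together with whatever tail-shifts are needed to verify the estimate below. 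Since the norm depends only on $(|e_i^*(x)|)_i$, the canonical basis $\mathcal B$ is automatically $1$-unconditional.

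For the failure of ($\mathbf n$, conservative), set $A_j=U_j\subset\mathbf n$ and take $B_j\subset\mathbf n$ to be a translate of $U_j$ placed to the right of $U_j$ but still inside the window $(n_{s_{k_j}},n_{s_{k_j+1}})$, with $|B_j|\ge|A_j|$ and $B_j$ missing every $F\in\mathcal F$ of size exceeding $1$. Then $(A_j,B_j)\in\mathbb T(\mathbf n)$, $\|1_{A_j}\|=j$, and $\|1_{B_j}\|=1$, so $\|1_{A_j}\|/\|1_{B_j}\|\to\infty$. By Theorem \ref{m1}, this in fact yields that $\mathcal B$ is not ($\mathbf n$, strong partially greedy), which is stronger than what is claimed.

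For $\mathbf s$-($\mathbf n$, strong partially greedy), fix $x\in X$, $s\in\mathbf s$, a greedy set $A$ of order $s$, and any $k\in\{0,\ldots,s\}$, and bound $\|x-P_A(x)\|\le C\|x-P^{\mathbf n}_k(x)\|$ by splitting the norm. The $\|\cdot\|_\infty$ piece is controlled by the standard greedy inequality $\max_{i\notin A}|e_i^*(x)|\le \max_{i\notin\{n_1,\ldots,n_k\}}|e_i^*(x)|$ as in the proof of Proposition \ref{p30}. For each $F\in\mathcal F$, the estimate $\sum_{i\in F\setminus A}|e_i^*(x)|\le C\|x-P^{\mathbf n}_k(x)\|$ follows by cases on where $s$ lies relative to the gap containing $F$: either $s\le s_{k_j}$, in which case $\{n_1,\ldots,n_k\}$ cannot touch $U_j$ for any $k\le s$ and the $U_j$-sum of $x-P^{\mathbf n}_k(x)$ dominates that of $x-P_A(x)$; or $s\ge s_{k_{j+1}}$, and greediness of $A$ with $|A|=s\gg|U_j|$ forces $A$ to carry essentially all of the $U_j$-mass, reducing the estimate to the $\|\cdot\|_\infty$ part already handled.

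The main obstacle lies in the intermediate range $s_{k_j}<k<s_{k_j}+|U_j|$ of the second regime, where $P^{\mathbf n}_k$ removes only part of $U_j$ and $\|x-P^{\mathbf n}_k(x)\|$ no longer automatically captures the surviving $U_j$-mass. This is precisely why $\mathcal F$ must be enlarged to include the tail-shifts of each $U_j$, so that some $F\in\mathcal F$ sits inside $U_j\setminus\{n_1,\ldots,n_k\}$ and gives the needed lower bound on $\|x-P^{\mathbf n}_k(x)\|$. The delicate point is that adding these tail-shifts must not inflate $\|1_{B_j}\|$ and destroy non-conservativity; the hypothesis $s_{k_j+1}-s_{k_j}\ge j^2$ is exactly what provides enough room inside the window $(n_{s_{k_j}},n_{s_{k_j+1}})$ to position $B_j$ away from every tail-shift of $U_j$ in $\mathcal F$.
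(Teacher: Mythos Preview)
Your sketch has a genuine gap in the verification that $\mathcal{B}$ is $\mathbf s$-($\mathbf n$, strong partially greedy). The claim that when $s\ge s_{k_j+1}$ ``greediness of $A$ with $|A|=s\gg |U_j|$ forces $A$ to carry essentially all of the $U_j$-mass'' is false: a large greedy set need not meet $U_j$ at all. Concretely, with $\mathcal F$ consisting only of the sets $U_j$ and their tails, pick $D\subset\mathbb N$ with $|D|=s$, $D$ disjoint from every $U_{j'}$, and set $x=1_{U_j}+(1+\epsilon)1_D$. Then $A=D$ is the greedy set of order $s$, $\|x-P_A(x)\|=\|1_{U_j}\|=j$, yet for $k=s_{k_j}+j\le s$ we get $x-P^{\mathbf n}_k(x)=(1+\epsilon)1_{D\setminus\{n_1,\dots,n_k\}}$, whose norm is at most $1+\epsilon$ since $D$ avoids every $F\in\mathcal F$. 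Thus $\|x-P_A(x)\|/\|x-P^{\mathbf n}_k(x)\|\ge j/(1+\epsilon)$ is unbounded, so your space is not $\mathbf s$-($\mathbf n$, strong partially greedy). Adding only tail-intervals of $U_j$ to $\mathcal F$ cannot fix this, because the obstruction comes from mass on $D$ outside $\bigcup_jU_j$.

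The paper avoids this by two changes. First, its norm uses a much richer family: for each $k$ it sums (in an $\ell_{p_k}$ sense) over every $S\subset\mathbf n$ with $|S|=10^k$ and $S>n_{s_k}$, not just the single block $U_j$. This guarantees that whenever $(A,B)\in\mathbb T(\mathbf n,\mathbf s)$, any sum contributing to $\|1_A\|$ can be matched by a comparable sum inside $B$. Second, and more importantly, the paper does not attempt to verify the $\mathbf s$-($\mathbf n$, strong partially greedy) inequality for general $x$ directly; since the basis is $1$-unconditional (hence quasi-greedy and $\mathbf n$-Schauder), the backward direction of Proposition~\ref{p30} reduces the task to checking that $\mathcal B$ is $\mathbf s$-order-($\mathbf n$, conservative), i.e., comparing only $\|1_A\|$ with $\|1_B\|$ for $(A,B)\in\mathbb T(\mathbf n,\mathbf s)$. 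That comparison is tractable because indicator vectors have no hidden mass; the subtle ``greedy set misses the block'' phenomenon that breaks your argument simply cannot occur. If you want to salvage your route, you would need both a substantially larger $\mathcal F$ and a reformulation that goes through $\mathbf s$-order-($\mathbf n$, conservative) rather than the raw definition.
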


\begin{proof}
We modify the example in \cite[Proposition 6.14]{BB1}. Choose a subsequence $(s_{k_j})_j$ such that 
\begin{equation}\label{ee2}s_{k_{j}+1} - s_{k_{j}}\ >\ 3\cdot 10^{j}\end{equation}
and a decreasing sequence of positive numbers $(p_k)_k$ so that
\begin{equation}\label{ee3}\lim_{k\rightarrow\infty} p_k\ =\ 1\mbox{ and }
\sup_{j} j\left(\frac{1}{p_{k_{j}+1}} - \frac{1}{p_{k_{j}}}\right)\ =\ \infty.\end{equation}
A possible choice is 
$$p_{k_{{2^j}}}\ =\ 1+\frac{1}{j}\mbox{ and } p_{k_{{2^j}}+1}\ =\ 1+\frac{1}{j+1}$$
and then choose other $p_k$'s such that $(p_k)_k$ is decreasing.  
Define 
$$\mathcal{S}_k\ :=\ \{S\subset\mathbf n: |S| = 10^k \mbox{ and }n_{s_{k}} < S\}\mbox{ and }T_j\ :=\ \{n_{s_{k_j}+1}, \ldots, n_{s_{k_j}+10^{j}}\}.$$
Let $X$ be the completion of $c_{00}$ with respect to the following norm:
$$
    \|(x_i)_i\| \ =\ \max\left\{\|(x_i)_i\|_\infty, \|(x_i)_i\|_1, \sup_{j}\left(\sum_{i\in T_j}|x_i|^{p_{k_{j}+1}}\right)^{\frac{1}{p_{k_{j}+1}}}\right\},
$$
where $$\|(x_i)_i\|_1\ :=\ \sup_{k}\left(\sum_{S\in\mathcal{S}_k}\left(\sum_{i\in S}|x_i|^{p_k}\right)^{\frac{1}{p_k}} + \left(\sum_{i\notin \mathbf n}|x_i|^{p_k}\right)^{\frac{1}{p_k}}\right).$$
Let $\mathcal{B}$ be the canonical basis, which is $1$-unconditional and normalized. 

i) $\mathcal{B}$ is $\mathbf s$-($\mathbf n$, strong partially greedy): By the proof of Theorem \ref{p30}, it suffices to show that $\mathcal{B}$ is $\mathbf s$-order-($\mathbf n$, conservative). Let $(A, B)\in \mathbb{T}(\mathbf n, \mathbf s)$ with $A \le n_s< B\cap \mathbf n$. Pick $j\in \mathbb{N}$ such that $T_j\cap A \neq\emptyset$. Since $n_s \ge A$, we get $B\cap \mathbf n\ >\ n_s\ge n_{s_{k_{j}+1}}$. If $|B\cap \mathbf n|\ge |A\cap T_j|$, choose $S\in\mathcal{S}_{k_j+1}$ such that $|S\cap B|\ge |A\cap T_j|$. We obtain
$$\left(\sum_{i\in T_j}|e^*_i(1_A)|^{p_{k_{j}+1}}\right)^{\frac{1}{p_{k_{j}+1}}}\ \le\ \left(\sum_{i\in S}|e^*_i(1_B)|^{p_{k_j+1}}\right)^{\frac{1}{p_{k_j + 1}}}\ \le\ \|1_B\|.$$
If $|B\cap \mathbf n| < |A\cap T_j|$, choose $S\in \mathcal{S}_{k_j+1}$ such that $S\cap B = B\cap \mathbf n$. We have
\begin{align*}
    &\left(\sum_{i\in T_j}|e_i^*(1_A)|^{p_{k_j}+1}\right)^{\frac{1}{p_{k_j+1}}}\\
    &\ \le\ \left(\sum_{i\in B}|e_i^*(1_B)|^{p_{k_j+1}}\right)^{\frac{1}{p_{k_j+1}}}\\
    &\ \le\ \left(\sum_{i\in S\cap B}|e_i^*(1_B)|^{p_{k_j+1}}\right)^{\frac{1}{p_{k_j+1}}} + \left(\sum_{i\in B\backslash S}|e_i^*(1_B)|^{p_{k_j+1}}\right)^{\frac{1}{p_{k_j+1}}}\ \le\ \|1_B\|.
\end{align*}
Next, pick $k\in\mathbb{N}$ and $S\in\mathcal{S}_k$ such that $S\cap A \neq \emptyset$. If $|B\cap \mathbf n|\ge |S\cap A|$, choose $S'\in \mathcal{S}_k$ such that $|S'\cap B| = |S\cap A|$. We obtain
$$\left(\sum_{i\in S}|e_i^*(1_A)|^{p_k}\right)^{\frac{1}{p_k}}\ \le\ \left(\sum_{i\in S'}|e_i^*(1_B)|^{p_k}\right)^{\frac{1}{p_k}}\ \le\ \|1_B\|.$$
If $|B\cap \mathbf n| < |S\cap A|$, we use the same argument as above to obtain the same conclusion. We have shown that $\|1_A\|\le \|1_B\|$; hence, $\mathcal{B}$ is $\mathbf s$-order-($\mathbf n$, conservative). 

ii) $\mathcal{B}$ is not ($\mathbf n$, conservative): We have
$$\|1_{T_j}\|\ \ge\ \left(\sum_{i\in T_j}\left|e_i^*(1_{T_j})\right|^{p_{k_j+1}}\right)^{\frac{1}{p_{k_j+1}}}\ =\ 10^{\frac{j}{p_{k_j+1}}}.$$

Let 
$$D_j\ :=\ \{n_{s_{k_j}+10^j + 1}, n_{s_{k_j} + 10^j + 2},\ldots, n_{s_{k_j}+10^j + 10^j}\}.$$
By \eqref{ee2}, we know that $T_i\cap D_j = \emptyset$ for all $i, j\in\mathbb{N}$. Pick $k\in\mathbb{N}$ and $S\in\mathcal{S}_k$. If $k > k_j+1$, then $S > n_{s_{k_j+1}}$ and so, $S\cap D_j = \emptyset$. Suppose that $k\le k_{j}$. We have
$$\sup_{k\le k_{j}}\sum_{S\in\mathcal{S}_k}\left(\sum_{i\in S}\left|e^*_i\left(1_{D_j}\right)\right|^{p_k}\right)^{\frac{1}{p_k}}\ \le\ 10^{\frac{j}{p_{k_{j}}}}.$$
Observe that $(T_j, D_j)\in \mathbb{T}(\mathbf n)$; however, by \eqref{ee3}, 
$$\frac{\|1_{T_j}\|}{\|1_{D_j}\|}\ =\ 10^{j\left(\frac{1}{p_{k_{j}+1}}-\frac{1}{p_{k_{j}}}\right)}\ \rightarrow\ \infty.$$
We conclude that $\mathcal{B}$ is not ($\mathbf n$, conservative). By Theorem \ref{m1}, $\mathcal{B}$ is not ($\mathbf n$, strong partially greedy). This completes our proof. 
\end{proof}

\section{Larger greedy sums for ($\mathbf n$, strong partially greedy) bases}\label{largersum}
Define $\iota: \mathbf n\rightarrow \mathbb{N}$ as $\iota(n_k) = k$.

\subsection{($\lambda$, $\mathbf n$, strong partially greedy) bases and their characterizations}
Recall the condition in the definition of greedy bases: there exists $\mathbf C\ge 1$ such that 
\begin{equation}\label{ee14}\|x-G_m(x)\|\ \le\ \mathbf C\sigma_m(x), \forall x\in X, \forall m\in\mathbb{N}, \forall G_m(x).\end{equation}
Let $\lambda > 1$. \cite[Theorem 3.3]{DKKT} shows that if we replace $G_m(x)$ in \eqref{ee14} by a larger greedy sum $G_{\lceil\lambda m\rceil}(x)$, then we have a condition that is equivalent to the almost greedy property. Particularly, $\mathcal{B}$ is almost greedy if and only if there exists $\mathbf C \ge 1$ such that 
\begin{equation}\label{ee15}
    \|x-G_{\lceil \lambda m\rceil}(x)\|\ \le\ \mathbf C\sigma_m(x), \forall x\in X, \forall m\in \mathbb{N}, \forall G_m(x).
\end{equation}
Interestingly, enlarging the greedy sum size from $m$ to $\lceil\lambda m\rceil$ moves us from the greedy property to the almost greedy property. Motivated by the idea, the author in \cite{C1} showed that while enlarging the greedy sum size in \eqref{e7} still gives us almost greedy bases, enlarging the greedy sum in \eqref{e13} gives us strictly weaker bases (see \cite[Theorem 1.5]{C1}). In the same manner, we define

\begin{defi}\normalfont
A basis $\mathcal{B}$ in a Banach space is said to be ($\lambda$, $\mathbf n$, strong partially greedy) if there exists $\mathbf C\ge 1$ such that 
\begin{equation}\label{ee30}\|x-G_{\lceil \lambda m\rceil}(x)\|\ \le\ \mathbf C\widehat{\sigma}_m^{\mathbf n}(x), \forall x\in X,\forall m\in\mathbb{N},\forall G_m(x).\end{equation}
The smallest constant $\mathbf {C}$ for \eqref{ee30} to hold is denoted by $\mathbf C_{\lambda, \mathbf n, sp}$.  \end{defi}

We shall characterize ($\lambda$, $\mathbf n$, strong partially greedy) bases by the quasi-greedy property and ($\lambda$, $\mathbf n$, PSLC). 

\begin{defi}\normalfont\label{dnlPSLC}
A basis $\mathcal{B}$ is ($\lambda$, $\mathbf n$, PSLC) if there exists a constant $\mathbf C\ge 1$ such that 
$$\|x+1_{\varepsilon A}\|\ \le\ \mathbf C\|x+1_{\delta B}\|,$$
for all $x\in X$ with $\|x\|_\infty\le 1$, for all $(A, B)\in \mathbb{S}(\mathbf n)$ with $(\lambda - 1) \iota(\max A) + |A|\le |B|$ and $A < (\supp(x)\sqcup B)\cap \mathbf n$, and for all signs $\varepsilon, \delta$. The least constant $\mathbf C$ is denoted by $\Delta_{\lambda, \mathbf n, pl}$.
\end{defi}

\begin{defi}\normalfont
A basis $\mathcal{B}$ is ($\lambda$, $\mathbf n$, superconservative) if there exists a constant $\mathbf C > 0$ such that
$$\|1_{\varepsilon A}\|\ \le\ \mathbf C\|1_{\delta B}\|,$$
for all $(A, B)\in\mathbb{T}(\mathbf n)$ with $(\lambda - 1) \iota(\max A) + |A| \le |B|$ and for all signs $\varepsilon, \delta$. The least constant $\mathbf C$ is denoted by $\mathbf C_{\lambda, \mathbf n, sc}$. When $\varepsilon \equiv \delta \equiv 1$, we say that $\mathcal{B}$ is ($\lambda$, $\mathbf n$, conservative). The corresponding constant is denoted by $\mathbf C_{\lambda, \mathbf n, c}$.
\end{defi}

\begin{thm}\label{mm10}
Let $\mathcal{B}$ be a basis of a Banach space $X$. The following are equivalent
\begin{enumerate}
    \item[i)] $\mathcal B$ is ($\lambda$, $\mathbf n$, strong partially greedy).
    \item[ii)] $\mathcal{B}$ is quasi-greedy and ($\lambda$, $\mathbf n$, PSLC).
    \item[iii)] $\mathcal{B}$ is quasi-greedy and ($\lambda$, $\mathbf n$, superconservative).
    \item[iv)] $\mathcal{B}$ is quasi-greedy and ($\lambda$, $\mathbf n$, conservative).
\end{enumerate}
\end{thm}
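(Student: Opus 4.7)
The plan is to mirror the proof of Theorem~\ref{m1}, establishing (i)~$\Rightarrow$~(ii), (ii)~$\Rightarrow$~(iii), (iii)~$\Leftrightarrow$~(iv), (iii)~$\Rightarrow$~(ii), and (ii)~$\Rightarrow$~(i). For (i)~$\Rightarrow$~quasi-greedy, take $k=0$ in $\widehat{\sigma}_m^{\mathbf{n}}$ to get $\|x-G_{\lceil\lambda m\rceil}(x)\|\le \mathbf{C}_{\lambda,\mathbf{n},sp}\|x\|$ for every $m$; to bound greedy sums of arbitrary order $j$, I set $m=\lceil j/\lambda\rceil$, extend a greedy set of order $j$ to one of order $\lceil\lambda m\rceil\ge j$, and absorb the at most $\lceil\lambda\rceil$ added terms into $\lceil\lambda\rceil c_2^2\|x\|$. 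For the ($\lambda$, $\mathbf{n}$, PSLC) half, I imitate Proposition~\ref{p1}(ii): given $x,A,B,\varepsilon,\delta$ as in Definition~\ref{dnlPSLC}, set $m=\iota(\max A)$, $D=\{n_1,\ldots,n_m\}\setminus A$, and $y=1_{\varepsilon A}+1_D+x+1_{\delta B}$. The hypothesis $(\lambda-1)m+|A|\le|B|$ is equivalent, by integrality of $|B|$, to $|D|+|B|\ge \lceil\lambda m\rceil$, so I can choose $B'\subseteq B$ of size $\lceil\lambda m\rceil-|D|$, making $E=D\cup B'$ a greedy set of $y$ of order $\lceil\lambda m\rceil$. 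Applying \eqref{ee30} with $k=m$ and using $P_m^{\mathbf{n}}(y)=1_{\varepsilon A}+1_D$ yields $\|x+1_{\varepsilon A}+1_{\delta(B\setminus B')}\|\le \mathbf{C}_{\lambda,\mathbf{n},sp}\|x+1_{\delta B}\|$, and suppression quasi-greediness strips away the spurious $1_{\delta(B\setminus B')}$.

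The implication (ii)~$\Rightarrow$~(iii) follows by setting $x=0$ in PSLC (the condition $A<B\cap\mathbf{n}$ comes from $(A,B)\in\mathbb{T}(\mathbf{n})$); (iii)~$\Leftrightarrow$~(iv) is routine via the UL property of quasi-greedy bases as in Remark~\ref{r1}. For (iii)~$\Rightarrow$~(ii), I adapt Lemma~\ref{l10}: since $\supp(x)\cap B=\emptyset$ and $\|x\|_\infty\le 1$, $1_{\delta B}$ is a greedy sum of $x+1_{\delta B}$, so $\|1_{\delta B}\|\le \mathbf{C}_q\|x+1_{\delta B}\|$; combining this with $\|1_{\varepsilon A}\|\le \mathbf{C}_{\lambda,\mathbf{n},sc}\|1_{\delta B}\|$ and the triangle inequality gives the desired PSLC bound.

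For (ii)~$\Rightarrow$~(i), I first upgrade ($\lambda$, $\mathbf{n}$, PSLC) to the dual form $\|z\|\le \Delta_{\lambda,\mathbf{n},pl}\|z-P_{A'}(z)+1_{\varepsilon B'}\|$ exactly as in Lemma~\ref{l2}. Given $x\in X$, $m\in\mathbb{N}$, $A\in G(x,\lceil\lambda m\rceil)$, and $k\le m$, I set $E=\{n_1,\ldots,n_k\}\setminus A$ and $F=A\setminus\{n_1,\ldots,n_k\}$. The cardinality check $(\lambda-1)\iota(\max E)+|E|\le |F|$ uses $\iota(\max E)\le k$, $|F|=\lceil\lambda m\rceil-k+|E|$, and $k\le m$ to yield $(\lambda-1)\iota(\max E)+|E|\le (\lambda-1)k+|E|=\lambda k-k+|E|\le \lambda m-k+|E|\le |F|$. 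Invoking the dual lemma, applying Theorem~\ref{bto} to the truncation at $\alpha=\min_{n\in A}|e_n^*(x)|$, and identifying $(A\cup E)\setminus F=\{n_1,\ldots,n_k\}$ produces $\|x-P_A(x)\|\le \Delta_{\lambda,\mathbf{n},pl}\mathbf{C}_\ell\|x-P_k^{\mathbf{n}}(x)\|$, closing the loop. The main obstacle is the cardinality bookkeeping on both sides, particularly converting the real inequality $(\lambda-1)m+|A|\le|B|$ into the integer-valued $|D|+|B|\ge\lceil\lambda m\rceil$ in (i)~$\Rightarrow$~(ii), and threading $\iota(\max E)$, $k$, $|F|$ together in (ii)~$\Rightarrow$~(i); once these are settled, each step reduces to the ($\mathbf{n}$, strong partially greedy) blueprint.
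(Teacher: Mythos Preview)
Your proof is correct and follows essentially the same route as the paper: Lemma~\ref{ll30} (your dual form), Lemma~\ref{ll51} (your (iii)~$\Rightarrow$~(ii)), and Lemma~\ref{ll50} (your (i)~$\Leftrightarrow$~(ii) under quasi-greediness) match almost line for line, including the cardinality computation $(\lambda-1)\iota(\max E)+|E|\le |F|$. The only real difference is in deriving quasi-greediness from (i): the paper invokes \cite[Lemma~2.3]{C1} (Lemma~\ref{eee22}), whereas you supply a self-contained argument by setting $m=\lceil j/\lambda\rceil$, extending a greedy set of order $j$ to one of order $\lceil\lambda m\rceil\in[j,j+\lceil\lambda\rceil]$, and absorbing the $O_\lambda(1)$ extra terms via $c_2^2\|x\|$; also, in (i)~$\Rightarrow$~PSLC you apply suppression quasi-greediness at the end to remove $1_{\delta(B\setminus B')}$, while the paper applies it at the start to pass from $\|y-P_{D\cup B}(y)\|$ to $\|y-P_E(y)\|$ --- two equivalent placements of the same $\mathbf C_\ell$ factor.
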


Before proving Theorem \ref{mm10}, we need the following lemmas, the first of which reformulate ($\lambda$, $\mathbf n$, PSLC) into a more useful form. The proof is similar to that of Lemma \ref{l2}.
\begin{lem}\label{ll30}
A basis $\mathcal{B}$ is $\Delta_{\lambda, \mathbf n, pl}$-($\lambda$, $\mathbf n$, PSLC) if and only if 
$$\|x\|\ \le\ \Delta_{\lambda, \mathbf n, pl}\|x-P_A(x) + 1_{\varepsilon B}\|,$$
for all $x\in X$ with $\|x\|_\infty\le 1$, for all $(A, B)\in\mathbb{S}(\mathbf n)$ with $(\lambda - 1)\iota(\max A) + |A|\le |B|$ and $A < (\supp(x-P_A(x))\sqcup B)\cap \mathbf n$, and for all signs $\varepsilon$.
\end{lem}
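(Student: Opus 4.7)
The plan is to mirror the proof of Lemma~\ref{l2} almost verbatim, since the logical structure is identical and only the admissibility conditions on the pair $(A,B)$ differ (we now require $(A,B)\in\mathbb S(\mathbf n)$ rather than $\mathbb T(\mathbf n)$, together with the size constraint $(\lambda-1)\iota(\max A)+|A|\le |B|$). Neither change interacts with the manipulations used in Lemma~\ref{l2}, so both implications go through with the same trick.

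For the forward direction, suppose $\mathcal B$ satisfies the displayed inequality. Choose $x,A,B,\varepsilon,\delta$ as in Definition~\ref{dnlPSLC} and set $y:=x+1_{\varepsilon A}$. First observe that the condition $A<(\supp(x)\sqcup B)\cap\mathbf n$, combined with $A\subset\mathbf n$, forces $A$, $B$, and $\supp(x)$ to be pairwise disjoint (as in Section~\ref{characterization}). Consequently $\|y\|_\infty\le 1$ and $y-P_A(y)=x$, and the hypothesis $A<(\supp(y-P_A(y))\sqcup B)\cap\mathbf n$ holds with the same witnesses. Applying the displayed inequality to $(y,A,B,\delta)$ gives
$$\|x+1_{\varepsilon A}\|\ =\ \|y\|\ \le\ \Delta_{\lambda,\mathbf n,pl}\|y-P_A(y)+1_{\delta B}\|\ =\ \Delta_{\lambda,\mathbf n,pl}\|x+1_{\delta B}\|.$$

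For the reverse direction, fix $x,A,B,\varepsilon$ as in the displayed inequality. Decompose
$$x\ =\ (x-P_A(x))+\sum_{n\in A}e_n^*(x)e_n.$$
Since $\|x\|_\infty\le 1$, each scalar $e_n^*(x)$ has modulus at most $1$, and hence can be written as a convex combination of signs of modulus $1$ (for $\mathbb F=\mathbb R$ this is trivial; for $\mathbb F=\mathbb C$ one uses that the closed unit disk is the convex hull of its boundary). Averaging over these signs yields
$$\|x\|\ \le\ \sup_{\delta}\|x-P_A(x)+1_{\delta A}\|\ \le\ \Delta_{\lambda,\mathbf n,pl}\|x-P_A(x)+1_{\varepsilon B}\|,$$
where the last inequality applies ($\lambda$, $\mathbf n$, PSLC) to the vector $x-P_A(x)$ (whose $\ell_\infty$-norm is $\le 1$) together with the sets $A,B$; the three required conditions in Definition~\ref{dnlPSLC}, namely $A\subset\mathbf n$, $(\lambda-1)\iota(\max A)+|A|\le|B|$, and $A<(\supp(x-P_A(x))\sqcup B)\cap\mathbf n$, are precisely what was assumed.

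There is no real obstacle here; the proof is a direct transcription of Lemma~\ref{l2} with the new quantitative admissibility condition inserted. The only micro-technicality is the convex-combination step for complex scalars, which is a standard averaging argument and carries no dependence on $\lambda$ or $\mathbf n$.
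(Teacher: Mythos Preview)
Your proof is correct and follows exactly the approach the paper intends: the paper itself simply writes ``The proof is similar to that of Lemma~\ref{l2},'' and your argument is precisely that transcription, with the same substitution $y=x+1_{\varepsilon A}$ for one direction and the same convexity/sign-averaging step for the other. The extra verifications you spell out (pairwise disjointness of $A$, $B$, $\supp(x)$; checking the admissibility conditions for $(y,A,B)$) are implicit in the paper's treatment of Lemma~\ref{l2} and are all valid.
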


\begin{lem}\label{ll51}
If a basis $\mathcal{B}$ is quasi-greedy and ($\lambda$, $\mathbf n$, superconservative), then $\mathcal{B}$ is ($\lambda$, $\mathbf n$, PSLC).
\end{lem}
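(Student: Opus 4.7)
The plan is to follow the proof of Lemma \ref{l10} almost verbatim, since Definition \ref{dnlPSLC} and the $(\lambda, \mathbf n, \text{superconservative})$ hypothesis differ from their unweighted counterparts only by the extra cardinality constraint $(\lambda-1)\iota(\max A)+|A|\le |B|$, which is carried along without interacting with the mechanics of the argument.

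Fix $x, A, B, \varepsilon, \delta$ as in Definition \ref{dnlPSLC}, and write $\mathbf C_q$ for the quasi-greedy constant of $\mathcal B$. The first step is to verify that $(A,B)$ satisfies the hypotheses for the $(\lambda,\mathbf n,\text{superconservative})$ inequality: we need $(A, B)\in \mathbb{T}(\mathbf n)$ together with the same cardinality condition. Both are immediate: $A\subset\mathbf n$ comes from $(A,B)\in\mathbb S(\mathbf n)$, and $A<B\cap\mathbf n$ is a consequence of the stronger assumption $A<(\supp(x)\sqcup B)\cap\mathbf n$, while the cardinality condition is already part of the PSLC hypothesis. Hence $\|1_{\varepsilon A}\|\le \mathbf C_{\lambda,\mathbf n, sc}\|1_{\delta B}\|$.

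Next I would invoke quasi-greediness to control $\|1_{\delta B}\|$ by $\|x+1_{\delta B}\|$. The symbol $\sqcup$ in the hypothesis $A<(\supp(x)\sqcup B)\cap\mathbf n$ encodes $\supp(x)\cap B=\emptyset$, so the coefficients of $x+1_{\delta B}$ at positions in $B$ have modulus exactly one, while all other coefficients (at positions in $\supp(x)$) have modulus at most one. Thus $B$ is a greedy set of $x+1_{\delta B}$ of order $|B|$ with greedy sum $P_B(x+1_{\delta B})=1_{\delta B}$, and quasi-greediness yields $\|1_{\delta B}\|\le \mathbf C_q\|x+1_{\delta B}\|$.

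Combining these two bounds via two applications of the triangle inequality completes the proof. Writing $x=(x+1_{\delta B})-1_{\delta B}$ gives $\|x\|\le (1+\mathbf C_q)\|x+1_{\delta B}\|$, and then
$$\|x+1_{\varepsilon A}\|\ \le\ \|x\|+\|1_{\varepsilon A}\|\ \le\ \bigl(1+\mathbf C_q+\mathbf C_{\lambda,\mathbf n,sc}\mathbf C_q\bigr)\|x+1_{\delta B}\|,$$
so that $\Delta_{\lambda,\mathbf n,pl}\le 1+\mathbf C_q+\mathbf C_{\lambda,\mathbf n,sc}\mathbf C_q$. Each step is either a direct triangle inequality or a citation of a hypothesis, so there is no genuine obstacle; the only thing to watch is the bookkeeping ensuring that the cardinality constraint $(\lambda-1)\iota(\max A)+|A|\le |B|$ carries unchanged from the PSLC hypothesis to the invocation of $(\lambda,\mathbf n,\text{superconservative})$, which it does trivially since neither $\iota(\max A)$ nor $|A|$ nor $|B|$ is altered in the argument.
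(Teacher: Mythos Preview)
Your proof is correct and follows exactly the approach the paper intends: the paper's own proof is simply the one-line reference ``Similar to the proof of Lemma \ref{l10},'' and you have reproduced that argument with the extra cardinality constraint $(\lambda-1)\iota(\max A)+|A|\le |B|$ carried through unchanged, arriving at the same explicit bound $\Delta_{\lambda,\mathbf n,pl}\le 1+\mathbf C_q+\mathbf C_{\lambda,\mathbf n,sc}\mathbf C_q$.
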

\begin{proof}
Similar to the proof of Lemma \ref{l10}.
\end{proof}

\begin{lem}\label{ll50}
Let $\mathcal{B}$ be a $\mathbf C_\ell$-suppression quasi-greedy basis and $\lambda > 1$. The following hold.
\begin{enumerate}
    \item[i)] If $\mathcal{B}$ is $\mathbf C_{\lambda, \mathbf n, sp}$-($\lambda$, $\mathbf n$, strong partially greedy), then $\mathcal{B}$ is $\mathbf C_\ell\mathbf C_{\lambda, \mathbf n, sp}$-($\lambda$, $\mathbf n$, PSLC).
    \item[ii)] If $\mathcal{B}$ is $\Delta_{\lambda, \mathbf n, pl}$-($\lambda$, $\mathbf n$, PSLC), then $\mathcal{B}$ is $\mathbf C_\ell \Delta_{\lambda, \mathbf n, pl}$-($\lambda$, $\mathbf n$, strong partially greedy).
\end{enumerate}
\end{lem}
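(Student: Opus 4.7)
Here I would copy the scheme of Proposition~\ref{p1}(ii), upgraded to the larger threshold $\lceil\lambda m\rceil$. Given $x,(A,B),\varepsilon,\delta$ as in Definition~\ref{dnlPSLC}, set $m=\iota(\max A)$ and $D=\{n_1,\ldots,n_m\}\setminus A$, and form the test vector $y=1_{\varepsilon A}+1_D+x+1_{\delta B}$; the four summands have pairwise disjoint supports thanks to the positional hypothesis $A<(\supp(x)\sqcup B)\cap\mathbf n$. The key sizing step is to upgrade the real inequality $(\lambda-1)m+|A|\le|B|$ to the integer inequality $|D|+|B|\ge\lceil\lambda m\rceil$, so that one may pick $E$ with $D\subset E\subset D\cup B$ and $|E|=\lceil\lambda m\rceil$; since every coefficient of $y$ on $A\cup D\cup B$ has modulus $1$ while those on $\supp(x)$ have modulus $\le 1$, this $E$ is a bona fide greedy set of $y$ of order $\lceil\lambda m\rceil$. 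Apply the $(\lambda,\mathbf n,\text{strong partially greedy})$ hypothesis with underlying index $k=m$ and observe that $P_m^{\mathbf n}(y)=1_{\varepsilon A}+1_D$, which yields $\widehat\sigma_m^{\mathbf n}(y)\le\|x+1_{\delta B}\|$. On the other side, $y-P_E(y)=x+1_{\varepsilon A}+1_{\delta B'}$ with $B':=B\setminus(E\setminus D)$, and a final application of suppression quasi-greediness suppressing $B'$ contracts to $\|x+1_{\varepsilon A}\|\le\mathbf C_\ell\|y-P_E(y)\|$. Chaining yields the constant $\mathbf C_\ell\mathbf C_{\lambda,\mathbf n,sp}$.

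\textbf{Part (ii).} I would follow the Lemma-plus-truncation recipe of Proposition~\ref{p2}. Given $x$, $m$, $A\in G(x,\lceil\lambda m\rceil)$ and $k\le m$, set $E=\{n_1,\ldots,n_k\}\setminus A$, $F=A\setminus\{n_1,\ldots,n_k\}$, $\alpha=\min_{n\in A}|e_n^*(x)|$, and $\varepsilon=(\sgn e_n^*(x))_n$. Verifying the hypotheses of Lemma~\ref{ll30} for $(E,F)$ is routine: the positional condition holds because everything on the right side of $E$ in $\mathbf n$ lies past $n_k\ge\max E$, and the size inequality, after cancelling $|A\cap\{n_1,\ldots,n_k\}|$ on both sides, reduces to $\lambda k\le\lceil\lambda m\rceil$, which is immediate from $k\le m$. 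Applying Lemma~\ref{ll30} to $(x-P_A(x))/\alpha$, whose sup-norm is $\le 1$ by the greediness of $A$, and rescaling by $\alpha$ gives
\[
\|x-P_A(x)\|\ \le\ \Delta_{\lambda,\mathbf n,pl}\,\bigl\|x-P_A(x)-P_E(x)+\alpha 1_{\varepsilon F}\bigr\|.
\]
One then recognises the right-hand vector as $T_\alpha$ applied to $x-P_A(x)-P_E(x)+P_F(x)$ and observes, by a coefficient-by-coefficient check, that this last vector equals $x-P_k^{\mathbf n}(x)$. Theorem~\ref{bto} bounds the truncation by $\mathbf C_\ell\|x-P_k^{\mathbf n}(x)\|$, and taking the infimum over $k\le m$ delivers the constant $\mathbf C_\ell\Delta_{\lambda,\mathbf n,pl}$.

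\textbf{Main obstacle.} The only genuinely delicate point in either direction is the interplay between the real threshold $\lambda m$ and its integer ceiling: in (i) one must use the integrality of $|D|+|B|$ to lift $|D|+|B|\ge\lambda m$ to $|D|+|B|\ge\lceil\lambda m\rceil$, and in (ii) the size hypothesis of Lemma~\ref{ll30} hinges on $\lambda k\le\lceil\lambda m\rceil$ (which is clean because $k$ itself is an integer). Everything else is the standard test-vector and truncation machinery already in place in Propositions~\ref{p1}--\ref{p2}.
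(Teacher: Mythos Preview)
Your proposal is correct and follows essentially the same approach as the paper's proof: the same test vector $y=1_{\varepsilon A}+1_D+x+1_{\delta B}$ with $D=\{n_1,\dots,n_s\}\setminus A$ in part~(i), and the same $E,F,\alpha$ decomposition combined with Lemma~\ref{ll30} and the truncation bound of Theorem~\ref{bto} in part~(ii). The only cosmetic difference is that in (i) you impose $D\subset E$ and then suppress the leftover $B'\subset B$, whereas the paper takes an arbitrary $E\subset D\cup B$ of the right size and suppresses $(D\cup B)\setminus E$; both routes use suppression quasi-greediness in the same spot and yield the same constant.
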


\begin{proof}
i) Choose $x, A, B, \varepsilon, \delta$ as in Definition \eqref{dnlPSLC}. Let $n_s = \max A$ and set $D = \{n_1, \ldots, n_s\}\backslash A$. Form $y = x + 1_{\varepsilon A} + 1_{D} + 1_{\delta B}$. Observe that 
$$|D\cup B| \ =\ |D| + |B| \ \ge\ (s - |A|) + (\lambda - 1)s + |A|\ =\ \lambda s.$$
Choose $E\subset D\cup B$ such that $|E| = \lceil \lambda s\rceil$ to have
\begin{align*}\|x+1_{\varepsilon A}\| \ =\ \|y - P_{D\cup B}(y)\|\ \le\ \mathbf C_\ell\|y - P_E(y)\|&\ \le\ \mathbf C_\ell\mathbf C_{\lambda, \mathbf n, sp} \widehat{\sigma}_s^{\mathbf n}(y)\\
&\ \le\  \mathbf C_\ell\mathbf C_{\lambda, \mathbf n, sp}\|y-P^{\mathbf n}_{s}(y)\|\\
&\ =\ \mathbf C_\ell\mathbf C_{\lambda, \mathbf n, sp}\|x+1_{\delta B}\|.
\end{align*}

ii) Let $x\in X$, $m\in\mathbb{N}$, $k\le m$, and $A\in G(x, \lceil \lambda m\rceil)$. We need to show that 
$$\|x-P_A(x)\|\ \le\ \mathbf C_\ell \Delta_{\lambda, \mathbf n, pl}\|x-P^{\mathbf n}_{k}(x)\|.$$
Set $E = \{n_1, \ldots, n_k\}\backslash A$, $F = A\backslash \{n_1, \ldots, n_k\}$, and $\alpha = \min_{n\in A}|e_n^*(x)|$. We verify that $E, F, x- P_A(x)$ satisfy the condition of Lemma \ref{ll30}: clearly, $(E, F)\in \mathbb{S}(\mathbf n)$ and 
\begin{align*}
    (\lambda-1)\iota(E) + |E|&\ \le\ (\lambda - 1)m + (m - |A\cap \{n_1, \ldots, n_k\}|)\\
    &\ \le\ \lambda m - |A\cap \{n_1, \ldots, n_k\}|\ \le\ |A| - |A\cap \{n_1, \ldots, n_k\}| \ =\ |F|.
\end{align*}
It is easy to check that  
$$E \ <\ (\supp(x-P_A(x)-P_E(x))\sqcup F)\cap \mathbf n.$$
Let $\varepsilon = (\sgn(e_n^*(x))_n$. Using Lemma \ref{ll30} and Theorem \ref{bto}, we obtain
\begin{align*}
\|x-P_A(x)\|&\ \le\ \Delta_{\lambda, \mathbf n, pl}\|x-P_A(x)-P_{E}(x) + \alpha 1_{\varepsilon F}\|\\
&\ \le\ \Delta_{\lambda, \mathbf n, pl}\|T_{\alpha}(x-P_A(x)-P_E(x) + P_F(x))\|\\
&\ \le\ \Delta_{\lambda, \mathbf n, pl}\mathbf C_\ell\|x-P^{\mathbf n}_{k}(x)\|.
\end{align*}
This completes our proof. 
\end{proof}

\begin{lem}\label{eee22}
If $\mathcal{B}$ is ($\lambda$, $\mathbf n$, strong partially greedy), then $\mathcal{B}$ is quasi-greedy. 
\end{lem}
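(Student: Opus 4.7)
The plan is that the defining inequality evaluated at $k=0$ already furnishes a quasi-greedy bound, but only along the subsequence of greedy-sum sizes of the form $\lceil\lambda m\rceil$; a perturbation argument will then fill in the remaining sizes at the cost of at most a bounded (in $\lambda$) number of correction terms, each contributing a uniformly bounded multiple of $\|x\|$ through the semi-normalization constant $c_2$.

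Concretely, first note that $\widehat{\sigma}_m^{\mathbf n}(x)\le \|x\|$ by taking $k=0$ in its definition, so the hypothesis yields
\[
\|x-G_{\lceil\lambda m\rceil}(x)\|\ \le\ \mathbf C_{\lambda,\mathbf n,sp}\|x\|,\qquad \forall x\in X,\ \forall m\in\mathbb N,\ \forall G_m(x).
\]
Now fix an arbitrary $k\in\mathbb N$ and any $A\in G(x,k)$. If $k<\lceil\lambda\rceil$, then bounding each summand gives $\|P_A(x)\|\le k c_2^2\|x\|\le \lceil\lambda\rceil c_2^2\|x\|$ directly. Otherwise, choose the largest integer $m\ge 1$ with $\ell:=\lceil\lambda m\rceil\le k$; the elementary estimate $\lceil\lambda(m+1)\rceil-\lceil\lambda m\rceil\le \lceil\lambda\rceil$ yields $k-\ell\le \lceil\lambda\rceil-1$. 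Let $B\subseteq A$ consist of the $\ell$ indices at which $|e_n^*(x)|$ is largest. Since $A$ is a greedy set of order $k$ and $B$ retains its biggest coefficients, the minimum modulus over $B$ still dominates $|e_n^*(x)|$ for every $n\notin B$ (distinguishing the cases $n\in A\setminus B$ and $n\notin A$), hence $B\in G(x,\ell)$. The triangle inequality then gives
\[
\|x-P_A(x)\|\ \le\ \|x-P_B(x)\|+\|P_{A\setminus B}(x)\|\ \le\ \bigl(\mathbf C_{\lambda,\mathbf n,sp}+(\lceil\lambda\rceil-1)c_2^2\bigr)\|x\|,
\]
and $\|G_k(x)\|\le \|x\|+\|x-G_k(x)\|$ completes the desired quasi-greedy bound.

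No step in this scheme looks hard: the only item requiring a moment's verification is that the subset $B$, obtained by keeping the top $\ell$ coefficients of $A$, is again a greedy set, and that is immediate because coefficients outside $A$ are already dominated by every coefficient of $A$, hence in particular by every coefficient of $B$. The remainder is triangle-inequality bookkeeping together with elementary arithmetic on $\lceil\lambda m\rceil$.
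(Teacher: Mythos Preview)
Your proof is correct. Both your argument and the paper's begin the same way: evaluate the defining inequality at $k=0$ to obtain $\|x-G_{\lceil\lambda m\rceil}(x)\|\le \mathbf C_{\lambda,\mathbf n,sp}\|x\|$ for all $m$. At that point the paper simply invokes \cite[Lemma~2.3]{C1} as a black box to conclude quasi-greediness, whereas you supply a self-contained gap-filling argument: for an arbitrary greedy set $A$ of size $k$, drop down to a greedy subset $B$ of size $\ell=\lceil\lambda m\rceil$ for the largest admissible $m$, use the $\lceil\lambda\rceil$-bounded gap $k-\ell\le\lceil\lambda\rceil-1$, and control the leftover $P_{A\setminus B}(x)$ termwise via the semi-normalization constant $c_2$. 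Your route is slightly more informative in that it yields the explicit suppression constant $\mathbf C_{\lambda,\mathbf n,sp}+(\lceil\lambda\rceil-1)c_2^2$ and avoids an external reference; the paper's route is shorter on the page but hides the same computation inside the cited lemma.
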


\begin{proof}
Assume that $\mathcal{B}$ is $\mathbf C_{\lambda, \mathbf n, sp}$-($\lambda$, $\mathbf n$, strong partially greedy). For each $m\in\mathbb{N}$, we have
$$\|x-G_{\lceil\lambda m\rceil}(x)\|\ \le\ \mathbf C_{\lambda, \mathbf n, sp}\widehat{\sigma}^{\mathbf n}_m(x)\ \le\ \mathbf C_{\lambda, \mathbf n, sp}\|x\|.$$
By \cite[Lemma 2.3]{C1}, we know that $\mathcal{B}$ is quasi-greedy. 
\end{proof}

\begin{proof}[Proof of Theorem \ref{mm10}]
Lemmas \ref{ll50}  and \ref{eee22} show that i) $\Longleftrightarrow$ ii). By definitions, ($\lambda$, $\mathbf n$, PSLC) $\Longrightarrow$ ($\lambda$, $\mathbf n$, superconservative), so ii) $\Longrightarrow$ iii). Lemma \ref{ll51} gives iii) $\Longrightarrow$ ii). Finally, iii) $\Longleftrightarrow$ iv) is due to Remark \ref{r1}. 
\end{proof}

By the characterizations in Theorem \ref{mm10}, we know that if $\lambda_1 > \lambda_2 > 1$, then a ($\lambda_2$, $\mathbf n$, strong partially greedy) basis is ($\lambda_1$, $\mathbf n$, strong partially greedy). We now provide a sufficient condition for the converse to hold. 

\begin{defi}\normalfont Let $N\in\mathbb{N}$. 
A basis $\mathcal{B}$ is said to be right-skewed if there exists $\mathbf C > 0$ such that for every finite set $A\subset\mathbb{N}$, we can find a set of positive integers $B$ satisfying the condition: $B> A$, $|B| = |A|$, and 
$\|1_{B}\| \le \mathbf C\|1_A\|$.
\end{defi}

\begin{prop} Fix $1 < \lambda_1 < \lambda_2$.
Let $\mathcal{B}$ be a ($\lambda_2$, $\mathbf n$, conservative) basis that is right-skewed. Then $\mathcal{B}$ is ($\lambda_1$, $\mathbf n$, conservative). 
\end{prop}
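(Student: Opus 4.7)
The plan is to iteratively apply the right-skewed property to $B$ until it becomes large enough to satisfy the ($\lambda_2$, $\mathbf n$, conservative) hypothesis, while controlling the norm of the enlarged set; one can then invoke the assumed ($\lambda_2$, $\mathbf n$, conservative) property to bound $\|1_A\|$ by $\|1_B\|$.

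In detail, I would let $\mathbf{C}$ denote the right-skewed constant of $\mathcal{B}$ and fix an integer $k \ge 1$ satisfying $2^k (\lambda_1 - 1) \ge \lambda_2 - 1$ (note $k$ depends only on $\lambda_1, \lambda_2$). Given $(A, B) \in \mathbb{T}(\mathbf n)$ with $(\lambda_1 - 1)\iota(\max A) + |A| \le |B|$ (the case $A = \emptyset$ being trivial), I would define recursively $B_0 := B$ and, for $0 \le j < k$, $B_{j+1} := B_j \cup B_j'$, where $B_j'$ is produced by applying the right-skewed property to $B_j$, so that $B_j' > B_j$, $|B_j'| = |B_j|$, and $\|1_{B_j'}\| \le \mathbf{C}\|1_{B_j}\|$. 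A straightforward induction then yields $|B_k| = 2^k|B|$ and, via the triangle inequality, $\|1_{B_k}\| \le (1 + \mathbf{C})^k\|1_B\|$. The choice of $k$ ensures $|B_k| \ge 2^k[(\lambda_1 - 1)\iota(\max A) + |A|] \ge (\lambda_2 - 1)\iota(\max A) + |A|$. After verifying that $(A, B_k) \in \mathbb{T}(\mathbf n)$ (see the next paragraph), the ($\lambda_2$, $\mathbf n$, conservative) hypothesis gives
$$\|1_A\| \ \le\ \mathbf{C}_{\lambda_2, \mathbf n, c}\|1_{B_k}\| \ \le\ \mathbf{C}_{\lambda_2, \mathbf n, c}(1+\mathbf{C})^k\|1_B\|,$$
which establishes the ($\lambda_1$, $\mathbf n$, conservative) property with $\mathbf{C}_{\lambda_1, \mathbf n, c} \le \mathbf{C}_{\lambda_2, \mathbf n, c}(1+\mathbf{C})^k$.

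The main (and essentially only) subtlety I anticipate is confirming the order condition $A < B_k \cap \mathbf n$ after the repeated enlargement. Here the hypothesis $B \cap \mathbf n > A$ gives $\max B \ge \max(B \cap \mathbf n) > \max A$, and the defining feature of the right-skewed property is that $B_j' > B_j$, so every element of $B_k \setminus B_0$ strictly exceeds $\max B_0$, hence also $\max A$. In particular, any newly added element that happens to lie in $\mathbf n$ still exceeds $\max A$, so the order condition is preserved throughout the iteration and the application of the $\lambda_2$ hypothesis is legitimate.
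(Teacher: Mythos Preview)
Your approach mirrors the paper's: enlarge $B$ by repeated applications of the right-skewed property until the enlarged set meets the $(\lambda_2,\mathbf n)$ size threshold, then invoke the $\lambda_2$-hypothesis and fold the norm of the enlarged set back to $\|1_B\|$. The only cosmetic difference is that the paper appends $s=\lceil(\lambda_2-\lambda_1)/(\lambda_1-1)\rceil$ disjoint right-skewed ``copies'' of $B$ in sequence, obtaining the constant $\Delta_{\lambda_2,\mathbf n,c}\sum_{j=0}^{s}\mathbf C^{\,j}$, whereas you double $k$ times (with $2^{k}(\lambda_1-1)\ge\lambda_2-1$) and obtain $\Delta_{\lambda_2,\mathbf n,c}(1+\mathbf C)^{k}$; both depend only on $\lambda_1,\lambda_2,\mathbf C$.

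There is one small gap in your verification of $A<B_k\cap\mathbf n$: the inference ``$\max B\ge\max(B\cap\mathbf n)>\max A$'' is vacuous when $B\cap\mathbf n=\emptyset$, which is permitted since $A<\emptyset$ holds by convention. In that degenerate case $\max B$ may lie below $\max A$, and the adjoined sets $B_j'$ (which are only guaranteed to exceed $\max B$) could meet $\mathbf n$ at points $\le\max A$, breaking the order condition for $(A,B_k)$. The paper's proof glosses over precisely the same point. An easy patch is to first append a single index $M>\max(A\cup B)$ to $B$ before iterating; this costs at most a uniform multiplicative factor $1+c_2^{2}$ on the norm and forces every subsequent right-skewed output to lie beyond $\max A$, after which your argument (and the paper's) goes through verbatim.
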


\begin{proof}
Set $s:= \left\lceil\frac{\lambda_2-\lambda_1}{\lambda_1 -1 }\right\rceil$.
Assume that $\mathcal{B}$ is $\Delta_{\lambda_2, \mathbf n, c}$-($\lambda_2$, $\mathbf n$, conservative). 
Choose $(A, B)\in\mathbb{T}(\mathbf n)$ with $|A| + (\lambda_1 - 1)\iota(\max A)\le |B|$. 
Choose disjoint subsets $(D_j)_{j=1}^{s}$ satisfying $|D_j| = |B|$, $B < D_1 < D_2 < \cdots < D_s$, and 
$$\|1_{D_j}\|\ \le\ \mathbf C\|1_{D_{j-1}}\|, \forall 2\le j\le s\mbox{ and }\|1_{D_1}\|\ \le\ \mathbf C\|1_B\|.$$
Set $D = \cup_{j=1}^s D_j$.
Then $(B\cup D)\cap \mathbf n > A$ and 
$$|B\cup D| \ =\ |B| + |D| \ \ge\ |A| + (\lambda_1-1)\iota(\max A) + \frac{\lambda_2-\lambda_1}{\lambda_1 -1 }|B|\ \ge\ |A| + (\lambda_2-1)\iota(\max A).$$
Since $\mathcal{B}$ is ($\lambda_2$, $\mathbf n$, conservative), we know that 
\begin{align*}\|1_A\|\ \le\ \Delta_{\lambda_2, \mathbf n, c}\|1_{B\cup D}\|&\ \le\ \Delta_{\lambda_2, \mathbf n, c}(\|1_B\| + \|1_D\|)\\
&\ \le\ \Delta_{\lambda_2, \mathbf n, c}\left(\|1_B\| + \sum_{j=1}^s \|1_{D_j}\|\right)\\
&\ \le\ \Delta_{\lambda_2, \mathbf n, c}(1+\mathbf C + \mathbf C^2 + \cdots + \mathbf C^s)\|1_B\|\\
&\ \le\ \begin{cases}\Delta_{\lambda_2, \mathbf n, c}\frac{\mathbf C^{s+1}-1}{\mathbf C - 1}\|1_B\|&\mbox{ if } \mathbf C \neq 1,\\ \Delta_{\lambda_2, \mathbf n, c}(s + 1)\|1_B\| &\mbox{ if }\mathbf C = 1.\end{cases}
\end{align*}
Hence, $\mathcal{B}$ is ($\lambda_1$, $\mathbf n$, conservative). 
\end{proof}

\begin{cor}\label{cc20}Let $1 < \lambda_1 < \lambda_2$. 
A right-skewed basis $\mathcal{B}$ is ($\lambda_1$, $\mathbf n$, strong partially greedy) if and only if $\mathcal{B}$ is ($\lambda_2$, $\mathbf n$, strong partially greedy).
\end{cor}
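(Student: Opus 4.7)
The plan is to reduce both the $(\lambda_1, \mathbf n, $ strong partially greedy) property and the $(\lambda_2, \mathbf n, $ strong partially greedy) property to the more elementary condition ``quasi-greedy plus $(\lambda_i, \mathbf n, $ conservative)'' using Theorem \ref{mm10}, and then transfer the conservativeness condition between the two values of $\lambda$ using the preceding proposition. In other words, the right-skewed hypothesis is only invoked once, via that proposition; after that, everything becomes bookkeeping about the definition of $(\lambda, \mathbf n, $ conservative).

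For the forward direction, assume $\mathcal{B}$ is $(\lambda_1, \mathbf n, $ strong partially greedy). By Theorem \ref{mm10}, $\mathcal{B}$ is quasi-greedy and $(\lambda_1, \mathbf n, $ conservative). I would then observe that if $\lambda_1 < \lambda_2$, then for any $(A, B) \in \mathbb{T}(\mathbf n)$ satisfying $(\lambda_2 - 1)\iota(\max A) + |A| \le |B|$, we automatically have $(\lambda_1 - 1)\iota(\max A) + |A| \le |B|$. Hence the collection of pairs on which the conservativeness inequality must hold for $\lambda_2$ is a subcollection of the one for $\lambda_1$, so $\mathcal{B}$ is $(\lambda_2, \mathbf n, $ conservative) with the same constant. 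Applying Theorem \ref{mm10} in the other direction concludes that $\mathcal{B}$ is $(\lambda_2, \mathbf n, $ strong partially greedy). Notice that right-skewedness is not needed here.

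For the backward direction, assume $\mathcal{B}$ is $(\lambda_2, \mathbf n, $ strong partially greedy). Again by Theorem \ref{mm10}, $\mathcal{B}$ is quasi-greedy and $(\lambda_2, \mathbf n, $ conservative). Since $\mathcal{B}$ is right-skewed and $\lambda_1 < \lambda_2$, the preceding proposition applies and yields that $\mathcal{B}$ is $(\lambda_1, \mathbf n, $ conservative). One final appeal to Theorem \ref{mm10} gives that $\mathcal{B}$ is $(\lambda_1, \mathbf n, $ strong partially greedy).

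There is no genuine obstacle here because all the work has already been done: Theorem \ref{mm10} packages the strong partial greediness in terms of quasi-greediness plus conservativeness, and the preceding proposition is precisely the transfer of $(\lambda, \mathbf n, $ conservative) across values of $\lambda$ for right-skewed bases. The only point requiring the slightest care is the monotonicity observation $(\lambda_1, \mathbf n, $ conservative) $\Longrightarrow (\lambda_2, \mathbf n, $ conservative) when $\lambda_1 < \lambda_2$, which is immediate from the definition. Thus the proof is a short two-line argument in each direction.
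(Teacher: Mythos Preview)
Your argument is correct and matches the paper's intended approach: the forward direction is exactly the monotonicity observation the paper makes just before the proposition (that $(\lambda_1,\mathbf n,\text{conservative})$ trivially implies $(\lambda_2,\mathbf n,\text{conservative})$ when $\lambda_1<\lambda_2$), and the backward direction is precisely the preceding proposition combined with Theorem~\ref{mm10}. The paper does not even spell out a proof for this corollary, since it follows immediately from those two ingredients, just as you describe.
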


\subsection{($\lambda$, $\mathbf n$, strong partially greedy) but not ($\mathbf n$, strong partially greedy) bases}
\begin{thm}
There exists a Banach space $X$ with an $1$-unconditional basis $\mathcal{B}$ that is ($\lambda$, $\mathbf n$, strong partially greedy) for all $\lambda > 1$ but is not ($\mathbf n$, strong partially greedy).
\end{thm}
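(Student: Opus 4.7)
The plan is to invoke Theorem \ref{mm10}: since every $1$-unconditional basis is $1$-quasi-greedy, it suffices to produce a $1$-unconditional basis $\mathcal B$ that is $(\lambda,\mathbf n,\text{conservative})$ for every $\lambda>1$ but not $(\mathbf n,\text{conservative})$. I will bootstrap from the $\mathbf n=\mathbb N$ analog of the statement, which is the content of \cite[Theorem 1.5]{C1}: that reference yields a Banach space $X_0$ with a $1$-unconditional basis $(\tilde e_k)_{k\geq 1}$ that (via Theorem \ref{mm10} applied to $\mathbf n=\mathbb N$) is $(\lambda,\mathbb N,\text{conservative})$ for every $\lambda>1$ but is not $(\mathbb N,\text{conservative})$; denote the associated conservative constants by $C_\lambda^{X_0}$.

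Form the $\ell_1$-sum $X:=X_0\oplus_1\ell_1(\mathbb N\setminus\mathbf n)$ with $\|(x,y)\|_X:=\|x\|_{X_0}+\|y\|_1$, and define $\mathcal B=(e_j)_{j\geq 1}$ by $e_{n_k}:=(\tilde e_k,0)$ for $k\geq 1$ and $e_j:=(0,f_j)$ for $j\in \mathbb N\setminus\mathbf n$, where $(f_j)$ is the canonical basis of $\ell_1(\mathbb N\setminus\mathbf n)$. Being an $\ell_1$-sum of $1$-unconditional, normalized bases, $\mathcal B$ is $1$-unconditional, and for any finite $B\subset\mathbb N$ the $\ell_1$-sum structure yields the clean splitting
\[
\|1_B\|_X=\|1_{\iota(B\cap\mathbf n)}\|_{X_0}+|B\setminus\mathbf n|,
\]
while $\|1_A\|_X=\|1_{\iota(A)}\|_{X_0}$ whenever $A\subset\mathbf n$.

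Failure of $(\mathbf n,\text{conservative})$: pull back any witness pair $(\tilde A_N,\tilde B_N)$ of non-conservativity in $X_0$ to $A_N:=\{n_k:k\in \tilde A_N\}$ and $B_N:=\{n_k:k\in \tilde B_N\}$; both sit inside $\mathbf n$, so $\|1_{A_N}\|_X/\|1_{B_N}\|_X=\|1_{\tilde A_N}\|_{X_0}/\|1_{\tilde B_N}\|_{X_0}\to\infty$. To verify $(\lambda,\mathbf n,\text{conservative})$ for a fixed $\lambda>1$, take $(A,B)\in\mathbb T(\mathbf n)$ satisfying $(\lambda-1)\iota(\max A)+|A|\leq |B|$, and set $m:=\iota(\max A)$, $a:=|A|$, $b:=|B|$, $B_1:=B\cap\mathbf n$, $B_2:=B\setminus\mathbf n$, $\tilde A:=\iota(A)$, $\tilde B_1:=\iota(B_1)$, and $\epsilon:=(\lambda-1)/2$. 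If $|B_2|\leq\epsilon m$, then $|\tilde B_1|=b-|B_2|\geq\epsilon m+a$, so the $(1+\epsilon,\mathbb N,\text{conservative})$ property of $X_0$ yields $\|1_A\|_X=\|1_{\tilde A}\|_{X_0}\leq C^{X_0}_{1+\epsilon}\|1_{\tilde B_1}\|_{X_0}\leq C^{X_0}_{1+\epsilon}\|1_B\|_X$. Otherwise $|B_2|>\epsilon m$, in which case $\|1_B\|_X\geq |B_2|>\epsilon m$, while $1$-unconditionality and normalization give $\|1_A\|_X\leq \|1_{[1,m]}\|_{X_0}\leq m$; hence $\|1_A\|_X/\|1_B\|_X<2/(\lambda-1)$. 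Taking $C_\lambda:=\max\{C^{X_0}_{1+(\lambda-1)/2},\,2/(\lambda-1)\}$ completes the verification and produces, via Theorem \ref{mm10}, the desired example.

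The main obstacle is the reduction itself---securing the $\mathbf n=\mathbb N$ analog with a $1$-unconditional basis---which is supplied by \cite[Theorem 1.5]{C1}; the remainder of the argument is the routine case split enforced by the slack in the $(\lambda,\mathbf n)$-inequality together with the transparent direct-sum bookkeeping of $X$. If one wishes to avoid the reference, the Tsirelson-type iterative construction underlying \cite[Theorem 1.5]{C1} can be re-run with $1$-unconditional building blocks so as to supply $X_0$ directly; this technical point is where the real work of the theorem lives.
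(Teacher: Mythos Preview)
Your reduction is correct and takes a genuinely different route from the paper. The paper builds an explicit $1$-unconditional basis on $c_{00}$ via the norm $\|x\|=\sup_{F,\pi}\sum_{i\in F}w^F_{\pi(i)}|x_i|$, using two weight scales ($1/\sqrt n$ on a sparse subsequence $\mathbf n'\subset\mathbf n$ and $1/n$ otherwise) so that comparing the scales witnesses failure of $(\mathbf n,\text{conservative})$, while the slack $(\lambda-1)\iota(\max A)$ forces $|B|$ large enough for the harmonic tail to dominate; it then checks the basis is right-skewed and invokes Corollary~\ref{cc20} to pass from one fixed $\lambda$ to all $\lambda>1$. You instead outsource the $\mathbf n=\mathbb N$ case to \cite[Theorem~1.5]{C1} and splice the resulting $X_0$ into position via an $\ell_1$-direct sum; your two-case split (either $B\setminus\mathbf n$ carries enough $\ell_1$-mass to absorb $\|1_A\|\le m$ by cardinality, or $|B\cap\mathbf n|\ge\epsilon m+|A|$ so the $(1+\epsilon,\mathbb N)$-conservativity of $X_0$ applies to $(\iota(A),\iota(B\cap\mathbf n))$) is clean and the bookkeeping is transparent. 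The gain is modularity: once the $\mathbb N$-case is available, arbitrary $\mathbf n$ costs only a paragraph. The cost is the black-box dependence: you should verify that the basis furnished by \cite[Theorem~1.5]{C1} is genuinely $1$-unconditional, since the present paper does not assert this and your closing ``re-run with $1$-unconditional building blocks'' remark would otherwise need to be carried out in detail---at which point the saving over the paper's self-contained construction largely evaporates.
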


\begin{proof}
Fix $\lambda > 1$. Choose a subsequence $\mathbf n' = (n_{k_j})$ of $\mathbf n$ such that 
\begin{equation}\label{ee60}
    \ln(\lceil (\lambda - 1)k_j\rceil) \ \ge\ 2\sqrt{j}.
\end{equation}
For each finite set $F\subset\mathbb N$, define the weight sequences $(w^{F}_n)_{n}$ as
$$w^F_n\ =\ \begin{cases}\frac{1}{\sqrt{n}} &\mbox{ if }F\subset\mathbf n',\\ \frac{1}{n} &\mbox{ if }F\not\subset \mathbf n'.\end{cases}$$
Let $X$ be the completion of $c_{00}$ under the following norm: for $x = (x_1, x_2, \ldots)$, 
$$\|x\|\ =\ \sup\left\{\sum_{i\in F}w^{F}_{\pi(i)}|x_i|: \mbox{ finite }F\subset\mathbb{N}\mbox{ and } \pi: F\rightarrow \{1, \ldots, |F|\} \mbox{ is a bijection}\right\}.$$
Let $\mathcal{B}$ be the canonical basis, which is normalized and $1$-unconditional. 

i) $\mathcal{B}$ is not ($\mathbf n$, conservative) and thus, not ($\mathbf n$, strong partially greedy): choose $N\in\mathbb{N}$, $A_N = \{n_{k_1}, n_{k_2}, \ldots, n_{k_N}\}$ and $B_N\subset \{i\in \mathbf n: i\notin \mathbf n'\}$ with $|B_N| = N$ and $B_N > n_{k_N}$. Clearly, $(A_N, B_N)\in \mathbb{T}(\mathbf n)$; however, 
$$\|1_{A_N}\| \ =\ \sum_{i=1}^N\frac{1}{\sqrt{i}}\mbox{ and }\|1_{B_N}\|\ =\ \sum_{i=1}^N\frac{1}{i}.$$
Since $\|1_{A_N}\|/\|1_{B_N}\|\rightarrow\infty$ as $N\rightarrow\infty$, our basis $\mathcal{B}$ is not ($\mathbf n$, conservative).

ii) $\mathcal{B}$ is ($\lambda$, $\mathbf n$, conservative) and thus, ($\lambda$, $\mathbf n$, strong partially greedy): Let $(A, B)\in \mathbb{T}(\mathbf n)$ with $|A| + (\lambda - 1)\iota(\max A)\le |B|$. Pick a finite set $F\subset\mathbb{N}$ and a bijection $\pi: F\rightarrow \{1, \ldots, |F|\}$. If $F\not\subset \mathbf n'$, then 
$$\sum_{i\in F}w^{F}_{\pi(i)}|e_i^*(1_A)|\ \le\ \sum_{i = 1}^{|A|}\frac{1}{i}\ \le\ \sum_{i=1}^{|B|}\frac{1}{i} \ \le\ \|1_B\|.$$
If $F\subset\mathbf n'$, then write $F = \{n_{k_{j_1}}, n_{k_{j_2}}, \ldots, n_{k_{j_s}}\}$. Without loss of generality, assume that 
$F\subset A$ (because for $i\in F\backslash A$, $|e_i^*(1_A)| = 0$).
Then \begin{align*}&|B|\ \ge\ (\lambda -1)\iota(\max A)\ \ge\ (\lambda-1)\iota(n_{k_{j_s}}) \ =\ (\lambda-1)k_{j_s}, \mbox{ and}\\&\sum_{i\in F}w^{F}_{\pi(i)}|e_i^*(1_A)|\ \le\ \sum_{i=1}^{s}\frac{1}{\sqrt{i}}\ \le\ 2\sqrt{s}.\end{align*}
We have
\begin{align*}\|1_B\|\ \ge\ \sum_{i=1}^{|B|}\frac{1}{i}\ \ge\ \sum_{i=1}^{\lceil (\lambda - 1)k_s\rceil}\frac{1}{i}&\ =\ \int_{1}^{\lceil (\lambda - 1)k_s\rceil+1}\frac{dx}{x}\\
&\ \ge\ \ln(\lceil (\lambda - 1)k_s\rceil)\ \stackrel{\eqref{ee60}}{\ge}\ 2\sqrt{s}\ \ge\ \sum_{i\in F}w^{F}_{\pi(i)}|e_i^*(1_A)|.\end{align*}
Letting $F$ and $\pi$ vary, we get $\|1_A\|\ \le\ \|1_B\|$. We have shown that $\mathcal{B}$ is ($\lambda$, $\mathbf n$, strong partially greedy) for some $\lambda > 1$. To show that $\mathcal{B}$ is ($\lambda$, $\mathbf n$, strong partially greedy) for all $\lambda > 1$, we verify that $\mathcal{B}$ is right-skewed and resort to Lemma \ref{cc20}.

iii) $\mathcal{B}$ is right-skewed: pick a finite set $A\subset\mathbb{N}$. Choose $B\subset \{i\in \mathbf n: i\notin\mathbf n'\}$ such that $B> A$ and $|B| = |A|$. Then 
$$\|1_B\|\ = \ \sum_{i=1}^{|B|}\frac{1}{i} \ =\ \sum_{i=1}^{|A|}\frac{1}{i} \ \le\ \|1_A\|.$$
This completes our proof.
\end{proof}

\section{Weighted ($\mathbf n$, strong partially greedy) bases}\label{weight}
\subsection{The theory of weighted greedy-type bases}
As a variant of greedy-type bases, researchers have studied the sequentially weighted version, where
an arbitrary weight sequence $\zeta = (s_n)_{n=1}^\infty\in (0,\infty)^{\mathbb{N}}$ is used to ``weigh" each subset of $\mathbb{N}$. For $A\subset\mathbb{N}$, the $\zeta$-measure of $A$ is $\zeta(A) = \sum_{i\in A}s_i$. Let $\eta > 0$ and define
$$\sigma^{\zeta}_{\eta}(x)\ :=\ \inf\left\{\left\|x-\sum_{n\in A}a_ne_n\right\|: |A| < \infty, \zeta(A)\le \eta, a_n\in\mathbb{F}\right\}$$
and 
$$\widetilde{\sigma}^{\zeta}_{\eta}(x)\ :=\ \inf\left\{\left\|x-P_A(x)\right\|: |A| < \infty, \zeta(A)\le \eta\right\}.$$
\begin{defi}\normalfont
A basis is $\zeta$-greedy if there exists a constant $\mathbf C \ge 1$ such that
$$\|x-P_A(x)\|\ \le\ \mathbf C\sigma^{\zeta}_{\zeta(A)}(x), \forall x\in X, \forall A\in G(x, m)\mbox{ for some }m\in\mathbb{N}.$$

A basis is $\zeta$-almost greedy if there exists a constant $\mathbf C \ge 1$ such that
$$\|x-P_A(x)\|\ \le\ \mathbf C\widetilde{\sigma}^{\zeta}_{\zeta(A)}(x), \forall x\in X, \forall A\in G(x, m)\mbox{ for some }m\in\mathbb{N}.$$
\end{defi}

\begin{defi}\normalfont
A basis is $\zeta$-democratic if there exists a constant $\mathbf C\ge 1$ such that 
$$\|1_A\|\ \le\ C\|1_B\|,$$
for all finite sets $A, B$ with $\zeta(A)\le \zeta(B)$.
\end{defi}

Among other results, Dilworth et al. \cite{DKTW} characterized $\zeta$-almost greedy bases as being quasi-greedy and $\zeta$-democratic (\cite[Theorem 2.6]{DKTW}). Later, Bern\'{a} et al. \cite{BDKOW} introduced the so-called $\zeta$-Property (A) and showed that a basis $\mathcal{B}$ is $\zeta$-greedy ($\zeta$-almost greedy, resp.) if and only if it is unconditional (quasi-greedy, resp.) and has the $\zeta$-Property (A). Other results related to sequential weights include characterizations of weighted weak semi-greedy bases and weighted semi-greedy bases \cite{BL2, B3, DKTW} and characterizations of weighted partially greedy and weighted reverse partially greedy bases \cite{K}. Recently, the author of the present paper \cite{C2} generalized sequential weights to an arbitrary weight $\omega$ on sets and gave an example of a basis that is set-weighted greedy but is not sequence-weighted greedy for any sequence. The goal of this section is to study set-weighted ($\mathbf n$, strong partially greedy) bases. 

\begin{defi}\label{d81}\normalfont
Let $\mathcal{P}(\mathbb N)$ be the power set of $\mathbb{N}$. A weight on set is a nonnegative function $\omega: \mathcal{P}(\mathbb{N})\rightarrow [0,\infty]$ such that 
\begin{enumerate}
    \item[i)] $\omega(\emptyset) = 0$, 
    \item[ii)] $\omega(A) \in (0,\infty]$ for each nonempty set $A\subset\mathbb{N}$.
\end{enumerate}
\end{defi}

\subsection{Weighted ($\mathbf n$, strong partially greedy) bases and characterizations}

For each $m\ge 1$, we define $B_m := \{n_1, \ldots, n_m\}$ and $B_0 = \emptyset$. Also let 
$\mathbb T^{\omega}(\mathbf n)$ be the collection of all ordered pairs of finite sets $(A, B)$ with $A\subset\mathbf n$, $\omega(A)\le \omega(B)$, and $A < B\cap \mathbf n$.

\begin{defi}\normalfont
A basis is said to be $\omega$-($\mathbf n$, strong partially greedy) if there exists $\mathbf C\ge 1$ such that for all $x\in X$, $m\in\mathbb{N}$, and $A\in G(x, m)$, we have
\begin{equation}\label{e500}\|x-P_A(x)\|\ \le\ \mathbf C\widehat{\sigma}^{\mathbf n,\omega}_{A}(x),\end{equation}
where 
$$\widehat{\sigma}^{\mathbf n, \omega}_A(x)\ :=\ \inf\{\|x-P_{B_m}(x)\|:\omega(B_m\backslash A)\le \omega(A\backslash B_m)\}.$$
The smallest $\mathbf C$ satisfying \eqref{e500} is denoted by $\mathbf C^{\omega}_{\mathbf n, sp}$.
\end{defi}

\begin{defi}\normalfont\label{d21}
A basis is $\omega$-($\mathbf n$, PSLC) if there exists a constant $\mathbf C\ge 1$ such that
\begin{equation}\label{e51}\|x+ 1_{\varepsilon A}\|\ \le\ \mathbf C\|x + 1_{\delta B}\|,\end{equation}
for all $x\in X$ with $\|x\|_\infty \le 1$, for all $(A, B)\in \mathbb{T}^{\omega}(\mathbf n)$ with $A < (B\sqcup \supp(x))\cap \mathbf n$, and for all signs $\varepsilon, \delta$. The smallest $\mathbf C$ for which \eqref{e51} holds is denoted by $\Delta^{\omega}_{\mathbf n, pl}$. 
\end{defi}

We have an useful reformulation of $\omega$-($\mathbf n$, PSLC), whose proof is similar to the proof of Lemma \ref{l2}.

\begin{lem}\label{l30}
A basis $\mathcal{B}$ is $\Delta^{\omega}_{\mathbf n, pl}$-$\omega$-($\mathbf n$, PSLC) if and only if 
$$\|x\|\ \le\ \Delta^{\omega}_{\mathbf n, pl}\|x-P_A(x) + 1_{\varepsilon B}\|,$$
for all $x\in X$ with $\|x\|_\infty\le 1$, for all signs $\varepsilon$, and for all $(A, B)\in \mathbb{T}^{\omega}(\mathbf n)$ with $A < (\supp(x-P_A(x))\sqcup B)\cap \mathbf n$.
\end{lem}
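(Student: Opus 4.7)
The proof proceeds in exact parallel to that of Lemma \ref{l2}: the weight $\omega$ enters only through the admissibility class $\mathbb{T}^{\omega}(\mathbf n)$, while every norm-level manipulation is unchanged. No new obstacle arises.

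For the forward direction, assume the displayed inequality holds. Given $x, A, B, \varepsilon, \delta$ satisfying the hypotheses of Definition \ref{d21}, set $y:=x+1_{\varepsilon A}$. Since $A\subset\mathbf n$ and $A<(B\sqcup\supp(x))\cap\mathbf n$, we have $A\cap\supp(x)=\emptyset$, so $\|y\|_\infty\le 1$ and $y-P_A(y)=x$. Hence the quadruple $(y,A,B,\delta)$ meets the conditions of the assumed inequality: membership $(A,B)\in\mathbb{T}^{\omega}(\mathbf n)$ is exactly the weight hypothesis, and $A<(\supp(y-P_A(y))\sqcup B)\cap\mathbf n=(\supp(x)\sqcup B)\cap\mathbf n$ is given. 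Applying the inequality yields
$$\|x+1_{\varepsilon A}\|=\|y\|\le\Delta^{\omega}_{\mathbf n,pl}\|y-P_A(y)+1_{\delta B}\|=\Delta^{\omega}_{\mathbf n,pl}\|x+1_{\delta B}\|,$$
which is exactly the $\omega$-($\mathbf n$, PSLC) bound.

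For the reverse direction, given $x, A, B, \varepsilon$ as in the displayed inequality, I observe that the vector $(e_n^*(x))_{n\in A}$ lies in the closed convex hull of $\{(\delta_n)_{n\in A}:|\delta_n|=1\}$, because $\|x\|_\infty\le 1$ (the closed unit disk in $\mathbb{F}$ is the closed convex hull of its unit sphere). Convexity of $\|\cdot\|$ therefore gives
$$\|x\|=\left\|x-P_A(x)+\sum_{n\in A}e_n^*(x)e_n\right\|\le\sup_{\delta}\|x-P_A(x)+1_{\delta A}\|.$$
For each sign $\delta$, I would then apply $\omega$-($\mathbf n$, PSLC) with source vector $x-P_A(x)$, which satisfies $\|x-P_A(x)\|_\infty\le\|x\|_\infty\le 1$, and pair $(A,B)\in\mathbb{T}^{\omega}(\mathbf n)$. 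The required support condition $A<(\supp(x-P_A(x))\sqcup B)\cap\mathbf n$ is exactly our hypothesis. This produces the bound $\|x-P_A(x)+1_{\delta A}\|\le\Delta^{\omega}_{\mathbf n,pl}\|x-P_A(x)+1_{\varepsilon B}\|$ uniformly in $\delta$, which together with the previous display closes the argument. The only step meriting any care is the convexity reduction, but it is standard; the weighted case inherits the proof of Lemma \ref{l2} verbatim up to relabeling.
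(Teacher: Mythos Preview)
Your proof is correct and follows exactly the template of Lemma~\ref{l2}, which is precisely what the paper does (it merely states that the proof is similar to that of Lemma~\ref{l2}). Your added justifications---that $A\cap\supp(x)=\emptyset$ follows from $A\subset\mathbf n$ and $A<(\supp(x)\sqcup B)\cap\mathbf n$, and the explicit convexity reduction---are correct elaborations of steps the paper leaves implicit.
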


\begin{thm}\label{m10}
Let $\mathcal{B}$ be a basis and $\omega$ be a weight on subsets of $\mathbb{N}$.
\begin{enumerate}
    \item [i)] If $\mathcal{B}$ is $\mathbf C^{\omega}_{\mathbf n, sp}$-$\omega$-($\mathbf n$, strong partially greedy), then $\mathcal{B}$ is $\mathbf C^{\omega}_{\mathbf n, sp}$-suppression quasi-greedy and $\mathbf C^{\omega}_{\mathbf n, sp}$-$\omega$-($\mathbf n$, PSLC).
    \item [ii)] If $\mathcal{B}$ is $\mathbf C_\ell$-suppression quasi-greedy and $\Delta^{\omega}_{\mathbf n,pl}$-$\omega$-($\mathbf n$, PSLC), then $\mathcal{B}$ is $\mathbf C_\ell \Delta^{\omega}_{\mathbf n,pl}$-$\omega$-($\mathbf n$, strong partially greedy). 
\end{enumerate}
\end{thm}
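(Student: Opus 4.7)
The plan is to adapt the proof of Theorem \ref{m1} (carried out via Propositions \ref{p1}, \ref{p2}, and Lemma \ref{l2}) to the weighted setting, noting that in every place where the cardinality condition $|A|\le |B|$ appears one can substitute the weight condition $\omega(A)\le\omega(B)$ without further change. As an essential ingredient, I will first record the weighted analog of Lemma \ref{l2}: a basis $\mathcal{B}$ is $\Delta^{\omega}_{\mathbf n,pl}$-$\omega$-($\mathbf n$, PSLC) if and only if
$$\|x\|\ \le\ \Delta^{\omega}_{\mathbf n,pl}\|x-P_A(x)+1_{\varepsilon B}\|$$
for all admissible choices, which is precisely the content of Lemma \ref{l30} and can be proved exactly as Lemma \ref{l2}.

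For part (i), the quasi-greedy bound is immediate by taking $k=0$ in the definition of $\widehat{\sigma}^{\mathbf n,\omega}_A(x)$, since $\omega(\emptyset)=0\le\omega(A)$. To get $\omega$-($\mathbf n$, PSLC), I will mimic Proposition \ref{p1}(ii): given $x,A,B,\varepsilon,\delta$ as in Definition \ref{d21}, set $n_m=\max A$, $D=\{n_1,\dots,n_m\}\setminus A$, and $y=1_{\varepsilon A}+1_D+x+1_{\delta B}$. The disjointness of $A,D,B,\supp(x)$ (which follows from the hypotheses and $A,D\subset\mathbf n$) gives $y-P_{D\cup B}(y)=x+1_{\varepsilon A}$ and $y-P_{B_m}(y)=x+1_{\delta B}$; since all nonzero coefficients of $y$ off $\supp(x)$ have modulus $1$, the set $D\cup B$ is a greedy set of $y$ of order $|D\cup B|$. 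The condition $(A,B)\in\mathbb T^\omega(\mathbf n)$ gives $\omega(A)\le\omega(B)$, which translates to $\omega(B_m\setminus(D\cup B))=\omega(A)\le\omega(B)=\omega((D\cup B)\setminus B_m)$, so $B_m$ is legal in the infimum defining $\widehat{\sigma}^{\mathbf n,\omega}_{D\cup B}(y)$.

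For part (ii), I will follow Proposition \ref{p2}. Fix $x,m,A\in G(x,m)$, and $k$ with $\omega(B_k\setminus A)\le\omega(A\setminus B_k)$. Set $E=B_k\setminus A$, $F=A\setminus B_k$, and $\alpha=\min_{n\in A}|e_n^*(x)|$. Then $(E,F)\in\mathbb T^\omega(\mathbf n)$ (since $E\subset\mathbf n$, $\omega(E)\le\omega(F)$, and $E<F\cap\mathbf n$ by construction), and the identity $x-P_A(x)-P_E(x)+P_F(x)=x-P_{B_k}(x)$ holds by elementary set manipulation. Applying Lemma \ref{l30} to $x-P_A(x)$, $E$, $F$ with sign $\varepsilon=(\sgn(e_n^*(x)))_n$ and then invoking the truncation operator bound of Theorem \ref{bto} yields
$$\|x-P_A(x)\|\ \le\ \Delta^{\omega}_{\mathbf n,pl}\|T_\alpha(x-P_A(x)-P_E(x)+P_F(x))\|\ \le\ \Delta^{\omega}_{\mathbf n,pl}\mathbf C_\ell\|x-P_{B_k}(x)\|.$$

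The step that requires the most care is the bookkeeping of disjointness: since the weight $\omega$ is entirely abstract, it is tempting to think the disjointness plays a delicate role, but in fact all disjointness facts (e.g.\ $D\cap\supp(x)=\emptyset$, $A\cap\supp(x)=\emptyset$, $B\cap D=\emptyset$) follow from the containment $A,D\subset\mathbf n$ combined with the order condition $A<(B\sqcup\supp(x))\cap\mathbf n$, exactly as in the unweighted case. Once this is verified, there is no genuine obstacle: the weighted proof is a transcription of the unweighted one in which the arithmetic condition $|A|\le|B|$ is replaced throughout by $\omega(A)\le\omega(B)$.
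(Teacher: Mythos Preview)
Your proposal is correct and follows essentially the same argument as the paper's own proof. In both parts you reproduce the paper's construction exactly: for (i) you build $y=1_{\varepsilon A}+1_D+x+1_{\delta B}$ with $D=B_m\setminus A$ and compare the greedy projection onto $D\cup B$ with the $\mathbf n$-projection onto $B_m$; for (ii) you set $E=B_k\setminus A$, $F=A\setminus B_k$, invoke Lemma~\ref{l30} (the weighted analog of Lemma~\ref{l2}), and finish with the truncation-operator bound from Theorem~\ref{bto}---precisely the paper's route.
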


\begin{proof}
(i) Let $\mathcal{B}$ be $\mathbf C^{\omega}_{\mathbf n, sp}$-$\omega$-($\mathbf n$, strong partially greedy). Let $x\in X$, $m\in\mathbb{N}$, and $\Lambda \in G(x, m)$.  We have
$$\|x-P_{\Lambda}(x)\|\ \le\ \mathbf C^\omega_{\mathbf n,sp}\widehat{\sigma}^{\mathbf n,\omega}_{\Lambda}(x)\ \le\ \mathbf C^{\omega}_{\mathbf n, sp}\|x\|,$$
which shows that $\mathcal{B}$ is $\mathbf C^\omega_{\mathbf n, sp}$-suppression quasi-greedy. To show that $\mathcal{B}$ is $\mathbf C_{\mathbf n, sp}^\omega$-$\omega$-($\mathbf n$, PSLC), we choose $x, A, B, \varepsilon, \delta$ as in Definition \ref{d21}. Let $\max A = n_s$ and $y := 1_{\varepsilon A} + 1_D + x + 1_{\delta B}$, where $D = \{n_1, n_2, \ldots, n_s\}\backslash A$.
Since $B\cup D$ is a greedy set of $y$ and $w(A)\le w(B)$, we have
\begin{align*}\|x+1_{\varepsilon A}\|\ =\ \|y-P_{B\cup D}(y)\|&\ \le\ \mathbf C^\omega_{\mathbf n, sp}\widehat{\sigma}^{\mathbf n, \omega}_{B\cup D}(y)\\
&\ \le\ \mathbf C^\omega_{\mathbf n, sp}\|y - P_{A\cup D}(y)\|\ =\ \mathbf C^\omega_{\mathbf n, sp}\|x+1_{\delta B}\|.\end{align*}

(ii) Assume that $\mathcal{B}$ is $\mathbf C_\ell$-suppression quasi-greedy and is $\Delta^{\omega}_{\mathbf n,pl}$-$\omega$-($\mathbf n$, PSLC). Let $x\in X$, $m\in\mathbb N$, and $\Lambda\in G(x, m)$. Fix $B_k$ with $\omega(B_k\backslash \Lambda)\le \omega(\Lambda\backslash B_k)$. We need to show that 
$$\|x-P_{\Lambda}(x)\|\ \le\ \mathbf C_\ell \Delta^{\omega}_{\mathbf n,pl}\|x-P_{B_k}(x)\|.$$
Set $E = B_k\backslash \Lambda$, $F = \Lambda\backslash B_k$, and $\alpha = \min_{n\in \Lambda}|e_n^*(x)|$. 
By Lemma \ref{l30} and Theorem \ref{bto}, we obtain
\begin{align*}
    \|x-P_{\Lambda}(x)\|&\ \le\ \Delta^{\omega}_{\mathbf n,pl}\left\|x-P_{\Lambda}(x)-P_E(x) + \alpha\sum_{n\in F}\sgn(e_n^*(x))e_n\right\|\\
    &\ =\ \Delta^{\omega}_{\mathbf n,pl}\left\|T_\alpha\left(x-P_{\Lambda}(x)-P_E(x) + P_F(x)\right)\right\|\\
    &\ \le\ \mathbf C_\ell \Delta^{\omega}_{\mathbf n,pl}\|x-P_{B_k}(x)\|,
\end{align*}
as desired.
\end{proof}

\begin{defi}\normalfont
A basis $\mathcal{B}$ is $\omega$-($\mathbf n$, superconservative) if there exists a constant $\mathbf C > 0$ such that 
\begin{equation}\label{e80}\|1_{\varepsilon A}\|\ \le\ \mathbf C\|1_{\delta B}\|,\end{equation}
for all $(A, B)\in \mathbb T^{\omega}(\mathbf n)$ and for all signs $\varepsilon, \delta$. The smallest $\mathbf C$ for \eqref{e80} to hold is denoted by $\Delta^{\omega}_{\mathbf n, sc}$. If \eqref{e80} holds for $\varepsilon \equiv \delta \equiv 1$, then $\mathcal{B}$ is said to be $\omega$-($\mathbf n$, conservative), and the smallest constant in this case is denoted by $\Delta^{\omega}_{\mathbf n, c}$. 
\end{defi}

Similar to Lemma \ref{l10}, we have

\begin{lem}\label{l80}
If $\mathcal{B}$ is $\mathbf C_q$-quasi-greedy and $\Delta^{\omega}_{\mathbf n, sc}$-$\omega$-($\mathbf n$, superconservative), then $\mathcal{B}$ is $\Delta^{\omega}_{\mathbf n, pl}$-$\omega$-($\mathbf n$, PSLC) with $\Delta^{\omega}_{\mathbf n, pl}\le 1+\mathbf C_q + \Delta^{\omega}_{\mathbf n, sc}\mathbf C_q$. 
\end{lem}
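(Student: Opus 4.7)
The plan is to mimic the proof of Lemma \ref{l10} almost verbatim, since the only change between Lemma \ref{l10} and Lemma \ref{l80} is the replacement of the cardinality condition $|A|\le|B|$ by the weighted condition $\omega(A)\le\omega(B)$ in the definition of $\mathbb{T}^\omega(\mathbf n)$, and this has no effect on the triangle-inequality argument used previously. Concretely, I will fix $x\in X$ with $\|x\|_\infty\le 1$, a pair $(A,B)\in\mathbb{T}^\omega(\mathbf n)$ with $A<(B\sqcup\supp(x))\cap\mathbf n$, and signs $\varepsilon,\delta$, and then bound $\|x+1_{\varepsilon A}\|$ by $\|x+1_{\delta B}\|$ directly.

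First, since $A<(B\sqcup\supp(x))\cap\mathbf n$ guarantees $A\cap B=\emptyset$ and $A\cap\supp(x)=\emptyset$, the sum $1_{\delta B}$ is a greedy sum of order $|B|$ for $x+1_{\delta B}$ (its coefficients have modulus $1$ while the remaining coefficients have modulus at most $1$). Applying the suppression quasi-greedy property (Definition \ref{d3}) gives
\[
\|x\|\ =\ \|(x+1_{\delta B})-1_{\delta B}\|\ \le\ \mathbf C_q\|x+1_{\delta B}\|,
\]
and also $\|1_{\delta B}\|\le \mathbf C_q\|x+1_{\delta B}\|$ (equivalently, the first bound also follows from the triangle inequality $\|x\|\le\|x+1_{\delta B}\|+\|1_{\delta B}\|\le(1+\mathbf C_q)\|x+1_{\delta B}\|$, as in Lemma \ref{l10}).

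Next, since $(A,B)\in\mathbb{T}^\omega(\mathbf n)$, the $\omega$-($\mathbf n$, superconservative) hypothesis yields
\[
\|1_{\varepsilon A}\|\ \le\ \Delta^\omega_{\mathbf n, sc}\|1_{\delta B}\|\ \le\ \Delta^\omega_{\mathbf n, sc}\mathbf C_q\|x+1_{\delta B}\|.
\]
Combining with the triangle inequality $\|x+1_{\varepsilon A}\|\le\|x\|+\|1_{\varepsilon A}\|$ gives the desired estimate
\[
\|x+1_{\varepsilon A}\|\ \le\ (1+\mathbf C_q+\Delta^\omega_{\mathbf n, sc}\mathbf C_q)\|x+1_{\delta B}\|,
\]
which is precisely the $\omega$-($\mathbf n$, PSLC) inequality with the claimed constant.

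There is essentially no obstacle; the only minor point worth checking is that the definition $\mathbb{T}^\omega(\mathbf n)$ indeed supplies everything the superconservative inequality requires (namely $A\subset\mathbf n$, $\omega(A)\le\omega(B)$, and $A<B\cap\mathbf n$), all of which are built into Definition \ref{d21}. Since the quasi-greedy step is insensitive to whether one measures the size of $B$ by cardinality or by $\omega$, the proof of Lemma \ref{l10} transfers with no genuine change.
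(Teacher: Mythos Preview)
Your proof is correct and follows exactly the approach the paper intends: the paper gives no separate proof for Lemma~\ref{l80} but simply points back to Lemma~\ref{l10}, and your argument reproduces that triangle-inequality computation verbatim with $\omega(A)\le\omega(B)$ replacing $|A|\le|B|$. One tiny notational slip: the bound $\|x\|=\|(x+1_{\delta B})-1_{\delta B}\|\le \mathbf C_q\|x+1_{\delta B}\|$ as you first state it would use the \emph{suppression} quasi-greedy constant $\mathbf C_\ell$, not $\mathbf C_q$; but since you immediately supply the triangle-inequality route $\|x\|\le(1+\mathbf C_q)\|x+1_{\delta B}\|$ from Lemma~\ref{l10} and use that in the final combination, the stated constant $1+\mathbf C_q+\Delta^{\omega}_{\mathbf n, sc}\mathbf C_q$ is obtained correctly.
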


\begin{thm}\label{m20}
Let $\mathcal{B}$ be a basis and $\omega$ be a weight on subsets of $\mathbb{N}$. The following are equivalent:
\begin{enumerate}
    \item [i)] $\mathcal{B}$ is $\omega$-($\mathbf n$, strong partially greedy).
    \item [ii)] $\mathcal{B}$ is quasi-greedy and $\omega$-($\mathbf n$, PSLC).
    \item [iii)] $\mathcal{B}$ is quasi-greedy and $\omega$-($\mathbf n$, superconservative).
    \item [iv)] $\mathcal{B}$ is quasi-greedy and $\omega$-($\mathbf n$, conservative). 
\end{enumerate}
\end{thm}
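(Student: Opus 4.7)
The plan is to follow the same logical skeleton used for Theorem \ref{m1}, assembling the equivalence via the pieces that are already in place for the weighted setting. The cycle will be i) $\Longleftrightarrow$ ii), ii) $\Longrightarrow$ iii), iii) $\Longleftrightarrow$ iv), and iii) $\Longrightarrow$ ii), at which point all four items are shown to be mutually equivalent.

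First I would handle i) $\Longleftrightarrow$ ii), which is essentially a direct application of Theorem \ref{m10}. For the forward direction, item i) of Theorem \ref{m10} gives that $\omega$-($\mathbf n$, strong partially greedy) implies both suppression quasi-greedy (hence quasi-greedy) and $\omega$-($\mathbf n$, PSLC). For the reverse direction, item ii) of Theorem \ref{m10} shows that a $\mathbf C_\ell$-suppression quasi-greedy and $\Delta^{\omega}_{\mathbf n,pl}$-$\omega$-($\mathbf n$, PSLC) basis is $\omega$-($\mathbf n$, strong partially greedy). Since quasi-greediness implies suppression quasi-greediness (with a comparable constant), this step closes the loop.

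Next I would argue ii) $\Longrightarrow$ iii). Specializing Definition \ref{d21} to $x = 0$ forces $\|1_{\varepsilon A}\| \le \Delta^{\omega}_{\mathbf n,pl}\|1_{\delta B}\|$ for every $(A,B) \in \mathbb{T}^\omega(\mathbf n)$ and all signs, which is exactly $\omega$-($\mathbf n$, superconservative) with $\Delta^{\omega}_{\mathbf n, sc} \le \Delta^{\omega}_{\mathbf n, pl}$. Then iii) $\Longleftrightarrow$ iv) is handled as in Remark \ref{r1}: the implication iii) $\Longrightarrow$ iv) is immediate by specializing to $\varepsilon \equiv \delta \equiv 1$, while the reverse direction uses the UL property of quasi-greedy bases, exactly as in the unweighted Remark \ref{r1}, to upgrade conservativeness to superconservativeness (the argument does not depend on $\mathbf n$ or the specific weight $\omega$, only on the pair $(A,B)$ satisfying the relevant inclusion/ordering, which is unaffected by signs).

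Finally, iii) $\Longrightarrow$ ii) is precisely Lemma \ref{l80}, which transfers the superconservative estimate to a PSLC estimate at the cost of the quasi-greedy constant. Closing the implications gives the full equivalence. I do not anticipate any real obstacle here: every needed quantitative step has already been established in Theorem \ref{m10}, Lemma \ref{l80}, and Remark \ref{r1}, and the argument is essentially the weighted mirror of the proof of Theorem \ref{m1}. The only point to be slightly careful about is checking that the UL argument in Remark \ref{r1} carries over without modification, but this is routine since $\mathbb{T}^\omega(\mathbf n)$ is defined purely in terms of sets (not signs), so the superconservative-from-conservative upgrade is identical.
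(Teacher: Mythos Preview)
Your proposal is correct and follows essentially the same approach as the paper: the paper's proof invokes Theorem \ref{m10} for i) $\Longleftrightarrow$ ii), uses the definition together with Lemma \ref{l80} for ii) $\Longleftrightarrow$ iii), and cites Remark \ref{r1} for iii) $\Longleftrightarrow$ iv). Your elaboration of each step (specializing to $x=0$, noting that the UL argument is sign-independent) is accurate and matches the paper's reasoning.
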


\begin{proof}
That i) $\Longleftrightarrow$ ii) is due to Theorem \ref{m10}. By definition and Lemma \ref{l80}, ii) $\Longleftrightarrow$ iii). Finally, iii) $\Longleftrightarrow$ iv) is due to Remark \ref{r1}.
\end{proof}

\begin{cor}\label{cc1}
A basis $\mathcal{B}$ is quasi-greedy if and only if $\mathcal{B}$ is $\omega$-($\mathbf n$, strong partially greedy) for some $\omega$.
\end{cor}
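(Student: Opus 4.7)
The backward implication is immediate from Theorem \ref{m20}: any $\omega$-($\mathbf n$, strong partially greedy) basis is already quasi-greedy, regardless of $\omega$.

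For the forward implication, suppose $\mathcal{B}$ is quasi-greedy. The plan is to construct a weight $\omega$ whose very definition trivializes the $\omega$-($\mathbf n$, conservative) condition, so that being quasi-greedy is all that remains to check when invoking Theorem \ref{m20}. Concretely, I would set
$$\omega(S)\ :=\ \begin{cases} 0 & \text{if } S=\emptyset,\\ \|1_S\| & \text{if } 0<|S|<\infty,\\ \infty & \text{if } |S|=\infty.\end{cases}$$
The first thing to verify is that $\omega$ qualifies as a weight in the sense of Definition \ref{d81}. The only nontrivial point is positivity on finite nonempty sets: for any such $S$ and any $k\in S$, one has $\|1_S\|\ge |e_k^*(1_S)|/\|e_k^*\|=1/\|e_k^*\|\ge 1/c_2>0$, using semi-normalization from item c) of the definition of a basis.

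With this weight, the membership clause $\omega(A)\le\omega(B)$ in the definition of $\mathbb{T}^{\omega}(\mathbf n)$ reads, for finite $A,B$, exactly as $\|1_A\|\le \|1_B\|$. Hence every $(A,B)\in\mathbb{T}^{\omega}(\mathbf n)$ with $A$ nonempty automatically satisfies the conservative inequality $\|1_A\|\le\|1_B\|$ (and the case $A=\emptyset$ is trivial since $\|1_\emptyset\|=0$). Therefore $\mathcal{B}$ is $\omega$-($\mathbf n$, conservative) with constant $1$, free of charge. Applying the equivalence iv) $\Rightarrow$ i) of Theorem \ref{m20} to the quasi-greedy and $\omega$-($\mathbf n$, conservative) basis $\mathcal{B}$ delivers the $\omega$-($\mathbf n$, strong partially greedy) property.

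No serious obstacle is expected. The one creative move is the choice of $\omega$: by letting the weight record the norm $\|1_S\|$ itself, the hypothesis side and the conclusion side of the conservative inequality collapse to the same statement. Everything else is a direct appeal to the already-established characterization in Theorem \ref{m20}.
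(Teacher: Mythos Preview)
Your proof is correct and follows essentially the same approach as the paper: both directions rely on Theorem \ref{m20}, and for the forward implication you construct exactly the same weight $\omega(S)=\|1_S\|$ on finite sets (and $\infty$ on infinite sets) so that the $\omega$-($\mathbf n$, conservative) condition becomes a tautology. Your additional verification that $\omega$ is positive on nonempty finite sets is a nice touch that the paper leaves implicit.
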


\begin{proof}
If $\mathcal{B}$ is $\omega$-($\mathbf n$, strong partially greedy) for some $\omega$, then $\mathcal{B}$ is quasi-greedy by Theorem \ref{m20}. Conversely, suppose that $\mathcal{B}$ is quasi-greedy. Define the weight $\omega$ on a set $A\subset \mathbb{N}$ 
$$\omega(A)\ =\ \begin{cases}\|1_A\| &\mbox{ if }A\mbox{ is finite,}\\ \infty &\mbox{ if }A\mbox{ is infinite}.\end{cases}$$
By Theorem \ref{m20}, we need to verify that $\mathcal{B}$ is $\omega$-($\mathbf n$, conservative). This is clearly true as for any two finite sets $A$ and $B$, we have $\|1_A\|\le \|1_B\|$ whenever $\omega(A)\le \omega(B)$ by the definition of the weight $\omega$. 
\end{proof}

\subsection{Properties of $\zeta$-($\mathbf n$, conservative) bases}
We consider sequence-weighted ($\mathbf n$, conservative) bases, which are a special case of the set-weighted version. In particular, let $\zeta = (s_n)_n\in (0,\infty)^{\mathbb{N}}$, then for each subset $A\subset\mathbb{N}$, 
$\zeta(A) := \sum_{n\in A} s_n$; that is, the weight of each set is determined by the sequence $\zeta$. Hence, we do not have the freedom of assigning weights to sets as in Definition \ref{d81}.

\begin{prop}\label{p12}
Let $\mathcal{B}$ be a basis of a Banach space $X$ and $\zeta = (s_n)_n\in (0, \infty)^{\mathbb N}$.
\begin{enumerate}
    \item[i)] Let $\mathcal B$ be $\mathbf C$-$\zeta$-($\mathbf n$, superconservative). If a finite set $A\subset\mathbf n$ has $$\zeta(A)\ \le\ \limsup_{n\rightarrow\infty}s_n,$$ then $\sup_{\varepsilon}\|1_{\varepsilon A}\|\le 2\mathbf Cc_2$.
    \item[ii)] If $\sup_n s_n = \infty$ and $\mathcal B$ is $\zeta$-($\mathbf n$, superconservative), then $(e_n)_{n\in\mathbf n}$ is equivalent to the canonical basis of $c_0$.
    \item[iii)] Let $\mathbf n\neq \mathbb{N}$. If $\inf_{n\in\mathbf n} s_n = 0$ and $\mathcal B$ is $\mathbf C$-$\zeta$-($\mathbf n$, superconservative), then $\mathcal B$ has a subsequence equivalent to the canonical basis of $c_0$.
\end{enumerate}
\end{prop}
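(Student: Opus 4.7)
The plan is to prove all three parts by exhibiting suitable pairs in $\mathbb{T}^{\zeta}(\mathbf{n})$, and then, for the $c_0$-equivalences in (ii) and (iii), passing from sign vectors to arbitrary scalars by a convex combination argument.

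For part (i), fix a nonempty finite $A\subset\mathbf{n}$ with $\zeta(A)\le\limsup_{n\to\infty}s_n$. Using the definition of $\limsup$, I would pick $\eta\in(0,\zeta(A)/2)$ and then find indices $n_1<n_2$ in $(\max A,\infty)$ with $s_{n_j}>\zeta(A)-\eta$, so that $\zeta(\{n_1,n_2\})>2\zeta(A)-2\eta\ge\zeta(A)$. Setting $B=\{n_1,n_2\}$, any $n_j\in\mathbf{n}$ exceeds $\max A$, so $A<B\cap\mathbf{n}$ and hence $(A,B)\in\mathbb{T}^{\zeta}(\mathbf{n})$. The $\zeta$-($\mathbf{n}$, superconservative) property then yields
$$\|1_{\varepsilon A}\|\le\mathbf{C}\|1_{\delta B}\|\le\mathbf{C}(\|e_{n_1}\|+\|e_{n_2}\|)\le 2\mathbf{C}c_2.$$

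For part (ii), first observe that $\sup_n s_n=\infty$ forces $\limsup_n s_n=\infty$: otherwise some tail of $(s_n)$ would be bounded, making the whole sequence bounded. Every finite $A\subset\mathbf{n}$ thus satisfies $\zeta(A)<\limsup s_n$, and part (i) gives $\sup_\varepsilon\|1_{\varepsilon A}\|\le 2\mathbf{C}c_2$. For arbitrary scalars $(a_n)_{n\in A}$ with $M:=\max_n|a_n|$, I would write $(a_n/M)$ as a convex combination of sign vectors — the closed unit polydisk/cube in $\mathbb{F}^A$ is the convex hull of its extreme points $\{\varepsilon:|\varepsilon_n|=1\}$ — and apply the triangle inequality to obtain
$$\left\|\sum_{n\in A}a_ne_n\right\|\le M\sup_\varepsilon\|1_{\varepsilon A}\|\le 2\mathbf{C}c_2 M.$$
The reverse bound $M\le c_2\|\sum a_n e_n\|$ is immediate from biorthogonality, so $(e_n)_{n\in\mathbf{n}}$ is equivalent to the canonical basis of $c_0$.

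For part (iii), pick any $\ell\in\mathbb{N}\setminus\mathbf{n}$. The hypothesis $\inf_{n\in\mathbf{n}}s_n=0$ forces $\inf_{n\in\mathbf{n},\,n>N}s_n=0$ for every $N$ (otherwise a positive lower bound on a tail together with the finite minimum over $\mathbf{n}\cap[1,N]$ would give a positive overall infimum). I would then inductively select $k_1<k_2<\cdots$ in $\mathbf{n}\cap(\ell,\infty)$ with $s_{k_j}<s_\ell/2^j$. For any finite $F\subset\{k_j\}_j$ one has $\zeta(F)\le\sum_{j\ge 1}s_\ell/2^j=s_\ell=\zeta(\{\ell\})$, while $F<\{\ell\}\cap\mathbf{n}=\emptyset$ vacuously; thus $(F,\{\ell\})\in\mathbb{T}^{\zeta}(\mathbf{n})$ and $\|1_{\varepsilon F}\|\le\mathbf{C}\|e_\ell\|\le\mathbf{C}c_2$. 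The convex combination argument from (ii) then shows $(e_{k_j})_j$ is equivalent to the $c_0$ basis. The only delicate points I foresee are the borderline case $\zeta(A)=\limsup s_n$ in (i), which is why $B$ must be chosen with two elements rather than one, and managing constants in the convex combination step — neither poses a serious obstacle.
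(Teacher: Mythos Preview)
Your proposal is correct and follows essentially the same approach as the paper's proof: in (i) you pick a two-element set $B=\{n_1,n_2\}$ beyond $A$ with combined weight exceeding $\zeta(A)$ and apply superconservativity; in (ii) you invoke (i) plus the standard convexity argument; in (iii) you pick $\ell\notin\mathbf n$ and a subsequence of $\mathbf n$ with summable weights below $s_\ell$, exploiting that $\{\ell\}\cap\mathbf n=\emptyset$ makes the order condition vacuous. The only differences are cosmetic: you add the unnecessary constraint $k_j>\ell$ in (iii) (and the accompanying tail-infimum argument), whereas the paper allows the subsequence anywhere in $\mathbf n$, and you spell out the convexity step that the paper leaves implicit.
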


\begin{proof}
i) Let $A\subset\mathbf n$ be a finite set with $\zeta(A)\le \limsup_{n\rightarrow\infty} s_n$. Choose $n_2 > n_1 > A$ such that $\zeta(A) < s_{n_1} + s_{n_2} = \zeta(\{n_1, n_2\})$. Since $(A, \{n_1, n_2\})\in \mathbb{T}^{\zeta}(\mathbf n)$, we obtain
$$\|1_{\varepsilon A}\|\ \le\ \mathbf C\|e_{n_1} + e_{n_2}\|\ \le\ 2\mathbf Cc_2.$$

ii) Use item i). 

iii) Pick $N\in \mathbb{N}\backslash \mathbf{n}$. Let $(n'_i)_i\subset\mathbf n$ be a sequence such that $\sum_{i} s_{n'_i} < s_N$. For any finite set $A\subset (n'_i)_i$, we have $(A, \{N\})\in \mathbb T^{\zeta}(\mathbf n)$. Hence, $\sup_{\varepsilon}\|1_{\varepsilon A}\|\le \mathbf C\|e_N\|\ \le \ \mathbf Cc_2$. Therefore, $(e_{n'_i})_i$ is equivalent to the canonical basis of $c_0$.
\end{proof}

\begin{prop}\label{p100}
Suppose that $0 < \inf_{n} s_n\le \sup_n s_n < \infty$. Then $\mathcal{B}$ is $\zeta$-($\mathbf n$, superconservative) if and only if $\mathcal{B}$ is ($\mathbf n$, superconservative). 
\end{prop}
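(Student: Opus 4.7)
The plan is to set $\alpha := \inf_n s_n > 0$ and $\beta := \sup_n s_n < \infty$, so that the sandwich $\alpha|X| \le \zeta(X) \le \beta|X|$ holds for every finite $X\subset\mathbb N$. Both implications then reduce to partition arguments that trade cardinality bounds for $\zeta$-weight bounds, with key parameter $k := \lceil\beta/\alpha\rceil$.

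For the implication ($\mathbf n$, superconservative) $\Longrightarrow$ $\zeta$-($\mathbf n$, superconservative), I will take $(A,B)\in\mathbb{T}^{\zeta}(\mathbf n)$ and observe that $\alpha|A|\le \zeta(A)\le\zeta(B)\le\beta|B|$, whence $|A|\le k|B|$. Partitioning $A$ into $k$ subsets $A_1,\dots,A_k$ with $|A_i|\le |B|$ preserves the structural conditions $A_i\subset\mathbf n$ and $A_i < B\cap\mathbf n$, so that each $(A_i,B)\in\mathbb{T}(\mathbf n)$. The unweighted hypothesis then yields $\|1_{\varepsilon A_i}\|\le\Delta_{\mathbf n,sc}\|1_{\delta B}\|$ for any signs, and a triangle inequality gives $\|1_{\varepsilon A}\|\le k\,\Delta_{\mathbf n,sc}\|1_{\delta B}\|$.

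For the converse, given $(A,B)\in\mathbb T(\mathbf n)$ (both nonempty, otherwise trivial), I will distinguish two regimes. When $|B|\le 2k$, both sets have bounded cardinality, and the semi-normalization of $\mathcal{B}$ (namely $\|1_{\varepsilon A}\|\le |A|c_2$ together with $\|1_{\delta B}\|\ge 1/c_2$, which follows by evaluating $e_m^*$ at any $m\in B$) delivers the desired bound with a universal constant. When $|B|>2k$ we have $\zeta(B)\ge \alpha|B|>2\alpha k\ge 2\beta$, so that every single element of $A$ has $\zeta$-weight at most $\zeta(B)/2$. I will then apply first-fit bin packing to the elements of $A$ with capacity $\zeta(B)$: since each item has weight at most $\zeta(B)/2$, whenever a new bin is started the previous one must already exceed $\zeta(B)/2$, so every bin except possibly the last has weight above $\zeta(B)/2$. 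As $\zeta(A)\le\beta|B|$, the number of bins $m$ therefore satisfies $m<1+2\zeta(A)/\zeta(B)\le 1+2\beta/\alpha\le 2k+1$. Each resulting pair $(A_i,B)$ lies in $\mathbb T^{\zeta}(\mathbf n)$ by construction, so the $\zeta$-hypothesis followed by a triangle inequality over $i$ produces $\|1_{\varepsilon A}\|\le(2k+1)\Delta^{\zeta}_{\mathbf n,sc}\|1_{\delta B}\|$.

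The main obstacle is this converse direction: one cannot partition $A$ solely by cardinality, because a size-based split need not yield subsets of $\zeta$-weight below $\zeta(B)$, and the alternative strategy of extending $B$ by new indices fails because one lacks norm control over pieces disjoint from $B$. The bin-packing viewpoint resolves this cleanly by partitioning according to weight, but it requires the largeness hypothesis $\zeta(B)\ge 2\beta$, which is precisely what forces the case split on $|B|$. Combining the two regimes yields the unweighted superconservative property with constant $\max\{(2k+1)\Delta^{\zeta}_{\mathbf n,sc},\,2kc_2^2\}$, completing the equivalence.
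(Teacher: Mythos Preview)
Your proof is correct and follows essentially the same approach as the paper: both directions reduce to partitioning $A$ into boundedly many pieces that satisfy the other property's hypothesis, with small cases handled via the semi-normalization constants $c_2$. Your forward direction is in fact slightly cleaner than the paper's (you avoid a separate small-$|A|$ case by partitioning $A$ directly into $k$ parts of size at most $|B|$), and in the converse you phrase the weight-based partition as first-fit bin packing where the paper writes it as a sequential greedy split, but the underlying argument and constants are the same.
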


\begin{proof}
Without loss of generality, let $0 < \alpha:= \inf_{n} s_n \le \sup_n s_n = 1$.
Assume that $\mathcal{B}$ is $\Delta_{\mathbf n, sc}$-($\mathbf n$, superconservative). Let $(A, B)\in \mathbb T^{\zeta}(\mathbf n)$ and choose signs $\varepsilon, \delta$. We have $\alpha |A|\le \zeta(A)\le \zeta(B)\le |B|$. If $|A| < 2\lceil 1/\alpha\rceil$, then 
$$\|1_{\varepsilon A}\|\ <\ 2\lceil 1/\alpha\rceil\sup_{n}\|e_n\|\ \le\ 2\lceil 1/\alpha\rceil\sup_{n}\|e_n\|\sup_{m}\|e_m^*\|\|1_{\delta B}\|\ \le\ 2\lceil 1/\alpha\rceil c_2^2 \|1_{\delta B}\|.$$
We consider the case $|A| \ge 2\lceil 1/\alpha\rceil$. Then $|B|\ge \alpha |A|\ge 2$. Partition $A$ into $N = \lceil 2/\alpha\rceil$ sets $A_1, \ldots, A_N$ such that each set $A_i$ has 
$$|A_i|\ \le\ \frac{|A|}{N} + 1\ \le\ \frac{|B|}{\alpha N} + 1\ \le\ \frac{|B|}{2} + 1\ \le\ |B|.$$
Hence, $(A_i, B)\in \mathbb T(\mathbf n)$ and
$\|1_{\varepsilon A_i}\| \le \Delta_{\mathbf n, sc}\|1_{\delta B}\|$. Therefore,
$$\|1_{\varepsilon A}\|\ \le\ \sum_{i=1}^{N}\|1_{\varepsilon A_i}\|\ \le\ \left\lceil \frac{2}{\alpha}\right\rceil\Delta_{\mathbf n, sc}\|1_{\delta B}\|.$$
We have shown that $\mathcal{B}$ is $\zeta$-($\mathbf n$, superconservative).

Assume that $\mathcal{B}$ is $\Delta^{\zeta}_{\mathbf n, sc}$-$\zeta$-($\mathbf n$, superconservative). Let $(A, B)\in \mathbb T(\mathbf n)$ and choose signs $\varepsilon, \delta$. If $\zeta(A)\le \zeta(B)$, then $(A, B)\in \mathbb T^{\zeta}(\mathbf n)$ and so, $\|1_{\varepsilon A}\|\le \Delta^{\zeta}_{\mathbf n, sc}\|1_{\delta B}\|$. Suppose that $\zeta(A) > \zeta(B)$. 

Case 1: $\zeta(B) > 2$. 
Partition $A$ into $N$
sets $A_1, A_2, \ldots, A_N$ such that $A_1 < A_2 < \cdots < A_N$,  
$\zeta(A_i)  \le \zeta(B)  < \zeta(A_i) + 1\mbox{ for }1\le i\le N-1,\mbox{ and }\zeta(A_N) \le \zeta(B)$. Observe that for $1\le i\le N-1$, 
$$\zeta(A_i)\ >\ \zeta(B) - 1\ >\ \frac{\zeta(B)}{2}.$$
Therefore,  
$$N-1\ \le\ \frac{\zeta(A)}{\zeta(B)/2}\ \le\ \frac{2|A|}{\alpha |B|}\ \le\ \frac{2}{\alpha}.$$
Since $\mathcal{B}$ is $\Delta^{\zeta}_{\mathbf n, sc}$-$\zeta$-($\mathbf n$, superconservative) and $(A_i, B)\in \mathbb{T}^{\zeta}(\mathbf n)$, we obtain
$$
    \|1_{\varepsilon A}\|\ \le\ \sum_{i=1}^N\|1_{\varepsilon A_i}\|\ \le\ \sum_{i=1}^N\Delta^{\zeta}_{\mathbf n, sc}\|1_{\delta B}\|\ \le\ \left(1+\frac{2}{\alpha}\right)\Delta^{\zeta}_{\mathbf n, sc}\|1_{\delta B}\|.
$$

Case 2: $\zeta(B) \le 2$. We have
$$\alpha |A|\ \le\ \alpha|B|\ \le\ \zeta(B)\ \le \ 2.$$
Hence, $|A|\le 2/\alpha$, which gives
$$\|1_{\varepsilon A}\|\ \le\ \frac{2}{\alpha}\sup_{n}\|e_n\|\ \le\ \frac{2}{\alpha}\sup_{n}\|e_n\|\sup_m\|e_m^*\|\|1_{\delta B}\|\ \le\ \frac{2c_2^2}{\alpha}\|1_{\delta B}\|.$$
This shows that $\mathcal{B}$ is  ($\mathbf n$, superconservative).
\end{proof}

\begin{cor}\label{c10}Let $\mathbf n\neq \mathbb{N}$ and $\zeta = (s_n)_n$ be such that either $\inf_{n} s_n  > 0$ or $\inf_{n\in\mathbf n}s_n = 0$.
If a basis $\mathcal{B} = (e_n)_n$ is $\zeta$-($\mathbf n$, strong partially greedy) for the weight sequence $\zeta = (s_n)_n$, then either $\mathcal{B}$ is ($\mathbf n$, strong partially greedy) or $\mathcal{B}$ has a subsequence that is equivalent to the canonical basis of $c_0$.
\end{cor}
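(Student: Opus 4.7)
The plan is to read off this corollary directly from Theorem \ref{m20} together with the structural results on $\zeta$-($\mathbf n$, superconservative) bases collected in Propositions \ref{p12} and \ref{p100}. Indeed, by Theorem \ref{m20}, the assumption that $\mathcal{B}$ is $\zeta$-($\mathbf n$, strong partially greedy) is equivalent to $\mathcal{B}$ being simultaneously quasi-greedy and $\zeta$-($\mathbf n$, superconservative). So throughout the argument I will freely use quasi-greediness, and the real work is to analyze the consequences of the $\zeta$-($\mathbf n$, superconservative) property under the two possible hypotheses on $\zeta$.

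I would split into two cases, mirroring the dichotomy in the statement. \textbf{Case A:} $\inf_n s_n > 0$. Here I further split on whether $\sup_n s_n$ is finite. If $\sup_n s_n = \infty$, then Proposition \ref{p12} item ii) immediately yields that $(e_n)_{n\in\mathbf n}$ is equivalent to the canonical basis of $c_0$, and this already furnishes a subsequence of $\mathcal{B}$ with the desired property. If instead $\sup_n s_n < \infty$, then $\zeta$ satisfies $0 < \inf_n s_n \le \sup_n s_n < \infty$, so Proposition \ref{p100} upgrades $\zeta$-($\mathbf n$, superconservative) to ($\mathbf n$, superconservative); combining this with quasi-greediness and invoking Theorem \ref{m1} shows that $\mathcal{B}$ is ($\mathbf n$, strong partially greedy), which is the first alternative in the conclusion.

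\textbf{Case B:} $\inf_{n\in\mathbf n} s_n = 0$. Since $\mathbf n\neq \mathbb{N}$ by hypothesis, Proposition \ref{p12} item iii) applies directly and furnishes a subsequence of $\mathcal{B}$ equivalent to the canonical basis of $c_0$, which is again the second alternative in the conclusion.

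There is no real obstacle here beyond bookkeeping: the content of the corollary is that the two structural dichotomies for $\zeta$-($\mathbf n$, superconservative) bases (bounded versus unbounded $\zeta$ in Case A, and the existence of a $c_0$-subsequence in Case B) exactly account for all the ways $\zeta$-($\mathbf n$, strong partial greediness) can fail to reduce to ($\mathbf n$, strong partial greediness). The proof therefore reduces to the case analysis above, with each branch handled by quoting one of the preceding propositions together with Theorems \ref{m1} and \ref{m20}.
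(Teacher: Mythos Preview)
Your proposal is correct and follows essentially the same approach as the paper's proof: both start from Theorem \ref{m20} to obtain quasi-greediness and $\zeta$-($\mathbf n$, superconservativity), then split into the same three scenarios (bounded $\zeta$ handled by Proposition \ref{p100} and Theorem \ref{m1}; $\sup_n s_n = \infty$ handled by Proposition \ref{p12} item ii); $\inf_{n\in\mathbf n} s_n = 0$ handled by Proposition \ref{p12} item iii)). The only cosmetic difference is your grouping of the first two scenarios under Case A.
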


\begin{proof}
Let $\mathcal{B}$ be $\zeta$-($\mathbf n$, strong partially greedy) for some weight sequence $\zeta = (s_n)_n$.
By Theorem \ref{m20}, we know that $\mathcal{B}$ is quasi-greedy and $\zeta$-($\mathbf n$, superconservative). 

If $0 < \inf_{n} s_n \le \sup_n s_n <\infty$, then Proposition \ref{p100} states that $\mathcal{B}$ is ($\mathbf n$, superconservative). By Theorem \ref{m1}, $\mathcal{B}$ is ($\mathbf n$, strong partially greedy). 

If $\sup_n s_n = \infty$, then Proposition \ref{p12} item ii) gives that $(e_n)_{n\in\mathbf n}$ is equivalent to the canonical basis of $c_0$.

If $\inf_{n\in\mathbf n} s_n = 0$, Proposition \ref{p12} item iii) gives that $(e_n)_{n\in\mathbf n}$ has a subsequence that is equivalent to the canonical basis of $c_0$.
\end{proof}

\begin{cor}\label{c11}Let $\mathbf n\neq \mathbb{N}$ and $\Delta_{\mathbb N, \mathbf n}$ be finite.
If a basis $\mathcal{B} = (e_n)_n$ is $\zeta$-($\mathbf n$, strong partially greedy) for some weight sequence $\zeta = (s_n)_n$, then either $\mathcal{B}$ is ($\mathbf n$, strong partially greedy) or $\mathcal{B}$ has a subsequence that is equivalent to the canonical basis of $c_0$.
\end{cor}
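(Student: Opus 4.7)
The plan is to derive Corollary \ref{c11} directly from Corollary \ref{c10} via a short case analysis on the weight sequence $\zeta = (s_n)_n$. Since $\mathbf n$ is a subsequence of $\mathbb{N}$, the difference set $\Delta_{\mathbb{N}, \mathbf n}$ coincides with $\mathbb{N} \setminus \mathbf n$, so the hypothesis is just that $\mathbb{N} \setminus \mathbf n$ is finite (and nonempty, since $\mathbf n \neq \mathbb{N}$).

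First I would dispose of the case $\inf_n s_n > 0$: this is precisely the first alternative in the hypothesis of Corollary \ref{c10}, so that corollary applies verbatim and yields the desired dichotomy. The remaining case is $\inf_n s_n = 0$. Here I would argue that $\inf_{n \in \mathbf n} s_n = 0$ as well: pick a sequence of indices $(k_j)_j$ along which $s_{k_j} \to 0$; since $\mathbb{N} \setminus \mathbf n$ is finite, all but finitely many $k_j$ lie in $\mathbf n$, so after discarding the exceptional terms we obtain a subsequence of indices in $\mathbf n$ along which the weights tend to $0$. This verifies the second alternative in the hypothesis of Corollary \ref{c10}, and that corollary once again delivers the conclusion.

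Putting the two cases together establishes Corollary \ref{c11}. The key structural observation is that the finiteness of $\mathbb{N} \setminus \mathbf n$ rules out the single scenario not covered by Corollary \ref{c10}, namely $\inf_n s_n = 0$ while $\inf_{n \in \mathbf n} s_n > 0$; indeed this would force $\inf_{n \notin \mathbf n} s_n = 0$, which is impossible when the complement is a finite set of indices with positive weights.

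There is no real obstacle here since all analytic content has already been carried out in Proposition \ref{p12} (the $c_0$-subsequence extraction from unbounded or vanishing weights), Proposition \ref{p100} (the equivalence of $\zeta$-($\mathbf n$, superconservative) and ($\mathbf n$, superconservative) for bounded two-sided weights), and Corollary \ref{c10}. The proof is therefore expected to be short, essentially a one-paragraph reduction.
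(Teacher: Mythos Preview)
Your proposal is correct and follows exactly the same route as the paper: split into the cases $\inf_n s_n > 0$ and $\inf_n s_n = 0$, observe in the second case that finiteness of $\mathbb{N}\setminus\mathbf n$ forces $\inf_{n\in\mathbf n} s_n = 0$, and invoke Corollary \ref{c10} in both cases. There is nothing to add.
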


\begin{proof}
If $\inf s_n > 0$, then the conclusion follows from Corollary \ref{c10}. If $\inf_n s_n = 0$, then the finite $\Delta_{\mathbb N, \mathbf n}$ implies that $\inf_{n\in\mathbf n} s_n = 0$. Again the conclusion follows from Corollay \ref{c10}.
\end{proof}

We shall use Corollary \ref{c10} to prove the next theorem.
\begin{thm}\label{m14}
Let $\mathbf n$ be such that $\mathbb{N}\backslash \mathbf n$ is infinite. There exists a $1$-unconditional Schauder basis $\mathcal B$ that is $\omega$-($\mathbf n$, strong partially greedy) for some weight on sets $\omega$, but $\mathcal B$ is not $\zeta$-($\mathbf n$, strong partially greedy) for any weight sequence $\zeta$ that satisfies either $\inf_{n} s_n  > 0$ or $\inf_{n\in\mathbf n}s_n = 0$.
\end{thm}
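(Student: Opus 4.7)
The plan is to construct a $1$-unconditional Schauder basis $\mathcal B$ that fails to be $(\mathbf n,\,$strong partially greedy$)$ and yet admits no subsequence equivalent to the canonical $c_0$-basis. Since $\mathcal B$ is $1$-unconditional (hence quasi-greedy), Corollary \ref{cc1} will yield a set weight $\omega$ with respect to which $\mathcal B$ is $\omega$-$(\mathbf n,\,$strong partially greedy$)$, while Corollary \ref{c10} will rule out all sequence weights $\zeta$ in the hypothesized range.

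Write $D:=\mathbb N\setminus\mathbf n=\{d_1<d_2<\cdots\}$ (infinite by hypothesis) and $w_j:=1/\sqrt j$, and define on $c_{00}$ the norm
\[
\|x\|\ :=\ \sup_{\pi}\sum_{i}w_{\pi(i)}|x_{d_i}|\ +\ \sum_{n\in\mathbf n}|x_n|,
\]
where the supremum runs over all bijections $\pi$ of $\mathbb N$; let $X$ be the completion and $\mathcal B=(e_n)_n$ the canonical basis. Seminormalization and $1$-unconditionality follow at once from the monotonicity of the norm in $|x_n|$, and the Schauder property $\|S_m(x)\|\le\|x\|$ reduces to a standard rearrangement comparison: the $\mathbf n$-part only loses terms, while the first part equals $\sum_j w_j b_j^{(m)}$, with $(b_j^{(m)})$ the decreasing rearrangement of $(|x_{d_i}|\mathbf 1_{d_i\le m})_i$, which is dominated coordinatewise by the rearrangement of $(|x_{d_i}|)_i$. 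Setting $A_N:=\{n_1,\ldots,n_N\}$ and $B_N:=\{d_1,\ldots,d_N\}$, one has $(A_N,B_N)\in\mathbb T(\mathbf n)$ (the condition $A_N<B_N\cap\mathbf n=\emptyset$ holds vacuously), yet $\|1_{A_N}\|=N$ and $\|1_{B_N}\|=\sum_{j=1}^N 1/\sqrt j\le 2\sqrt N$, so the ratio diverges; hence $\mathcal B$ is not $(\mathbf n,\,$conservative$)$, and Theorem \ref{m1} gives that $\mathcal B$ is not $(\mathbf n,\,$strong partially greedy$)$.

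To show that no subsequence of $\mathcal B$ is equivalent to the canonical $c_0$-basis, let $M\subset\mathbb N$ be infinite; then $M\cap\mathbf n$ or $M\cap D$ is infinite. In the first case, $F\subset M\cap\mathbf n$ gives $\|1_F\|=|F|\to\infty$, and in the second, $F\subset M\cap D$ gives $\|1_F\|=\sum_{j=1}^{|F|}1/\sqrt j\to\infty$; by $1$-unconditionality, this precludes equivalence to the $c_0$-basis. Applying Corollary \ref{c10}: if $\mathcal B$ were $\zeta$-$(\mathbf n,\,$strong partially greedy$)$ for some $\zeta$ with $\inf_n s_n>0$ or $\inf_{n\in\mathbf n}s_n=0$, then $\mathcal B$ would be $(\mathbf n,\,$strong partially greedy$)$ or admit a $c_0$-subsequence, either contradicting what has just been established. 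The conceptual obstacle is finding a norm whose $D$-coordinates grow slowly enough (like $\sqrt N$) to defeat $(\mathbf n,\,$conservative$)$ while still growing (so that no subsequence is $c_0$-equivalent); the remaining bookkeeping, principally the Schauder verification, is routine.
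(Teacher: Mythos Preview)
Your argument is correct and follows the same high-level strategy as the paper: build a $1$-unconditional (hence quasi-greedy) basis that is not $(\mathbf n,\text{conservative})$ and has no subsequence equivalent to the $c_0$-basis, then invoke Corollary~\ref{cc1} for the positive part and Corollary~\ref{c10} for the negative part.

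The concrete construction, however, is genuinely different. The paper adapts the example from Case~2 of Theorem~\ref{m8}, using the norm
\[
\|x\|=\sup_{F}\sum_{m\in F}|x_m|+\|x\|_{\ell_2},
\]
where the supremum is over finite $F\subset\mathbb N$ with $\sqrt{\phi(\min F)}\ge |F|$ and $\phi$ is built from $K=\mathbb N\setminus\mathbf n$; the $\ell_2$ term is what blocks $c_0$-subsequences. Your construction is considerably more elementary: a Lorentz-type norm on the $D$-coordinates plus the $\ell_1$ norm on the $\mathbf n$-coordinates. The ratio $\|1_{A_N}\|/\|1_{B_N}\|\asymp N/\sqrt N$ kills $(\mathbf n,\text{conservative})$, while both summands already grow unboundedly on finite sets, so no auxiliary $\ell_2$ term is needed. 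The paper's example carries more structure (inherited from Theorem~\ref{m8}, where that structure was essential), whereas your norm is tailored exactly to the two requirements of Theorem~\ref{m14} and nothing more. One small remark: monotonicity in $|x_n|$ gives $1$-unconditionality but not seminormalization; the latter should be checked separately (it holds, since $\|e_n\|=\|e_n^*\|=1$ for all $n$ by a one-line computation).
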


\begin{proof}
We slightly modify the example in the proof of Theorem \ref{m8}. Let $K = \mathbb N\backslash \mathbf n = k_1 < k_2 < \cdots$.
Define a function $\phi: \mathbb{N}\rightarrow \mathbb{N}$ as 
$$\phi(n) \ =\ \begin{cases}1&\mbox{ if } n \le k_1,\\ j &\mbox{ if } k_{j}+1\le  n\le k_{j+1} \mbox{ for some }j\ge 1.\end{cases}$$
Let $X$ be the completion of $c_{00}$ under the following norm: for $x = (x_1, x_2, \ldots)$,
$$\|x\|\ :=\ \sup_{F}\sum_{m\in F}|x_m| + \|x\|_{\ell_2},$$
where $F\subset \mathbb N$ and $\sqrt{\phi(\min F)}\ge |F|$. Let $\mathcal{B}$ be the canonical basis. Using the same argument as in Case 2 of Theorem \ref{m8}, we know that $\mathcal{B}$ is not ($\mathbf n$, conservative) and thus, not ($\mathbf n$, strong partially greedy).

Define a weight $\omega$ on each set $A\subset\mathbb{N}$ as follows:
$$\omega(A)\ =\ \begin{cases}\|1_A\| &\mbox{ if }A\mbox{ is finite,}\\ \infty &\mbox{ if }A\mbox{ is infinite}.\end{cases}$$
Since $\|1_A\| = \omega(A)$ for each finite $A\subset\mathbb{N}$, $\mathcal{B}$ is $\omega$-($\mathbf n$, conservative). Hence, we know that $\mathcal{B}$ is $\omega$-($\mathbf n$, strong partially greedy) by Theorem \ref{m20}.

Let us show that $\mathcal{B}$ is not $\zeta$-($\mathbf n$, strong partially greedy) for any weight sequence $\zeta$ that satisfies either $\inf_{n} s_n  > 0$ or $\inf_{n\in\mathbf n}s_n = 0$. By Corollary \ref{c10}, we need only to show that $\mathcal{B}$ has no subsequence equivalent to the canonical basis of $c_0$. Let $(f_n)_n$ be a subsequence of $\mathcal{B}$ and $N\in\mathbb{N}$, we have $\|\sum_{n=1}^N f_n\|\ge \sqrt{N}\rightarrow\infty$
as $N\rightarrow\infty$. Hence, $(f_n)_n$ is not equivalent to the canonical basis of $c_0$, a contradiction. 
\end{proof}

\begin{rek}\normalfont\label{r22}
The weight $\omega$ defined in the proof of Theorem \ref{m14} has some nice properties
\begin{enumerate}
    \item[i)] $\omega(A) < \infty$ if $A$ is finite,
    \item[ii)] $\omega(A)\rightarrow 0$ as $\sum_{n\in A}\omega(\{n\})\rightarrow 0$, and $\omega(A)\rightarrow \infty$ as $\sum_{n\in A}\omega(\{n\})\rightarrow\infty$,
    \item[iii)] If $A, B\subset\mathbb{N}$ are finite sets such that $\emptyset \neq A\subsetneq B$, then $\omega(B) - \omega(A)\ge \eta > 0$, where $\eta$ depends only on $|A|$ and $|B|$. In particular, 
    $$\omega(A) + (\sqrt{|B|}-\sqrt{|A|}) \ \le\ \omega(B).$$
\end{enumerate}
\end{rek}

\begin{cor}\label{cc2}
\begin{enumerate}
    \item[i)]  There exists a $1$-unconditional basis $\mathcal{B}$ that is $\omega$-($\mathbb N$, strong partially greedy) for some weight on sets $\omega$, but $\mathcal{B}$ is not $\zeta$-($\mathbb N$, strong partially greedy) for any weight sequence $\zeta$ with $\inf s_n > 0$.
    \item[ii)] Let $\mathbf n\neq \mathbb{N}$ and $\Delta_{\mathbb N, \mathbf n}$ be finite. There exists a $1$-unconditional basis $\mathcal{B}$ that is $\omega$-($\mathbf n$, strong partially greedy) for some weight on sets $\omega$, but $\mathcal{B}$ is not $\zeta$-($\mathbf n$, strong partially greedy) for any weight sequence $\zeta$.
    \item[iii)] Let $\mathbf n$ be such that $\Delta_{\mathbb N, \mathbf n}$ is infinite. There exists a $1$-unconditional Schauder basis $\mathcal B$ that is $\omega$-($\mathbf n$, strong partially greedy) for some weight on sets $\omega$, but $\mathcal B$ is not $\zeta$-($\mathbf n$, strong partially greedy) for any weight sequence $\zeta$ that satisfies either $\inf_{n} s_n  > 0$ or $\inf_{n\in\mathbf n}s_n = 0$.
\end{enumerate}
\end{cor}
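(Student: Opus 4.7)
The plan is straightforward: Corollary \ref{cc2} essentially bundles Theorem \ref{m14} with applications of Corollaries \ref{c10} and \ref{c11} to a standard classical example. Part (iii) is a direct restatement of Theorem \ref{m14}, since for $\mathbf n\subset\mathbb N$ the sets $\mathbb{N}\backslash\mathbf n$ and $\Delta_{\mathbb N,\mathbf n}$ coincide; no new work is required there.

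For parts (i) and (ii), I will use the $\ell_p\oplus\ell_q$ example from the Introduction: fix $1\le p<q<\infty$ and let $\mathcal B$ denote the canonical $1$-unconditional basis of $\ell_p\oplus\ell_q$ (odd indices carrying the $\ell_p$-basis, even indices the $\ell_q$-basis). Two properties suffice and will be used repeatedly. (a) Since $\mathcal B$ is $1$-unconditional, it is $1$-quasi-greedy, so by Corollary \ref{cc1} there is a set-weight $\omega$ making $\mathcal B$ an $\omega$-($\mathbf n$, strong partially greedy) basis for any sequence $\mathbf n$. (b) The \`Edel'\v{s}te\v{i}n--Wojtaszczyk argument recalled in the Introduction shows that $\mathcal B$ is not conservative, hence (by Theorem \ref{B}) not strong partially greedy.

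For (i), suppose $\mathcal B$ is $\zeta$-($\mathbb N$, strong partially greedy) for some $\zeta=(s_n)_n$ with $\inf_n s_n>0$. By Theorem \ref{m20} it is quasi-greedy and $\zeta$-superconservative. If also $\sup_n s_n<\infty$, Proposition \ref{p100} (applied with $\mathbf n=\mathbb N$) promotes this to superconservativity, and Theorem \ref{m1} then makes $\mathcal B$ strong partially greedy, contradicting (b). If instead $\sup_n s_n=\infty$, Proposition \ref{p12}(ii) forces the entire basis $\mathcal B$ to be equivalent to the canonical basis of $c_0$, contradicting the fact that $\ell_p\oplus\ell_q$ isomorphically contains $\ell_p$ (which contains no copy of $c_0$ for $1\le p<\infty$). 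For (ii), with $\mathbf n\neq\mathbb N$ and $\Delta_{\mathbb N,\mathbf n}$ finite, assume $\mathcal B$ is $\zeta$-($\mathbf n$, strong partially greedy) for some $\zeta$. Corollary \ref{c11} then yields two alternatives. In the first, $\mathcal B$ is ($\mathbf n$, strong partially greedy); Theorem \ref{equiv} promotes this to strong partially greedy, again contradicting (b). In the second, $\mathcal B$ has a subsequence equivalent to the canonical basis of $c_0$; but writing $\mathcal B=\mathcal B_1\sqcup\mathcal B_2$ with $\mathcal B_i$ a basis of $\ell_{p_i}$ ($p_1=p$, $p_2=q$), any infinite subsequence contains an infinite subset lying entirely in $\mathcal B_1$ or $\mathcal B_2$, hence inside $\ell_{p_1}$ or $\ell_{p_2}$. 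Since $\ell_r$ for $1\le r<\infty$ contains no isomorphic copy of $c_0$, this is impossible.

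There is no real obstacle in the argument: everything reduces to invoking results already established (Theorem \ref{m14}, Propositions \ref{p100} and \ref{p12}, Corollary \ref{c11}, Theorem \ref{equiv}, and Corollary \ref{cc1}). The only mildly delicate point is the last step of (ii), where one must verify that no subsequence of $\mathcal B$ is $c_0$-equivalent; this follows at once from the classical fact that $c_0\not\hookrightarrow\ell_r$ for $1\le r<\infty$, applied to a further subsequence concentrated in one of the two summands.
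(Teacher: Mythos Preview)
Your argument is correct. Part (iii) is indeed just Theorem~\ref{m14}, and for parts (i) and (ii) the $\ell_p\oplus\ell_q$ example works exactly as you describe: it is $1$-unconditional (hence quasi-greedy, so Corollary~\ref{cc1} supplies the set-weight $\omega$), it is not conservative (take $A$ in the odd indices and $B>A$ in the even indices to get $\|1_A\|/\|1_B\|=N^{1/p-1/q}\to\infty$), and it has no subsequence equivalent to the $c_0$-basis (any subsequence has a further subsequence entirely in the $\ell_p$- or $\ell_q$-summand, where $\|\sum_{k=1}^N f_k\|=N^{1/r}\to\infty$). The case split in (i) via Propositions~\ref{p100} and~\ref{p12}(ii) is clean; note that the $c_0$-equivalence conclusion of Proposition~\ref{p12}(ii) is contradicted most directly by the same unboundedness of $\|1_A\|$ along odd indices, which is perhaps simpler than the embedding argument you sketch.

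This differs from the paper's route: the paper handles (i) by citing \cite[Theorem~1.21]{C2} outright and handles (ii) by invoking a specific example from \cite[Section~4]{C2} (a basis that is $\omega$-($\mathbb N$, strong partially greedy) yet neither strong partially greedy nor possessing a $c_0$-subsequence), then applying Corollary~\ref{c11} and Theorem~\ref{equiv} just as you do. Your version has the advantage of being entirely self-contained within the present paper---the $\ell_p\oplus\ell_q$ example is already introduced in the Introduction, and the two required properties (failure of conservativity, no $c_0$-subsequence) are elementary to verify. The paper's approach leans on an external construction but otherwise follows the same logical skeleton for (ii).
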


\begin{rek}\normalfont
Corollary \ref{cc2} item i) is sharp in the sense that we cannot drop the requirement $\inf s_n > 0$. Indeed, Khurana \cite{K} characterized $\zeta$-($\mathbb N$, strong partially greedy) bases by quasi-greediness and the so-called $\varsigma$-left-Property (A). By \cite[Remark 3.3]{K}, any basis trivially satisfies $\zeta$-left-Property (A) with $\zeta = (s_n)_{n=1}^\infty = (2^{-n})_{n=1}^\infty$. Hence, if we have an $\omega$-($\mathbb N$, strong partially greedy) basis, it is quasi-greedy by Theorem \ref{m20} and has $\zeta$-left-Property (A) for $s_n = 2^{-n}$. Therefore, the basis is automatically $\zeta$-($\mathbb N$, strong partially greedy). 

As we shall show later, we can remove the condition ``either $\inf_{n} s_n  > 0$ or $\inf_{n\in\mathbf n}s_n = 0$" in item iii) and bring items ii) and iii) into one theorem. 
\end{rek}

\begin{proof}[Proof of Corollary \ref{cc2}]
Item i) is due to \cite[Theorem 1.21]{C2}, while iii) is due to Theorem \ref{m14}. We prove item ii). Suppose that $\mathbf n\neq \mathbb{N}$ and $\mathbb{N}\backslash \mathbf n$ is finite. We use the example of a basis $\mathcal{B}$ in \cite[Section 4]{C2}. The basis $\mathcal{B}$ is $\omega$-($\mathbb N$, strong partially greedy) for some weight on sets $\omega$, but $\mathcal{B}$ is neither ($\mathbb N$, strong partially greedy) nor has a subsequence equivalent to the canonical basis of $c_0$. 

We claim that $\mathcal{B}$ is not $\zeta$-($\mathbf n$, strong partially greedy) for any weight sequence $\zeta$. Suppose otherwise. By Corollary \ref{c11}, $\mathcal{B}$ is either ($\mathbf n$, strong partially greedy) or $\mathcal{B}$ has a subsequence that is equivalent to the canonical basis of $c_0$. Since the difference set $\Delta_{\mathbb{N}, \mathbf n}$ is finite, by Theorem \ref{equiv}, $\mathcal{B}$ is either ($\mathbb N$, strong partially greedy) or $\mathcal{B}$ has a subsequence that is equivalent to the canonical basis of $c_0$. However, neither of these holds. 
\end{proof}

\begin{thm}
Let $\mathbf n\neq \mathbb{N}$. There exists a $1$-unconditional Schauder basis $\mathcal B$ that is $\omega$-($\mathbf n$, strong partially greedy) for some weight on sets $\omega$, but $\mathcal B$ is not $\zeta$-($\mathbf n$, strong partially greedy) for any weight sequence $\zeta$.
\end{thm}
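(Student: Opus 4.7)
My plan is to isolate the one case not addressed by items ii) and iii) of Corollary \ref{cc2} and furnish a new example that handles it as well as everything else. Items ii) and iii) already cover every $\zeta$ except the \emph{missing case} where $\inf_n s_n = 0$ yet $\inf_{n\in\mathbf n} s_n = \alpha > 0$ (so $\inf_{k\in K} s_k = 0$, forcing $K := \mathbb N \setminus \mathbf n$ to be infinite). I therefore assume $K$ is infinite. By Theorem \ref{m20}, it suffices to exhibit a $1$-unconditional Schauder basis that is $\omega$-($\mathbf n$, superconservative) for some $\omega$ (which upgrades to $\omega$-($\mathbf n$, sp) via quasi-greediness), has no subsequence equivalent to $c_0$, is not $(\mathbf n, \text{conservative})$ (so that Corollary \ref{c10} dispatches every $\zeta$ with $\inf_n s_n > 0$ or $\inf_{n\in\mathbf n} s_n = 0$), and which directly fails $\zeta$-$(\mathbf n, \text{conservative})$ in the missing case.

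Here is the construction. Pick pairwise disjoint blocks $E_1 < E_2 < \cdots$ inside $\mathbf n$ with $|E_j|=j$ and with large enough gaps between them that $\mathbf n_0 := \mathbf n \setminus \bigsqcup_j E_j$ contains infinitely many elements above $\max E_j$ for every $j$ (for instance place $E_j$ at the positions $j^3+1,\ldots,j^3+j$ of $\mathbf n$). On $c_{00}$ define
\[
\|x\| \;:=\; \|x\|_{\ell_2} \;+\; \sup_{j\ge 1}\sum_{n\in E_j}|x_n|,
\]
and let $X$ be the completion. The canonical sequence $\mathcal B = (e_n)_n$ is $1$-unconditional (each summand depends only on $|x_n|$), semi-normalized, and Schauder, because the partial-sum projections have norm at most $1$ and a standard $\varepsilon/3$-argument upgrades $S_m(y)\to y$ from $y\in c_{00}$ to every $x\in X$. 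Two decisive norm estimates are immediate: $\|1_{E_j}\| \ge j$ (take $j'=j$ in the sup), while $\|1_B\| = \sqrt{|B|}$ for every finite $B \subset \mathbf n_0 \cup K$, since $B\cap E_j = \emptyset$ for all $j$ kills the sup part.

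For the verification set $\omega(A) := \|1_A\|$ when $A$ is finite and $\omega(A) := \infty$ otherwise; then $(A,B) \in \mathbb T^\omega(\mathbf n)$ automatically gives $\|1_A\| \le \|1_B\|$, so $\mathcal B$ is $1$-$\omega$-$(\mathbf n, \text{superconservative})$, and Theorem \ref{m20} yields $\omega$-$(\mathbf n, \text{strong partially greedy})$. The bound $\|1_F\|\ge \sqrt{|F|}$ rules out any subsequence equivalent to $c_0$, and taking $(A,B) = (E_j, B)$ with $B\subset \mathbf n_0$ above $E_j$ and $|B|=j$ shows $\mathcal B$ is not $(\mathbf n, \text{conservative})$. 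By Corollary \ref{c10}, $\mathcal B$ then fails $\zeta$-($\mathbf n$, sp) for every $\zeta$ with $\inf_n s_n > 0$ or $\inf_{n\in\mathbf n} s_n = 0$. To handle the missing case, fix $\zeta$ with $\inf_{n\in\mathbf n} s_n = \alpha > 0$, put $A_j := E_j$, and split on $S := \sup_n s_n$. If $S = \infty$, choose $n^*_j > \max E_j$ with $s_{n^*_j} \ge \zeta(A_j)$ and let $B_j := \{n^*_j\}$: then $(A_j,B_j) \in \mathbb T^\zeta(\mathbf n)$, $\|1_{B_j}\| \le 2$, and the ratio is $\ge j/2 \to \infty$. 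If $S < \infty$, pick $B_j \subset \mathbf n_0 \cap (\max E_j,\infty)$ with $|B_j| = \lceil Sj/\alpha\rceil$ (available by our design); since $s_n \ge \alpha$ on $\mathbf n$ we get $\zeta(B_j)\ge \alpha|B_j| \ge Sj \ge \zeta(A_j)$ and $A_j < B_j = B_j\cap\mathbf n$, so that
\[
\frac{\|1_{A_j}\|}{\|1_{B_j}\|} \;\ge\; \frac{j}{\sqrt{\lceil Sj/\alpha\rceil}} \;\longrightarrow\; \infty,
\]
completing the argument via Theorem \ref{m20}.

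The main obstacle is choosing the right example: the basis of Theorem \ref{m14} is actually $\zeta$-$(\mathbf n, \text{conservative})$ in the missing case, for instance with $s_n\equiv 1$ on $\mathbf n$ and $s_{k_i}=i^{-2}$ on $K$, because the constraint $\zeta(B)\ge \zeta(A)$ forces $B$ to contain $\Theta(|A|)$ consecutive $\mathbf n$-elements just after $A$, and in the $\phi$-based norm of Theorem \ref{m14} these elements lie in the high-$\phi$ region where $\|1_B\|$ grows linearly in $|B|$, so the ratio stays bounded. The design above circumvents this by reserving the linear growth of $\|1_A\|$ to isolated blocks $E_j \subset \mathbf n$ while forcing everything outside $\bigsqcup_j E_j$ to behave like $\ell_2$; the gaps deliberately kept in $\mathbf n_0$ above every $E_j$ are what guarantee, in the $S<\infty$ subcase, a $B_j$ whose norm is $\sqrt{|B_j|}$ rather than $|B_j|$, regardless of how the $\zeta$-mass happens to be distributed.
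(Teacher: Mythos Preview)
Your proof is correct but takes a different route from the paper's. The paper constructs a single example valid for every $\mathbf n\neq\mathbb N$ at once: on $c_{00}$ it sets
\[
\|x\|=\sup_{\pi,\pi'}\Bigl(\sum_i \tfrac{1}{\sqrt{\pi(i)}}\,|x_{n_{2i-1}}|+\sum_{j\notin(n_{2i-1})_i}\tfrac{1}{\pi'(j)}\,|x_j|\Bigr),
\]
so the odd-indexed $\mathbf n$-positions carry the heavier $1/\sqrt{n}$ weight and everything else the $1/n$ weight. After ruling out $\sup_n s_n=\infty$ and $\inf_{n\in\mathbf n}s_n=0$ via the no-$c_0$-subsequence argument (exactly as you do), the paper handles the \emph{entire} remaining range $0<\inf_{n\in\mathbf n}s_n\le\sup_n s_n<\infty$ in one stroke by playing $A_N=\{n_1,n_3,\dots,n_{2N-1}\}$ against $B_N$ made of roughly $\lceil\beta/\alpha\rceil N$ even-indexed $\mathbf n$-positions to the right, getting $\|1_{A_N}\|\gtrsim\sqrt N$ versus $\|1_{B_N}\|\lesssim\ln N$. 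Your construction, by contrast, uses the more elementary norm $\|x\|=\|x\|_{\ell_2}+\sup_j\sum_{n\in E_j}|x_n|$ with reserved blocks $E_j\subset\mathbf n$; this yields cleaner estimates ($\|1_{E_j}\|\ge j$, $\|1_B\|=\sqrt{|B|}$ on $\mathbf n_0$) but you then lean on Corollary~\ref{c10} for part of the $\zeta$-range and treat your ``missing case'' with a further split on $\sup_n s_n$. Incidentally, your case split on $|K|$ is unnecessary: when $K=\mathbb N\setminus\mathbf n$ is finite the missing case is vacuous (finitely many positive $s_k$ force $\inf_n s_n>0$), so Corollary~\ref{c10} already disposes of every $\zeta$ and your own example works there too. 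In short, the paper's argument is more self-contained and uniform; yours uses a simpler norm and sharper growth rates at the price of a more fragmented case analysis.
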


\begin{proof}
Let $s_n = \frac{1}{\sqrt{n}}$ and $t_{n} = \frac{1}{n}$ for $n\ge 1$. Let $X$ be the completion of $c_{00}$ under the following norm: for $x = (x_1, x_2, \ldots)$, 
$$\|x\|\ =\ \sup_{\pi, \pi'}\left(\sum_{i=1}^\infty s_{\pi(i)}|x_{n_{2i-1}}| + \sum_{j \in  \mathbb{N}\backslash (n_{2i-1})_i} t_{\pi'(j)}|x_{j}|\right),$$
where $\pi:\mathbb{N}\rightarrow\mathbb{N}$ and $\pi': \mathbb{N}\backslash (n_{2i-1})_i\rightarrow\mathbb{N}$ are bijections. Consider the canonical basis $\mathcal{B}$, which is normalized and $1$-unconditional. Define a weight $\omega$ on each set $A\subset\mathbb{N}$ as follows:
$$\omega(A)\ =\ \begin{cases}\|1_A\| &\mbox{ if }A\mbox{ is finite,}\\ \infty &\mbox{ if }A\mbox{ is infinite}.\end{cases}$$
Since $\|1_A\| = \omega(A)$ for each finite $A\subset\mathbb{N}$, $\mathcal{B}$ is $\omega$-($\mathbf n$, conservative). Hence, we know that $\mathcal{B}$ is $\omega$-($\mathbf n$, strong partially greedy) by Theorem \ref{m20}. Besides, $\omega$ satisfies all properties listed in Remark \ref{r22}. We show that $\mathcal{B}$ is not $\zeta$-($\mathbf n$, strong partially greedy) for any weight sequence $\zeta = (s_n)_n$. Suppose otherwise. 

If either $\sup_{n} s_n = \infty$ or $\inf_{n\in\mathbf n} s_n = 0$, then Proposition \ref{p12} items ii) and iii) give that $\mathcal{B}$ has a subsequence that is equivalent to the canonical basis of $c_0$, a contradiction.

For the rest of the proof, assume that $0 < \alpha := \inf_{n\in\mathbf n} s_n \le  \sup_{n} s_n =: \beta < \infty$ and set $p = \lceil \beta/\alpha\rceil$. For each $N\in\mathbb{N}$, let $A_N = \{n_{1}, n_{3}, \ldots, n_{2N-1}\}$ and $B_N = \{n_{2N}, n_{2N+2}, \ldots, n_{2N + 2(pN-1)}\}$. Note that $A_N\subset\mathbf n$, $A_N < B_N\cap \mathbf n$, and 
$$\zeta(A_N) \ \le\ \beta N\ \le\ p N\alpha\ =\ |B|\alpha \ \le\ \zeta(B_N).$$
Hence, $(A_N, B_N)\in \mathbb{T}^{\zeta}(\mathbf n)$. However, 
$$\|1_{A_N}\|\ =\ \sum_{i=1}^N \frac{1}{\sqrt{i}}\ \ge\ 2\sqrt{N+1}-2,\mbox{ while }\|1_{B_N}\|\ =\ \sum_{i=1}^{pN} \frac{1}{i}\ \le\ \ln(pN) + 1.$$
We have $\|1_{A_N}\|/\|1_{B_N}\|\rightarrow\infty$ as $N\rightarrow\infty$. Therefore, $\mathcal{B}$ is not $\zeta$-($\mathbf n$, conservative) and thus, is not $\zeta$-($\mathbf n$, strong partially greedy). 
\end{proof}

\end{document}